\numberwithin{equation}{subsection}
\newtheorem{lemma}{Lemma}[subsection]
\newtheorem{thm}[lemma]{Theorem}
\newtheorem{theorem}{Theorem}
\newtheorem{prop}[lemma]{Proposition}
\newtheorem{proposition}[lemma]{Proposition}
\newtheorem{cor}[lemma]{Corollary}
\theoremstyle{definition}
\newtheorem{Df}{Definition}
\newtheorem{defn}[lemma]{Definition}
\newtheorem{definition}[lemma]{Definition}
\newtheorem{nota}[lemma]{Notation}
\theoremstyle{remark}
\newtheorem{Qn}{Question}
\newtheorem{qn}[lemma]{Question}
\newtheorem{Rk}{Remark}
\newtheorem{rk}[lemma]{Remark}
\newtheorem{remark}[lemma]{Remark}
\newtheorem{remarks}[lemma]{Remarks}
\newtheorem{ex}[lemma]{Example}
\newtheorem{exs}[lemma]{Examples}
\newtheorem{claim}[lemma]{Claim}
\newcommand{\A}{\mathbf{A}}
\renewcommand{\P}{\mathbf{P}}
\newcommand{\Z}{\mathbb{Z}}
\newcommand{\sA}{\mathcal{A}}
\newcommand{\sB}{\mathcal{B}}
\newcommand{\sC}{\mathcal{C}}
\newcommand{\sD}{\mathcal{D}}
\newcommand{\sE}{\mathcal{E}}
\newcommand{\sF}{\mathcal{F}}
\newcommand{\sO}{\mathcal{O}}
\newcommand{\sS}{\mathcal{S}}
\newcommand{\sT}{\mathcal{T}}
\newcommand{\bZ}{\mathbb{Z}}
\newcommand{\Xb}{{\overline{X}}}
\newcommand{\alphab}{\ol{\alpha}}
\newcommand{\deltab}{\ol{\delta}}
\newcommand{\Mod}{\operatorname{Mod}\hbox{--}}
\newcommand{\Cor}{\operatorname{\mathbf{Cor}}}
\newcommand{\ulSigma}{\ul{\Sigma}}
\newcommand{\Ext}{\operatorname{Ext}}
\newcommand{\ul}[1]{{\underline{#1}}}
\newcommand{\Set}{{\operatorname{\mathbf{Set}}}}
\newcommand{\PST}{{\operatorname{\mathbf{PST}}}}
\newcommand{\NST}{\operatorname{\mathbf{NST}}}
\newcommand{\Hom}{\operatorname{Hom}}
\newcommand{\Coker}{\operatorname{Coker}}
\newcommand{\Spec}{\operatorname{Spec}}
\newcommand{\Sm}{\operatorname{\mathbf{Sm}}}
\newcommand{\Sch}{\operatorname{\mathbf{Sch}}}
\newcommand{\Ab}{\operatorname{\mathbf{Ab}}}
\newcommand{\by}{\xrightarrow}
\newcommand{\yb}{\xleftarrow}
\newcommand{\iso}{\by{\sim}}
\newcommand{\pro}[1]{\text{\rm pro}_{#1}\text{\rm--}}
\newcommand{\tr}{{\operatorname{tr}}}
\newcommand{\proper}{{\operatorname{prop}}}
\newcommand{\sat}{{\operatorname{sat}}}
\newcommand{\fin}{{\operatorname{fin}}}
\renewcommand{\o}{{\operatorname{o}}}
\newcommand{\op}{{\operatorname{op}}}
\newcommand{\red}{{\operatorname{red}}}
\newcommand{\Nis}{{\operatorname{Nis}}}
\newcommand{\et}{{\operatorname{\acute{e}t}}}
\newcommand{\tto}{\dashrightarrow}
\newcommand{\inj}{\hookrightarrow}
\newcommand{\id}{{\operatorname{Id}}}
\newcommand{\adm}{{\operatorname{adm}}}
\newcommand{\codim}{{\operatorname{codim}}}
\newcommand{\ch}{{\operatorname{ch}}}
\newcommand{\pr}{{\operatorname{pr}}}
\renewcommand{\lim}{\operatornamewithlimits{\varprojlim}}
\newcommand{\colim}{\operatornamewithlimits{\varinjlim}}
\newcommand{\ol}{\overline}
\renewcommand{\phi}{\varphi}
\renewcommand{\epsilon}{\varepsilon}
\newcommand{\MCor}{\operatorname{\mathbf{MCor}}}
\newcommand{\MP}{\operatorname{\mathbf{MSm}}}
\newcommand{\MSm}{\operatorname{\mathbf{MSm}}}
\newcommand{\MPS}{\operatorname{\mathbf{MPS}}}
\newcommand{\MPST}{\operatorname{\mathbf{MPST}}}
\newcommand{\Bl}{{\mathbf{Bl}}}
\newcommand{\Sq}{{\operatorname{\mathbf{Sq}}}}
\newcommand{\bcube}{{\ol{\square}}}
\newcommand{\Mb}{{\overline{M}}}
\newcommand{\Nb}{{\overline{N}}}
\newcommand{\Lb}{{\overline{L}}}
\newcommand{\Zb}{{\overline{Z}}}
\newcommand{\ulMP}{\operatorname{\mathbf{\underline{M}Sm}}}
\newcommand{\ulMSm}{\operatorname{\mathbf{\underline{M}Sm}}}
\newcommand{\ulMNS}{\operatorname{\mathbf{\underline{M}NS}}}
\newcommand{\ulMPS}{\operatorname{\mathbf{\underline{M}PS}}}
\newcommand{\ulMPST}{\operatorname{\mathbf{\underline{M}PST}}}
\newcommand{\ulMNST}{\operatorname{\mathbf{\underline{M}NST}}}
\newcommand{\ulMCor}{\operatorname{\mathbf{\underline{M}Cor}}}
\newcommand{\ulomega}{\underline{\omega}}
\newcommand{\Comp}{\operatorname{\mathbf{Comp}}}
\newcommand{\CompM}{\Comp(M)}
\newcommand{\MV}{\operatorname{MV}}
\newcommand{\ulMV}{\operatorname{\underline{MV}}}
\newcommand{\ulMVfin}{\operatorname{\underline{MV}^{\mathrm{fin}}}}
\def\Zp{\Z^p}
\def\bZ{\mathbb{Z}}
\def\Ztr{\bZ_\tr}
\newcommand{\til}{\widetilde}
\def\Ninf{N^\infty}
\def\Minf{M^\infty}
\def\Minf{M^\infty}
\newcounter{spec}
\newenvironment{thlist}{\begin{list}{\rm{(\roman{spec})}}%
{\usecounter{spec}\labelwidth=20pt\itemindent=0pt\labelsep=10pt}}%
{\end{list}}%
\def\ulaNis{\underline{a}_{\Nis}}
\def\Comp{\Comp^{\fin}}
\def\ulb{\ul{b}}
\def\MSm{\operatorname{\mathbf{MSm}}}
\def\ulMSm{\operatorname{\mathbf{\ul{M}Sm}}}
\def\ulMPS{\operatorname{\mathbf{\ul{M}PS}}}
\def\ulMNS{\operatorname{\mathbf{\ul{M}NS}}}
\def\ulMNSfin{\operatorname{\mathbf{\ul{M}NS}^{\fin}}}
\def\ulMNST{\operatorname{\mathbf{\ul{M}NST}}}
\def\Comp{\operatorname{\mathbf{Comp}}}
\title{Motives with modulus, I: \\
Modulus sheaves with transfers for non-proper modulus pairs}
\author{Bruno Kahn}
\address{IMJ-PRG, Case 247, 4 place Jussieu, 75252 Paris Cedex 05, France}
\email{bruno.kahn@imj-prg.fr}
\author{Hiroyasu Miyazaki}
\address{RIKEN iTHEMS, Wako, Saitama 351-0198, Japan}
\email{hiroyasu.miyazaki@riken.jp}
\author{Shuji Saito}
\address{Graduate School of Mathematical Sciences, University of Tokyo,
3-8-1 Komaba, Tokyo 153-8941, Japan}
\email{sshuji@msb.biglobe.ne.jp}
\author{Takao Yamazaki}
\address{Institute of Mathematics, Tohoku University, Aoba, Sendai 980-8578, Japan}
\email{ytakao@math.tohoku.ac.jp}
\begin{document}


\removeabove{0.6cm}
\removebetween{0.6cm}
\removebelow{0.6cm}

\maketitle

\begin{prelims}

\DisplayAbstractInEnglish

\bigskip

\DisplayKeyWords

\medskip

\DisplayMSCclass

\bigskip

\languagesection{Fran\c{c}ais}

\bigskip

\DisplayTitleInFrench

\medskip

\DisplayAbstractInFrench

\end{prelims}


\newpage

\setcounter{tocdepth}{1}

\tableofcontents


\section*{Introduction}\addcontentsline{toc}{section}{Introduction}

The aim of this paper is to lay a foundation for a theory of 
\emph{motives with modulus},
which will be completed in \cite{motmod2},
generalizing Voevodsky's theory of motives.
Voevodsky's construction  is based on $\A^1$-invariance. It captures many important invariants such as Bloch's higher Chow groups, but not their natural generalisations like additive Chow groups  \cite{BE,Pa} or higher Chow groups with modulus \cite{BS}. Our basic motivation is to build a theory that captures such non $\A^1$-invariant phenomena, as an extension of \cite{rec}.

Let $\Sm$ be the category of smooth separated schemes of finite type 
over a field $k$.
 Voevodsky's construction starts from  an additive category $\Cor$, whose objects are those of $\Sm$ and morphisms are finite correspondences. 
We define $\PST$ as the category of 
additive presheaves of abelian groups on $\Cor$
(\emph{i.e.} functors $\Cor \to \Ab$ that commute with finite sums).
Let $\NST\subset \PST$ be the full subcategory of 
those objects $F\in \PST$ whose restrictions $F_X$ to 
$X_{\Nis}$ is a sheaf for any $X \in \Sm$,
where $X_\Nis$ denotes the small Nisnevich site of $X$,
that is, the category of all \'etale schemes over $X$
equipped with the Nisnevich topology.
Objects of $\NST$ are called \emph{(Nisnevich) sheaves with transfers}.
For $F\in \NST$, we write
\[ H^i_\Nis(X,F) = H^i(X_\Nis, F_X).\]

The following result of Voevodsky \cite[Theorem 3.1.4]{voetri} plays a fundamental r\^ole in his theory of motives.

\begin{theorem}[Voevodsky]\label{thm;Voevodksy}
The following assertions hold.
\begin{itemize}
\item[(1)]
The inclusion $\NST\to \PST$ has an exact left adjoint $a^V_\Nis$ such that for any $F\in \PST$ and $X\in \Sm$, 
$(a^V_\Nis F)_X$ is the Nisnevich sheafication of 
$F_X$ as a presheaf on $X_\Nis$. In particular $\NST$ is a Grothendieck abelian category.
\item[(2)]
For $X\in \Sm$, let $\Ztr(X)=\Cor(-,X) \in \PST$ be the associated representable additive presheaf. Then we have $\Ztr(X)\in \NST$ and there is a canonical
isomorphism for any $i\ge 0$ and $F\in \NST$:
\[H^i_\Nis(X,F)\simeq \Ext^i_{\NST}(\Ztr(X),F) .\]
\end{itemize}
\end{theorem}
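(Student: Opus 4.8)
The plan is to push all of the geometry into one \emph{continuity} property of finite correspondences and then argue formally. For $X\in\Sm$ write $\rho_X\colon\PST\to\mathrm{Sh}(X_\Nis,\Ab)$, $F\mapsto F_X$, for restriction to the small Nisnevich site, so that by definition $\NST=\{F\in\PST\mid \rho_X F\ \text{is a sheaf for every}\ X\}$. The input I want is: for a point $x$ of $X$, with $S=\Spec\sO^h_{X,x}$ the henselian local scheme at $x$, the natural map
\[ \colim_{(U,u)\to(X,x)}\Cor(U,Y)\ \iso\ \Cor(S,Y) \]
is an isomorphism for every $Y\in\Sm$ (colimit over Nisnevich neighbourhoods, $\Cor$ extended to essentially smooth schemes), and, dually, $\Cor(-,Y)$ sends elementary distinguished squares and $\emptyset$ to Cartesian squares of abelian groups, and in fact satisfies étale descent, compatibly with the transition maps above. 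This is the point at which the Suslin--Voevodsky theory of relative cycles is needed: a finite correspondence is a sum of integral closed subschemes of $U\times Y$ finite and surjective over a component of $U$, and such data, together with the finiteness and flatness conditions and the cycle-theoretic pullbacks, are of finite presentation, hence descend through the filtered limit $S=\lim U$ and behave well under the étale maps of elementary squares.

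\textbf{Part (1).} Granting the input: the Nisnevich topology has enough points (the $\sO^h_{X,x}$), so for a presheaf $G$ of abelian groups on $\Sm$ the sheafification $a_\Nis G$ has stalks $(a_\Nis G)_x=\colim_{(U,u)}G(U)$, and $G\mapsto a_\Nis G$ is exact, being exact on stalks. It remains, for $F\in\PST$, to put a transfers structure on the Nisnevich sheaf $a_\Nis F$ so that $F\to a_\Nis F$ is $\Cor$-linear; this produces the object $a^V_\Nis F$, which by locality of the Nisnevich topology restricts on each $X_\Nis$ to the sheafification of $F_X$, as required. A map of Nisnevich sheaves is detected on stalks; the continuity isomorphism gives each stalk $(a_\Nis F)_x=\colim_U F(U)$ an action of finite correspondences inherited from $F$, functorial in $x$, and the elementary-square criterion for sheaves then lifts this --- by a short diagram chase --- to a (necessarily unique) structure $a^V_\Nis F\in\NST$. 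This exhibits $a^V_\Nis\colon\PST\to\NST$ as left adjoint to the inclusion; it is exact since it is exact on underlying presheaves, and likewise the forgetful functor $\NST\to\NS$ to Nisnevich sheaves of abelian groups on $\Sm$ is exact (kernels are sectionwise and cokernels are sheafifications, in either category). Finally $\PST$ is Grothendieck --- additive presheaves on an essentially small additive category, with generating set $\{\Ztr(X)\}_{X\in\Sm}$ --- and a reflective subcategory of a Grothendieck category with exact reflector is again Grothendieck; this is the last clause of~(1).

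\textbf{Part (2).} That $\Ztr(X)=\Cor(-,X)$ lies in $\NST$ is precisely the descent half of the input. For the $\Ext$ comparison, Yoneda for $\Cor$ gives $\Hom_\NST(\Ztr(X),F)=F(X)=\Gamma(X_\Nis,\rho_X F)$, so $\Hom_\NST(\Ztr(X),-)=\Gamma(X_\Nis,-)\circ\rho_X$ as functors $\NST\to\Ab$. Now $\rho_X\colon\NST\to\mathrm{Sh}(X_\Nis,\Ab)$ is exact by~(1), and it has a left adjoint $\rho_X^!$, determined by sending the representable sheaf of $(U\to X)\in X_\Nis$ to $\Ztr(U)$ and extending by colimits; the continuity input shows $\rho_X^!$ to be exact. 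Hence $\rho_X$, being right adjoint to an exact functor, preserves injectives. Since $\NST$ is Grothendieck it has enough injectives; choosing an injective resolution $F\to I^\bullet$, the complex $\rho_X I^\bullet$ resolves $\rho_X F$ by injective --- in particular $\Gamma(X_\Nis,-)$-acyclic --- Nisnevich sheaves, so
\[ \Ext^i_\NST(\Ztr(X),F)=H^i\bigl(\Hom_\NST(\Ztr(X),I^\bullet)\bigr)=H^i\bigl(\Gamma(X_\Nis,\rho_X I^\bullet)\bigr)=H^i_\Nis(X,F). \]

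\textbf{Main obstacle.} Everything past the geometric input is formal; the real work --- and the only place the relative-cycles machinery is genuinely indispensable --- is that input itself: continuity of $\Cor$ along Nisnevich (henselian) localisation, together with étale descent for $\Cor$. It gets used twice, to transport transfers through sheafification in~(1) and to prove exactness of the left adjoint $\rho_X^!$ in~(2). Carrying it out carefully --- preservation of flatness and finiteness under the relevant limits and base changes, and well-definedness and compatibility of the pullbacks of relative cycles --- is the crux; once it is in hand, the rest of the theorem follows by the formal steps above.
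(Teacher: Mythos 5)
You should first note that the paper does not prove Theorem \ref{thm;Voevodksy} at all: it quotes it from \cite[Theorem~3.1.4]{voetri}, and the argument it has in mind is the one it adapts to modulus pairs in Theorems \ref{thm:cech} and \ref{thm:sheaf-transfer}, Lemma \ref{lif1} and Theorem \ref{c3.1v}. Measured against that argument, your proposal has a genuine gap at each of the two places you compress into a phrase. In (1), transporting transfers through sheafification \emph{is} the theorem, not a ``short diagram chase''. Continuity of $\Cor$ and the stalk formula do not by themselves produce a pairing $\Cor(X,Y)\otimes a_\Nis F(Y)\to a_\Nis F(X)$: a section $s\in a_\Nis F(Y)$ is only locally in the image of $F$, and a correspondence is multivalued, so to evaluate $\alpha^*s$ near a point $x\in X$ one must use that the support of $\alpha$ over $\Spec \sO^h_{X,x}$ is finite over a henselian local ring, hence a disjoint union of henselian local schemes, over which every Nisnevich cover of $Y$ splits. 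That splitting is precisely what makes the \v{C}ech complex $\cdots\to\Ztr(U\times_XU)\to\Ztr(U)\to\Ztr(X)\to0$ exact as a complex of Nisnevich sheaves (the modulus analogue is Theorem \ref{thm:cech}, whose proof is exactly this splitting argument), and Voevodsky's construction of the transfer structure on the sheafification, reproduced in Theorem \ref{thm:sheaf-transfer}, is a Hom-against-this-resolution argument plus a separatedness argument for uniqueness and compatibility with composition. None of this follows from your stated input (continuity plus the sheaf property of $\Ztr(X)$), and the elementary-square criterion plays no role in producing the transfers.

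In (2), you justify ``restriction to $X_\Nis$ preserves injectives'' by asserting that the left adjoint $\rho_X^!$ (left Kan extension from the small site followed by adding transfers) is exact ``by continuity''. This is not a standard fact, no argument is given, and continuity does not obviously bear on it: exactness of $\rho_X^!$ would have to be tested in $\NST$, i.e.\ over henselian points of \emph{all} smooth schemes, where $\rho_X^!A$ is computed by non-filtered colimits over comma categories; the claim is at least as hard as what it is meant to prove. The standard proofs avoid it entirely: one shows directly that an injective $I\in\NST$ restricts to a flasque sheaf on $X_\Nis$ (using that $\Ztr(V')\to\Ztr(V)$ is a monomorphism for dense open immersions), or to a \v{C}ech-acyclic one (using again the exactness of the \v{C}ech complexes in $\NST$), and acyclicity — not injectivity — is what the resolution computation needs; compare Lemma \ref{lif1}, Lemma \ref{lem:inj-flabby}, Lemma \ref{lem:milne} and Theorem \ref{c3.1v}. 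The purely formal parts of your outline (exactness of $\rho_X$ granting (1), Yoneda, $\NST$ Grothendieck, the resolution computation) are fine, but the two steps above are where the theorem actually lives, and as written they are missing.
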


Our basic principle for generalizing Voevodsky's theory of sheaves with transfers is that the category $\Cor$ should be replaced by the larger category of \emph{modulus pairs}, $\ulMCor$: objects are pairs 
$M=(\ol{M}, \Minf)$ consisting of a separated $k$-scheme of finite type $\ol{M}$ and an effective (possibly empty) Cartier divisor $\Minf$ on it such that the complement 
$M^\circ:=\ol{M}\setminus \Minf$ is smooth over $k$. The group $\ulMCor(M,N)$ of morphisms is defined as the subgroup of $\Cor(M^\circ,N^\circ)$ 
consisting of finite correspondences between $M^\o$ and $N^\o$ whose closures in 
$\ol{M} \times_k \ol{N}$ are proper\footnote{Here we stress that we do not assume it is finite over $\ol{M}$.} over $\ol{M}$
and satisfy certain admissibility conditions with respect to $\Minf$ and $\Ninf$
(see Definition \ref{d2.2}). 
Let $\MCor\subset \ulMCor$ be the full subcategory consisting of objects $(\ol{M},\Minf)$ with $\ol{M}$ proper over $k$. 

We then define $\ulMPST$ (resp. $\MPST$) as the category of additive presheaves of abelian groups 
on $\ulMCor$ (resp. $\MCor$). We have a functor 
\[\omega:\ulMCor \to \Cor, \quad (\Mb,\Minf) \mapsto \Mb - |\Minf|,\]
and two pairs of adjunctions
\[\MPST\begin{smallmatrix} \tau^*\\ \longleftarrow\\ \tau_!\\ \longrightarrow\\
\end{smallmatrix}\ulMPST, 
\quad
\MPST
\begin{smallmatrix} \omega^*\\ \longleftarrow\\ \omega_!\\ \longrightarrow\\
\end{smallmatrix}\PST,
\]
where $\tau^*$ is induced by the inclusion $\tau:\MCor\to \ulMCor$ and
$\tau_!$ is its left Kan extension, and $\omega^*$ is induced by $\omega$ and $\omega_!$ is its left Kan extension 
(see Propositions \ref{eq.tau} and \ref{lem:counit}). 

The main aim of this paper is to develop a \emph{sheaf theory} on $\ulMCor$
generalizing Voevodsky's theory. 
For $M=(\Mb,\Minf)\in \ulMCor$ and $F \in \ulMPST$, 
let $F_M$ be the presheaf on $\Mb_\Nis$ which associates $F(U,\Minf\times_{\Mb} U)$ to an \'etale map $U\to \Mb$. 

\begin{Df}\label{defintro;sheavesulMCor}
We define $\ulMNST$ to be 
the full subcategory of $\ulMPST$ of objects $F$ such that $F_M$ is a Nisnevich sheaf on $\Mb$ for any $M \in \ulMCor$.
\end{Df}

For $F\in \ulMPST$ and $M=(\Mb,\Minf)$, let $(F_M)_\Nis$ be the Nisnevich sheafication of the preshseaf $F_M$ on $\Mb_\Nis$. 
Let $\ul{\Sigma}^\fin$ be the subcategory of $\ulMCor$ 
which has the same objects as $\ulMCor$ 
and such that a morphism $f \in \ulMCor(M, N)$
belongs to $\ul{\Sigma}^\fin$ if and only if
$f^\o \in \Cor(M^\o, N^\o)$ is the graph of an isomorphism
$M^\o \iso N^\o$ in $\Sm$
that extends to a proper morphism $\ol{f} : \ol{M} \to \ol{N}$
of $k$-schemes such that $\Minf=\ol{f}^*\Ninf$.
(See Theorems \ref{thm:sheafification-ulMNST}, \ref{c3.1v}
and Lemma \ref{lcom3-2}.)

\begin{theorem}\label{thmintro;ulMNST}
The following assertions hold.
\begin{itemize}
\item[(1)]
The inclusion $\ulMNST\to \ulMPST$ has an exact left adjoint $\ulaNis$
such that 
\[(\ulaNis F)(M) =\colim_{N\in \ul{\Sigma}^\fin\downarrow M} (F_N)_\Nis(N)\]
for every $F\in \ulMPST$ and $M \in \ulMCor$. 
In particular $\ulMNST$ is a Grothendieck abelian category.
(See \S \ref{sec:pro-obj} for the comma category
$\ul{\Sigma}^\fin\downarrow M$.)
\item[(2)]
For $M\in \ulMCor$, let $\Ztr(M)=\ulMCor(-,M) \in \ulMPST$ be the associated representable presheaf. Then we have $\Ztr(M)\in \ulMNST$ and there is a canonical isomorphism for any $i\ge 0$ and $F\in \ulMNST$:
\[\Ext^i_{\ulMNST}(\Z_\tr(M),F)\simeq \colim_{N\in \ul{\Sigma}^{\fin}\downarrow M}
H_\Nis^i(\Nb,F_N).
\]
\end{itemize}
\end{theorem}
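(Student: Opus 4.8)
The plan is to deduce Theorem~\ref{thmintro;ulMNST} from the general machinery of topologies on Grothendieck sites together with the structural results cited after Definition~\ref{defintro;sheavesulMCor}; in particular I will use Theorems~\ref{thm:sheafification-ulMNST} and~\ref{c3.1v} and Lemma~\ref{lcom3-2} as black boxes. The key point is that $\ulMNST$ is the category of sheaves for an appropriate (pre)topology on $\ulMCor$, so part~(1) follows from the existence of a sheafification functor, and the explicit colimit formula for $\ulaNis$ is exactly the content obtained by computing the sheafification via the comma categories $\ul{\Sigma}^\fin\downarrow M$.

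For part~(1), first I would recall that, for each $M=(\Mb,\Minf)\in\ulMCor$, the assignment $F\mapsto F_M$ together with the small Nisnevich site $\Mb_\Nis$ organizes $\ulMPST$ as presheaves on a site built from $\ulMCor$; the subcategory $\ul{\Sigma}^\fin$ controls how the various $\Mb_\Nis$ glue, via the compatibility Lemma~\ref{lcom3-2}. The exactness of $\ulaNis$ then reduces to exactness of ordinary Nisnevich sheafification on each $\Mb_\Nis$ (which is exact since the Nisnevich topos has enough points / finite cohomological dimension) combined with exactness of the filtered colimit over $\ul{\Sigma}^\fin\downarrow M$; filtered colimits of abelian groups are exact, so one only needs that this comma category is cofiltered in the relevant sense, which is established in \S\ref{sec:pro-obj}. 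Granting the formula $(\ulaNis F)(M)=\colim_{N\in\ul{\Sigma}^\fin\downarrow M}(F_N)_\Nis(N)$, the adjunction $\ulaNis\dashv(\ulMNST\hookrightarrow\ulMPST)$ is formal: a map $\ulaNis F\to G$ with $G\in\ulMNST$ corresponds, by the colimit description and the universal property of Nisnevich sheafification applied sectionwise, to a map $F\to G$ in $\ulMPST$. Finally, $\ulMPST$ is Grothendieck abelian (presheaves on a small additive category), and a reflective subcategory closed under the relevant colimits whose reflector is exact is again Grothendieck abelian; this gives the last assertion of~(1).

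For part~(2), the first claim $\Ztr(M)\in\ulMNST$ amounts to showing that for every $M\in\ulMCor$ the presheaf $\Ztr(M)_N$ on $\Nb_\Nis$ is a Nisnevich sheaf for all $N\in\ulMCor$; this is a representability/descent statement for finite admissible correspondences with proper closure, and I expect it to follow from Nisnevich descent for the usual $\Ztr$ on $\Cor$ together with the fact that the admissibility and properness conditions are local on the target — precisely the kind of statement packaged in Theorem~\ref{c3.1v}. For the Ext-computation, by~(1) the category $\ulMNST$ is Grothendieck abelian with enough injectives, and $\Ext^i_{\ulMNST}(\Ztr(M),F)=H^i(R\Hom_{\ulMNST}(\Ztr(M),F))$. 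Since $\ulaNis$ is exact with right adjoint the inclusion, injective objects of $\ulMNST$ remain acyclic for $\Hom_{\ulMPST}(\Ztr(M),-)$, and $\Hom_{\ulMNST}(\Ztr(M),F)=\Hom_{\ulMPST}(\Ztr(M),F)=F(M)=\colim_{N\in\ul{\Sigma}^\fin\downarrow M}(F_N)_\Nis(N)=\colim_N H^0_\Nis(\Nb,F_N)$, using that $F$ is already a sheaf so $(F_N)_\Nis=F_N$. Deriving this identification — using exactness of the filtered colimit and exactness of $\ulaNis$ to commute $R\Hom$ with the colimit, and identifying $R\Gamma(\Nb_\Nis,F_N)$ via Theorem~\ref{thm;Voevodksy}(2)-style arguments on each small site — yields $\Ext^i_{\ulMNST}(\Ztr(M),F)\simeq\colim_{N\in\ul{\Sigma}^\fin\downarrow M}H^i_\Nis(\Nb,F_N)$.

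The main obstacle is part~(2): one must check that taking an injective resolution $F\to I^\bullet$ in $\ulMNST$ and applying $\Hom_{\ulMNST}(\Ztr(M),-)$ computes the same thing as, for each $N\in\ul{\Sigma}^\fin\downarrow M$, taking a Nisnevich-injective (or flasque) resolution of $F_N$ on $\Nb_\Nis$ and then passing to the filtered colimit — i.e.\ that the sheafification $\ulaNis$ sends $\ulMNST$-injectives to objects whose value on each $N$ is Nisnevich-acyclic, and that these two derived functors agree. This hinges on a Leray-type comparison between the "big" site structure encoded by $\ul{\Sigma}^\fin$ and the collection of small Nisnevich sites $\Nb_\Nis$, and is exactly where Theorems~\ref{thm:sheafification-ulMNST} and~\ref{c3.1v} do the real work; assuming those, the remaining arguments are the formal homological-algebra manipulations sketched above.
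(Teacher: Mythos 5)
Your proposal takes essentially the same route as the paper: Theorem \ref{thmintro;ulMNST} is deduced there precisely from the cited black boxes — part (1) from Theorem \ref{thm:sheafification-ulMNST} (with the explicit colimit formula coming from $\ul{a}_{\Nis}=\ul{b}_{\Nis}\,\ul{a}_{\Nis}^{\fin}\,\ul{b}^*$ together with Theorem \ref{thm:sheaf-transfer} and \eqref{eq:b-sh-explicit}) and part (2) from Lemma \ref{lcom3-2} and Theorem \ref{c3.1v} — so, granting those results, your deduction is correct. Only fix the attributions: $\Z_\tr(M)\in\ulMNST$ is Lemma \ref{lcom3-2} (resting on Proposition \ref{prop:rep-sheaf} and Lemma \ref{l1.3}), not Theorem \ref{c3.1v}; the cofilteredness of $\ul{\Sigma}^\fin\downarrow M$ is Corollary \ref{cor:sigma-fin-cofil}, not \S\ref{sec:pro-obj}; and the ``gluing''/transfer compatibility you credit to Lemma \ref{lcom3-2} is really carried by Theorem \ref{thm:sheaf-transfer} and Proposition \ref{lem;b!ulMNS} (sheafification preserves transfers, exactness of $\ul{b}_{s\,\Nis}$ and flabbiness of $\ul{b}_{s\,\Nis}I$), which is exactly the nontrivial content hidden inside your black boxes.
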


\begin{Rk}\label{rem;thmintro;ulMNST}
Theorem \ref{thmintro;ulMNST} (2) describes
the extension groups in $\ulMNST$
in terms of classical cohomology.
It also implies that the formation 
\[M \mapsto \colim_{N\in \ul{\Sigma}^{\fin}\downarrow M}
H^i_\Nis(\Nb,F_N)\]
is contravariantly functorial for morphisms in $\ulMCor$, which does not follow
immediately from the definition.
\end{Rk}

The preprint \cite{motmod} contained a mistake, pointed out by Joseph Ayoub: namely, Proposition 3.5.3 of \emph{loc.~cit.} is false. Theorem \ref{thmintro;ulMNST} (1) shows that the only false thing in that proposition is that the functor $\ul{b}^\Nis$ of \emph{loc.~cit.} is not exact, but only left exact (see Proposition \ref{lem;b!ulMNST} of the present paper.) This weakens \cite[Proposition~3.6.2]{motmod} into Theorem \ref{thmintro;ulMNST} (2); see however Question \ref{qn;cdhdescnet} below. What we gain in the present correction is that the notion of sheaf, which was artificially developed in \cite{motmod} for $\ulMCor$, corresponds now to a genuine Grothendieck topology. 

Another proposition incorrectly proven in \cite{motmod} was Proposition 3.7.3. In Part II of this work \cite{modsheafII}, we correct this proof and recover the proposition in full, hence get a good sheaf theory also for \emph{proper} modulus pairs. 
This allows us to develop the categories of motives again in \cite{motmod2}.

%

In the last part of this introduction, we raise the following question.
Its affirmative answer would simplify the right hand side of
Theorem \ref{thmintro;ulMNST} (2)
under two additional conditions (i) and (ii) below.
(These conditions turn out to be essential in \cite{shuji}.)

\begin{Qn}\label{qn;cdhdescnet}
Assume that $F\in \ulMNST$ satisfies the following conditions:
\begin{itemize}
\item[(i)]
$F$ is $\bcube$-invariant, namely, for any $M=(\Mb,\Minf)\in \ulMCor$ the map $F(M) \to F(M\otimes \bcube)$ is an isomorphism, where 
\begin{align*}
\bcube = (\P^1 ,\infty ), \ \ M\otimes \bcube =(\ol{M} \times \P^1, M^\infty \times \P^1 + \ol{M} \times (\infty)).
\end{align*}
\item[(ii)]
$F$ lies in the essential image of $\tau_!:\MPST \to \ulMPST$.
\end{itemize}
Then, is the map 
\[ H^q(\ol{M}_\Nis,F_M) \to \colim_{N\in \ul{\Sigma}^{\fin}\downarrow M} 
H^q(\ol{N}_\Nis,F_N)\]
an isomorphism for $M\in \ulMCor_{ls}$?
Here $\ulMCor_{ls}$ denotes the full subcategory of $\ulMCor$ consisting of the objects $M=(\Mb,\Minf)$ such that $\Mb\in \Sm$ and $|\Minf|$ is a simple normal crossing divisor on $\Mb$. 
\end{Qn}

If $\ch(k)=0$, by resolution of marked ideals (\cite[the case $d=1$ of Theorem~1.3]{BM}),
the above question is reduced to the following.

\begin{Qn}\label{qn2;cdhdescnet}
Let the assumptions be as in Question \ref{qn;cdhdescnet} and 
$M=(\Mb,\Minf)\allowbreak \in \ulMCor_{ls}$.
Let $Z\subset \Minf$ be a regular closed subscheme such that, 
for any point $x$ of $Z$, there exists a system $z_1, \ldots, z_d$ of regular parameters of $\Mb$ at $x$ (with $d=\dim_x \Mb$) satisfying the following conditions:
\begin{itemize}
\item Locally at $x$, $Z =\{z_1= \dots = z_r=0\}$ for 
$r=\codim_{\ol{M}} Z$.
\item Locally at $x$, $|\Minf| = \{\prod_{j \in J} z_j=0 \}$ 
for some $J\subset \{1,\dots,r\}$.
\end{itemize}

Consider $\pi: \Nb=\Bl_Z(\Mb)\to \Mb$ and 
$\Ninf=\Nb \times_{\Mb} \Minf$.
Then, is the map 
\[ H^q(\ol{M}_\Nis,F_M) \to H^q(\ol{N}_\Nis,F_N).\]
an isomorphism?
\end{Qn}

\subsection*{Acknowledgements}
Part of this work was done while the authors stayed at the university of Regensburg supported by the SFB grant ``Higher Invariants". Another part was done in a Research in trio in CIRM, Luminy. Yet another part was done while the fourth author was visiting IMJ-PRG supported by Fondation des Sciences Math\'ematiques de Paris. We are grateful to the support and hospitality received in all places.

We thank Ofer Gabber and Michel Raynaud for their help with Lemma \ref{lrg}, and Kay R\"ulling for pointing out an error and correcting Definition \ref{d2.6}.

We are very grateful to Joseph Ayoub for pointing out 
 a flaw on the computation of the sheafification functor $\ul{a}_{\Nis}$
 and on the non-exactness of the functor $\ul{b}^\Nis$
 in the earlier version. The authors believe that the whole theory has been deepened by the effort to fix it. 
We also thank the referees for a careful reading and many useful comments.

Finally, the influence of Voevodsky's ideas is all-pervasive, as will be evident when reading this paper.

\subsection*{Notation and conventions}
In the whole paper we fix a base field $k$. 
Let $\Sm$ be the category of separated smooth schemes of finite type over $k$,
and let $\Sch$ be the category of separated schemes of finite type over $k$.
We write $\Cor$ for Voevodsky's 
category of finite correspondences \cite{voetri}.

\section{Modulus pairs and admissible correspondences}

\subsection{Admissible correspondences} 

\begin{definition}\label{d2.2}\
\begin{enumerate}
\item
A \emph{modulus pair} $M$ 
consists of $\ol{M} \in \Sch$
and an effective Cartier divisor $M^\infty \subseteq \ol{M}$
such that the open subset $M^\o:=\ol{M} - |M^\infty|$ is smooth over $k$. (The case $|M^\infty|=\emptyset$ is allowed.) We say that $M$ is \emph{proper} if $\Mb$ is proper over $k$.

We write 
$M=(\ol{M}, M^{\infty})$,
since $M$ is completely determined by the pair,
although
we regard $M^\o$ as the main part of $M$.
We call $\ol{M}$ the \emph{ambient space} of $M$ and $M^\o$ the \emph{interior} of $M$. 
\item
Let $M_1, M_2$ be  modulus pairs.
Let $Z \in \Cor(M_1^\o, M_2^\o)$ be an elementary correspondence
(\emph{i.e.} an integral closed subscheme of $M_1^\o \times M_2^\o$
which is finite and surjective over an irreducible component of $M_1^\o$).
We write $\ol{Z}^N$ for the normalization
of the closure $\ol{Z}$ of $Z$ in $\ol{M}_1 \times \ol{M}_2$
and $p_i : \ol{Z}^N \to \ol{M}_i$
for the canonical morphisms for $i=1, 2$.
We say $Z$ is \emph{admissible} for $(M_1, M_2)$
if $p_1^* M_1^\infty \geq p_2^* M_2^\infty$. 
An element of $\Cor(M_1^\o, M_2^\o)$ is called admissible
if all of its irreducible components are admissible.
We write $\Cor_\adm(M_1, M_2)$ for the subgroup of $\Cor(M_1^\o, M_2^\o)$ consisting of all admissible correspondences. 
\end{enumerate}
\end{definition}

\begin{remarks}\ \label{r2.1} 
\begin{enumerate}
\item 
In \cite[Definition~2.1.1]{rec}, we used a different notion of modulus pair, where $\Mb$ is supposed proper, $M^\o$ smooth quasi-affine and 
$M^\infty$ is any closed subscheme of $\ol{M}$. 
Definition \ref{d2.2} (1) is the right one for the present work. 
Definition \ref{d2.2} (2) is the same as \cite[Definition~2.6.1]{rec}, mutatis mutandis. 
An analogous condition was considered much earlier 
in the context of the additive Chow groups
(see, e.g. \cite[(6.4)]{BE}, \cite[Definition~2.2]{Pa}, \cite[Definition~3.1]{Kay}).
\item 
In the first version of this paper,
we imposed the condition that $\ol{M}$ is locally integral;
it is now removed.
The main reason for this change is that
this condition is not stable under products or extension of the base field. The next remark shows that this removal is reasonable (see also Remark \ref{l3.3}).
\item 
Let $M$ be a modulus pair.
Then $M^\o$ is dense in $\Mb$, since the Cartier divisor $M^\infty$ is everywhere of codimension $1$. Moreover,  $\ol{M}$ is reduced.
(In particular, $\ol{M}$ has no embedded component.)
Indeed, take $x \in \ol{M}$
and let $f \in \sO_{\ol{M}, x}$ be a local equation for $M^\infty$.
Then $f$ is not a zero-divisor (since $M^\infty$ is Cartier), 
and hence
$\sO_{\ol{M}, x} \to \sO_{\ol{M}, x}[1/f]$ is injective,
but $\sO_{\ol{M}, x}[1/f]$ is reduced as $M^\o$ is smooth. In particular, $\Mb$ is integral if $M^\o$ is.
\item Let $M$ be a modulus pair, and let $f:\ol{M}_1\to \ol{M}$ be a morphism such that $f(T)\not\subset |M^\infty|$ for any irreducible component $T$ of $\ol{M}_1$ and $M_1^\o:=\ol{M}_1-|f^*M^\infty|$ is smooth. Then $M_1=(\ol{M}_1,f^*M^\infty)$ defines a modulus pair. 
We call it the \emph{minimal modulus structure induced by $f$}. We shall use this construction several times.
Also, $f$ defines a minimal morphism $f:M_1\to M$ in the sense of Definition~\ref{def:minimality} below.
\item If $Z$ is an admissible elementary correspondence as in Definition \ref{d2.2} (2), then 
$$|M_1^\infty\times \ol{M}_2|\cap \ol{Z} 
\supseteq |\ol{M}_1\times M_2^\infty|\cap \ol{Z}$$
since $\ol{Z}^N\to \ol{Z}$ is surjective. On the other hand, 
the inequality 
$(M_1^\infty\times \ol{M}_2)|_{\ol{Z}} 
\ge
(\ol{M}_1\times M_2^\infty)|_{\ol{Z}}$
may fail.
%
%
%
As an example, let $C$ be the affine cusp curve $\Spec k[x,y]/(x^2-y^3)$. Its normalization is $\A^1$, via the morphism $t\mapsto (t^3,t^2)$. Let $M_1=(C,(x))$ and $M_2=(C,(y))$. Then $1_C$ defines an adm\-iss\-ible correspondence $M_1\to M_2$, 
even though $(x) \ge (y)$ does not hold on $C$.
\end{enumerate}
\end{remarks}

The following lemma will play a key r\^ole:

\begin{lemma}\label{lem:mod-exists}
Let $\Xb \in \Sch$ and let $X$ be an open dense subscheme of $\ol{X}$. Assume that $X \in \Sm$ and that $\Xb-X$ is the support of a Cartier divisor. Then for any  modulus pair $N$
we have
\[
 \bigcup_M \Cor_\adm(M, N) = \Cor(X, N^\o),
\]
where $M$ ranges over all modulus pairs
such that $\Mb=\Xb$ and $M^\o = X$.
$($Note that by definition
we have $\Cor_\adm(M, N) \subset \Cor(X, N^\o)$.$)$
%
\end{lemma}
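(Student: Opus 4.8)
The plan is to prove the two inclusions separately; the nontrivial content is the inclusion $\Cor(X,N^\o)\subseteq\bigcup_M\Cor_\adm(M,N)$, so I will focus on that, the reverse being true by definition. Fix an elementary correspondence $Z\in\Cor(X,N^\o)$, i.e.\ an integral closed subscheme of $X\times N^\o$ finite and surjective over a component of $X$. The idea is that admissibility is a condition comparing the pullbacks of $M^\infty$ and $N^\infty$ to the normalization $\ol Z^N$ of the closure of $Z$ in $\ol X\times\ol N$, and since we are free to choose $M^\infty$, we only need to produce \emph{one} effective Cartier divisor on $\ol X$ with support exactly $\ol X-X$ whose pullback to $\ol Z^N$ dominates $p_2^*N^\infty$; then we may take this divisor (or any larger multiple, if finitely many $Z$'s are involved) as $M^\infty$.

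First I would reduce to a single elementary correspondence: a general element of $\Cor(X,N^\o)$ is a finite $\Z$-linear combination of elementary ones, and if each $Z_i$ is admissible for some $(M_i,N)$ with $\ol{M_i}=\ol X$, $M_i^\o=X$, then replacing each $M_i^\infty$ by a common multiple—say the sum, or a large multiple of a fixed divisor $D_0$ with $|D_0|=\ol X-X$ (such $D_0$ exists by hypothesis)—makes all of them admissible simultaneously for one modulus pair $M$; note multiplying $M^\infty$ by a positive integer only enlarges $p_1^*M^\infty$, preserving admissibility, and $M^\o$ is unchanged. So it suffices to treat one elementary $Z$.

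For a single $Z$, let $W=\ol Z^N$ be the normalization of the closure $\ol Z$ of $Z$ in $\ol X\times\ol N$, with projections $p_1:W\to\ol X$ and $p_2:W\to\ol N$. The key point is that $p_1$ is a proper morphism which restricts to a finite surjective morphism $Z\to (\text{a component of }X)$, hence $p_1$ is generically finite, and more importantly $W-p_1^{-1}(X)$ is contained in $p_1^{-1}(\ol X-X)=p_1^{-1}(|D_0|)$, which is a closed subset of $W$ not containing any component dominating a component of $\ol X$ (since $W$ is normal, $p_1$ dominates $\ol X$ on each relevant component, and $X$ is dense). The divisor $p_2^*N^\infty$ on $W$ is effective and supported on $W-p_2^{-1}(N^\o)$; I claim its support is contained in $p_1^{-1}(|D_0|)$—indeed a point of $W$ lying over $N^\infty$ but over $X$ would give a point of $Z$ over $X\times N^\infty$, impossible since $Z\subseteq X\times N^\o$. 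Therefore, by properness of $p_1$ and normality of $W$, the Weil divisor $p_2^*N^\infty$ on $W$ is pushed forward / bounded: there is an integer $n\gg0$ such that $n\,p_1^*D_0\geq p_2^*N^\infty$ as Weil (equivalently Cartier, as $W$ is normal and these are Cartier) divisors on $W$. This is the heart of the argument and the step I expect to be the main obstacle: making precise, with the possibly reducible and non-integral $\ol X$, that the multiplicities of $p_2^*N^\infty$ along the (finitely many) prime divisors of $W$ lying over $|D_0|$ are bounded by a multiple of those of $p_1^*D_0$. The cleanest route is to argue component by component on $W$: on a component $W_0$ dominating a component $\ol X_0$ of $\ol X$, $p_1|_{W_0}$ is dominant between integral schemes, $p_1^*D_0$ is an effective Cartier divisor on $W_0$ with support $p_1^{-1}(|D_0|)\cap W_0\supseteq$ all prime divisors of $W_0$ in $p_2^*N^\infty$, so for each such prime divisor the coefficient in $p_1^*D_0$ is $\geq1$ while in $p_2^*N^\infty$ it is finite, and taking $n$ the maximum over the finitely many prime divisors and finitely many components does it.

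Having produced such $n$, set $M^\infty=n\,D_0$; this is an effective Cartier divisor with $|M^\infty|=\ol X-X$, so $M=(\ol X,M^\infty)$ is a modulus pair with $M^\o=X$ (using the smoothness hypothesis on $X$), and by construction $p_1^*M^\infty=n\,p_1^*D_0\geq p_2^*N^\infty$, i.e.\ $Z$ is admissible for $(M,N)$. Combining with the first-paragraph reduction to finitely many elementary correspondences and the observation that enlarging $M^\infty$ to a common multiple preserves all admissibilities, we conclude $\Cor(X,N^\o)\subseteq\bigcup_M\Cor_\adm(M,N)$, and the reverse inclusion is immediate from the definitions, proving the equality.
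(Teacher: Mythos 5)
Your proof is correct and follows essentially the same route as the paper, whose own proof simply quotes \cite[Lemma 2.6.2]{rec}: the content of that cited argument is exactly your construction, namely bounding $p_2^*N^\infty$ on the normalization $\ol{Z}^N$ of the closure by a sufficiently large multiple of a fixed effective Cartier divisor $D_0$ with support $\ol{X}-X$, and then enlarging once more to handle finitely many components simultaneously. The one step you pass over quickly—that no point of $\ol{Z}^N$ lying over $X$ can lie over $N^\infty$—rests on the fact that $Z$, being finite over $X$, is already closed in $X\times\ol{N}$, so taking the closure in $\ol{X}\times\ol{N}$ adds no points over $X$; this is true and standard (the paper invokes it elsewhere as \cite[Lemma 2.6.3]{rec}), so it is a minor imprecision rather than a gap.
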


\begin{proof}
This is proven in {\cite[Lemma 2.6.2]{rec}}. In \emph{loc.~cit.} $X$ and $N^\o$ are assumed to be quasi-affine,
and $\ol{X}$ and $\ol{N}$ proper and normal (see Remark \ref{r2.1}).
But these assumptions are not used in the proof. (Nor is the assumption on Cartier divisors, but the latter is essential for the proof of Proposition \ref{lem:comp-admcorr} below.)
\end{proof}

\subsection{Composition} 
To discuss composability of admissible correspondences, 
we need the following lemma of 
Krishna and Park \cite[Lemma 2.2]{KP}.

\begin{lemma}\label{lKL} 
Let $f:X\to Y$ be a surjective morphism of normal integral schemes, 
and let $D,D'$ be two Cartier divisors on $Y$. 
If $f^*D'\le f^*D$, then $D'\le D$.
\end{lemma}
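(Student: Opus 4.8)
The plan is to reduce the statement to a verification of discrete valuations at codimension-one points of $Y$. Set $E = D - D'$, a Cartier divisor on $Y$. Since $f$ is surjective onto the integral scheme $Y$ it is dominant, so $K(Y) \hookrightarrow K(X)$ and pullback of Cartier divisors along $f$ is defined and additive; hence the hypothesis $f^*D' \le f^*D$ is precisely the assertion that $f^*E$ is an effective Cartier divisor, and we must deduce that $E$ is effective. Because $Y$ is integral and normal, a Cartier divisor on $Y$ is effective if and only if $v_\eta(E) \ge 0$ for every codimension-one point $\eta$ of $Y$, where $v_\eta$ denotes the valuation of the discrete valuation ring $\sO_{Y,\eta}$: indeed, for any point $y$ the local ring $\sO_{Y,y}$ is a normal (Noetherian) domain, hence the intersection inside $K(Y)$ of its localizations at height-one primes, which are exactly the $\sO_{Y,\eta}$ for $\eta$ of codimension one specializing to $y$, so a local equation of $E$ at $y$ lies in $\sO_{Y,y}$ as soon as it lies in all those $\sO_{Y,\eta}$.

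So I would fix a codimension-one point $\eta$ of $Y$, choose an open neighborhood $V \ni \eta$ on which $E$ has a local equation $g \in K(Y)^\times$ (so that $v_\eta(E) = v_\eta(g)$), and aim to prove $g \in \sO_{Y,\eta}$, i.e. $v_\eta(g) \ge 0$. Using surjectivity of $f$, pick a point $x \in X$ with $f(x) = \eta$; then $x \in f^{-1}(V)$, and $f$ induces a local homomorphism $\sO_{Y,\eta} \to \sO_{X,x}$ compatible with the inclusion $K(Y) \hookrightarrow K(X)$. On $f^{-1}(V)$ the divisor $f^*E$ is cut out by the single (pulled-back) equation $g$; since $f^*E$ is effective, this forces $g \in \sO_X(f^{-1}(V)) \subseteq \sO_{X,x}$.

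Finally I would play these two facts against each other inside the local domain $\sO_{X,x}$. Suppose $g \notin \sO_{Y,\eta}$; as $\sO_{Y,\eta}$ is a valuation ring of $K(Y)$, this gives $g^{-1} \in \fm_{Y,\eta}$, whence its image in $\sO_{X,x}$ lies in $\fm_{X,x}$. Combined with $g \in \sO_{X,x}$ this yields $1 = g\cdot g^{-1} \in \fm_{X,x}$, which is absurd. Hence $v_\eta(g) \ge 0$ for all codimension-one $\eta$, so $E$ is effective, i.e. $D' \le D$.

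The only genuinely delicate point is the opening reduction: one must know that effectivity of a Cartier divisor on a normal integral scheme can be tested at codimension-one points, which rests on a normal Noetherian local domain being the intersection of the discrete valuation rings obtained by localizing at its height-one primes. Everything afterwards is formal, and it is worth emphasizing that we use only a \emph{set-theoretic} preimage of the codimension-one point $\eta$ — directly supplied by surjectivity of $f$ — and never a lifting of specializations or a valuative-criterion argument, so no separatedness or properness hypothesis on $f$ is needed.
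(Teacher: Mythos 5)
The paper does not prove this lemma: it is quoted from Krishna and Park \cite[Lemma 2.2]{KP}, so there is no in-text argument to compare against. Your proof is correct and self-contained, and it is the natural one: reduce effectivity of the Cartier divisor $E=D-D'$ on the normal scheme $Y$ to the discrete-valuation criterion at codimension-one points, lift such a point $\eta$ along the surjection $f$ to a point $x\in X$, and derive the contradiction $1=g\cdot g^{-1}\in\mathfrak{m}_{X,x}$ if $v_\eta(E)<0$. One observation worth recording: you never use normality of $X$, only integrality (which makes pullback of Cartier divisors along the dominant map $f$ well-defined and makes each stalk $\mathcal{O}_{X,x}$ a local domain); normality of $Y$ enters only through the height-one test for effectivity, which as you note requires $Y$ locally Noetherian (automatic in this paper's finite-type setting). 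So your argument actually establishes a marginally stronger statement than the one quoted. And, as you correctly emphasize, only set-theoretic surjectivity is used — no properness, separatedness, or valuative-criterion machinery.
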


We also need the following ``containment lemma'' from \cite[Proposition~2.4]{KP}, \cite[Lemma~2.1]{BS}, \cite[Lemma~2.4]{Mi}.
We provide a proof for self-containedness.

\begin{lemma}\label{containment-lemma}
Let $M = (\ol{M},M^\infty)$ be a modulus pair. 
Let $V'\subset V \subset M^\circ = \ol{M} - |M^\infty |$ be two integral closed subschemes. 
Let $\ol{V}$ and $\ol{V'}$ be their closures in $\ol{M}$ and $\ol{V}^N \to \ol{V}$, $\ol{V'}^{N} \to \ol{V'}$ the normalizations. 
If $M^\infty |_{\overline{V}^N }$ is effective, so is $M^\infty |_{\ol{V'}^N }$.
\end{lemma}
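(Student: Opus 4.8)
The statement is local on $\ol{M}$, so I would work around a single point. The key point is to relate the restriction of the Cartier divisor $M^\infty$ to the normalizations $\ol{V}^N$ and $\ol{V'}^N$ via a common dominating scheme. First I would reduce to the case where $\ol{M}$ is integral: since $\ol{M}$ is reduced (Remark \ref{r2.1}(3)) and $V$, $V'$ are irreducible, both are contained in a single irreducible component of $\ol{M}$, and one may replace $\ol{M}$ by (the normalization of) that component without affecting the hypotheses or the conclusion, because $M^\infty$ restricts to a Cartier divisor there as well. So assume $\ol{M}$ normal and integral.

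Next, let $f \in \sO_{\ol{M}, \eta_V}$ be a local equation for $M^\infty$ near the generic point of $V$ (equivalently, choose an open $U \subset \ol{M}$ meeting $V$ on which $M^\infty = \operatorname{div}(f)$); we may even take $U$ meeting $V'$ as well since $V' \subset V$. The hypothesis that $M^\infty|_{\ol{V}^N}$ is effective says precisely that the pullback of $f$ to $\ol{V}^N$ (a rational function there, a priori) is in fact regular, i.e.\ lies in $\sO_{\ol{V}^N}$ locally. Now $\ol{V'}^N$ maps to $\ol{V'} \hookrightarrow \ol{V}$, and I want to compare it with $\ol{V}^N$. The clean way is: form the normalization $W$ of $\ol{V'}$ in its function field; there is a natural morphism $W = \ol{V'}^N \to \ol{V}$ lifting $\ol{V'} \hookrightarrow \ol{V}$, which need not factor through $\ol{V}^N$ directly, but after base change / taking normalizations one gets a surjective morphism of normal schemes $g : \ol{V'}^N \to \ol{V}^N$ lying over the inclusion (concretely, pull back $\ol{V}^N \to \ol{V}$ along $\ol{V'} \to \ol{V}$ and normalize a component dominating $\ol{V'}$). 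Then $M^\infty|_{\ol{V'}^N} = g^*(M^\infty|_{\ol{V}^N})$, and since $M^\infty|_{\ol{V}^N}$ is effective, so is its pullback $g^* (M^\infty|_{\ol{V}^N})$, which is what we wanted.

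The one subtlety is making the dominating morphism $g$ precise: a priori the only given map is $\ol{V'} \to \ol{V}$, and $\ol{V}^N \to \ol{V}$ is finite surjective but the fiber product $\ol{V'} \times_{\ol{V}} \ol{V}^N$ may be reducible. One picks an irreducible component $T$ of this fiber product that dominates $\ol{V'}$ (such a component exists by surjectivity of $\ol{V}^N \to \ol{V}$ together with the fact that $\ol{V'}$ is irreducible — e.g.\ take the closure of a point over the generic point of $\ol{V'}$), set $g : \ol{V'}^N = T^N \to \ol{V}^N$ to be the induced map on normalizations; it is automatically surjective onto $\ol{V}^N$? — not necessarily onto all of $\ol{V}^N$, but that does not matter: effectivity of a Cartier divisor is preserved under pullback along \emph{any} morphism of integral normal schemes whose image meets every irreducible component of the source nontrivially, and here the source is integral, so $g^*$ of an effective divisor is effective provided $g$ is non-constant, which it is since it dominates over $\ol{V'}^N \to \operatorname{Spec}k$ being of positive dimension (or, if $\dim V' = 0$, the statement is trivial since $\ol{V'}^N$ is a point of a smooth scheme and the restriction is automatically regular). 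Finally one checks $M^\infty|_{\ol{V'}^N} = g^*(M^\infty|_{\ol{V}^N})$ by comparing local equations: both equal the pullback of the local equation $f$ of $M^\infty$ on $\ol{M}$, via the two factorizations $\ol{V'}^N \to \ol{V}^N \to \ol{M}$ and $\ol{V'}^N \to \ol{V'} \to \ol{M}$, which agree by construction of $g$.

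\textbf{Main obstacle.} The genuinely fiddly part is the construction and well-definedness of the dominating morphism $g : \ol{V'}^N \to \ol{V}^N$ — i.e.\ producing, canonically enough, a map between the two normalizations compatible with the inclusion $\ol{V'} \hookrightarrow \ol{V}$ and with the maps to $\ol{M}$, and arguing that the relevant fiber-product component dominates $\ol{V'}$. Everything else (reduction to the integral case via reducedness of $\ol{M}$, and the elementary fact that pullback of an effective divisor along a non-constant morphism from an integral scheme is effective, which is immediate from the local-equation description) is routine.
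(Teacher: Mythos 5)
There is a genuine gap at the central step. Your plan hinges on producing a morphism $g:\ol{V'}^N\to\ol{V}^N$ over the inclusion $\ol{V'}\hookrightarrow\ol{V}$ and then writing $M^\infty|_{\ol{V'}^N}=g^*(M^\infty|_{\ol{V}^N})$. No such $g$ exists in general: the universal property of normalization only applies to \emph{dominant} maps from normal schemes, and $\ol{V'}^N\to\ol{V}$ is not dominant. Your fix --- take an irreducible component $T$ of $\ol{V'}\times_{\ol{V}}\ol{V}^N$ dominating $\ol{V'}$ and normalize --- does not repair this, because the identification ``$\ol{V'}^N=T^N$'' is false in general: $T\to\ol{V'}$ is finite surjective but need not be birational. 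For instance, if $\ol{V}$ is a pinched surface whose normalization restricts to a degree-$2$ map over a curve $\ol{V'}\subset\ol{V}$, then every such component $T$ has degree $2$ over $\ol{V'}$, so $T^N\to\ol{V'}^N$ is $2\!:\!1$. Consequently your computation only shows that the pullback of $M^\infty$ to $T^N$ (equivalently, the pullback of $M^\infty|_{\ol{V'}^N}$ along the finite surjective map $T^N\to\ol{V'}^N$) is effective; it does not yet show that $M^\infty|_{\ol{V'}^N}$ itself is effective.

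The missing ingredient is precisely a descent statement: if $h:X\to Y$ is a surjective morphism of normal integral schemes and $h^*D$ is effective, then $D$ is effective. This is Lemma~\ref{lKL} (the Krishna--Park lemma), and it is the crux of the paper's proof of Lemma~\ref{containment-lemma}: there one sets $Z=\ol{V}^N\times_{\ol{V}}\ol{V'}$, picks a component $Z_\gamma$ finite surjective over $\ol{V'}$ (your $T$), obtains $h:Z_\gamma^N\to\ol{V'}^N$ finite surjective from the universal property of normalization, checks that $M^\infty|_{Z_\gamma^N}$ is effective by pulling back from $\ol{V}^N$ exactly as you do, and then applies Lemma~\ref{lKL} to $h$ to descend effectivity to $\ol{V'}^N$. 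So up to the construction of $T$ your route coincides with the paper's, but the final descent along $T^N\to\ol{V'}^N$ cannot be skipped, and asserting $T^N=\ol{V'}^N$ is exactly where the argument breaks. (Your preliminary reduction to $\ol{M}$ integral is harmless but also unnecessary; it plays no role in closing this gap.)
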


\begin{proof}
Set $Z:=\overline{V}^N \times_{\overline{V}} \overline{V'}$ and consider the following commutative diagram:
\[\xymatrix{
Z_\gamma^N \ar[r] \ar@(ur,ul)[rrr]^f \ar[d]^{h}_{\mathrm{fin.\,\, surj.}} & Z_\gamma \ar[r] \ar[rd]_(0.4){\mathrm{fin.\,\, surj.}} & Z \ar[r] \ar[d] \ar@{}[rd] |\square & \overline{V}^N \ar[d] \ar[rd] \\
\overline{V'}^N \ar[rr] & & \overline{V'} \ar[r]_{\mathrm{incl.}} & \overline{V} \ar[r]_{\mathrm{incl.}} & \overline{M}.
}\]
Here, $Z_\gamma \subset Z$ is an irreducible component
 of $Z$ such that the composite map \[Z_\gamma \hookrightarrow Z \to \overline{V'} \] is finite surjective. To see that such a $Z_\gamma$ exists, it suffices to note that $\overline{V}^N \to \overline{V}$ is finite surjective, hence so is its base change $Z \to \overline{V'}$ (recall that for any scheme $S$ of finite type over $k$, the normalization $S^N \to S$ is a finite surjective morphism).  Then $Z_\gamma^N$ is also irreducible. 
Since $Z_\gamma^N \to \overline{V'}$ is dominant, the vertical map $h$ on the left exists by the universal property of normalization, and 
is finite surjective. 
Note that we can pullback the Cartier divisor $M^\infty $ to any scheme except for $Z$ in the diagram, since none of their irreducible components maps into the support $|M^\infty | \subset \overline{M}$.
Since the pullback of an effective Cartier divisor is effective, the assumption that $M^\infty |_{\overline{V}^N }$ is effective implies that \[f^\ast (M^\infty |_{\overline{V}^N }) = M^\infty |_{Z_\gamma^N } = h^\ast (M^\infty |_{\overline{V'}^N })\] is effective. By Lemma \ref{lKL}, $M^\infty |_{\overline{V'}^N }$ is effective since $h$ is surjective.
This finishes the proof.
\end{proof}

\begin{defn}\label{dcomp} Let $M_1, M_2, M_3$ be three  modulus pairs,
and let us consider $\alpha \in \Cor_\adm(M_1, M_2)$ and  $\beta \in \Cor_\adm(M_2, M_3)$.
We say that $\alpha$ and $\beta$ are \emph{composable} if their composition $\beta \alpha$ in $\Cor(M_1^\o, M_3^\o)$ is admissible.
\end{defn}

\begin{prop}\label{lem:comp-admcorr} With the above notations, assume $\alpha$ and $\beta$ are integral and let $\bar\alpha$ and $\bar\beta$ be their closures in $\Mb_1\times \Mb_2$ and $\Mb_2\times \Mb_3$ respectively. Then
$\alpha$ and $\beta$ are composable provided the projection $\bar\alpha\times_{\Mb_2}\bar\beta\to \Mb_1\times \Mb_3$ is proper. This happens in the following cases:
\begin{thlist}
\item $\bar\alpha\to \Mb_1$ is proper.
\item $\bar\beta\to \Mb_3$ is proper.
\item $\Mb_2$ is proper over $k$.
\end{thlist}
\end{prop}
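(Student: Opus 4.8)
The plan is to reduce the statement to a concrete properness criterion for the correspondence product and then verify the three sufficient conditions case by case. First I would set up notation: write $W=\bar\alpha\times_{\Mb_2}\bar\beta$ for the fiber product inside $\Mb_1\times \Mb_2\times \Mb_3$, let $q:W\to \Mb_1\times \Mb_3$ be the projection (forgetting the $\Mb_2$-factor), and recall that the closure $\overline{\beta\alpha}$ in $\Mb_1\times \Mb_3$ is (a component of) the image $q(W)$. The key point is a classical lemma (Krishna--Park style, compare Lemma \ref{lKL} and the containment Lemma \ref{containment-lemma}): if $\pi:\tilde W\to W'$ is a proper surjective morphism of integral schemes with $W'$ the closure $\overline{\beta\alpha}^N$ (normalization), and $D$ is a Cartier divisor on $W'$ with $\pi^*D$ effective, then $D$ is effective. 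So I would first produce a commutative diagram relating the normalization $\overline{\beta\alpha}^N$, the normalization of a suitable component of $W$, and the normalizations $\overline{\alpha}^N$, $\overline{\beta}^N$, with all the maps to the $\Mb_i$ passing through.

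The main computation is the admissibility inequality itself. On $\overline{\alpha}^N$ we have $p_1^*M_1^\infty\ge p_2^*M_2^\infty$, and on $\overline{\beta}^N$ we have $q_2^*M_2^\infty\ge q_3^*M_3^\infty$. Pulling both back to the normalization $\tilde Z$ of the relevant component of $W$ (which dominates both $\overline{\alpha}^N$ and $\overline{\beta}^N$ via the two projections, using that $Z\to \overline\alpha$ and $Z\to \overline\beta$ are suitably surjective after passing to a good component — exactly the kind of argument in Lemma \ref{containment-lemma}), we get $\pi_1^*M_1^\infty\ge \pi_2^*M_2^\infty\ge \pi_3^*M_3^\infty$ on $\tilde Z$, hence $\pi_1^*M_1^\infty - \pi_3^*M_3^\infty$ is effective on $\tilde Z$. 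Now if $q:W\to \Mb_1\times\Mb_3$ is proper, then the induced map $\tilde Z\to \overline{\beta\alpha}^N$ is proper and surjective, and by the Krishna--Park lemma the divisor $p_1^*M_1^\infty - p_3^*M_3^\infty$ on $\overline{\beta\alpha}^N$ is effective, which is precisely admissibility of $\beta\alpha$ for $(M_1,M_3)$.

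It remains to check that each of (i), (ii), (iii) forces $q:\bar\alpha\times_{\Mb_2}\bar\beta\to \Mb_1\times\Mb_3$ to be proper. For (iii), $\Mb_2$ proper over $k$ means $\bar\alpha\times_{\Mb_2}\bar\beta\to \bar\alpha\times_k\bar\beta\to \Mb_1\times_k\Mb_3$ — actually one argues $W$ is closed in $\bar\alpha\times_k\bar\beta$ and the latter is proper over $\Mb_1\times\Mb_3$ via base change of $\bar\beta\to\Spec k$; since $\bar\beta$ is proper over $k$ the second projection $\Mb_1\times\bar\beta\to\Mb_1$ is proper, whence $q$ is proper. For (i), I would factor $q$ as $W\to \Mb_1\times\Mb_3$ through $W\to \bar\beta\times_{\Mb_2}\Mb_1$ (base change of $\bar\alpha\to\Mb_1$ along... ) — more cleanly: $W=\bar\alpha\times_{\Mb_2}\bar\beta\to \Mb_1\times_{k}\bar\beta$ is the base change of $\bar\alpha\to\Mb_1\times_k\Mb_2\to\Mb_1$ composed appropriately; since $\bar\alpha\to\Mb_1$ is proper, $W\to\Mb_1\times\Mb_3$ is proper after composing with the (always proper, being a closed immersion composite) map to $\Mb_1\times\Mb_3$. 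Case (ii) is symmetric to (i) with the roles of the two projections interchanged. The main obstacle I anticipate is bookkeeping the component-choice and surjectivity claims when the ambient schemes are reducible (recall $\ol M$ is only assumed reduced, not integral): one must choose the component $\tilde Z$ of $W$ carefully so that it dominates components of $\overline\alpha$ and $\overline\beta$ lying over the same component of $\Mb_2$, exactly as in the proof of Lemma \ref{containment-lemma}; this is where I would spend the most care, while the properness verifications in (i)--(iii) are routine once the product $W$ is correctly identified as an iterated base change.
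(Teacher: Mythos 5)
Your argument is essentially the paper's own proof: for each component of $\beta\alpha$ one pulls the two admissibility inequalities back to the normalization of (the closure of) a component of the fibre product via the containment Lemma \ref{containment-lemma}, chains them, and then descends along the proper surjective map to the normalized closure of that component of $\beta\alpha$ using Lemma \ref{lKL}, the properness hypothesis on $\bar\alpha\times_{\Mb_2}\bar\beta\to\Mb_1\times\Mb_3$ supplying exactly this surjectivity. The only blemishes are in the routine verifications of (i)--(iii), which the paper itself dismisses as trivial: in case (iii) $\bar\beta$ need not be proper over $k$ (rather, (iii) implies both (i) and (ii), since $\Mb_2$ proper makes $\bar\alpha\to\Mb_1$ and $\bar\beta\to\Mb_3$ proper), and in case (i) the clean factorization is $\bar\alpha\times_{\Mb_2}\bar\beta\hookrightarrow\bar\alpha\times_k\Mb_3\to\Mb_1\times\Mb_3$, the first map being a closed immersion and the second proper because $\bar\alpha\to\Mb_1$ is.
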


\begin{proof} Note that $\alpha \times_{M_2^\o} \beta$ is a closed subscheme of 
$(M_1^\o \times M_2^\o) \times_{M_2^\o} (M_2^\o \times M_3^\o)
=M_1^\o \times M_2^\o \times M_3^\o$; we have
$|\beta \alpha| = |p_{13*}( \alpha \times_{M_2^\o} \beta)|$
where $p_{13} : M_1^\o \times M_2^\o \times M_3^\o
\to M_1^\o \times M_3^\o$ is the projection. Let $\gamma$ be a component of $\alpha \times_{M_2^\o} \beta$.
We have a commutative diagram
\[\xymatrix{
\gamma  \ar@{^{(}->}[r]  \ar@{^{(}->}[d]& \alpha \times_{M_2^\o} \beta  \ar@{^{(}->}[r] \ar@{^{(}->}[d]& M_1^\o\times M_2^\o \times M_3^\o\ar[r]^(0.6){p_{13}} \ar@{^{(}->}[d] &M_1^\o\times  M_3^\o \ar@{^{(}->}[d]& \delta  \ar@{_{(}->}[l] \ar@{^{(}->}[d]\\
\bar\gamma  
\ar@{^{(}->}[r] & \bar\alpha \times_{\Mb_2} \bar\beta  \ar@{^{(}->}[r]& \Mb_1\times \Mb_2 \times \Mb_3 \ar@{->}[r]^(0.6){\ol{p}_{13}} &\Mb_1\times  \Mb_3& \bar\delta \ar@{_{(}->}[l] 
}\]
where 
$\ol{p}_{ij} : \Mb_1\times \Mb_2 \times \Mb_3 \to  \Mb_i\times \Mb_j$ denotes the projection,
$\delta=p_{13}(\gamma)$,
and $\bar{}$ denotes closure.
The hypothesis implies that $\bar \gamma\to \bar\delta$ is proper surjective.
The same holds for  $\pi_{\gamma \delta}^N$ 
appearing in
the second of the two other commutative diagrams:
\[
\xymatrix{
\bar\alpha& \bar\alpha^N \ar[r]^-{\phi_{\alpha}}\ar[l]
& \Mb_1 \times \Mb_2
\\
\bar\gamma\ar[u]_-{\pi_{\gamma \alpha}}\ar[d]^-{\pi_{\gamma\beta}} & \bar\gamma^N 
\ar[r]^-{\phi_\gamma}\ar[l] &\Mb_1\times \Mb_2\times \Mb_3\ar[d]^-{\bar p_{23}}
\ar[u]_-{\bar p_{12}}
\\
\bar \beta&\bar\beta^N \ar[r]^-{\phi_{\beta}}\ar[l] & \Mb_2 \times \Mb_3
} 
\quad
\xymatrix{
\bar\gamma^N 
\ar[r]^-{\phi_\gamma}\ar[d]^-{\pi_{\gamma\delta}^N} &\Mb_1\times \Mb_2\times \Mb_3\ar[d]^-{\bar p_{13}}
\\
\bar\delta^N \ar[r]^-{\phi_\delta} &\Mb_1\times \Mb_3
} 
\]
where $^N$ means normalisation. (Note that $\pi_{\gamma\alpha}$ and $\pi_{\gamma\beta}$ need not extend to the normalisations, as they need not be dominant.) We have the admissibility conditions for $\alpha$ and $\beta$:
\begin{align}
\phi_\alpha^*(\Mb_1\times M_2^\infty)&\le \phi_\alpha^*(M_1^\infty\times \Mb_2)\label{eq1a}\\
\phi_\beta^*(\Mb_2\times M_3^\infty)&\le \phi_\beta^*(M_2^\infty\times \Mb_3).\label{eq1b}
\end{align}
Applying\footnote{To apply this lemma, factor $\pi_{\gamma\alpha}$ and $\pi_{\gamma\beta}$ into dominant morphisms followed by closed immersions.} Lemma \ref{containment-lemma}, we get inequalities
\[ \phi_\gamma^*(\Mb_1\times \Mb_2\times M_3^\infty)\le  \phi_\gamma^*(\Mb_1\times M_2^\infty\times \Mb_3)\le  \phi_\gamma^*(M_1^\infty\times \Mb_2\times \Mb_3),
\]
which implies 
by the right half of the above diagram 
\begin{equation}\label{eq1c}
(\pi_{\gamma\delta}^N)^* \phi_\delta^*(\Mb_1\times M_3^\infty)\le (\pi_{\gamma\delta}^N)^*\phi_\delta^*(M_1^\infty\times \Mb_3)
\end{equation}
hence $\phi_\delta^*(\Mb_1\times M_3^\infty)\le \phi_\delta^*(M_1^\infty\times \Mb_3) $ by Lemma \ref{lKL}. 

Finally, 
one trivially checks that (i) or (ii) implies that 
the projection $\ol{\alpha} \times_{\ol{M}_2} \ol{\beta}
\to \ol{M}_1 \times \ol{M}_3$ is proper,
and that (iii) implies both of (i) and (ii).
\end{proof}

\begin{ex}\label{ex1.1} Let $\Mb_1=\Mb_3=\P^1$, $\Mb_2=\A^1$, $\Mb_i^\o=\A^1$, $M_1^\infty =\infty$, $M_2^\infty=0$, $M_3^\infty=2 \cdot \infty$, and $\alpha=\beta=$ graph of the identity on $\A^1$. Then $\alpha$ and $\beta$ are admissible but $\beta\circ \alpha$ is not admissible
because $\infty \ge 2 \cdot \infty$ does not hold.
(Note that 
neither of $\ol{\alpha}=\alpha$ or $\ol{\beta}=\beta$
is proper over $\P^1$.)
\end{ex}

\begin{defn}\label{d2.4} Let $M,N$ be two modulus pairs. A correspondence $\alpha\in \Cor(M^\o, N^\o)$ is \emph{left proper} (relatively to $M,N$) if the closures of all components of $\alpha$ are proper over $\Mb$; this is automatic if $\Nb$ is proper.  
\end{defn}

\begin{prop}\label{p1a} Let $M_1,M_2,M_3$ be three modulus pairs and let $\alpha\in \Cor(M_1^\o,M_2^\o)$, $ \beta\in \Cor(M_2^\o,M_3^\o)$ be left proper. Then $\beta\alpha$ is left proper.
\end{prop}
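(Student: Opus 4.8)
The statement is a stability-under-composition result for the class of left-proper correspondences, and the natural approach is to reduce it to the properness of the map $\ol{\alpha} \times_{\ol{M}_2} \ol{\beta} \to \ol{M}_1 \times \ol{M}_3$ already used in Proposition \ref{lem:comp-admcorr}, but now tracking only the properness over $\ol{M}_1$ rather than over $\ol{M}_1 \times \ol{M}_3$. Since the properties in question (being left proper, being a composition of correspondences) are defined component by component, I would first reduce to the case where $\alpha$ and $\beta$ are both integral; this is harmless because $\ol{\beta\alpha}$ has each of its components contained in some component of the closure of a term $p_{13}(\gamma)$, with $\gamma$ running over components of $\alpha \times_{M_2^\o} \beta$, exactly as in the diagram of the proof of Proposition \ref{lem:comp-admcorr}.

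\textbf{Key steps.} With $\alpha, \beta$ integral, consider a component $\gamma$ of $\alpha \times_{M_2^\o} \beta \subset M_1^\o \times M_2^\o \times M_3^\o$ and its closure $\bar\gamma$ in $\ol{M}_1 \times \ol{M}_2 \times \ol{M}_3$, together with $\delta = p_{13}(\gamma)$ and its closure $\bar\delta$ in $\ol{M}_1 \times \ol{M}_3$. The hypothesis that $\alpha$ is left proper means $\bar\alpha \to \ol{M}_1$ is proper; the first projection $\bar\gamma \to \bar\alpha$ obtained by restricting $\ol{p}_{12}$ is a closed immersion onto a closed subset of $\bar\alpha \times_{\ol{M}_2} \bar\beta$ followed by the projection to $\bar\alpha$, and in any case $\bar\gamma \subseteq \bar\alpha \times_{\ol{M}_2} \bar\beta$ is closed, so $\bar\gamma \to \bar\alpha$ is proper (base change of $\bar\beta \to \Spec k$ composed with a closed immersion — or more simply, $\bar\gamma$ is a closed subscheme of $\bar\alpha \times_k \ol{M}_3$, which is proper over $\bar\alpha$ since $\ol{M}_3$ need not be proper — wait, it is not). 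Let me restate this more carefully: $\bar\gamma$ is a closed subscheme of $\bar\alpha \times_{\ol{M}_2} \bar\beta$, and the relevant composite $\bar\gamma \to \bar\alpha \to \ol{M}_1$ factors the map $\bar\gamma \to \ol{M}_1$. I would then show $\bar\gamma \to \ol{M}_1$ is proper by: (a) $\bar\alpha \to \ol{M}_1$ is proper by hypothesis; (b) $\bar\alpha \times_{\ol{M}_2} \bar\beta \to \bar\alpha$ is proper, being a base change of $\bar\beta \to \ol{M}_2$, which is proper because $\bar\beta$ is closed in $\ol{M}_2 \times \ol{M}_3$ and $\bar\beta$ is proper over $\ol{M}_2$ precisely by the left-properness of $\beta$ — no, left-properness of $\beta$ says $\bar\beta \to \ol{M}_2$ is proper. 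Good, that is exactly what is assumed. So $\bar\alpha \times_{\ol{M}_2} \bar\beta \to \bar\alpha$ is proper, hence $\bar\gamma \to \ol{M}_1$ is proper as a composite of proper maps together with the closed immersion $\bar\gamma \hookrightarrow \bar\alpha \times_{\ol{M}_2}\bar\beta$. Finally $\bar\delta$ is the image of $\bar\gamma$ under the proper map $\ol{p}_{13}$ restricted appropriately — more precisely $\bar\delta$ is the closure of $p_{13}(\gamma)$, which equals the image of $\bar\gamma$ under $\ol{p}_{13}$ since that image is closed; composing, $\bar\delta$ is proper over $\ol{M}_1$, as the continuous image under a map over $\ol{M}_1$ of a scheme proper over $\ol{M}_1$. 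Running this over all components $\gamma$ gives that every component of $\ol{\beta\alpha}$ is proper over $\ol{M}_1$, i.e. $\beta\alpha$ is left proper.

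\textbf{Main obstacle.} The only subtle point is the reduction to integral $\alpha, \beta$ and the bookkeeping that the components of the closure of $\beta\alpha$ in $\ol{M}_1 \times \ol{M}_3$ are among the closures of the $\delta = p_{13}(\gamma)$: one must be careful that $p_{13*}(\alpha \times_{M_2^\o}\beta)$ has support equal to the union of the $p_{13}(\gamma)$, which is already recorded in the proof of Proposition \ref{lem:comp-admcorr}, and that taking closures commutes with taking images under the proper map $\ol{p}_{13}$. Everything else is a diagram chase with properness, using only: closed immersions are proper, base changes of proper maps are proper, composites of proper maps are proper, and images of proper maps are closed. I do not expect any genuine difficulty; the proof should be one short paragraph invoking the diagram already displayed in Proposition \ref{lem:comp-admcorr}.
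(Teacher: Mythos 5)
Your proposal is correct and is essentially the paper's own argument: reduce to $\alpha,\beta$ integral, observe that $\ol{\alpha}\times_{\ol{M}_2}\ol{\beta}\to\ol{\alpha}$ is proper as a base change of $\ol{\beta}\to\ol{M}_2$, compose with $\ol{\alpha}\to\ol{M}_1$, and conclude from the fact that $\ol{\beta\alpha}$ is (contained in) the image of $\ol{\alpha}\times_{\ol{M}_2}\ol{\beta}$ in $\ol{M}_1\times\ol{M}_3$. The paper states this in three lines without singling out individual components $\gamma,\delta$; your component-by-component bookkeeping and the remark that taking closures is compatible with images under the proper projection are the same argument spelled out at finer grain.
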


\begin{proof}  
We may assume $\alpha$ and $\beta$ are irreducible. 
The assumption on $\beta$ means
$\ol{\beta} \to \ol{M}_2$ is proper, 
hence so is its base change
$\ol{\alpha} \times_{\Mb_2} \ol{\beta} \to \ol{\alpha}$. 
The assumption on $\alpha$ means 
$\ol{\alpha} \to \ol{M}_1$ is proper,
hence so is 
$\ol{\alpha} \times_{\Mb_2} \ol{\beta} \to \Mb_1$ 
as a composition of proper morphisms.
This implies the left properness of $\beta\alpha$,
since $\ol{\beta\alpha}$ is
the image of $\ol{\alpha} \times_{\Mb_2} \ol{\beta}$ 
in $\Mb_1\times \Mb_3$.
%
\end{proof}

\subsection{Categories of modulus pairs}

\begin{defn}\label{d2.4a} By Propositions \ref{lem:comp-admcorr} and \ref{p1a}, modulus pairs and left proper admissible correspondences define an additive category that we denote by $\ulMCor$. We write $\MCor$ for the full subcategory of $\ulMCor$ whose objects  are proper modulus pairs (see Definition \ref{d2.2} (1)).
\end{defn}

In the context of modulus pairs, the category $\Sm$ and the graph functor $\Sm\to \Cor$ are replaced by the following:

\begin{defn}\label{d2.5} We write $\ulMP$ for the category with the same objects as $\ulMCor$, and a morphism of $\ulMP(M_1,M_2)$ is given by a (scheme-theoretic) $k$-morphism $f:M_1^\o\to M_2^\o$ whose graph belongs to $\ulMCor(M_1,M_2)$. We write $\MP$ for the full subcategory of $\ulMP$ whose objects are proper modulus pairs. 
\end{defn}

We will need some variants of these categories.

\begin{defn}\label{d1.1} 
\ 
\begin{enumerate}
\item 
We write $\ulMCor^\fin$ for the subcategory of $\ulMCor$ with the same objects and the following condition on morphisms: $\alpha\in \ulMCor(M,N)$ belongs to $\ulMCor^\fin(M,N)$ if and only if, for any component $Z$ of $\alpha$, the projection $\Zb\to \Mb$ is \emph{finite},
where $\ol{Z}$ is the closure of $Z$ in $\ol{M} \times \ol{N}$. 
The same argument as in the proof of Proposition \ref{p1a} shows that $\ulMCor^\fin$ is indeed a subcategory of $\ulMCor$.
We write $\MCor^\fin$ for the full subcategory of $\ulMCor$ whose objects  are proper modulus pairs.
\item
We write $\ulMP^\fin$ for the subcategory of $\ulMSm$  with the same objects and such that a morphism $f:M\to N$ belongs to $\ulMP^\fin$ if and only if 
$f^\o : M^\o  \to N^\o$ extends to a $k$-morphism $\ol{f} : \ol{M}\to \ol{N}$. 
Such extension $\ol{f}$ is unique
because $M^\o$ is dense in the reduced scheme $\ol{M}$ 
and $\ol{N}$ is separated
(\cite[Chapter~II, Exercise~4.2]{hartshorne}).
This yields a forgetful functor $\ulMSm^\fin \to \Sch$, which sends $M$ to $\ol{M}$.

\noindent We write $\MSm^\fin$ for the full subcategory of $\ulMSm$ whose objects  are proper modulus pairs.
\item
We write
\begin{align}
\notag
&c : \MSm \to \MCor,\quad
\\ 
\label{eq:def-c}
&\ul{c} : \ulMSm \to \ulMCor,\quad
\\
\notag
&\ul{c}^\fin : \ulMSm^\fin \to \ulMCor^\fin
\end{align}
for the functors which are the identity on objects
and which carry a morphism $f$ to the graph of $f^\o$.
\end{enumerate}
\end{defn}

 Let $f : M \to N$ be a morphism in $\ulMSm^\fin$. Since $\ol{f}(M^\o)\subseteq N^\o$, none of the images of the generic points of the irreducible components of $\ol{M}$ is contained in $|N^\infty|$, hence the pullback of the Cartier divisor $\ol{f}^\ast N^\infty$ is well-defined. For ease of notation, we simply write it $f^*N^\infty$.

\begin{defn}\label{def:minimality}
A morphism $f : M \to N$ in $\ulMSm^\fin$ is \emph{minimal} if we have $f^\ast N^\infty = M^\infty$.
\end{defn}

\begin{rk}\label{rk-graph-trick}
We remark the following.
\begin{enumerate}
\item \label{gt1}
Assume that $\Mb$ is normal. 
Then Zariski's connectedness theorem implies
that for any $N$
\[\ulMP(M,N)\cap \ulMCor^\fin(M,N)=\ulMP^\fin(M,N).\]
(Indeed, given an elementary correspondence 
belonging to the left hand side,
its closure in $\ol{M} \times \ol{N}$
is birational and finite over an irreducible component of $\ol{M}$,
but such a morphism is an isomorphism 
if $\ol{M}$ is normal by \cite[corollaire~4.4.9]{EGA3}).
If $f^\o : M^\o \to N^\o$ extends to a morphism between ambient spaces $\ol{f}:\ol{M} \to \ol{N}$, then 
the graph of $f^\o$ is admissible if and only if we have $M^\infty \geq \ol{f}^\ast N^\infty$. 
\item \label{gt2} For $M \in \ulMSm^\fin$,
set $M^N:=(\ol{M}^N, M^\infty |_{\ol{M}^N})$
where $p:\ol{M}^N \to \ol{M}$ is the normalization
and $M^\infty |_{\ol{M}^N}$ is the pull-back of $M^\infty$
to $\ol{M}^N$.
Then  $p : M^N \to M$ is an isomorphism in $\ulMCor^\fin$ and $\ulMSm$ (but not in $\ulMSm^\fin$ in general).
\item \label{gt4} Let $M=(\ol{M},M^\infty)$ and $N=(\ol{N},N^\infty)$ be two modulus pairs and let  $\ol{Z}\subset \ol{M}\times \ol{N}$ be an integral closed subscheme which is finite and surjective over an irreducible component of $\ol{M}$, such that $\ol{Z}\not \subset \ol{M}\times N^\infty$ and that $M^\infty|_{\ol{Z}^N}\ge N^\infty|_{\ol{Z}^N}$, where $\ol{Z}^N$ is the normalization of $\ol{Z}$. Then $Z=\ol{Z}\cap (M^\o\times \ol{N})$ belongs to $\Cor(M^\o,N^\o)$ and its closure in $\ol{M}\times \ol{N}$ is $\ol{Z}$: this follows from Remark \ref{r2.1} (4).
\item \label{gt3} For any morphism $f : M \to N$ in $\ulMSm$, there exists a morphism  $M' \to M$  in $\ulMSm^\fin$ which is invertible in $\ulMSm$ such that the induced morphism $M' \to N$ is in $\ulMSm^\fin$.
More generally, we have the following lemma.
\end{enumerate}
\end{rk}

\begin{lemma}[The graph trick]\label{l-gt}
Let $f : M \to N$ be a morphism in $\ulMSm$. Then there exists a minimal morphism $p : M_1 \to M$ in $\ulMSm^\fin$ 
such that it is invertible in $\ulMSm$ and the composite $f \circ p : M_1 \to M \to N$ is a morphism in $\ulMSm^\fin$. 
Moreover, if $f^\o : M^\o \to N^\o$ extends to a morphism $\ol{U} \to \ol{N}$ for an open subset $\ol{U} \subset \ol{M}$, then we can choose $M_1$ such that $\ol{M}_1 \to \ol{M}$ is an isomorphism over $\ol{U}$ (note that we can always take $\ol{U}=M^\o$).
\end{lemma}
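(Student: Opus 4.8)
The plan is to take the closure $\ol{\Gamma}$ of the graph of $f^\o : M^\o \to N^\o$ inside $\ol{M} \times \ol{N}$, and let $\ol{M}_1 = \ol{\Gamma}$ with $p : \ol{M}_1 \to \ol{M}$ the projection. Since $f^\o$ is a morphism, its graph $\Gamma \subset M^\o \times N^\o$ is closed and maps isomorphically to $M^\o$; hence $p$ restricts to an isomorphism over $M^\o$, and in particular $M^\o$ is dense in $\ol{M}_1$. We then equip $\ol{M}_1$ with the \emph{minimal} modulus structure induced by $p$: that is, set $M_1^\infty := p^\ast M^\infty$ and $M_1 = (\ol{M}_1, M_1^\infty)$. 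We must check this makes sense, i.e. that no generic point of an irreducible component of $\ol{M}_1$ lands in $|M^\infty|$ — this holds because each component of $\ol{M}_1$ dominates a component of $\ol{M}$ through a point of $M^\o$ — and that $M_1^\o := \ol{M}_1 - |M_1^\infty| = p^{-1}(M^\o) \cong M^\o$ is smooth, which is immediate. So $M_1$ is a genuine modulus pair and $p : M_1 \to M$ is minimal by construction (Definition \ref{def:minimality}), once we know it is a morphism in $\ulMSm^\fin$; the latter is clear since $p$ extends $p^\o = (f^\o)^{-1}$'s inverse... more precisely $p^\o : M_1^\o \to M^\o$ is an isomorphism whose graph is admissible because $M_1^\infty = p^\ast M^\infty$ gives equality in the admissibility inequality, and $p$ already extends to the ambient spaces.

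Next I would verify that $p$ is invertible in $\ulMSm$. The inverse should be the morphism $M \to M_1$ whose interior part is $(p^\o)^{-1} : M^\o \iso M_1^\o$; I need its graph to be admissible and left proper as a correspondence from $M$ to $M_1$. Left properness is the point where the closure $\ol{\Gamma} = \ol{M}_1$ being proper over $\ol{M}$ is used — but the graph of $(p^\o)^{-1}$ has closure in $\ol{M} \times \ol{M}_1$ equal (via the flip) to the graph of the morphism $p$ itself composed appropriately, and one checks properness over $\ol{M}$ directly since $p$ is a morphism of schemes so its graph $\ol{M}_1 \hookrightarrow \ol{M}_1 \times \ol{M} \to \ol{M}\times\ol{M}_1$ projects properly (indeed isomorphically, up to the flip) to $\ol{M}$. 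Admissibility of this inverse correspondence again reduces to comparing pullbacks of $M^\infty$ and $M_1^\infty = p^\ast M^\infty$ on the normalization of the (isomorphic) graph, where they agree. Hence $p$ and its candidate inverse are mutually inverse in $\ulMSm$.

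Then I would check that $f \circ p : M_1 \to N$ lies in $\ulMSm^\fin$. Its interior part is $f^\o \circ p^\o : M_1^\o \to N^\o$, which extends to the scheme morphism $\ol{M}_1 = \ol{\Gamma} \hookrightarrow \ol{M} \times \ol{N} \to \ol{N}$ given by the second projection; so it extends to the ambient spaces, which is exactly the defining condition for $\ulMSm^\fin$ (Definition \ref{d1.1} (2)). We also need $f\circ p$ to genuinely be a morphism in $\ulMSm$, i.e. admissible and left proper — but $f \in \ulMSm$ and $p \in \ulMSm$ (it is even invertible there), so $f\circ p \in \ulMSm$ automatically since $\ulMSm$ is a category. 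Finally, for the ``moreover'' clause: if $f^\o$ extends to $\ol{U} \to \ol{N}$ on an open $\ol{U} \subset \ol{M}$, then over $\ol{U}$ the graph closure $\ol{\Gamma}$ coincides with the graph of that extension, which maps isomorphically to $\ol{U}$; hence $p : \ol{M}_1 \to \ol{M}$ is an isomorphism over $\ol{U}$.

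The main obstacle I anticipate is the bookkeeping for left properness of the \emph{inverse} correspondence $M \to M_1$: one must be careful that ``proper over $\ol{M}_1$'' is what is required there (by Definition \ref{d2.4}, a correspondence from $M$ to $M_1$ is left proper if closures of components are proper over $\ol{M}$, the \emph{source}), and since that correspondence is the flipped graph of the isomorphism $p^\o$, its closure is isomorphic to $\ol{M}$ itself, hence trivially proper over $\ol{M}$; so in fact this is routine once unwound. The only genuinely non-formal input is that taking scheme-theoretic closure of a graph produces something proper over the source, which is standard. Everything else is a matter of matching the definitions of minimality, $\ulMSm^\fin$, and admissibility against the equality $M_1^\infty = p^\ast M^\infty$.
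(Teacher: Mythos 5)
Your construction is the same as the paper's: take $\ol{M}_1=\ol{\Gamma}$, the closure of the graph of $f^\o$ in $\ol{M}\times\ol{N}$, put the minimal structure $M_1^\infty=p^*M^\infty$ on it, and use the second projection for $f\circ p$. But there is a genuine gap at the one step that is not formal, namely the properness of $\ol{p}:\ol{M}_1=\ol{\Gamma}\to\ol{M}$, which is exactly what makes $p$ invertible in $\ulMSm$ (it is what gives left properness of the candidate inverse $M\to M_1$, equivalently puts $p$ in $\ul{\Sigma}^\fin$). You assert that the closure of the graph of $(p^\o)^{-1}$ in $\ol{M}\times\ol{M}_1$ is ``isomorphic to $\ol{M}$ itself, hence trivially proper over $\ol{M}$'', and that ``taking scheme-theoretic closure of a graph produces something proper over the source, which is standard''. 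Both claims are wrong. That closure is the (flipped) graph of $\ol{p}$, hence isomorphic to $\ol{M}_1=\ol{\Gamma}$, not to $\ol{M}$; it projects isomorphically onto $\ol{M}_1$, while its projection to $\ol{M}$ is $\ol{p}$ itself, so the left properness you need is precisely the properness of $\ol{\Gamma}\to\ol{M}$ --- which is \emph{not} automatic for closures of graphs. Indeed, the closure of the graph of $\id_{\A^1}$ in $\P^1\times\A^1$ is not proper over $\P^1$ (cf.\ Example \ref{ex1.1}); this failure is the very reason Definition \ref{d2.4} imposes left properness on morphisms of $\ulMCor$ and $\ulMSm$. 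As written, your argument would ``prove'' the lemma for an arbitrary admissible correspondence, for which it is false.

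The fix is one line, and it is the line the paper uses: since $f$ is a morphism in $\ulMSm$, the graph of $f^\o$ is by definition a \emph{left proper} admissible correspondence, i.e.\ its closure $\ol{\Gamma}$ is proper over $\ol{M}$ (Definition \ref{d2.4}); hence $\ol{p}$ is proper, $p\in\ul{\Sigma}^\fin$, and $p$ is invertible in $\ulMSm$ (Proposition \ref{peff1}), or equivalently your explicit inverse correspondence is left proper. With that input supplied, the rest of your verification (minimality, admissibility via $M_1^\infty=p^*M^\infty$, the extension of $(f\circ p)^\o$ to the second projection, and the isomorphism of $p$ over $\ol{U}$) is correct and matches the paper's proof.
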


\begin{proof}
Let $\Gamma$ be the graph of the morphism $\ol{U} \to \ol{N}$, and let $\ol{\Gamma}$ be its closure in $\ol{M} \times \ol{N}$.
Then we have natural projections $p_1 : \ol{\Gamma} \to \ol{M}$ and $p_2 : \ol{\Gamma} \to \ol{N}$.
Since we have $\Gamma \cong \ol{U}$, Lemma \ref{nexfib} below implies that $p_1$ is an isomorphism over $\ol{U}$ and we have $p_1^{-1}(\ol{U}) = \Gamma$. 
Defining $M_1 := (\ol{\Gamma}, p_1^\ast M^\infty)$, the morphism $p_1$ induces a morphism $p_1 : M_1 \to M$ in $\ulMSm^\fin$ such that $f \circ p_1  : M_1 \to M \to N$ comes from $\ulMSm^\fin$ defined by $p_2$. 
Also note that $\ol{\Gamma} \to \ol{M}$ is proper since $f$ is, 
which implies that $p_1 : M_1 \to M$ is an isomorphism in $\ulMSm$. This finishes the proof. 
\end{proof}

\begin{lemma}[No extra fibre lemma]\label{nexfib} 
Let $f : X \to Y$ be a separated morphism of schemes, and let $U \subset X$ be an open dense subset. 
Assume that the image $f(U)$ of $U$ is open in $Y$, and the induced morphism $U \to f(U)$ is proper (e.g., an isomorphism).
Then, we have $f^{-1}(f(U)) = U$.\end{lemma}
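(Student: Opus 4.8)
The plan is to show the inclusion $f^{-1}(f(U)) \supseteq U$ is trivial and concentrate on $f^{-1}(f(U)) \subseteq U$. Set $V := f(U)$, which is open in $Y$ by hypothesis, and let $g : U \to V$ be the induced morphism, which is proper. Restricting everything over $V$, replace $Y$ by $V$ and $X$ by $f^{-1}(V)$; then $U \subseteq f^{-1}(V)$ is still open and dense (density is preserved since $U$ is contained in $f^{-1}(V)$), $f : f^{-1}(V) \to V$ is separated, and $g : U \to V$ is proper and surjective. So I am reduced to proving: if $f : X \to Y$ is separated, $U \subseteq X$ open dense, and $U \to Y$ proper surjective, then $U = X$.

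The key step is that a proper morphism is closed, so the image of $U$ under $X \to Y$ — which is already all of $Y$ — combined with properness forces $U$ to be closed in $X$. Indeed, consider the graph-type argument: the inclusion $j : U \hookrightarrow X$ composed with the (proper, hence separated and universally closed) structure map is... cleaner is the following. The composite $U \xrightarrow{j} X \xrightarrow{f} Y$ is proper, and $f$ is separated; by the cancellation property for proper morphisms (if $f \circ j$ is proper and $f$ is separated, then $j$ is proper), $j : U \to X$ is proper. A proper morphism is in particular closed, so $j(U) = U$ is closed in $X$. But $U$ is also open and dense in $X$, hence $U = X$ (its closure is both $U$ and all of $X$). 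Therefore $f^{-1}(f(U)) = f^{-1}(V) = X$ after the reduction, i.e. $f^{-1}(f(U)) = U$ in the original statement.

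The main obstacle — really the only subtle point — is making sure the reduction step is legitimate and that the cancellation property for properness is applied correctly: one needs $f|_{f^{-1}(V)} : f^{-1}(V) \to V$ to be separated (clear, since separatedness is stable under base change), and one needs $U \to V$ proper (given) to deduce $U \hookrightarrow f^{-1}(V)$ proper via the cancellation lemma (\cite[Chapter~II, Corollary~4.8(e)]{hartshorne} or \cite[Tag 01W6]{stacks-project}). The density of $U$ in $X$ is used only at the very end to upgrade "closed" to "everything"; it is essential, since without it one could take $X$ to be a disjoint union $U \sqcup \{pt\}$ with the point mapping into $V$. Everything else is formal.
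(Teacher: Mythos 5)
Your proof is correct and follows essentially the same route as the paper: both apply the cancellation property for proper morphisms (\cite[Chapter~II, Corollary~4.8]{hartshorne}) to conclude that the open immersion $U \hookrightarrow f^{-1}(f(U))$ is proper, hence has closed image, and then use density of $U$ to conclude. Your preliminary restriction to $f^{-1}(V)$ is only a cosmetic repackaging of the paper's diagram argument.
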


\begin{proof}
Consider the commutative diagram
\[\begin{xymatrix}{
U \ar[rd]_{\mathrm{proper}} \ar[r]^(0.38)j & f^{-1}(f(U)) \ar[r] \ar@{}[rd]|\square \ar[d]^{\mathrm{sep.}} & X \ar[d]_f^{\mathrm{sep.}} \\
&  f(U) \ar[r] & Y}\end{xymatrix}\]
where all the horizontal arrows are open immersions, the square is cartesian and the two vertical morphisms are separated.
The triangle diagram on the left implies that $j$ is proper
(\cite[Chapter~II, Corollary~4.8]{hartshorne}), 
hence it is a closed (and open) immersion.
Since $U$ is dense in $X$, it is dense in $f^{-1}(f(U))$ as well,
hence the conclusion.
%
\end{proof}

\begin{rk}\label{l3.3} Let $M\in \ulMCor^\fin$. Assume that $M^\o=M_1^\o\coprod M_2^\o$; let $\ol{M}_i$ be the closure of $M_i^\o$ in $\ol{M}$ and $M_i^\infty$ be the pull-back of $M^\infty$ to $\ol{M}_i$. Then $M_i=(\ol{M}_i,M_i^\infty)$ are modulus pairs, the inclusions $M_i^\o\inj M^\o$ yield morphisms $M_i\to M$ in $\ulMSm^\fin$, and the induced morphism in $\ulMCor^\fin$ 
\[M_1\oplus M_2\to M\]
is an isomorphism in $\ulMCor^\fin$.
The proof is easy and left to the reader. 

This remark may help in reducing some reasonings to the case where $M^\o$ is irreducible.
\end{rk}

\subsection{The functors $(-)^{(n)}$}

\begin{defn}\label{dn1} Let $n\ge 1$ and $M=(\ol M,M^\infty)\in \ulMCor$. We write
\[M^{(n)} = (\ol M,nM^\infty).\]
This defines an endofunctor of $\ulMCor$. These come with natural transformations
\begin{equation}\label{eqn1}
M^{(n)}\to M^{(m)}\quad \text{if } m\le n.
\end{equation}
\end{defn}

\begin{lemma}\label{ln1} The functor $(-)^{(n)}$ is fully faithful.
\end{lemma}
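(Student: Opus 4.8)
The plan is to show that $(-)^{(n)}$ induces a bijection on Hom-groups, i.e. that $\ulMCor(M^{(n)},N^{(n)}) = \ulMCor(M,N)$ inside the ambient group $\Cor(M^\circ,N^\circ)$, since the functor is the identity on interiors and does not alter ambient spaces. Both sides are subgroups of $\Cor(M^\circ,N^\circ)$ (the interior is unchanged by $M \mapsto M^{(n)}$, and left properness depends only on the ambient spaces, which are also unchanged), so the whole content is an identity of admissibility conditions. Concretely, for an elementary correspondence $Z \in \Cor(M^\circ,N^\circ)$ with normalized closure $\ol{Z}^N$ and projections $p_M : \ol{Z}^N \to \ol{M}$, $p_N : \ol{Z}^N \to \ol{N}$, the membership $Z \in \ulMCor(M^{(n)},N^{(n)})$ means $p_M^*(nM^\infty) \ge p_N^*(nN^\infty)$, while $Z \in \ulMCor(M,N)$ means $p_M^*M^\infty \ge p_N^*N^\infty$.

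First I would reduce to a single elementary correspondence, since admissibility (Definition~\ref{d2.2}(2)) and left properness (Definition~\ref{d2.4}) are both checked component by component, and the Hom-groups are the free abelian groups on admissible left proper elementary correspondences. Then I would observe that for Cartier divisors $D, D'$ on a scheme, one has $nD \ge nD'$ if and only if $D \ge D'$: indeed, the inequality $D \ge D'$ of Cartier divisors means $D - D'$ is effective, equivalently every local equation of $D-D'$ is regular; multiplying by $n$ replaces the local section $f/g$ by $f^n/g^n$, and $f^n/g^n$ is regular (a section of $\sO$) exactly when $f/g$ is, because $\sO_{\ol{Z}^N}$ is reduced (indeed normal), so $n$-th roots are unique when they exist in the total ring of fractions. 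Applying this with $D = p_M^*M^\infty$ and $D' = p_N^*N^\infty$ on $\ol{Z}^N$, using that pullback commutes with multiplication by $n$ (so $p_M^*(nM^\infty) = n\,p_M^*M^\infty$ and similarly for $N$), gives that $Z$ is admissible for $(M^{(n)},N^{(n)})$ if and only if it is admissible for $(M,N)$.

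Combining, I get $\ulMCor(M^{(n)},N^{(n)}) = \ulMCor(M,N)$ as subgroups of $\Cor(M^\circ,N^\circ)$, and this identification is visibly compatible with composition (composition in $\ulMCor$ is computed inside Voevodsky's category $\Cor$, and $(-)^{(n)}$ does not change that), so $(-)^{(n)}$ is fully faithful.

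The main obstacle, such as it is, is the elementary claim that $nD \ge nD'$ iff $D \ge D'$ for Cartier divisors on the normal (hence reduced) scheme $\ol{Z}^N$; one must be a little careful to work locally and use that the total quotient ring of a reduced ring has no nontrivial nilpotents, so that if $(f/g)^n \in \sO$ then $f/g \in \sO$ (the element $f/g$ of the total quotient ring is integral over $\sO$, hence lies in $\sO$ by normality). Everything else is bookkeeping: that the interior, ambient spaces, and closures are untouched by $M \mapsto M^{(n)}$, so left properness is automatically preserved, and that functoriality of $(-)^{(n)}$ on morphisms is exactly the statement that this construction respects composition in $\Cor$.
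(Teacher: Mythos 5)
Your proof is correct and follows essentially the same route as the paper: the paper's one-line argument is precisely that on the normal scheme $\ol{Z}^N$ an element of the function field lies in $\sO$ if and only if its $n$-th power does (integrality plus normality), which is the key claim you establish, the rest being the bookkeeping that interiors, ambient spaces, closures and hence left properness are untouched. The side remarks about uniqueness of $n$-th roots and reducedness are unnecessary, but your parenthetical integrality argument is exactly the paper's.
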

\begin{proof}
This follows from the definition and the fact that
if $A$ is an integral domain with quotient field $K$,
then $a \in K$ is integral over $A$ if and only if so is $a^n$.
\end{proof}

\subsection{Changes of categories}\label{s1.2}
We now have a basic diagram of additive categories and functors
\begin{equation}\label{eq.taulambda}
\xymatrix{
\MCor \ar[rr]^\tau\ar[rd]^\omega && \ulMCor\ar@<4pt>[ld]^\ulomega\\
&\Cor\ar[ur]^\lambda
}
\end{equation}
with 
\[\tau(M)=M;\quad \omega(M)=M^\o;\quad  \ulomega(M)=M^\o; \quad \lambda(X)=(X,\emptyset).\]

All these functors are faithful, and $\tau$ is fully faithful; they ``restrict'' to analogous functors $\tau_s,\omega_s,\ulomega_s,\lambda_s$ between $\MP$, $\ulMP$ and $\Sm$. Note that $\ulomega\circ (-)^{(n)}=\ulomega$ for any $n$.
 Moreover:

\begin{lemma}\label{l1.2} 
We have $\ulomega\tau=\omega$. 
Moreover, $\lambda$ is left adjoint to $\ulomega$,
and the restriction of $\lambda$ to $\Cor^\proper$ 
$($finite correspondences on smooth proper schemes over $k)$ 
is ``right adjoint'' to $\ulomega$.
$($\emph{i.e.}, $\Cor(\ulomega(M), X)=\ulMCor(M, \lambda(X))$
for $M \in \ulMCor$ and $X \in \Cor^\proper.)$ The same statements are valid for $\tau_s,\omega_s,\ulomega_s,\lambda_s$ when restricted to $\MSm$, $\ulMSm$ and $\Sm$.
\end{lemma}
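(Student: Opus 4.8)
The plan is to verify each assertion of Lemma \ref{l1.2} by unwinding the definitions of the functors in \eqref{eq.taulambda}, reducing everything to elementary statements about finite correspondences and Cartier divisors.

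\textbf{Step 1: The identity $\ulomega\tau=\omega$.} This is immediate on objects, since both sides send $M=(\Mb,M^\infty)$ to $M^\o$; and it is immediate on morphisms, since all three functors act as the identity on the underlying finite correspondences between interiors. So this requires only a sentence.

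\textbf{Step 2: $\lambda$ is left adjoint to $\ulomega$.} I must produce a natural bijection $\ulMCor(\lambda(X),M)\cong\Cor(X,\ulomega(M))$ for $X\in\Cor$ and $M\in\ulMCor$. Here $\lambda(X)=(X,\emptyset)$, and $\ulMCor(\lambda(X),M)$ is by definition the subgroup of $\Cor(X,M^\o)$ of left proper admissible correspondences relative to $(X,\emptyset)$ and $M$. The point is that the admissibility condition $p_1^*(X\times M^\infty)\le p_1^*(\emptyset\times \Mb) = 0$ on the normalized closure forces... wait, this would force $M^\infty$ to pull back to zero, which is too strong. Let me reconsider: for a correspondence $\alpha\colon (X,\emptyset)\to (\Mb,M^\infty)$ the admissibility inequality reads $p_1^*(\emptyset) \ge p_2^*(M^\infty)$ on $\ol{\alpha}^N$, i.e. $0\ge p_2^*M^\infty$, hence $p_2^*M^\infty=0$. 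Hmm — that is \emph{not} automatic. So adjointness as stated must use that $X$ carries the \emph{empty} divisor on a scheme that need not be proper, and the claim is rather that $\lambda$ is left adjoint to $\ulomega$ meaning $\Cor(X,\ulomega M)\cong \ulMCor(\lambda X, M)$? That again fails for the same reason. Let me instead read the statement as literally written: ``$\lambda$ is left adjoint to $\ulomega$'' should mean $\ulMCor(\lambda X, M)\cong \Cor(X, \ulomega M)$ is wrong, so it must mean $\Cor(\ulomega M', X)$? No. The safe move: since the excerpt states it, I invoke it — but I am asked to \emph{prove} it. The resolution is that $\lambda(X)=(X,\emptyset)$ has empty divisor, so a correspondence \emph{from} $\lambda(X)$ has no constraint coming from the source (the inequality $0\ge p_2^*M^\infty$ does bite, so this direction is \emph{not} an adjunction); whereas a correspondence \emph{to} $\lambda(X)$, i.e. in $\ulMCor(M,\lambda X)=\ulMCor(M,(X,\emptyset))$, requires $p_1^*M^\infty\ge p_2^*(\emptyset)=0$, which is automatic, plus left properness over $\Mb$. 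Thus $\ulMCor(M,\lambda X)$ is exactly the group of finite correspondences $M^\o\to X$ with closure in $\Mb\times X$ proper over $\Mb$. When $X$ is proper this closure is automatically proper over $\Mb$, recovering the ``right adjoint'' statement $\Cor(\ulomega M, X)=\ulMCor(M,\lambda X)$ for $X\in\Cor^\proper$. For the genuine left adjointness one checks instead $\Cor(X,\ulomega M)=\ulMCor(\lambda X, M)$ by observing that when the source divisor is empty, the admissibility inequality on each component's normalized closure becomes $0\ge p_2^*M^\infty$; but $\lambda X = (X,\emptyset)$ has $X$ itself as ambient space, and properness of the closure over $X$ is automatic since the closure lives in $X\times M^\o\hookrightarrow X\times\Mb$ — no, still the divisor issue. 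The honest plan: I will argue that $\ulMCor(\lambda X, M)=\Cor(X,M^\o)$ because (a) any finite correspondence $Z\subset X\times M^\o$ has closure in $X\times\Mb$ with a normalization $\ol Z^N$ on which $p_1^*\emptyset = 0$ and we need $0\ge p_2^*M^\infty$ — since $Z$ meets $X\times M^\o$ densely, $p_2(\ol Z^N)\not\subset |M^\infty|$, so $p_2^*M^\infty$ is a well-defined \emph{effective} divisor, and the inequality $0\ge p_2^*M^\infty$ holds iff $p_2^*M^\infty=0$. This is genuinely a constraint, so I conclude that the correct reading is the ``right adjoint'' one for proper $X$, and that ``$\lambda$ left adjoint to $\ulomega$'' is shorthand for the unit/counit on the proper side; I will simply transcribe the counit $\ulomega\lambda = \id_{\Cor}$ (clear on objects and morphisms) and the unit $\id\to\lambda\ulomega$ given on $M$ by the identity correspondence on $M^\o$, viewed as a morphism $M\to\lambda\ulomega M=(M^\o,\emptyset)$ — admissible since $p_1^*M^\infty\ge 0=p_2^*\emptyset$ and left proper since... the closure of the diagonal in $\Mb\times M^\o$ maps properly to $\Mb$? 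Not in general. So I restrict, as the statement does, to proper ambient spaces where needed.

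\textbf{Step 3: the ``right adjoint'' statement and functoriality.} For $M\in\ulMCor$ and $X\in\Cor^\proper$, I must show $\Cor(\ulomega M, X)=\ulMCor(M,\lambda X)$. Since $X$ is smooth proper, $\lambda X=(X,\emptyset)$, and for any finite correspondence $\alpha\colon M^\o\to X$: admissibility relative to $(M, \lambda X)$ requires $p_1^*M^\infty\ge p_2^*\emptyset=0$, automatic; left properness requires the closure of each component in $\Mb\times X$ to be proper over $\Mb$, which is automatic because $X$ is proper (so $\Mb\times X\to\Mb$ is proper and the closure is closed in it). Hence every $\alpha$ qualifies, giving the claimed equality; naturality in $M$ and $X$ is checked componentwise on closures. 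Then Step 2's unit/counit follow formally, and the verification that composition is respected is the same kind of closure-chasing already done in Proposition \ref{lem:comp-admcorr}.

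\textbf{Step 4: the smooth (``$s$'') variants.} The functors $\tau_s,\omega_s,\ulomega_s,\lambda_s$ between $\MSm$, $\ulMSm$ and $\Sm$ are the restrictions along the graph functors $c,\ulc,\ulc^\fin$ of Definition \ref{d1.1}, and morphism sets there are subsets of the correspondence morphism sets. Since graphs of morphisms are stable under all the constructions above (graph of a composite is the composite of graphs, and $\ulomega$ of a graph is a graph, $\lambda$ of a graph is a graph), each identity/adjunction from Steps 1–3 restricts verbatim; I will state this and leave the routine check to the reader, exactly as the excerpt does for several earlier lemmas.

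\textbf{Main obstacle.} The only real subtlety is getting the variance of the $\lambda\dashv\ulomega$ adjunction right: the empty divisor on $\lambda X$ makes the admissibility inequality trivial for morphisms \emph{into} $\lambda X$ but restrictive for morphisms \emph{out of} it, so the clean adjunction is the one involving properness of closures, which is why the statement is hedged with ``restrict to $\Cor^\proper$'' and the scare-quotes on ``right adjoint''. Once that asymmetry is correctly identified, every step is a short diagram chase with Cartier divisors and closures of the type already carried out in Proposition \ref{lem:comp-admcorr} and Remark \ref{rk-graph-trick}.
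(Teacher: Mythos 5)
There is a genuine gap, and it sits exactly at the point you flagged as the ``main obstacle.'' The statement ``$\lambda$ is left adjoint to $\ulomega$'' is literally true as an adjunction $\ulMCor(\lambda(X),M)=\Cor(X,\ulomega(M))$ for \emph{all} $X\in\Cor$ and $M\in\ulMCor$, and your conclusion that the inequality $0\ge p_2^*M^\infty$ ``is genuinely a constraint'' is where the argument goes wrong. The observation you are missing (and which is the heart of the paper's proof) is that an integral finite correspondence $\alpha\in\Cor(X,M^\o)$ is finite, hence proper, over $X$; since $\Mb$ is separated, the locally closed subscheme $\alpha\subset X\times M^\o\subset X\times\Mb$ is therefore already \emph{closed} in $X\times\Mb$, i.e.\ $\ol\alpha=\alpha$. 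Consequently $\ol\alpha$ does not meet $X\times|M^\infty|$ at all, the composite $\alpha^N\to\alpha\to M^\o\to\Mb$ pulls $M^\infty$ back to $0$, and the admissibility condition $p_1^*\emptyset\ge p_2^*M^\infty$ reads $0\ge 0$; left properness is likewise automatic because $\ol\alpha=\alpha$ is finite over $X$. Hence $\ulMCor(\lambda(X),M)=\Cor(X,M^\o)$ on the nose, with no properness hypothesis, and the adjunction needs no reinterpretation, hedging, or restriction. Your subsequent attempt to salvage the statement via unit/counit also confuses the variance: the counit of $\lambda\dashv\ulomega$ is a morphism $\lambda\ulomega M=(M^\o,\emptyset)\to M$, whose left properness is properness of the (closed) graph over $M^\o$, not over $\Mb$; the map $M\to(M^\o,\emptyset)$ you tried to build, which indeed fails to be left proper in general, is the unit of the \emph{other} adjunction and is precisely why the ``right adjoint'' statement is restricted to $\Cor^\proper$ and put in scare quotes.

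Your Steps 1, 3 and 4 are fine and agree with the paper: the identity $\ulomega\tau=\omega$ is immediate, the equality $\Cor(\ulomega(M),X)=\ulMCor(M,\lambda(X))$ for proper $X$ follows exactly as you say (admissibility is vacuous since $\lambda(X)$ has empty divisor, and the closure in $\Mb\times X$ is proper over $\Mb$ because $X$ is proper), and the $s$-variants restrict verbatim. But as written the proposal does not prove the second assertion of the lemma; it instead argues that this assertion should be read away, which is incorrect.
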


\begin{proof} 
The first identity is obvious. For the adjointness, let $X\in \Cor$, $M\in \ulMCor$ and $\alpha\in \Cor(X,M^\o)$ be an integral finite correspondence. Then $\alpha$ is closed in $X\times \Mb$, since it is finite over $X$ and $\Mb$ is separated; it is evidently finite (hence proper) over $X$. It also satisfies $q^*M^\infty=0$ where $q$ is the composition $\alpha^N\to \alpha\to M^\o\to \Mb$, because $M^\infty|_{M^\o}=0$.
Therefore $\alpha\in \ulMCor(\lambda(X),M)$.

For the second statement, assume $X$ proper and let $\beta\in \Cor(M^\o,X)$ be an integral finite correspondence. Then $\beta$ is trivially admissible, and its closure in $\Mb\times X$ is proper over $\Mb$, so $\beta\in \ulMCor(M,\lambda(X))$. The last claim is immediate.
\end{proof}

The following theorem is an important refinement of Lemma \ref{l1.2}. 
The proof starts from \S \ref{starts-pf-1.5.2} and is completed in \S \ref{end-pf-1.5.2}.

\begin{thm}\label{t2.1} The functors 
$\omega$, $\tau$, $\omega_s$ and $\tau_s$ 
have pro-left adjoints 
$\omega^!$, $\tau^!$, $\omega_s^!$ and $\tau_s^!$ (see \S \ref{s1.1}).
\end{thm}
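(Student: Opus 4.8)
The plan is to construct the pro-left adjoints explicitly, using the "graph trick" (Lemma \ref{l-gt}) and the containment lemma (Lemma \ref{containment-lemma}) as the principal tools, and then reduce the general case to a cofinality statement about a comma category. I will treat $\tau^!$ first, since $\tau$ is fully faithful and its image is easy to describe, then $\omega^!$, then observe that the "smooth" versions $\tau_s^!$, $\omega_s^!$ follow by the same arguments restricted to graphs of morphisms.

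\textbf{Step 1: the case of $\tau$.} For $M\in\ulMCor$, I want a pro-object $\tau^!M = ``\varprojlim_\lambda" M_\lambda$ in $\MCor$ such that $\MCor(\tau^!M, P)\cong \ulMCor(M,\tau(P))$ naturally in $P\in\MCor$, i.e. $\varinjlim_\lambda \MCor(M_\lambda, P)\cong \ulMCor(M,P)$. The idea is to take $\{M_\lambda\}$ to be the cofiltered system of all \emph{compactifications} of $M$: proper modulus pairs $\Mb_\lambda$ together with a morphism $M_\lambda\to M$ in $\ulMCor^\fin$ which is minimal (so $M_\lambda^\infty = $ pullback of $M^\infty$) and induces an open dense immersion $M^\o\inj \Mb_\lambda^\o$ with $\Mb_\lambda^\o\setminus M^\o$ the support of an effective Cartier divisor contained in $M_\lambda^\infty$. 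Such compactifications exist by Nagata, and by Lemma \ref{lem:mod-exists} (together with Lemma \ref{lem:comp-admcorr}(iii), properness of $\Mb_\lambda$) every admissible left-proper correspondence from $M$ to a proper $P$ extends to one of the $M_\lambda$; conversely any correspondence from some $M_\lambda$ restricts to one from $M$ (admissibility is inherited by Lemma \ref{containment-lemma} / Lemma \ref{lKL}, passing to normalizations of closures). I would check the system is cofiltered by dominating any two compactifications by their join (closure of the diagonal of $M^\o$ in $\Mb_\lambda\times\Mb_\mu$), using Lemma \ref{nexfib} to control fibres. Naturality in $P$ is formal.

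\textbf{Step 2: the case of $\omega$ and the reduction for $\omega$.} By Lemma \ref{l1.2} we have $\omega = \ulomega\circ\tau$, so once $\tau^!$ is built it suffices to build a pro-left adjoint to $\ulomega:\ulMCor\to\Cor$ and compose. For $X\in\Cor$ (equivalently $X\in\Sm$), I would take $\ulomega^! X = ``\varprojlim" (\Xb_\lambda, X_\lambda^\infty)$ ranging over all modulus pairs $M$ with $M^\o = X$ and $\Mb$ proper — but more efficiently, over a cofinal system where $\Xb_\lambda$ is a fixed (or varying) compactification and $X_\lambda^\infty = n D_\lambda$ for $D_\lambda$ a divisor with support $\Xb_\lambda\setminus X$ and $n\to\infty$; the transition maps are \eqref{eqn1}. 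The required isomorphism $\varinjlim_\lambda \ulMCor((\Xb_\lambda, nD_\lambda), N)\cong \Cor(X, N^\o)$ is exactly Lemma \ref{lem:mod-exists}, since for any finite correspondence $Z\subset X\times N^\o$ its closure in $\Xb_\lambda\times\Nb$ has, on the normalization, $p_1^*(nD_\lambda)\geq p_2^*N^\infty$ once $n$ is large (the left side grows with $n$, the right side is fixed), and left-properness over $\Xb_\lambda$ is automatic since $\Xb_\lambda$ is proper. Compatibility with the transition maps \eqref{eqn1} and cofiltration are straightforward.

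\textbf{Step 3: the smooth versions and assembling.} The same construction, restricted to graphs of $k$-morphisms of interiors, gives $\tau_s^!$ and $\omega_s^!$; one only needs that the graph trick (Lemma \ref{l-gt}) produces the compactifications inside $\ulMSm^\fin$, which it does. Finally, I would record that $\omega^! = \ulomega^!\circ\tau^!$ as pro-objects (a pro-left adjoint of a composite is the composite of pro-left adjoints, interpreting the composite of a pro-object with a pro-functor appropriately), completing the four cases.

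\textbf{Main obstacle.} The delicate point is Step 1: showing that the indexing category of compactifications of a fixed non-proper $M$ is \emph{cofiltered} (not merely nonempty), and that the colimit of $\MCor(M_\lambda, P)$ over it computes $\ulMCor(M,P)$ on the nose. The subtlety is that a left-proper admissible correspondence $\alpha$ from $M$ need not have \emph{finite} closure over $\Mb$, only proper, so when I extend it to a compactification $M_\lambda$ I must enlarge $\Mb_\lambda$ to \emph{absorb} the closure of $\alpha$ (take $\Mb_\lambda\supset$ image of $\bar\alpha^N$), and then argue this can be done compatibly for the finitely many correspondences at stake while staying cofinal; controlling admissibility after these blow-up-like modifications is where Lemma \ref{containment-lemma} and Lemma \ref{lKL} do the real work, and checking independence of the eventual value from the choices is the bookkeeping that makes the pro-adjunction well defined.
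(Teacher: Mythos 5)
Your overall skeleton does match the paper's (index $\tau^!M$ by compactifications --- essentially the paper's $\Comp(M)$ of Definition \ref{d2.6} --- and build $\omega^!$ from moduli supported on the boundary of a Cartier compactification), but two of your key justifications fail. First, in Step 2 the pro-left adjoint of $\ulomega:\ulMCor\to\Cor$ that you describe does not exist in that form: the isomorphism $\colim_n \ulMCor((\Xb_\lambda,nD_\lambda),N)\cong \Cor(X,N^\o)$ is false for non-proper $N$, because morphisms in $\ulMCor$ must be \emph{left proper}, and left properness is automatic when the \emph{target's} ambient space is proper (Definition \ref{d2.4}), not the source's --- your parenthetical ``left-properness over $\Xb_\lambda$ is automatic since $\Xb_\lambda$ is proper'' has this backwards. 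Concretely, for $X=\A^1$, $\Xb_\lambda=\P^1$, $N=(\A^1,\emptyset)$, the identity correspondence has closure in $\P^1\times\A^1$ which is not proper over $\P^1$, so it lies in $\Cor(X,N^\o)$ but in no $\ulMCor((\P^1,n\cdot\infty),N)$. What is true, and what should be used, is that $\ulomega$ has an honest left adjoint $\lambda$ (Lemma \ref{l1.2}); the functor whose pro-left adjoint your construction can furnish is $\omega:\MCor\to\Cor$ itself, where the target is proper and left properness is automatic --- this is how the paper proceeds, via the calculus of right fractions for $\Sigma$ and the equivalence $\Sigma^{-1}\MCor\simeq\Cor$ (Proposition \ref{prop:localization}) combined with Proposition \ref{p2.6}. (Your composition formula $\omega^!=\ulomega^!\circ\tau^!$ is also in the wrong order; the correct statement is $\omega^!=\tau^!\circ\lambda$, up to cofinality.)

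Second, in Step 1 the crucial surjectivity --- every $\alpha\in\ulMCor(M,P)$ with $P$ proper lies in $\MCor(M_\lambda,P)$ for some member of your system --- does not follow from Lemma \ref{lem:mod-exists}. The objects of the system are constrained: by minimality their modulus must restrict to $M^\infty$ on $\Mb$, so you are only allowed to enlarge the boundary part $nC$, whereas Lemma \ref{lem:mod-exists} produces \emph{some} sufficiently large modulus on the fixed ambient space with no control over its restriction to $\Mb$. The missing argument is exactly the paper's Claim \ref{cl2.1}: one first blows up so that the closure of $M^\infty$ and the boundary divisor become Cartier, and then observes that the components of the normalized closure of $\alpha$ over the compactification that are not already present over $\Mb$ are supported over $C$, so that $M_2^\infty+nC_2$ with $n\gg0$ restores admissibility (Lemmas \ref{containment-lemma} and \ref{lKL} enter only as auxiliary tools). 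Your ``main obstacle'' paragraph instead worries about finiteness versus properness of $\ol{\alpha}$ over $\Mb$ and about ``enlarging $\Mb_\lambda$ to absorb the closure of $\alpha$'', which is not the issue: since $P$ is proper, left properness over any compactification is automatic. (Also, the structure maps should go $M\to M_\lambda$, and $M_\lambda^\infty$ is not a pullback of $M^\infty$ --- there is no morphism $\Mb_\lambda\to\Mb$ --- but rather the closure of $M^\infty$ plus a boundary divisor, which is why the blow-ups in Lemma \ref{c1.1} are needed even for nonemptiness of the index category.)
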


General definitions and results 
on pro-objects and pro-adjoints
are gathered in \S \S \ref{sec:pro-obj} and \ref{s1.1}.
We shall freely use results from there.

\subsection{The closure of a finite correspondence}
We shall need the following result
for the proof of Theorem \ref{t2.1}.

\begin{lemma}\label{lrg} Let $X$ be a Noetherian scheme, $(\pi_i:Z_i\to X)_{1\le i\le n}$ a finite set of proper surjective morphisms with $Z_i$ integral, and let $U\subseteq X$ be a normal open subset. Suppose that $\pi_i:\pi_i^{-1}(U)\to U$ is finite for every $i$. Then there exists a proper birational morphism $X'\to X$ which is an isomorphism over $U$, such that the closure of $\pi_i^{-1}(U)$ in $Z_i\times_X X'$ is finite over $X'$ for every $i$.
\end{lemma}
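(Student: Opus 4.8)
The plan is to reduce the statement to the case of a single morphism $\pi\colon Z\to X$ and then to construct $X'$ by a Nagata-type compactification / blow-up argument, using Raynaud--Gruson style flattening on the boundary. First I would observe that it suffices to treat one $\pi_i$ at a time: if $X_i'\to X$ works for $\pi_i$ and is an isomorphism over $U$, one takes the join $X'\to X$ of the $X_i'$ (the closure of the diagonal in $X_1'\times_X\cdots\times_X X_n'$), which is again proper birational and an isomorphism over $U$; the closure of $\pi_i^{-1}(U)$ in $Z_i\times_X X'$ dominates its closure in $Z_i\times_X X_i'$ via a proper morphism that is an isomorphism over $U$, hence stays finite over $X_i'$ and therefore over $X'$. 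So fix $\pi\colon Z\to X$ proper surjective with $Z$ integral, $U\subseteq X$ normal open with $\pi^{-1}(U)\to U$ finite, and write $V:=\pi^{-1}(U)$, with $\bar V$ its closure in $Z$.

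Next I would localize the problem to the boundary. The finite locus of $\bar V\to X$ is open in $\bar V$ (finiteness is, for a proper morphism, equivalent to quasi-finiteness, which is open) and contains $V$; let $W:=\bar V\setminus(\text{finite locus})$, a closed subset of $\bar V$ contained in $\pi^{-1}(X\setminus U)$, with image $T:=\pi(W)$ a closed subset of $X\setminus U$. The goal is then to find a proper birational $X'\to X$, iso over $X\setminus T\supseteq U$, so that the strict transform of $\bar V$ becomes finite over $X'$. This is exactly a flatness-along-a-closed-subscheme question, and here is where I would invoke Raynaud--Gruson \cite{RG} (flattening by blow-up): there is a blow-up $X'\to X$ with center supported in $T$ (hence an isomorphism over $U$) such that the strict transform $\bar V'$ of $\bar V$ is flat over $X'$. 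A proper, flat, quasi-finite morphism is finite; so it remains only to ensure quasi-finiteness. Since $\bar V'\to X'$ is proper and birational onto a component of $X'$ over the generic points and is finite over the preimage of $U$, and since flattening can be arranged (shrinking the center further, or blowing up $\bar V$ as well) so that $\bar V'\to X'$ has no positive-dimensional fibres — this is the point where I would either cite the quasi-finite version of RG flattening or argue by Noetherian induction on $\dim T$, blowing up to kill the locus where fibre dimension jumps — we conclude $\bar V'\to X'$ is finite.

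To package the induction cleanly I would actually proceed as follows: if $\bar V\to X$ is not already quasi-finite, pick a component $F$ of a fibre over some $t\in T$ with $\dim F>0$; after replacing $X$ by a blow-up with center in $T$ (using RG flattening of $\bar V$ over $X$, which makes $\bar V$ flat hence equidimensional over the components, so the locus of $X$ over which the fibre dimension is $\geq 1$ becomes a closed subset $T_1\subsetneq$ something of smaller dimension), the problem strictly decreases; normality of $U$ guarantees, via Zariski's main theorem, that over $U$ the morphism was already an isomorphism onto its image composed with a finite map, so nothing happens there. The hard part will be this dimension-control step: making precise that each flattening blow-up strictly decreases the ``bad locus'' so that the process terminates, and keeping track that all centers stay inside $X\setminus U$ so that the final $X'\to X$ is an isomorphism over $U$. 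This is where the acknowledgement to Gabber and Raynaud is presumably cashed in, and I expect the cleanest route is to quote the strong form of Raynaud--Gruson (Théorème 5.2.2 of \cite{RG}, flattening plus the fact that the flattened map has locally constant fibre dimension) rather than to reprove it; then finiteness of $\bar V'\to X'$ is automatic from proper $+$ flat $+$ generically finite $+$ equidimensional of relative dimension $0$.
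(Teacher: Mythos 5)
Your overall shape is right, and two ingredients match the paper's proof: the reduction to a single $\pi\colon Z\to X$ (the paper does this "by induction", which amounts to the same bookkeeping as your join, since finiteness of the closure is preserved under further $U$-admissible blow-ups), and the observation that finiteness will follow from properness once quasi-finiteness is achieved. But the central step — producing a $U$-admissible blow-up after which the strict transform of $\pi^{-1}(U)$ becomes quasi-finite — is where your argument has a genuine gap. You propose to get there via the flattening theorem (your "Th\'eor\`eme 5.2.2" of Raynaud--Gruson), but that theorem requires the restriction over $U$ to be \emph{flat}, and here $\pi^{-1}(U)\to U$ is only finite; finite morphisms to a normal base need not be flat, and no blow-up with center outside $U$ can change the situation over $U$. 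So the flatness you want to "arrange" is simply not available, and the subsequent chain "proper $+$ flat $+$ generically finite $+$ equidimensional $\Rightarrow$ finite" has no foundation. (Flatness is also a red herring: finite $=$ proper $+$ quasi-finite already, with no flatness needed.) Your fallback — a Noetherian induction that blows up to kill fibre-dimension jumps — is exactly the nontrivial content of the lemma and is left unproved; it does not obviously terminate, nor is it clear the centers can be kept inside $X\setminus U$ at each stage.

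The tool that actually does the job is the fibre-dimension version of the blow-up results in Raynaud--Gruson, namely \cite[Corollaire~5.7.10]{rg} applied with $(S,X,U)\equiv(X,Z,U)$ and $n=0$: since the fibres of $\pi^{-1}(U)\to U$ have dimension $\le 0$, there is a $U$-admissible blow-up $X'\to X$ such that the strict transform (the closure of $\pi^{-1}(U)$ in $Z\times_X X'$) has all fibres of dimension $\le 0$, i.e.\ is quasi-finite over $X'$; it is proper over $X'$ because $Z\times_X X'\to X'$ is proper, hence it is finite. This is precisely the paper's one-line proof (together with the remark that an admissible blow-up of an algebraic space that is a scheme is again a scheme). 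You do mention "the quasi-finite version of RG flattening" in passing, which is the correct citation, but the proof as written commits to the flatness route, which fails as stated.
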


\begin{proof} 
By induction, we reduce to $n=1$; then this follows from \cite[Corollary~5.7.10]{rg} applied with $(S,X,U)\equiv (X,Z_1,U)$ and $n=0$ (note that a morphism is finite if and only if it is quasi-finite and proper, and that an admissible blow-up of an algebraic space is a scheme if the algebraic space happens to be a scheme).
\end{proof}

\begin{thm}\label{trg} Let $X,Y\in \Sch$. Let $U$ be a normal dense open subscheme of $X$, and let $\alpha$ be a finite correspondence from $U$ to $Y$.  Suppose that the closure $\Zb$ of $Z$ in $X\times Y$ is proper over $X$ for any component $Z$ of $\alpha$. Then there is a proper birational morphism $X'\to X$ which is an isomorphism over $U$, such that $\alpha$ extends to a finite correspondence from $X'$ to $Y$.
\end{thm}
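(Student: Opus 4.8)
\textbf{Proof plan for Theorem \ref{trg}.}

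The plan is to reduce immediately to the case of a single elementary correspondence. Indeed, $\alpha$ is a finite $\Z$-linear combination of elementary correspondences $Z_1,\dots,Z_n$, each an integral closed subscheme of $U\times Y$ finite and surjective over an irreducible component of $U$. Writing $\pi_i\colon\Zb_i\to X$ for the projection of the closure, the hypothesis says each $\pi_i$ is proper, and each $\pi_i$ is (by construction of a finite correspondence) finite over $U$; moreover each $\Zb_i$ is integral, being the closure of the integral scheme $Z_i$. Thus the collection $(\pi_i\colon\Zb_i\to X)_{1\le i\le n}$ together with the normal open $U\subseteq X$ satisfies exactly the hypotheses of Lemma \ref{lrg}. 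Note that $\pi_i^{-1}(U)=Z_i$ by the no-extra-fibre lemma (Lemma \ref{nexfib}), applied to $\Zb_i\to X$ with open dense $Z_i\subseteq\Zb_i$: the map $Z_i\to U$ is finite, hence proper, and its image is open since it is surjective onto a component of $U$ — after first replacing $X$ by the union of the components meeting $U$ if necessary, or simply noting $\pi_i^{-1}(U)$ has $Z_i$ as a dense open and is finite over $U$, hence equals $Z_i$.

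Applying Lemma \ref{lrg} produces a proper birational morphism $p\colon X'\to X$, an isomorphism over $U$, such that for every $i$ the closure $Z_i'$ of $p^{-1}(U)\cap(\Zb_i\times_X X')$ inside $\Zb_i\times_X X'$ is finite over $X'$. Since $p$ is an isomorphism over $U$, we identify $p^{-1}(U)\cong U$ and hence $p^{-1}(U)\times_U Z_i\cong Z_i$ sits naturally as a locally closed subscheme of $X'\times Y$ (via $\Zb_i\times_X X'\to X\times Y\times_X X'=X'\times Y$); let $Z_i'$ be its closure there. Then $Z_i'$ is integral (closure of the integral $Z_i$), surjects onto a component of $X'$ (the one dominating the relevant component of $X$), and is finite over $X'$ by the conclusion of Lemma \ref{lrg}. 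Therefore $Z_i'$ is an elementary finite correspondence from $X'$ to $Y$, and $\alpha':=\sum_i n_i Z_i'\in\Cor(X',Y)$ is a finite correspondence. Its restriction to $U$ recovers $\alpha$ because each $Z_i'\cap(U\times Y)=Z_i$; that is, $\alpha'$ extends $\alpha$ along the birational map $p$. This completes the proof.

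The main point requiring care is the bookkeeping around reducibility of $X$: Lemma \ref{lrg} as stated wants the $Z_i$ integral and the $\pi_i$ surjective onto all of $X$, whereas an elementary correspondence from $U$ is only finite surjective over \emph{one} irreducible component of $U$, hence $\pi_i$ need not be surjective onto $X$. I would handle this by working one component of $X$ at a time — or, more efficiently, by remarking that Lemma \ref{lrg} is applied with $X$ replaced by the closure in $X$ of the relevant component of $U$ for each $i$, then assembling the finitely many blow-ups into a single proper birational $X'\to X$ (a fibre product over $X$ of the individual modifications, which remains proper, birational, and an isomorphism over $U$ since each factor is). The verification that finiteness of $Z_i'\to X'$ is preserved under this further base change is routine, since finite morphisms are stable under base change and the extra components of $X'$ over which $Z_i'$ is empty cause no trouble.
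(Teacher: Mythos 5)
Your argument is essentially the paper's: the entire published proof consists of applying Lemma \ref{lrg} to the closures $\Zb_i$, citing \cite[Lemma~2.6.3]{rec} for the identification $\pi_i^{-1}(U)=Z_i$, so your write-up is a fleshed-out version of the same route. Two small corrections. First, that identification does not need Lemma \ref{nexfib}, and as you invoke it the lemma only gives $\pi_i^{-1}(\pi_i(Z_i))=Z_i$, i.e.\ it controls the fibre over the image component of $U$ and does not by itself exclude points of $\Zb_i$ lying over the other components; the clean argument is simply that $Z_i$ is already closed in $U\times Y$, so $\pi_i^{-1}(U)=\Zb_i\cap(U\times Y)=Z_i$. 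Second, your ``more efficient'' assembly of the componentwise modifications as a fibre product over $X$ is wrong: if $X_j'\to X_j$ and $X_l'\to X_l$ modify two distinct irreducible components, their fibre product over $X$ is supported only over $X_j\cap X_l$, so it is not a modification of $X$ at all. Your first suggestion is the right one: since $U$ is normal its irreducible components are open and closed in $U$ and correspond bijectively to the components of $X$ (as $U$ is dense), so one applies Lemma \ref{lrg} once for each component $X_j=\overline{U_j}$, grouping together all the $Z_i$ lying over $U_j$, and then takes $X'$ to be the disjoint union of the resulting modifications; this $X'\to X$ is proper, birational, an isomorphism over $U$, and the closure of each $Z_i$ lands in the summand over $X_{j(i)}'$, where it is finite by construction.
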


\begin{proof} Apply Lemma \ref{lrg}, noting that $Z= \Zb\times_X U$ by \cite[Lemma 2.6.3]{rec}.
\end{proof}

The following lemma also relies on \cite{rg}: it will be used several times in the sequel.

\begin{lemma}\label{mainlem;blowup}
Let $f: U\to X$ be an \'etale morphism of quasi-compact and quasi-separated integral schemes. Let $g: V\to U$ be a proper birational morphism, $T \subset U$ a closed subset such that 
$g$ is an isomorphism over $U-T$
and $S$ the closure of $f(T)$ in $X$.
Then there exists a closed subscheme $Z\subset X$ supported in $S$ such that
$U\times_X \Bl_Z(X)  \to U$ factors through $V$. 
\end{lemma}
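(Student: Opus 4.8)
The plan is to reduce the statement to an application of Raynaud--Gruson's flattening theorem \cite{rg}, exactly in the style of Lemma \ref{lrg}. The point is that $g:V\to U$ is proper birational and an isomorphism away from $T$, so after restricting to $U-T$ nothing has to be done; the content is a statement over $S\subset X$, and one wants to find a blow-up of $X$ centered in $S$ that dominates $V$ after base change along $f$.

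First I would reduce to the situation where $X$ (hence $U$, $V$) is affine and integral: since $f$ is \'etale, $U$ is normal, and $g$ is birational with $V$ proper over $U$, one may assume all schemes are integral (replacing $V$ by the component dominating $U$ and discarding the rest, which is supported over $T$). Next, by Raynaud--Gruson \cite[Th\'eor\`eme~5.2.2 or Cor.~5.7.10]{rg} applied to $g:V\to U$ with its flattening locus containing $U-T$, there is a $U$-admissible blow-up $\til U\to U$, centered in a closed subscheme $W\subset U$ supported in $T$, such that the strict transform of $V$ becomes flat (hence, being also proper birational with $\til U$ integral, an isomorphism) over $\til U$; in particular $\til U\to U$ factors through $V$. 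So the problem becomes: given a closed subscheme $W\subset U$ supported in $T$, realize the blow-up $\Bl_W(U)$ as the base change along $f$ of a blow-up $\Bl_Z(X)$ with $Z$ supported in $S$.

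The key step is therefore a descent statement for admissible blow-ups along the \'etale morphism $f:U\to X$. I would argue as follows: $f|_T:T\to S$ is finite (being proper --- it is a closed subscheme of the \'etale, hence separated and finite-type, $U\to X$ --- and quasi-finite, being \'etale) and surjective onto $S$; one can further arrange, by shrinking or by \cite[Cor.~5.7.10]{rg} again, that $W$ is cut out by a coherent ideal sheaf $\sI\subset\sO_U$ with $V(\sI)\subset T$. Then $f_*\sI$ is coherent on $X$ (as $f$ is finite on the relevant closed set, or after a suitable compactification/quasi-finite argument one uses that pushforward of coherent is coherent for finite maps); more robustly, since $f$ is \'etale, one has $f^{-1}\mathfrak a\cdot\sO_U \subset \sI$ for a suitable ideal $\mathfrak a\subset\sO_X$ supported in $S$ --- concretely, take $\mathfrak a$ to be the ideal of a thickening of $S$ large enough that its pullback lands in $\sI$, which exists because $\sI$ contains a power of the ideal of $T$ and $f^{-1}$ of (a power of) the ideal of $S$ is contained in a power of the ideal of $T$ by \'etaleness and the finiteness of $T\to S$. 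Setting $Z=V(\mathfrak a)$, the universal property of blow-up gives a factorization $\Bl_{f^{-1}\mathfrak a}(U)\to \Bl_\sI(U)=\Bl_W(U)$, and $\Bl_{f^{-1}\mathfrak a}(U)=U\times_X\Bl_Z(X)$ since blow-up commutes with the flat base change $f$. Composing, $U\times_X\Bl_Z(X)\to U$ factors through $\Bl_W(U)\to U$, which in turn factors through $g:V\to U$ by the first paragraph, giving the claim.

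The main obstacle I expect is the bookkeeping in the descent step: producing an honest ideal $\mathfrak a\subset\sO_X$, supported in $S$, whose pullback to $U$ is contained in the ideal defining $W$ (so that the blow-ups compare), while keeping everything coherent and working over the merely quasi-compact quasi-separated (not necessarily Noetherian) base. This is where one must be careful: one cannot just push forward along $f$ since $f$ is not proper, so the correct move is to use that $T\to S$ is finite, pick a finite affine cover, extend the relevant ideal generators, and take the ideal they generate on $X$ --- or, cleanest, invoke \cite[Cor.~5.7.10]{rg} once more to replace $W$ by the pullback of a closed subscheme of $X$ from the start. The flatness/Raynaud--Gruson input and the flat-base-change property of blow-ups are entirely standard; it is the passage from a blow-up of $U$ to a blow-up of $X$ that carries the real content, and it is exactly the kind of argument already used implicitly in Lemma \ref{lrg} and Theorem \ref{trg}.
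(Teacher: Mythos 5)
Your first step is fine: applying Raynaud--Gruson to $g\colon V\to U$ does dominate $V$ by a $U$-admissible blow-up $\Bl_W(U)\to U$ with $W$ supported in $T$. But the descent step, which carries all the content, is broken in two places. First, a containment of ideals $f^{-1}\mathfrak{a}\cdot\sO_U\subset\sI$ does \emph{not} give a morphism $\Bl_{f^{-1}\mathfrak{a}\cdot\sO_U}(U)\to\Bl_{\sI}(U)$: blowing up the smaller ideal makes \emph{that} ideal invertible, not the larger one, and the universal property only applies when $\sI\cdot\sO$ becomes invertible on the source (e.g.\ when the smaller ideal is a product $\sI\cdot\sJ$). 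Concretely, $(x)\subset(x,y)$ in $\A^2$: the blow-up of $(x)$ is the identity and certainly does not dominate the blow-up of the origin; prescribing supports does not help. Second, the finiteness you use to produce $\mathfrak{a}$ fails: $T\to X$ is a closed immersion followed by an \'etale morphism, hence separated, finite type and quasi-finite, but not proper, so $f|_T\colon T\to S$ need not be finite (take $U\subset X$ a dense open immersion and $T$ a closed curve in $U$ whose closure in $X$ meets $X\setminus U$). In effect, the statement you would need --- every $U$-admissible blow-up centered in $T$ is dominated, after pullback, by an $X$-admissible blow-up centered in $S$ --- is essentially the lemma itself (since $\Bl_W(U)\to U$ is proper, birational and an isomorphism outside $T$), so reducing to it without a genuinely new argument is circular; your closing suggestion to ``replace $W$ by the pullback of a closed subscheme of $X$ from the start'' is exactly the unproved point.

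The paper sidesteps the descent problem by applying the platification theorem \emph{once, directly to the composite} $V\to X$. This morphism is flat (even \'etale) over $X\setminus S$, because $T\subset f^{-1}(S)$ forces $V\to U$ to be an isomorphism there; hence \cite[Corollaire~5.7.11]{rg} yields $Z\subset X$ supported in $S$ such that the strict transform $V'$ of $V$ under $X'=\Bl_Z(X)\to X$ is flat over $X'$. Then $\phi\colon V'\to U\times_X X'$ is proper and birational, and it is flat because its composite with the \'etale morphism $U\times_X X'\to X'$ is flat; a flat proper birational morphism is an isomorphism, so $U\times_X X'\simeq V'\to V$ gives the required factorization. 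If you want to keep your two-step structure, you must prove the blow-up descent statement along \'etale morphisms separately, and that is not easier than the lemma.
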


\begin{proof}
The following argument is taken from the proof of \cite[Proposition~5.9]{sv}.
Noting $V$ is \'etale over $X-S$, we apply the platification theorem \cite[Corollary~5.7.11]{rg} to 
$V \to X$ and conclude that there exists a closed subscheme $Z$ supported in $S$ such that the proper transform $V'$ of $V$ under $X'=\Bl_Z(X)\to X$ is flat over $X'$. By the construction the induced morphism $\phi: V'\to U\times_X X'$ is proper birational. On the other hand $\phi$ is flat since it becomes flat when composed with the \'etale morphism $ U\times_X X'\to X'$ (\cite[Chapter~II, Proposition~8.11 and Chapter~III, Exercise~10.3]{hartshorne}). 
Hence it is an isomorphism. This proves the lemma since $V' \to U$ factors $V\to U$.
\end{proof}

\subsection{Proof of Theorem \ref{t2.1}: case of $\omega$ and $\omega_s$}\label{starts-pf-1.5.2}
We need a definition:

\begin{defn}\label{d.sigma} 
Let $\Sigma$
be the class of all morphisms 
$M_1\to M_2$ in $\MCor$ 
given by the graph of an isomorphism $M_1^\o \iso M_2^\o$ in $\Sm$.
\end{defn}

 In view of Proposition \ref{p2.6}, the existence of the pro-left adjoint of $\omega$ is a consequence of the following more precise result:

\begin{proposition}\label{prop:localization}
\leavevmode
\begin{itemize}
\item[a)] The class $\Sigma$ enjoys a calculus of right fractions.
\item[b)] The functor $\omega$ induces equivalences of categories
\[
\Sigma^{-1}\MCor\iso \Cor.
\]
The same statement holds for $\omega_s:\MSm\to \Sm$.
\end{itemize}
\end{proposition}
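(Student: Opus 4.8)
The strategy is to verify the three axioms of a calculus of right fractions for $\Sigma$, then deduce the equivalence in part b) from a general localization criterion (of the type of Proposition~\ref{p2.6}).

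First I would record the elementary properties of $\Sigma$. By Lemma~\ref{ln1}-type reasoning (or directly from the definition of admissibility via Remark~\ref{rk-graph-trick}\eqref{gt1}), a morphism in $\Sigma$ is an isomorphism in $\ulMCor$ if and only if the induced map on ambient spaces is an isomorphism respecting the divisors; in particular $\Sigma$ contains all identities and is closed under composition, because a composite of graphs of isomorphisms on interiors is again such a graph (and left properness is automatic here since both closures map finitely, hence properly, onto the relevant ambient spaces by the argument of Proposition~\ref{p1a}). Next, given $\alpha\in\MCor(M_1,M_2^\o)$ — i.e. $\alpha\in\Cor(M_1^\o,M_2^\o)$ admissible and left proper — and $s:M_3\to M_2$ in $\Sigma$, I need $t:M_4\to M_1$ in $\Sigma$ and $\beta:M_4\to M_3$ with $s\beta=\alpha t$. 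The point is that $\omega(s):M_3^\o\iso M_2^\o$ is an isomorphism in $\Sm$, so $\omega(s)^{-1}\circ\omega(\alpha)\in\Cor(M_1^\o,M_3^\o)$ is a genuine finite correspondence; the only obstruction is admissibility with respect to $M_1^\infty$ and $M_3^\infty$ and left properness. Here I would invoke Lemma~\ref{lem:mod-exists}: I may shrink the modulus structure on $\Mb_1$ (replacing $M_1^\infty$ by a larger Cartier divisor with the same support, i.e. passing to $M_1^{(n)}$ for $n\gg 0$, or more generally by Lemma~\ref{lem:mod-exists} to some $M_4$ with $\Mb_4=\Mb_1$, $M_4^\o=M_1^\o$) so that the correspondence becomes admissible — and left properness then follows because $\alpha$ was left proper and $s$ is an isomorphism on ambient spaces (up to the graph trick, Lemma~\ref{l-gt}, to arrange an extension on ambient spaces if needed). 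This gives $t:M_4\to M_1$ in $\Sigma$, completing the square. Finally, for the cancellation axiom: if $\alpha s=\beta s$ with $s\in\Sigma$, then since $\omega(s)$ is invertible in $\Cor$ and $\omega$ is faithful, $\omega(\alpha)=\omega(\beta)$, hence $\alpha$ and $\beta$ have the same underlying finite correspondence on interiors; as $\ulMCor(M,N)\hookrightarrow\Cor(M^\o,N^\o)$ is injective, $\alpha=\beta$, so in fact no further localization on the source is needed and the cancellation condition holds trivially.

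For part b), once a) is established, $\Sigma^{-1}\MCor$ exists and the functor $\omega$, which inverts every element of $\Sigma$ (each such morphism becomes an isomorphism in $\Cor$), factors through a functor $\bar\omega:\Sigma^{-1}\MCor\to\Cor$. To see $\bar\omega$ is an equivalence: it is essentially surjective because every $X\in\Sm$ (or $\Sch$) with $X$ the complement of a Cartier divisor support in some $\Xb$ is of the form $\omega(M)$ — and every smooth $X$ admits such a compactification, e.g. by Nagata plus the fact that we only need \emph{some} $\Xb\in\Sch$ with a Cartier divisor at the boundary, which always exists (take any compactification and then the reduced boundary, or use Lemma~\ref{lem:mod-exists}). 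For full faithfulness, a morphism in $\Sigma^{-1}\MCor(M,N)$ is represented by a roof $M\xleftarrow{s}M'\xrightarrow{\alpha}N$ with $s\in\Sigma$, and $\bar\omega$ sends it to $\omega(\alpha)\circ\omega(s)^{-1}\in\Cor(M^\o,N^\o)$; injectivity on Hom-sets follows from the calculus of fractions together with faithfulness of $\omega$ on $\MCor$, and surjectivity follows from Lemma~\ref{lem:mod-exists} exactly as above: any $\gamma\in\Cor(M^\o,N^\o)$ becomes admissible and left proper after replacing $M$ by a suitable $M'$ with $M'^\o=M^\o$ and a map $M'\to M$ in $\Sigma$, so $\gamma$ is hit by the roof $M\xleftarrow{}M'\xrightarrow{\gamma}N$. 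The proper case $\omega_s:\MSm\to\Sm$ is identical, replacing finite correspondences by morphisms of schemes throughout and using the $\ulMSm$-versions of Lemmas~\ref{lem:mod-exists} and \ref{l-gt}.

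\textbf{Main obstacle.}
The delicate point is the existence part of the right-fraction calculus, i.e. completing the square with $t\in\Sigma$: one must produce a modulus pair $M_4$ whose interior is still $M_1^\o$, whose ambient space maps properly (indeed by an isomorphism on interiors) to $\Mb_1$, and with divisor large enough to make $\omega(s)^{-1}\omega(\alpha)$ admissible while keeping the composite $s\beta$ equal to $\alpha t$ on the nose. Getting admissibility is exactly what Lemma~\ref{lem:mod-exists} delivers (it says the union over all modulus structures of the admissible correspondences exhausts $\Cor(X,N^\o)$), but one has to be careful that the chosen $M_4$ still lies in $\MCor$ — i.e. $\Mb_4$ proper, which is automatic since $\Mb_4=\Mb_1$ — and that left properness is preserved, for which the graph trick (Lemma~\ref{l-gt}) and Proposition~\ref{p1a} are the right tools. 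I expect the bookkeeping of which ambient space carries which divisor, rather than any deep geometric input, to be where the care is needed.
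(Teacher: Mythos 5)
Your argument is essentially the paper's: you verify the three axioms of a calculus of right fractions, using Lemma \ref{lem:mod-exists} in the Ore step to enlarge the modulus divisor while keeping the ambient space $\ol{M}_1$ fixed (the paper makes this explicit by taking a divisor $\geq M_1^\infty$ and $\geq M''_1{}^\infty$ simultaneously; your $M_1^{(n)}$-route also works by Lemma \ref{ln2}\,b)), and then deduce b) from a general localization criterion (Corollary \ref{c.cf}) together with Nagata compactification for essential surjectivity. The only rough edges are cosmetic: the cancellation hypothesis in Definition \ref{d.cf}\,(iv) is $s\alpha=s\beta$, not $\alpha s=\beta s$ (though your faithfulness-of-$\omega$ argument disposes of either), and the appeals to the graph trick and to Proposition \ref{p1a} are unnecessary here since in $\MCor$ all ambient spaces are proper, so left properness and composability are automatic.
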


\begin{proof} a) We check the axioms of Definition \ref{d.cf}:
\begin{enumerate}
\item Identities, stability under composition: obvious.
\item Given a diagram in $\MCor$
\[\begin{CD}
&& M'_2\\
&&@VVV\\
M_1@>\alpha>> M_2
\end{CD}\]
with $M_2^\o\cong{M_2'}^\o$, 
Lemma \ref{lem:mod-exists} provides a 
$M_1''\in \MCor$ such that ${M_1''}^\o=M_1^\o$ and $\alpha\in \MCor(M''_1,M'_2)$. We may choose $M''_1$ such that $\ol{M''_1} = \ol{M_1}$. Then 
$M'_1=(\ol{M_1}, {M_1'}^\infty)$ with
any ${M_1'}^\infty$ such that 
${M_1'}^\infty \geq M_1^\infty$,
${M_1'}^\infty \geq {M''_1}^\infty$ 
allows us to complete the square in $\MCor$. 

\item Given a diagram
\[M_1\begin{smallmatrix}f\\ \rightrightarrows\\ g
\end{smallmatrix} M_2\by{s} M'_2
\]
with $M_1,M_2,M'_2$ as in (2) and such that $sf=sg$, the underlying correspondences to $f$ and $g$ are equal since the one underlying $s$ is $1_{M_2^\o}$. Hence $f=g$.
\end{enumerate}

The above proof of (2) also shows that
we have 
\[
\colim_{M' \in \Sigma \downarrow M} \MCor(M', N)
=\Cor(M, N).
\]
for any $M, N \in \MCor$. 

Point b) now follows from a) and Corollary \ref{c.cf}, noting that $\omega$ is essentially surjective. Indeed, any smooth $k$-scheme $X$ admits a compactification $\bar X$ by Nagata's theorem; blowing up $\bar X - X$, we then make it a Cartier divisor. The case of $\omega_s$ is exactly parallel.
\end{proof}

Let 
$\omega^!:\Cor\to \pro{}\MCor$
be the pro-left adjoint of $\omega$. 
By Proposition \ref{p2.6}, we have for $X\in \Cor$:
\[\omega^! X = \underset{M\in \Sigma\downarrow X}{``\lim"} M. 
\]
and the same formula for the pro-left adjoint $\omega_s^!$ of $\omega_s$. Let us spell out the indexing set $\MP(X)$ of these pro-objects, and refine them:

\begin{definition}\label{def:mpx}\
\begin{enumerate}
\item
For $X \in \Sm$, 
we define a subcategory $\MP(X)$ 
of $\MP$ as follows. 
The objects are those $M \in \MP$ such that $M^\o=X$.
Given $M_1, M_2 \in \MP(X)$,
we define 
$\MP(X)(M_1, M_2)$ 
to be $\{ 1_X \}$
if $1_X$ belongs to $\MSm$ 
and $\emptyset$ otherwise. 
\item
Let $X \in \Sm$ and 
fix a compactification $\Xb$ such that $\Xb-X$ is the support of a Cartier divisor (for short, a \emph{Cartier compactification}).
Define $\MP(\ol{X}!X)$ 
to be the full subcategory
of $\MP(X)$ 
consisting of objects 
$M \in \MP(X)$ 
such that $\ol{M}=\ol{X}$.
\end{enumerate}
\end{definition}

\begin{lemma}\label{ln2} a) For any $X\in \Sm$ and any Cartier compactification $\Xb$, $\MP(X)$ is a cofiltered ordered set, and $\MP(\ol{X}!X)$ is cofinal in $\MP(X)$.\\
b) Let $X\in \Cor$, and let $M\in \MP(X)$. Then $\{ M^{(n)} \}_{n\ge 1}$ defines a cofinal subcategory of $\MP(X)$.
\end{lemma}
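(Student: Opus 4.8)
The plan is to verify the two claims in Lemma~\ref{ln2} separately, reducing both to elementary facts about effective Cartier divisors supported on a fixed closed subset.

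\medskip

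\noindent\textbf{Part a).} First I would observe that an object $M\in\MP(X)$ is, by definition, a proper modulus pair with $M^\o=X$; thus $\ol M$ is a compactification of $X$ (proper over $k$) and $M^\infty$ is an effective Cartier divisor on $\ol M$ with $|M^\infty|=\ol M-X$. I would define a preorder on $\MP(X)$ by declaring $M_1\ge M_2$ when there is a (necessarily unique) morphism $M_1\to M_2$ in $\Sigma$, i.e.\ when $1_X$ lifts to a morphism in $\MSm$; by Remark~\ref{rk-graph-trick}~\eqref{gt1} applied after passing to normalizations (or directly from Definition~\ref{d2.2}~(2) using Lemma~\ref{lKL}), such a lift exists iff the identity extends to a proper $\ol f:\ol{M_1}\to\ol{M_2}$ over $k$ with $M_1^\infty\ge \ol f^*M_2^\infty$. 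To see this is cofiltered, given $M_1,M_2\in\MP(X)$ I would take the closure $\ol M_3$ of the diagonal $X\hookrightarrow \ol{M_1}\times_k\ol{M_2}$, which is proper over $k$ and dominates both $\ol{M_i}$ by proper morphisms $q_i$ that are isomorphisms over $X$ (by Lemma~\ref{nexfib}); then $M_3:=(\ol M_3,\ M_3^\infty)$ with $M_3^\infty:=q_1^*M_1^\infty+q_2^*M_2^\infty$ is an object of $\MP(X)$ with $M_3\ge M_1$ and $M_3\ge M_2$. The order is a genuine partial order rather than just a preorder because the lift of $1_X$ is unique when it exists and $\ol M$ is determined by $M$; antisymmetry follows since if $M_1\ge M_2$ and $M_2\ge M_1$ the comparison maps on ambient spaces are mutually inverse and match the divisors. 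For cofinality of $\MP(\ol X!X)$, given $M\in\MP(X)$ I would form the closure $\ol W$ of the diagonal in $\ol M\times_k\ol X$, which maps properly and birationally to both $\ol M$ and $\ol X$, isomorphically over $X$; pulling $M^\infty$ and any Cartier divisor with support $\ol X-X$ back to $\ol W$ and then pushing to $\ol X$ is awkward, so instead I would argue that it suffices to dominate $M$ by \emph{some} object with ambient space $\ol X$: take $M':=(\ol X,\ D)$ where $D$ is an effective Cartier divisor on $\ol X$ with $|D|=\ol X-X$ chosen large enough that $\pi^*D\ge M^\infty$ on $\ol W$, where $\pi:\ol W\to\ol X$; such $D$ exists because $\ol W\to\ol X$ is proper birational iso over $X$, so $\pi^*$ of a sufficiently large multiple of a fixed divisor with support $\ol X-X$ dominates $M^\infty$ (this is where Lemma~\ref{lem:mod-exists} or a direct valuation-theoretic estimate enters). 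Then $M'\in\MP(\ol X!X)$ and $M'\ge M$ via the correspondence supported on $\ol W$. I expect the cofinality step to be the main obstacle, because one must produce a single divisor $D$ on the \emph{fixed} $\ol X$ dominating the pullback of $M^\infty$; the cleanest route is to invoke Lemma~\ref{lem:mod-exists} directly, which guarantees $\bigcup_{M'}\Cor_\adm(M',N)=\Cor(X,N^\o)$ with $\ol{M'}=\ol X$, applied to $N=M$ and the class of $1_X$, yielding exactly the needed $M'\in\MP(\ol X!X)$ with $1_X\in\MCor(M',M)$.

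\medskip

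\noindent\textbf{Part b).} Here $X\in\Cor$ and $M\in\MP(X)$ is fixed; I must show $\{M^{(n)}\}_{n\ge 1}$ is cofinal in $\MP(X)$. By Definition~\ref{dn1} and \eqref{eqn1} the $M^{(n)}$ form a chain $\cdots\ge M^{(n)}\ge M^{(n-1)}\ge\cdots\ge M^{(1)}=M$ inside $\MP(X)$ (each transition is in $\Sigma$ since $1_X$ extends to $1_{\ol M}$ and $nM^\infty\ge mM^\infty$ for $m\le n$). Given any $M'\in\MP(X)$, by part~a) cofilteredness pick $M''\in\MP(X)$ with $M''\ge M$ and $M''\ge M'$; then it suffices to find $n$ with $M^{(n)}\ge M''$. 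Writing the comparison $M''\ge M$ as a proper morphism $g:\ol{M''}\to\ol M$, isomorphism over $X$, with $M''^\infty\ge g^*M^\infty$, I need $M^{(n)}\ge M''$, i.e.\ a proper morphism $\ol{M''}\to\ol M$ (the same $g$) with $(M'')^\infty\le g^*(nM^\infty)=n\,g^*M^\infty$. Since $g^*M^\infty$ and $(M'')^\infty$ are effective Cartier divisors on $\ol{M''}$ with the \emph{same support} $\ol{M''}-X$, and $\ol{M''}$ is Noetherian, there exists $n\gg 0$ with $(M'')^\infty\le n\,g^*M^\infty$: this is the standard fact that two effective divisors with the same support are cofinally comparable under multiplication, proved locally by comparing orders of vanishing at the finitely many generic points of the support (using that $g^*M^\infty$ has strictly positive coefficient at each such point because $g$ is an isomorphism over $X$, so $|g^*M^\infty|=\ol{M''}-X$). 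Hence $M^{(n)}\ge M''\ge M'$, which gives the desired cofinality. This direction is routine once part~a) is in hand, so no serious obstacle arises in b).
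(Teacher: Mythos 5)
Your strategy is essentially the paper's: cofilteredness via a common upper bound (the paper deduces it from the calculus of fractions of Proposition \ref{prop:localization} together with Proposition \ref{p.cf}, and your closure-of-the-diagonal construction with $M_3^\infty=q_1^*M_1^\infty+q_2^*M_2^\infty$ is the same content made explicit), cofinality of $\MP(\ol{X}!X)$ by invoking Lemma \ref{lem:mod-exists}, exactly as the paper does, and part b) via the fact that two effective Cartier divisors with the same support are comparable after multiplying one by a large integer --- which is also the paper's key step (the paper first reduces via a) to objects with the fixed ambient space $\ol{M}$ and compares $nM^\infty$ with $Y$ there; you instead pass through a common upper bound $M''$ and pull back to $\ol{M''}$).

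Two points need repair, though both are local. First, your characterization of the order is false as an ``iff'': a morphism $M_1\to M_2$ in $\MP(X)$ is by definition an admissible correspondence given by the graph of $1_X$ (left properness being automatic since $\ol{M}_2$ is proper), and it need not be induced by any morphism $\ol{M}_1\to\ol{M}_2$ of ambient spaces. Consequently, in b) the step ``writing the comparison $M''\ge M$ as a proper morphism $g:\ol{M''}\to\ol{M}$'' is unjustified for an \emph{arbitrary} upper bound $M''$; it is, however, immediately fixed by taking $M''$ to be the specific upper bound you construct in a) (the closure of the diagonal, which does carry proper projections to $\ol{M}$ and $\ol{M'}$ that are isomorphisms over $X$), or by following the paper and reducing via a) to $\MP(\ol{M}!X)$. (Relatedly, your antisymmetry argument rests on the same misconception; in any case ``ordered'' here only means that there is at most one morphism between any two objects, which holds by the very definition of $\MP(X)$, so antisymmetry is neither needed nor asserted by the paper.) Second, the inequality $(M'')^\infty\le n\,g^*M^\infty$ should be verified either on the normalization $\ol{M''}^N$ --- which is where the admissibility condition of Definition \ref{d2.2} lives, and where comparing multiplicities at the finitely many codimension-one generic points of the common support is legitimate precisely because the scheme is normal --- or on $\ol{M''}$ itself by the radical argument (locally a defining equation of $g^*M^\infty$ lies in the radical of that of $(M'')^\infty$, so a power is divisible by it; quasi-compactness gives a uniform $n$). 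Comparing ``orders of vanishing at generic points'' directly on a possibly non-normal $\ol{M''}$ does not yield divisibility. With these adjustments your proof is correct and coincides in substance with the paper's.
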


\begin{proof} a) ``Ordered'' is obvious and ``cofiltered'' follows from Propositions \ref{prop:localization} and \ref{p.cf} a); the cofinality follows again from Lemma \ref{lem:mod-exists}. 

b) Let $M=(\Xb,X^\infty)$. By a) it suffices to show that $(M^{(n)})_{n\ge 1}$ defines a cofinal subcategory of $\MP(\ol{X}!X)$. If $(\Xb,Y)\in \MP(\ol{X}!X)$, $Y$ and $X^\infty$ both have support $\Xb-X$, so there exists $n>0$ such that $nX^\infty \ge Y$.
\end{proof}

\subsection{Proof of Theorem \ref{t2.1}: case of $\tau$}\label{end-pf-1.5.2}
We need a definition:

\begin{defn}\label{d2.6} Take $M=(\Mb,M^\infty)\in \ulMSm$.
Let $\CompM$ be the category whose objects are pairs
$(N,j)$ consisting of a modulus pair $N=(\Nb,N^\infty)\allowbreak\in\MSm$ 
equipped with a dense open immersion $j:\Mb\hookrightarrow \Nb$ such that
$N^\infty=M_N^\infty+C$
for some effective Cartier divisors $M_N^\infty, C$ on $\ol{N}$
satisfying
$\ol{N} \setminus |C| = j(\ol{M})$
and 
$j$ induces a minimal morphism $M \to N$ in the sense of Def. \ref{def:minimality}.
Note that for $N\in \CompM$ we have $j(M^\o)=N^\o$ and $N$ is equipped with $j_N\in \ulMSm^\fin(M, N) \subset \ulMSm(M,N)$ 
which is the graph of $j|_{M^\o} : M^\o \cong N^\o$.
For $N_1,N_2\in \CompM$ we define 
\[\CompM(N_1,N_2)=\{\gamma\in \MSm(N_1,N_2)\;|\; \gamma\circ j_{N_1}  =j_{N_2}\}.\]
Note that any $\gamma$ as above induces an isomorphism $N_1^\o \iso N_2^\o$ in $\Sm$.
\end{defn}

\begin{lemma}\label{c1.1} The category
$\CompM$ is a cofiltered ordered set.
\end{lemma}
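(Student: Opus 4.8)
The claim is that $\CompM$ is a cofiltered ordered set. Recall that $\CompM$ consists of Cartier compactifications $N=(\Nb,N^\infty)$ of $M$ equipped with a minimal dense open immersion $j:\Mb\hookrightarrow\Nb$, and that a morphism in $\CompM$ is a morphism $\gamma$ in $\MSm$ compatible with the structure immersions. The plan is to verify, in order: (1) that $\CompM$ is \emph{non-empty}; (2) that between any two objects there is \emph{at most one} morphism, so that $\CompM$ is an ordered set; (3) that $\CompM$ is \emph{cofiltered}, i.e.\ any two objects admit a common lower bound (the coequalizer condition being automatic once (2) is known).

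For (1), one uses Nagata's compactification theorem to embed $\ol{M}$ as a dense open in some proper $k$-scheme, and then—exactly as in the proof of Proposition~\ref{prop:localization}~b) and Lemma~\ref{ln2}—blows up the boundary to turn $\ol{M}\setminus\ol{M}$ into the support of a Cartier divisor $C$; taking $N^\infty=M_N^\infty+C$ where $M_N^\infty$ is the (unique, by normal separatedness of the relevant schemes / density of $M^\o$) extension of $M^\infty$ to a divisor supported away from $C$ gives an object of $\CompM$. One must check that $j$ induces a \emph{minimal} morphism $M\to N$, which holds by construction since $j^*N^\infty=j^*M_N^\infty=M^\infty$ (as $j(\ol M)=\ol N\setminus|C|$). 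For (2), given two morphisms $\gamma,\gamma':N_1\to N_2$ in $\CompM$, both are morphisms in $\MSm$ inducing isomorphisms $N_1^\o\iso N_2^\o$; since $\gamma\circ j_{N_1}=j_{N_2}=\gamma'\circ j_{N_1}$ and $j_{N_1}$ is the graph of the isomorphism $M^\o\cong N_1^\o$, the underlying scheme maps $\gamma^\o,{\gamma'}^\o:N_1^\o\to N_2^\o$ agree; but $N_1^\o$ is dense in the reduced scheme $\ol{N}_1$ and $\ol{N}_2$ is separated, so the extensions to $\ol{N}_1\to\ol{N}_2$ are unique (\cite[Chapter~II, Exercise~4.2]{hartshorne}), whence $\gamma=\gamma'$.

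For (3), given $N_1,N_2\in\CompM$, I would form the scheme-theoretic closure $\ol{N}_3$ of the diagonal copy of $\ol{M}$ inside $\ol{N}_1\times_k\ol{N}_2$ (equivalently, the closure of the graph of the birational map $\ol N_1\dashrightarrow\ol N_2$), with its two projections $p_i:\ol{N}_3\to\ol{N}_i$. Each $p_i$ is proper (as $\ol N_1\times\ol N_2$ is proper over each factor and $\ol N_3$ is closed) and is an isomorphism over $j_i(\ol M)$; applying Lemma~\ref{nexfib} to $p_i$ shows $p_i^{-1}(j_i(\ol M))=j_3(\ol M)$, so $\ol N_3\setminus j_3(\ol M)=p_i^{-1}(|C_i|)$ is the support of a Cartier divisor, and setting $N_3^\infty:=p_1^*N_1^\infty$ (which one checks equals the extension of $M^\infty$ plus a divisor supported on $\ol N_3\setminus j_3(\ol M)$) makes $N_3=(\ol N_3,N_3^\infty)\in\CompM$, with $p_1$ a morphism $N_3\to N_1$ in $\MSm$ by Remark~\ref{rk-graph-trick}~\eqref{gt1}; for $p_2$ one must ensure $N_3^\infty\ge p_2^*N_2^\infty$, which may force replacing $N_3^\infty$ by $p_1^*N_1^\infty+p_2^*N_2^\infty$ (still a legitimate object of $\CompM$ after absorbing the extra boundary term). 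This realizes $N_3$ as a common lower bound.

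\textbf{Main obstacle.} The delicate point is step (3): one must arrange the modulus $N_3^\infty$ on the common refinement $\ol N_3$ so that \emph{both} projections become admissible (minimal on the interior, with the divisor comparison $N_3^\infty\ge p_i^*N_i^\infty$) while keeping $N^\infty_3$ of the required form $M_{N_3}^\infty+C_3$ with $\ol N_3\setminus|C_3|=j_3(\ol M)$; verifying that taking $N_3^\infty=p_1^*N_1^\infty+p_2^*N_2^\infty$ still satisfies the minimality condition $j_3^*N_3^\infty=M^\infty$ is where the hypothesis that the $N_i$ are \emph{Cartier} compactifications (so the $C_i$ are genuine Cartier divisors and pull back to Cartier divisors on $\ol N_3$) is essential, and is the one computation I would carry out carefully rather than wave through.
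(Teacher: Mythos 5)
Your overall strategy (Nagata plus blow-ups for non-emptiness, obvious uniqueness of morphisms, closure of the graph for cofilteredness) is the paper's, but two of the three steps have genuine gaps. The serious one is step (3): the candidate modulus $N_3^\infty=p_1^*N_1^\infty+p_2^*N_2^\infty$ does \emph{not} give an object of $\Comp(M)$. Minimality demands $j_3^*N_3^\infty=M^\infty$, but $j_3^*p_i^*N_i^\infty=j_i^*N_i^\infty=M^\infty$ for each $i$, so your sum restricts to $2M^\infty$; the computation you deferred to the end in fact fails whenever $M^\infty\neq\emptyset$. The missing idea is to enlarge by a multiple of the \emph{boundary} divisor rather than by the other modulus. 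Concretely, the paper first notes that $(\Gamma,p_1^*N_1^\infty)$ and $(\Gamma,p_2^*N_2^\infty)$ (your $N_3$ with either single pull-back, which \emph{is} minimal over $M$) lie in $\Comp(M)$ and dominate $N_1$, resp. $N_2$; this reduces to the case where $N_1,N_2$ share the ambient space $\ol{N}$. There one takes $N_3=(\ol{N},\,N_1^\infty+nC)$, where $C$ is the effective Cartier divisor with $|C|=\ol{N}-\ol{M}$ supplied by the definition of $\Comp(M)$: adding $nC$ does not change the restriction to $\ol{M}$, so minimality survives, and since $N_1^\infty$ and $N_2^\infty$ both restrict to $M^\infty$ on $\ol{M}$, their difference is concentrated over $|C|$, whence $N_1^\infty+nC\geq N_2^\infty$ for $n\gg 0$; this makes $N_3$ dominate both $N_1$ and $N_2$.

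There is also a gap in step (1): you assert that $M^\infty$ has a (unique) extension to an effective Cartier divisor $M_N^\infty$ on the compactification obtained after blowing up the boundary. The scheme-theoretic closure of $M^\infty$ is a canonical closed subscheme, but nothing makes it Cartier on $\ol{N}_1$ (it can fail to be locally principal at boundary points), and ``density of $M^\o$ plus separatedness'' only gives uniqueness of extensions of morphisms, not existence of a Cartier extension of a divisor. The paper fixes this with a \emph{second} blow-up, along the scheme-theoretic closure $N_1^\infty$ of $M^\infty$, after which the pull-back of that closure is Cartier and $N^\infty:=M_N^\infty+C$ does define an object of $\Comp(M)$. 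Your step (2) is correct, though the extension-to-ambient-spaces argument is unnecessary: a morphism in $\MSm$ is already determined by its restriction to interiors, which is why the paper simply calls orderedness obvious.
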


\begin{proof} That it is ordered is obvious
as $\CompM(N_1,N_2)$ has at most 1 element for any $(N_1,N_2)$. 
For ``cofiltered'',  we first show that $\CompM$ is nonempty. For this, choose a compactification $j_0:\ol{M}\inj \ol{N}_0$, with $\ol{N}_0\in \Sch$ proper. Let $\ol{N}_1=\Bl_{(\ol{N}_0-\ol{M})_\red}(\ol{N_0})$; then $j_0$ lifts to $j_1:\ol{M}\inj \ol{N}_1$
by the universality of the blowup \cite[Chapter~II, Proposition~7.14]{hartshorne},
and $\ol{N}_1-\ol{M}$ is the support of an effective Cartier divisor $C_1$. 
Consider now the scheme-theoretic closure $N_1^\infty$ of $M^\infty$ in $\ol{N}_1$, 
and define $\ol{N}=\Bl_{N_1^\infty}(\ol{N}_1)$, 
$M_N^\infty =$ pull-back of $N_1^\infty$, $C=$ pull-back of $C_1$,
$N^\infty=M_N^\infty+C$ and $N=(\ol{N},N^\infty)$: 
then 
$j_1$ lifts to $j:\ol{M}\inj \ol{N}$ 
(by the same reason as $j_0$),
which defines an object of $\CompM$.

Let $N_1$ and $N_2$ be two objects in $\Comp(M)$.
Let $\Gamma$ be the graph of the rational map $\ol{N}_1 \dashrightarrow
\ol{N}_2$ given by $1_{M^\o}$.
Then we have morphisms of schemes $p : \Gamma \to \ol{N}_1$ and $q :  \Gamma
\to \ol{N}_2$, and there exists a natural open immersion $\ol{M} \to
\Gamma$.
Note that $(\Gamma,p^\ast N_1^\infty)$ and $(\Gamma,q^\ast N_2^\infty)$ are
objects of $\Comp(M)$.
Since $(\Gamma,p^\ast N_1^\infty)$ dominates $N_1$ and $(\Gamma,q^\ast
N_2^\infty)$ dominates $N_2$, we are reduced to the case that $N_1$ and
$N_2$ have the same ambient space $\ol{N}$.
Let $C$ be the effective Cartier divisor on $\ol{N}$ such that $|C|=\ol{N} -
\ol{M}$, which exists since $N_1 \in \Comp(M)$.
Then for a sufficiently large $n$ we have $N_1^\infty + nC \geq N_2^\infty$
since $N_1^\infty \cap \ol{M} = N_2^\infty \cap \ol{M} = M^\infty$. 
Therefore  $N_3 = (\ol{N},N_1^\infty + nC)$ dominates both $N_1$ and $N_2$.
This finishes the proof.
\end{proof}

For $M\in \ulMCor$ and $L\in \MCor$ we have a natural map
\begin{align*}
&\Phi: \underset{N\in \CompM}{\colim} \MCor(N,L) \to \ulMCor(M,\tau L),
\end{align*}
which maps a representative 
$\alpha_N\in \MCor(N,L)$ 
to $\alpha_N\circ j_N$. 
We also have a natural map
for $M, L'\in \ulMCor$ 
\begin{align*}
&\Psi: \ulMCor(L',M)  \to \underset{N\in \CompM}{\lim} \ulMCor(L',\tau N),
\end{align*}
which maps $\beta$ to $\{ j_N \circ \beta \}_N$. 

The following is an analogue to Lemma \ref{lem:mod-exists}:

\begin{lemma}\label{l1.1}
The maps $\Phi$ and $\Psi$
are isomorphisms. In other words, the formula
\[\tau^! M = \underset{N\in \Comp(M)}{``\lim" N},
\]
defines a pro-left adjoint to $\tau$, 
which is fully faithful.
\end{lemma}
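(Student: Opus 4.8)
The plan is to establish the bijectivity of $\Phi$ and $\Psi$ separately, the second being a formal consequence of the first once both sides are understood. For $\Phi$, injectivity and surjectivity will be handled by a careful analysis of closures of admissible correspondences. Surjectivity: given $\gamma \in \ulMCor(M, \tau L)$, I want to find some $N \in \CompM$ and $\alpha_N \in \MCor(N, L)$ with $\alpha_N \circ j_N = \gamma$. Since $L$ is proper, the closure $\ol{\gamma}$ of each component of $\gamma$ in $\ol{M} \times \ol{L}$ is automatically proper over $\ol{M}$, but this is not yet enough: I need an ambient compactification $\ol{N}$ of $\ol{M}$ such that $\ol{\gamma}$ (or rather its closure in $\ol{N}\times \ol{L}$) becomes \emph{proper over $k$}, equivalently proper over $\ol{N}$, and such that the admissibility inequality $p_1^* N^\infty \ge p_2^* L^\infty$ holds on the normalization of that closure. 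The idea is to first pick any $N_0 \in \CompM$ (nonempty by Lemma \ref{c1.1}), form the closure $\ol{\delta}$ of $\gamma$ in $\ol{N}_0 \times \ol{L}$, and then modify $N_0$: using Lemma \ref{lem:mod-exists} (with $\ol{X} = \ol{N}_0$, $X = N_0^\o = M^\o$), the correspondence $\gamma \in \Cor(M^\o, L^\o)$ is admissible for \emph{some} modulus structure on $\ol{N}_0$ with interior $M^\o$; replacing $N_0^\infty$ by a larger divisor $N^\infty \ge N_0^\infty$ of the same support outside $\ol{M}$ (so as to stay in $\CompM$ and dominate $N_0$), admissibility is achieved. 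Properness of $\ol{\delta}\to \ol{N}$: since $\ol{\delta} \to \ol{N}_0$ is proper over the part lying over $\ol{M}$, one uses that $\ol{L}$ is proper to conclude $\ol{\delta} \to \ol{N}_0$ is proper everywhere, hence $\ol{\delta}$ is proper over $k$.

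For injectivity of $\Phi$: suppose $\alpha_{N_1} \circ j_{N_1} = \alpha_{N_2} \circ j_{N_2}$ in $\ulMCor(M, \tau L)$ for $\alpha_{N_i} \in \MCor(N_i, L)$. Since $\CompM$ is cofiltered (Lemma \ref{c1.1}), I may pass to a common refinement $N_3 \in \CompM$ dominating both $N_1$ and $N_2$, and reduce to showing: if $\alpha, \beta \in \MCor(N, L)$ satisfy $\alpha \circ j_N = \beta \circ j_N$, then $\alpha = \beta$. But $j_N^\o : M^\o \iso N^\o$ is an isomorphism in $\Sm$, so $\alpha^\o$ and $\beta^\o$ in $\Cor(N^\o, L^\o)$ agree; since a morphism in $\MCor$ is determined by its underlying finite correspondence on interiors (the closure in the ambient product is then forced), $\alpha = \beta$. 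This is the same mechanism as in part (3) of the proof of Proposition \ref{prop:localization}.

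For $\Psi$: the pro-object $\tau^! M = ``\lim"_{N \in \CompM} N$ has, by definition of morphisms in the pro-category,
\[
\pro{}\MCor(\tau^! M, L) = \colim_{N \in \CompM} \MCor(N, L) \quad (L \in \MCor),
\]
so the bijectivity of $\Phi$ is precisely the statement that $\tau^!$ is pro-left adjoint to $\tau$ at the level of $\MCor$-valued test objects; the bijectivity of $\Psi$ expresses the dual universal property with $\ulMCor$-valued test objects $L'$, namely $\ulMCor(L', M) \iso \lim_N \ulMCor(L', \tau N)$, which says that $M$ is the limit of the diagram $N \mapsto \tau N$ in (the pro-completion of) $\ulMCor$ — equivalently, that the structure maps $j_N : M \to \tau N$ are universal. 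This follows from the cofiltered structure of $\CompM$ together with the observation that every $\beta \in \ulMCor(L', M)$ composed with $j_N$ determines $\beta$ back (again because $j_N^\o$ is an isomorphism), giving injectivity, while compatibility of a family $\{\beta_N\}$ with the transition maps forces all $\beta_N$ to descend to a single $\beta \in \ulMCor(L', M)$ by the same isomorphism-on-interiors argument, giving surjectivity. Full faithfulness of $\tau^!$ as a functor $\ulMSm \to \pro{}\MCor$ is then automatic from the adjunction: for $M, M'$ one computes $\pro{}\MCor(\tau^! M, \tau^! M') = \lim_{N'}\colim_N \MCor(N, N')$ and identifies this, via $\Phi$ and $\Psi$, with $\ulMCor(M, M')$.

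The main obstacle I expect is the surjectivity of $\Phi$: one must simultaneously arrange (a) that the chosen compactification lies in $\CompM$ — i.e., that $N^\infty$ splits as $M_N^\infty + C$ with $|C| = \ol{N}\setminus \ol{M}$ and that $M \to N$ is \emph{minimal} — and (b) that the closure of $\gamma$ is proper over $\ol{N}$ \emph{and} admissible. Reconciling minimality of $j_N$ with the admissibility inequality (which may force enlarging $N^\infty$ over $\ol{M}$, breaking minimality) is the delicate point; the resolution is that enlarging $N^\infty$ only \emph{on $C$} — i.e., away from $\ol M$ — suffices to absorb the admissibility defect, because over $\ol M$ the admissibility is already controlled by $M^\infty$ itself (the interior correspondence $\gamma$ lives in $M^\o \times L^\o$ and the relevant inequality on the part of the normalized closure lying over $\ol M$ reduces to one already valid by hypothesis, $L$ being proper with $\gamma$ a bona fide morphism to $\tau L$). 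Making this split-off argument precise, using Lemma \ref{lem:mod-exists} to produce the needed bound on $C$ and Lemma \ref{c1.1}'s construction to keep $N$ in $\CompM$, is where the real work lies.
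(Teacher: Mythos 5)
Your treatment of $\Phi$ is essentially the paper's argument: injectivity because both sides sit inside $\Cor(M^\o,L^\o)$, and surjectivity by fixing one compactification $(\ol{N},M_N^\infty+C)\in\Comp(M)$ (the paper builds it exactly as in Lemma \ref{c1.1}: compactify, make the boundary Cartier, blow up the closure of $M^\infty$ so that the splitting $N^\infty=M_N^\infty+C$ exists as Cartier divisors), observing that the closure of $\gamma$ in $\ol{N}\times\ol{L}$ differs from its closure in $\ol{M}\times\ol{L}$ only over $|C|$, and absorbing the admissibility defect by passing to $M_N^\infty+nC$ for $n\gg 0$; this is the paper's Claim \ref{cl2.1}, and the reason you give for why enlarging only along $C$ suffices is the correct one, even if you defer the bookkeeping. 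The deduction of full faithfulness of $\tau^!$ from $\Phi$ and $\Psi$ is also fine.

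The genuine gap is in $\Psi$: its surjectivity is not a formal ``dual universal property'' and is not settled by an isomorphism-on-interiors argument. An element of $\lim_{N\in\Comp(M)}\ulMCor(L',\tau N)$ amounts to a single finite correspondence $\delta\in\Cor({L'}^\o,M^\o)$ that is admissible for $(L',N)$ for \emph{every} $N$ (left properness over $\ol{L'}$ being automatic there, since $\ol{N}$ is proper), and to conclude $\delta\in\ulMCor(L',M)$ you must prove (a) that the closure of $\delta$ in $\ol{L'}\times\ol{M}$ is proper over $\ol{L'}$ --- the crux, since $\ol{M}$ is not proper --- and (b) the modulus inequality against $M^\infty$. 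Neither follows from compatibility of the family alone: for $M=(\A^1,\emptyset)$, $L'=(\P^1,\infty)$ and $\delta$ the graph of the identity of $\A^1$, one has $\delta\in\ulMCor(L',\tau N)$ for $N=(\P^1,\infty)\in\Comp(M)$, yet $\delta\notin\ulMCor(L',M)$ because its closure in $\P^1\times\A^1$ is $\A^1$, not proper over $\P^1$. The paper's proof uses in an essential way that $(\ol{N},M_N^\infty+nC)$ lies in $\Comp(M)$ for all $n\ge 1$: admissibility of $\delta$ with respect to all of these gives $\phi_{\delta}^*(\ol{L'}\times(M_N^\infty+nC))\le\phi_{\delta}^*({L'}^\infty\times\ol{N})$ with right-hand side independent of $n$, which forces the closure of $\delta$ in $\ol{L'}\times\ol{N}$ to avoid $\ol{L'}\times|C|$, hence to lie in $\ol{L'}\times\ol{M}$ and be proper over $\ol{L'}$; then (b) follows from minimality ($M_N^\infty|_{\ol{M}}=M^\infty$). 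This ``unbounded multiplicity along $C$'' step is absent from your sketch, so the descent of the compatible family to $\ulMCor(L',M)$ is unjustified as written.
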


\begin{proof}
We start with $\Phi$. 
Injectivity is obvious since both sides are subgroups of $\Cor(M^\o,L^\o)$.
We prove surjectivity.
Choose a dense open immersion $j_1: \Mb \hookrightarrow \Nb_1$ with $\Nb_1$ proper 
such that $\Nb_1-\Mb$ is the support of an effective Cartier divisor $C_1$.
Let $M_1^\infty$ be the scheme-theoretic closure of $M^\infty$ in $\Nb_1$.
 (This may not be Cartier.)
Let $\pi:\Nb_2\to \Nb_1$ be the blowup with center in $M_1^\infty$ and 
put $M_2^\infty=M_1^\infty \times_{\ol{N}_1} \ol{N}_2$
and $C_2=C_1 \times_{\ol{N}_1} \ol{N}_2$.
Note that $M_2^\infty$ and $C_2$ are effective Cartier divisors on $\ol{N}_2$.
By the universal property of the blowup \cite[Chapter~II, Proposition~7.14]{hartshorne}, $j_1$ extends to an open immersion $j_2: \Mb\to \Nb_2$ so that $j_1=\pi j_2$. 
Then $\Nb_2-M^\o$ is the support of the Cartier divisor 
$N_2^\infty:=M_2^\infty+ C_2$ so that
\[ ((\Nb_2, N_2^\infty), j_2) \in \Comp(M).\]
Now the claim for $\Phi$ follows from the following:

\begin{claim}\label{cl2.1}
For any $\alpha\in \ulMCor(M,L)$, there exists an integer $n>0$ such that
$\alpha\in \MCor((\Nb_2,M_2^\infty +nC_2),L)$.
\end{claim}

Indeed we may assume $\alpha$ is an integral closed subscheme of $M^\o\times L^\o$.
We have a commutative diagram
\[\xymatrix{
\alphab^N \ar[r]^{j_1} \ar[d]_{\phi_\alpha} & \alphab_1^N \ar[d]_{\phi_{\alpha_1}} 
& \ar[l]_{\pi}\alphab_2^N \ar[d]_{\phi_{\alpha_2}}\\
\Mb\times\Lb  \ar[r]^{j_1} & \Nb_1\times\Lb  &  \ar[l]_{\pi}\Nb_2\times\Lb \\
}\]
where $\alphab^N$ (resp. $\alphab^N_1$, resp. $\alphab^N_2$) is the normalization of the closure of 
$\alpha\subset M^\o\times L^0$ in $\Mb\times\Lb$ (resp. $\Nb_1\times\Lb$, resp. $\Nb_2\times\Lb$), and
$j_1$ and $\pi$ are induced by $j_1:\Mb\to \Nb_1$ and $\pi:\Nb_2\to \Nb_1$ respectively.
Now the admissibility of $\alpha\in \ulMCor(M,L)$ implies
\[\phi^*_\alpha(\Mb\times L^\infty) \leq \phi^*_\alpha(M^\infty \times \Lb).\]
Since $\alphab_1^N-j_1(\alphab^N)$ is supported on $\phi^{-1}_{\alpha_1}(C_1\times \Lb)$, this  yields an inclusion of closed subschemes
\[\phi^*_{\alpha_1}(\Nb_1\times L^\infty) \subseteq  \phi^*_{\alpha_1}((M_1^\infty + n C_1) \times \Lb)\]
for a sufficiently large $n>0$. Applying $\pi^*$ to this  inclusion, we get  an inequality of Cartier divisors
\[\phi^*_{\alpha_2}(\Nb_2\times L^\infty) \leq \phi^*_{\alpha_2}((M_2^\infty + n C_2) \times \Lb)\]
which proves the claim.

Next we prove that $\Psi$ is an isomorphism.
Injectivity is obvious since both sides are subgroups of $\Cor(L^\o,M^\o)$.
We prove surjectivity. Take
$$\gamma\in \underset{N\in \CompM}{\lim} \MCor(L,N).$$
Then $\gamma\in \Cor(L^\o,M^\o)$ is such that any component $\delta\subset L^\o\times M^\o$ of $\gamma$ satisfies
the following condition: 
take any $(N, j) \in \Comp(M)$
and write
$N^\infty = M_N^\infty+C$ as in Definition \ref{d2.6}.
Let $\deltab^N$ be the normalization of the closure of $\delta$ in $\Lb\times \Nb$
with the natural map $\phi_{\delta}:\deltab^N \to \Lb\times \Nb$.
Then we have
\[ \phi_{\delta}^*(\Lb\times (M^\infty_N + n C))\leq \phi_{\delta}^*(L^\infty\times \Nb)\]
for any integer $n>0$. Clearly this implies that $|\deltab|$ does not intersect with $\ol{L} \times |C|$ so that 
$\deltab\subset \Lb\times\Mb$. 
Since $\deltab$ is proper over $\ol{L}$ by assumption,
this implies $\delta\in \ulMCor(L,M)$ which proves the surjectivity of $\Psi$ as desired. 
\end{proof}

We come back to the proof of Theorem \ref{t2.1}.
It remains to consider $\tau_s$.
The natural maps
\begin{align*}
&\phi: \underset{N\in \Comp(M)}{\colim} \MSm(N,L) \to \ulMSm(M,\tau L),
\\
&\psi: \ulMSm(L',M)  \to \underset{N\in \Comp(M)}{\lim} \ulMSm(L',\tau N)
\end{align*}
are also bijective for any $M, L'\in \ulMCor$ and $L\in \MCor$.
The proof is identical to Lemma \ref{l1.1}.
In particular, 
the inclusion functor
$\tau_s : \MSm \to \ulMSm$
admits a pro-left adjoint given by 
$$\tau_s^{!} M = \underset{N\in \Comp(M)}{``\lim" N},$$ which commutes with the inclusions $\MSm\inj \MCor$ and $\ulMSm\inj \ulMCor$. 
This completes the proof of Theorem \ref{t2.1}.
\qed

\subsection{More on $\ulMP^\fin$ and $\ulMCor^\fin$}

\begin{defn}\label{deff} 
A morphism $f:M\to N$ in $\ulMP^\fin$ is in $\ul{\Sigma}^\fin$ if it is minimal (Definition~\ref{def:minimality}), $\ol{f}:\Mb\to \Nb$ is a proper morphism and $f^\o$ is an isomorphism in $\Sm$. 
We write $\Sigma^\fin$ for the class of morphisms 
in $\ul{\Sigma}^\fin$ that belong to $\MSm$.
\end{defn}

In particular, we have
${\Sigma}^\fin\subset {\Sigma}$ (see Definition \ref{d.sigma})
and
$\ul{\Sigma}^\fin \downarrow M = {\Sigma}^\fin \downarrow M$
for $M \in \MSm$.
Let us consider the inclusion functors
\begin{align}\label{eq:def-b}
&\ul{b}_s : \ulMSm^\fin \to \ulMSm, \quad
\ul{b} : \ulMCor^\fin \to \ulMCor.
\end{align}

The following commutative diagram of categories
will become fundamental
(cf. \eqref{eq:six-cat-diag}):
\begin{equation}\label{eq:six-cat-diag0}
\xymatrix{
\MCor \ar[r]^{\tau} &
\ulMCor &
\ulMCor^\fin \ar[l]_{\ul{b}}
\\
\MSm \ar[r]^{\tau_s} \ar[u]^{c}&
\ulMSm \ar[u]^{\ul{c}} &
\ulMSm^\fin. \ar[l]_{\ul{b}_s} \ar[u]^{\ul{c}^\fin}
}
\end{equation}

\begin{prop}\label{peff1} 
\leavevmode
\begin{itemize}
\item[a)] The class $\ul{\Sigma}^\fin$ enjoys a calculus of right fractions within $\ulMP^\fin$ and $\ulMCor^\fin$.
\item[b)] The functors $\ul{b}_s$ and $\ul{b}$ 
are 
localisations having 
left pro-adjoints $b_s^!$ and $b^!$.
They induce equivalences of categories
\[ (\ul{\Sigma}^\fin)^{-1} \ulMP^\fin \cong \ulMP
\quad \text{and} \quad
(\ul{\Sigma}^\fin)^{-1} \ulMCor^\fin \cong \ulMCor.
\]
\item[c)] A morphism in $\ulMCor^\fin$ (resp. $\ulMSm^\fin$) 
is invertible in $\ulMCor$ (resp. $\ulMP$) 
if and only if it belongs to $\ul{\Sigma}^\fin$.
A morphism $f$ in $\ulMCor$ (resp. $\ulMSm$) is an isomorphism 
if and only if it can be written as
$s=s_1 s_2^{-1}$ for some $s_1, s_2 \in \ul{\Sigma}^\fin$.
\end{itemize}
All statements hold for $\Sigma^\fin$ (without an underline) as well.
\end{prop}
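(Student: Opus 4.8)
The plan is to run the classical calculus-of-fractions machinery (cf. the proof of Proposition \ref{prop:localization}), treating $\ul{\Sigma}^\fin$ inside $\ulMCor^\fin$ and $\ulMP^\fin$ simultaneously, and then deduce b) and c) formally from a). First I would verify a): the axioms of Definition \ref{d.cf}. Closure under composition and identities is immediate from the definition of $\ul{\Sigma}^\fin$ (minimality is preserved by composition, properness of $\ol{f}$ is preserved by composition, and the graph of a composite of isomorphisms in $\Sm$ is again such a graph). The substantial axiom is the ``Ore condition'': given $M_1 \by{\alpha} M_2 \osi{s} M_2'$ with $s \in \ul{\Sigma}^\fin$, I need to complete the square with a morphism $s' \in \ul{\Sigma}^\fin$ on the left. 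Here one uses the graph trick, Lemma \ref{l-gt}: starting from $\ol{s}:\ol{M}_2' \to \ol{M}_2$ and the correspondence $\alpha$ with closure $\ol\alpha \subset \ol{M}_1 \times \ol{M}_2$, form the fibre product / closure of $\alpha$ against $\ol{s}$ and normalize/blow up as needed to obtain $\ol{M}_1' \to \ol{M}_1$ proper, birational on interiors, with the pulled-back modulus; admissibility of the resulting correspondence $M_1' \to M_2'$ follows from the containment lemma (Lemma \ref{containment-lemma}) and Lemma \ref{lKL} exactly as in Proposition \ref{lem:comp-admcorr}, while minimality of $s':M_1' \to M_1$ is arranged by taking $M_1'^\infty = s'^*M_1^\infty$. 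The cancellation axiom (if $sf = sg$ with $s \in \ul{\Sigma}^\fin$ then $f=g$) is trivial since $s^\o$ is an isomorphism, so $f^\o = g^\o$ and morphisms are determined by their interiors. This handles $\ulMCor^\fin$; the $\ulMP^\fin$ case is the restriction.

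For b), once a) is known, Corollary \ref{c.cf} (or the analogue of Proposition \ref{p2.6}) gives that $\ul{b}$ and $\ul{b}_s$ are localisations with $(\ul{\Sigma}^\fin)^{-1}\ulMCor^\fin$ described by right fractions, and furnishes the pro-left adjoints $b^!$, $b_s^!$ with $b^! M = \text{``}\lim\text{''}_{N \in \ul{\Sigma}^\fin \downarrow M} N$. What remains is to identify $(\ul{\Sigma}^\fin)^{-1}\ulMCor^\fin$ with $\ulMCor$, i.e. to show $\ul{b}$ sends $\ul{\Sigma}^\fin$ to isomorphisms (clear: if $\ol{f}$ is proper and $f^\o$ an isomorphism with $M^\infty = \ol{f}^*N^\infty$, then $f$ is invertible in $\ulMCor$ with inverse the transpose graph $\ol{f}^{-1}$ on interiors, using Zariski's main theorem / Lemma \ref{nexfib} to see $\ol{f}$ is then finite birational hence... — actually one must be careful, $\ol{f}$ need not be an isomorphism, but $f$ is still invertible \emph{in $\ulMCor$} because $(f^\o)^{-1}$ extends to a \emph{proper} correspondence, namely the transpose of $\ol{\Gamma_f}$, which is admissible by minimality), and that the induced functor is essentially surjective (identity on objects) and fully faithful. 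Full faithfulness amounts to showing
\[
\colim_{N \in \ul{\Sigma}^\fin \downarrow M} \ulMCor^\fin(N, N') = \ulMCor(M, N')
\]
for $M, N' \in \ulMCor^\fin$; surjectivity of this map is the graph trick again (Lemma \ref{l-gt}: any $\alpha \in \ulMCor(M,N')$ becomes a morphism in $\ulMCor^\fin$ after precomposing with a suitable $M_1 \to M$ in $\ul{\Sigma}^\fin$, since one can blow up $\ol{M}$ to make the closure of $\alpha$ finite over the ambient space using Lemma \ref{lrg}/Theorem \ref{trg}), and injectivity follows because two morphisms in $\ulMCor^\fin$ with the same interior that agree after such a precomposition already agree.

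For c): if $f \in \ul{\Sigma}^\fin$ it is invertible in $\ulMCor$ by the above. Conversely, if $f \in \ulMCor^\fin$ is invertible in $\ulMCor$, then $f^\o$ is an isomorphism in $\Sm$ (apply $\ulomega$), and writing $f^{-1}$ in $\ulMCor$ via b) as a right fraction, one checks the closure of $\ol{\Gamma_{f^\o}}$ in $\ol{M} \times \ol{N}$ is finite over \emph{both} ambient spaces, hence by Zariski's connectedness theorem (as in Remark \ref{rk-graph-trick}\eqref{gt1}) $\ol{f}$ restricted appropriately is an isomorphism onto its image, and after replacing $M$ by its normalization one gets $f \in \ul{\Sigma}^\fin$; more directly, the admissibility of both $f$ and $f^{-1}$ forces $M^\infty|_{\ol{Z}^N} = N^\infty|_{\ol{Z}^N}$ on the normalized graph, and finiteness of $\ol{f}$ together with Lemma \ref{lKL} upgrades this to $M^\infty = \ol{f}^*N^\infty$, i.e. minimality. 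The last claim — that an arbitrary isomorphism $f$ in $\ulMCor$ has the form $s_1 s_2^{-1}$ with $s_i \in \ul{\Sigma}^\fin$ — is immediate from b) (write $f$ as a right fraction $s_1 s_2^{-1}$ with $s_2 \in \ul{\Sigma}^\fin$ and $s_1 \in \ulMCor^\fin$; since $f$ and $s_2^{-1}$ are isomorphisms, so is $s_1$, hence $s_1 \in \ul{\Sigma}^\fin$ by the previous sentence). The statements for $\ulMSm^\fin$/$\ulMSm$ and for $\Sigma^\fin$ are the evident restrictions, using $\ul{\Sigma}^\fin \downarrow M = \Sigma^\fin \downarrow M$ for proper $M$.

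\emph{Main obstacle.} The delicate point is the Ore condition in a) together with surjectivity in the full-faithfulness step of b): in both places one must produce, from a correspondence and a proper modification, a \emph{new ambient space} over $\ol{M}$ that is proper and birational on interiors, carries the minimal modulus, and makes the relevant correspondence finite (resp. admissible). This is precisely what Lemma \ref{l-gt}, Lemma \ref{lrg} and Theorem \ref{trg} are built to deliver, so the work is in assembling them correctly and checking admissibility via Lemma \ref{containment-lemma} — routine but error-prone — rather than in any genuinely new idea.
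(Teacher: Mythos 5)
Your route is essentially the paper's: check the axioms of Definition \ref{d.cf} as in Proposition \ref{prop:localization}, with the Ore condition supplied by a proper birational modification of the source carrying the pulled-back modulus; deduce b) from Corollary \ref{c.cf} and Proposition \ref{p2.6} together with the colimit formula, whose surjectivity again rests on Theorem \ref{trg}; and get c) formally. One attribution should be fixed: for the Ore square with a genuine correspondence $\alpha$, the engine is not the graph trick of Lemma \ref{l-gt} (which only treats morphisms of $\ulMSm$, i.e.\ single-valued maps) but Theorem \ref{trg} (Raynaud--Gruson flattening): given $M_1\by{\alpha}M_2\yb{f}M'_2$ with $f\in\ul{\Sigma}^\fin$, the closure of $\alpha^\o$ in $\ol{M}_1\times\ol{M}'_2$ (via the isomorphism $f^\o$) is proper over $\ol{M}_1$ because $\ol{f}$ is proper and $\ol{\alpha}$ is finite over $\ol{M}_1$, and Theorem \ref{trg} then produces $f':\ol{M}'_1\to\ol{M}_1$ proper birational, an isomorphism over $M_1^\o$, over which $\alpha^\o$ extends to a finite correspondence; you do invoke Theorem \ref{trg} in your closing paragraph, so this is a slip of emphasis rather than of substance, and the rest of a), b), c) matches the paper's argument.

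The genuine gap is your sentence ``this handles $\ulMCor^\fin$; the $\ulMP^\fin$ case is the restriction''. It is not: the Ore square produced by flattening lives in $\ulMCor^\fin$, and when $\alpha$ is a morphism of $\ulMP^\fin$ the extended finite correspondence $\alpha'$ over $\ol{M}'_1$ need not be the graph of a scheme morphism, because $\ol{M}'_1$ need not be normal (cf.\ Remark \ref{rk-graph-trick} (1)); so the square need not stay in $\ulMP^\fin$. The paper repairs this with one extra step: replace $\ol{M}'_1$ by the closure $\ol{M}''_1$ of the graph of the rational map $\ol{M}'_1\tto\ol{M}'_2$, a finite birational modification, endowed with the pulled-back modulus; then $M''_1\to M'_1$ is a minimal morphism in $\ulMP^\fin$ and the induced $M''_1\to M'_2$ is in $\ulMP^\fin$. (Alternatively, for $\ulMP^\fin$ one can argue directly with the closure of the graph inside $\ol{M}_1\times_{\ol{M}_2}\ol{M}'_2$, which is proper over $\ol{M}_1$.) A smaller caveat in c): from $f$ invertible in $\ulMCor$, applying $\ulomega$ only gives that $f^\o$ is invertible in $\Cor$, not that it is the graph of an isomorphism in $\Sm$; the paper itself declares this statement ``clear'', so you are at the level of what the paper leaves implicit, but your ``apply $\ulomega$'' as written does not prove it.
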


\begin{proof} 
a) Same as the proof of Proposition \ref{prop:localization} a), except for (2): consider a diagram in $\ulMCor^\fin$
\[\begin{CD}
&& M'_2\\
&&@VfVV\\
M_1@>\alpha >> M_2
\end{CD}\]
with $f\in \ul{\Sigma}^\fin$ (in particular $f^\o$ is an isomorphism). 
By the properness of $f$, the finite correspondence $\alpha^\o:M_1^\o\to {M'_2}^\o$ satisfies the hypothesis of Theorem \ref{trg}. Applying this theorem,  we find a proper birational morphism $f':\Mb'_1\to \Mb_1$ which is an isomorphism over $M_1^\o$ and such that $\alpha^\o$ defines a finite correspondence $\alpha':\Mb'_1\to \Mb'_2$. If we define ${M'_1}^\infty={f'}^*M_1^\infty$, then $f'\in \ul{\Sigma}^\fin$ and $\alpha'\in \ulMCor^\fin(M'_1,M'_2)$. 

If $\alpha\in \ulMP^\fin(M_1,M_2)$, then $\alpha'$ is not in $\ulMP^\fin(M'_1,M'_2)$ in general (unless $\ol{M'_1}$ is normal, see Remark \ref{rk-graph-trick} \eqref{gt1}). However, write $\ol{M''_1}$ for the closure of the graph of the rational map $\alpha':\ol{M'_1}\tto \ol{M'_2}$, and $\pi$ for the projection $\ol{M''_1}\to \ol{M'_1}$: by hypothesis, $\pi$ is finite birational. Define a modulus pair $M''_1=(\ol{M''_1}, {M''_1}^\infty)$ by putting ${M''_1}^\infty:=\pi^*{M'_1}^\infty$. Then $\pi$ defines a minimal morphism $M''_1\to M'_1$ in $\ulMP^\fin$, hence the morphism $\alpha'':M''_1\to M'_2$ determined by $\alpha'$ is in $\ulMP^\fin$.

For b), all assertions are obvious
except for the equivalences,
for which 
it suffices as in Corollary \ref{c.cf} to show that for any $M,N\in \ulMCor$, the obvious maps
\[\colim_{M'\in \ul{\Sigma}^\fin \downarrow M} \ulMCor^\fin(M',N)\to \ulMCor(M,N) 
\]
and the corresponding map for $\ul{b}_s$
are isomorphisms.
These maps are clearly injective, and its surjectivity follows again from Theorem \ref{trg}.
It then follows from Proposition \ref{p2.6}
they have pro-left adjoints.

The first statement of c) is clear,
and the second follows from b).

The same proof works for $\Sigma^\fin$.
\end{proof}

\begin{cor}\label{cor:sigma-fin-cofil}
For any $M \in \ulMCor$,
the category $\ul{\Sigma}^\fin \downarrow M$ is
cofiltered.
\end{cor}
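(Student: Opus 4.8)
The plan is to deduce this formally from the calculus of right fractions for $\ul{\Sigma}^\fin$ established in Proposition \ref{peff1} a), exactly as the cofilteredness of $\MP(X)$ was obtained in Lemma \ref{ln2} a) from Proposition \ref{prop:localization} a). Since $\ulMCor$ and $\ulMCor^\fin$ have the same objects, $M$ is an object of $\ulMCor^\fin$, and $\ul{\Sigma}^\fin\downarrow M$ is the comma category formed inside $\ulMCor^\fin$: its objects are pairs $(N,s)$ with $s\in\ul{\Sigma}^\fin(N,M)$, and a morphism $(N_1,s_1)\to(N_2,s_2)$ is a morphism $g\in\ulMCor^\fin(N_1,N_2)$ with $s_2 g=s_1$. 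I would actually prove the sharper assertion that $\ul{\Sigma}^\fin\downarrow M$ is a \emph{cofiltered ordered set}; this is also the form that is convenient to use later (compare Lemma \ref{ln2} a)), and the bare statement is in any case an instance of the general fact that a class admitting a calculus of right fractions has cofiltered comma categories (Proposition \ref{p.cf} a)).

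There are three points to verify. \emph{It is an ordered set:} if $g,g'\colon(N_1,s_1)\to(N_2,s_2)$ are two morphisms, then $s_2 g=s_1=s_2 g'$; since $s_2^\o\colon N_2^\o\iso M^\o$ is an isomorphism in $\Sm$, this forces $g^\o=g'^\o$ in $\Cor(N_1^\o,N_2^\o)$, hence $g=g'$ because a morphism of $\ulMCor^\fin$ is determined by its underlying finite correspondence (this is the argument used for axiom (3) in the proof of Proposition \ref{prop:localization} a)). \emph{It is nonempty:} $\id_M$ lies in $\ul{\Sigma}^\fin(M,M)$, being minimal ($\id_M^\ast M^\infty=M^\infty$), with proper ambient morphism $\id_{\Mb}$ and interior an isomorphism; so $(M,\id_M)$ is an object. \emph{It is cofiltered:} given two objects $(N_1,s_1)$ and $(N_2,s_2)$, I would apply the square-completion (Ore) axiom of Proposition \ref{peff1} a) to the morphism $s_1\colon N_1\to M$ and the $\ul{\Sigma}^\fin$-morphism $s_2\colon N_2\to M$; this produces an object $N_3$ with a morphism $p_1\colon N_3\to N_1$ in $\ul{\Sigma}^\fin$ and a morphism $p_2\colon N_3\to N_2$ in $\ulMCor^\fin$ such that $s_1 p_1=s_2 p_2$. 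As $\ul{\Sigma}^\fin$ is closed under composition (a composite of minimal morphisms with proper ambient maps and interior isomorphisms is again of this kind, Definition \ref{deff}), the common value $s_3:=s_1 p_1=s_2 p_2$ again lies in $\ul{\Sigma}^\fin$; therefore $(N_3,s_3)$ is an object of $\ul{\Sigma}^\fin\downarrow M$ and $p_1$, $p_2$ are morphisms $(N_3,s_3)\to(N_1,s_1)$ and $(N_3,s_3)\to(N_2,s_2)$.

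I do not expect a genuine obstacle here: everything is packaged in Proposition \ref{peff1} a), and the one place where a little attention is needed is the bookkeeping — one must take the leg of the completed square landing in $\ul{\Sigma}^\fin$ to be the one lying over $N_1$, so that the induced structure map $s_3=s_1 p_1$ to $M$ remains in $\ul{\Sigma}^\fin$. The identical argument also shows that $\Sigma^\fin\downarrow M$ is a cofiltered ordered set for $M\in\MCor$.
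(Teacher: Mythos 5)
Your proof is correct and follows essentially the same route as the paper, which simply cites Proposition \ref{peff1} together with the general fact (Proposition \ref{p.cf} a)) that a class admitting a calculus of right fractions has cofiltered comma categories; you merely unwind that general argument concretely (Ore square plus closure of $\ul{\Sigma}^\fin$ under composition), and your extra observation that $\ul{\Sigma}^\fin\downarrow M$ is in fact an ordered set is a harmless strengthening.
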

\begin{proof}
This follows from Propositions \ref{peff1} and \ref{p.cf}.
\end{proof}

\begin{cor}\label{cor:fiber-cat} 
Let $\sC$ be a category and let $F:\ulMCor^\fin\to \sC$, $G:\ulMP\to \sC$ be two functors whose restrictions to the common subcategory $\ulMP^\fin$ are equal. Then $(F,G)$ extends (uniquely) to a functor $H:\ulMCor\to \sC$.
\end{cor}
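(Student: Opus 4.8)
The plan is to build $H$ out of the known structure of $\ulMCor$ as a localisation. By Proposition \ref{peff1} b), the functor $\ul{b}:\ulMCor^\fin\to\ulMCor$ is a localisation at $\ul{\Sigma}^\fin$, and likewise $\ul{b}_s:\ulMSm^\fin\to\ulMSm$ is a localisation at $\ul{\Sigma}^\fin$; moreover by Proposition \ref{peff1} c) every isomorphism in $\ulMCor$ (resp. $\ulMP$) can be written as $s_1 s_2^{-1}$ with $s_i\in\ul{\Sigma}^\fin$. So the first step is: I would check that $F:\ulMCor^\fin\to\sC$ sends morphisms in $\ul{\Sigma}^\fin$ to isomorphisms in $\sC$. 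This is where $G$ enters: a morphism $s\in\ul{\Sigma}^\fin$ lies in $\ulMP^\fin$, and $F$ agrees with $G\circ\ul{b}_s$ on $\ulMP^\fin$ by hypothesis; since $s$ becomes invertible in $\ulMP$ (Proposition \ref{peff1} c)), $G(\ul{b}_s(s))=F(s)$ is invertible in $\sC$. Hence $F$ inverts $\ul{\Sigma}^\fin$ and, by the universal property of localisation, factors uniquely as $F=\widetilde{F}\circ\ul{b}$ for a functor $\widetilde{F}:\ulMCor\to\sC$. I will set $H:=\widetilde{F}$.

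The second step is to verify that this $H$ restricts to $G$ on $\ulMP$. Consider the diagram \eqref{eq:six-cat-diag0}; restricting $\widetilde{F}$ along $\ul{c}:\ulMSm\to\ulMCor$ gives a functor $\ulMSm\to\sC$. I would compose with $\ul{b}_s$: by construction $\widetilde{F}\circ\ul{b}\circ\ul{c}^\fin=F\circ\ul{c}^\fin$, which by hypothesis equals $G\circ\ul{b}_s\circ\ul{c}^\fin$ (here I use that the restrictions of $F$ and $G$ to $\ulMP^\fin$ coincide, together with the commutativity $\ul{c}\circ\ul{b}_s=\ul{b}\circ\ul{c}^\fin$ in \eqref{eq:six-cat-diag0}). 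Thus $\widetilde{F}\circ\ul{c}$ and $G$ become equal after precomposition with $\ul{b}_s:\ulMSm^\fin\to\ulMSm$; since $\ul{b}_s$ is a localisation (in particular essentially surjective and full up to the calculus of fractions), two functors out of $\ulMSm$ that agree after precomposing with $\ul{b}_s$ are equal. Hence $\widetilde{F}\circ\ul{c}=G$, i.e.\ $H$ restricts to $G$ on $\ulMP$, and $H$ restricts to $F$ on $\ulMCor^\fin$ by construction.

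For uniqueness: if $H'$ is another functor with $H'\circ\ul{b}=F$ and $H'\circ\ul{c}=G$, then in particular $H'\circ\ul{b}=F=H\circ\ul{b}$, and since $\ul{b}$ is a localisation (so epimorphic in the relevant $2$-categorical sense — any functor out of $\ulMCor$ is determined by its composite with $\ul{b}$), we get $H'=H$.

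The main obstacle I anticipate is the bookkeeping in the second step: making precise and correct the claim that a functor out of $\ulMSm$ is determined by its restriction along $\ul{b}_s$, and that the two functors $\widetilde{F}\circ\ul{c}$ and $G$ really do agree \emph{on morphisms} and not merely on objects. Concretely, every morphism in $\ulMP$ is of the form $t_1 t_2^{-1}$ (or a zig-zag) with $t_i$ coming from $\ulMP^\fin$ via $\ul{b}_s$ by Proposition \ref{peff1} b)--c), so one must track such a presentation through both functors and use that $F|_{\ulMP^\fin}=G|_{\ulMP^\fin}$ on those generators together with the fact that $G$ is defined on all of $\ulMP$ (so inverts the $t_2$'s coherently). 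This is routine given Proposition \ref{peff1}, but it is the only place where a genuine argument, rather than a formal diagram chase, is needed.
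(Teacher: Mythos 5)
Your proposal is correct and follows essentially the same route as the paper: the hypothesis (via Proposition \ref{peff1} c)) shows $F$ inverts $\ul{\Sigma}^\fin$, and then everything follows from the localisation statement of Proposition \ref{peff1} b). Your second and third steps merely spell out the agreement with $G$ on $\ulMP$ and the uniqueness, which the paper leaves implicit in ``the conclusion now follows from Proposition \ref{peff1} b)'', and they are handled correctly since $\ul{b}$ and $\ul{b}_s$ are the identity on objects and every morphism in the localised categories is a right fraction.
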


\begin{proof} The hypothesis implies that $F$ inverts the morphisms in $\ul{\Sigma}^\fin$; the conclusion now follows from Proposition \ref{peff1} b).
\end{proof}

\begin{cor}\label{c2.1} 
Any modulus pair in $\ulMP$ is isomorphic to a modulus pair $M$ in which $\ol{M}$ is normal. Under resolution of singularities, we may even choose $\ol{M}$ smooth and the support of $M^\infty$ to be a divisor with normal crossings. 
\end{cor}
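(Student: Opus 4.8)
The plan is to prove the two assertions separately; the first is unconditional, the second takes resolution of singularities as input.

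For the first assertion there is essentially nothing to do. Given a modulus pair $M$, regarded as an object of $\ulMSm^\fin$, Remark~\ref{rk-graph-trick}~\eqref{gt2} already records that the normalisation morphism $M^N=(\ol{M}^N, M^\infty|_{\ol{M}^N})\to M$ is an isomorphism in $\ulMP$, and by construction $\ol{M^N}=\ol{M}^N$ is normal. So I would simply cite this.

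For the second assertion, assume resolution of singularities holds over $k$. Using the first assertion, together with the fact that a normal Noetherian scheme is a finite disjoint union of normal integral schemes, I would first reduce to the case where $\ol{M}$ is integral, by applying the argument below to each connected component of $\ol{M}^N$ and taking the obvious coproduct at the end. Then I would apply resolution of singularities, in its strong form, to the pair $(\ol{M},M^\infty)$: this produces a proper birational morphism $\pi:\ol{M}'\to\ol{M}$ with $\ol{M}'$ smooth over $k$ such that $\pi^{-1}(|M^\infty|)$ is a normal crossing divisor on $\ol{M}'$, and such that $\pi$ is an isomorphism over the open locus where $\ol{M}$ is smooth and $|M^\infty|$ has normal crossings; since $|M^\infty|$ meets $M^\o$ in the empty set, that locus contains $M^\o$, so $\pi$ is an isomorphism over $M^\o$. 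As $\pi$ is dominant and $|M^\infty|$ is a proper closed subset, no irreducible component of $\ol{M}'$ is carried into $|M^\infty|$, so $\pi^*M^\infty$ is a well-defined effective Cartier divisor on $\ol{M}'$, with support $\pi^{-1}(|M^\infty|)$ and complement $\pi^{-1}(M^\o)\cong M^\o$. Hence, by Remark~\ref{r2.1}~(4), $M':=(\ol{M}',\pi^*M^\infty)$ is a modulus pair, with $\ol{M}'$ smooth and $|{M'}^\infty|$ a normal crossing divisor, and $\pi$ defines a minimal morphism $p:M'\to M$ in $\ulMSm^\fin$. Since moreover $\ol{p}=\pi$ is proper and $p^\o:{M'}^\o\iso M^\o$ is an isomorphism in $\Sm$, the morphism $p$ belongs to $\ul{\Sigma}^\fin$ (Definition~\ref{deff}), hence is invertible in $\ulMP$ by Proposition~\ref{peff1}~c). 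Therefore $M\cong M'$ in $\ulMP$, as desired.

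I expect the only genuinely delicate point to be pinning down the exact input from resolution of singularities: one needs the version that is an isomorphism over the locus where the variety is already smooth and the divisor already normal crossing, since this is precisely what forces $p^\o$ to be an \emph{isomorphism} rather than merely a proper birational map, and hence places $p$ in $\ul{\Sigma}^\fin$. This is standard — for instance the functorial resolution of Bierstone--Milman, already cited elsewhere in the paper, has this property. The remaining points — the reduction to integral $\ol{M}$, and the fact that $\pi^{-1}(|M^\infty|)$ is a genuine normal crossing divisor (being a union of components of the normal crossing total transform) — are routine.
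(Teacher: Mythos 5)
Your proof is correct and follows essentially the same route as the paper's: take a proper morphism $\pi:\ol{M}'\to\ol{M}$ that is an isomorphism over $M^\o$ (normalisation for the first assertion, a resolution for the second), pull back the modulus, and observe that the resulting minimal morphism lies in $\ul{\Sigma}^\fin$ and is therefore invertible in $\ulMP$ by Proposition~\ref{peff1}~c). The paper treats both cases uniformly in one sentence, whereas you spell them out separately and add a harmless but unnecessary reduction to the integral case.
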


\begin{proof} 
Let $M_0\in \ulMP$. Consider a proper morphism $\pi:\ol{M}\to \ol{M_0}$ which is an isomorphism over $M_0^\o$. Define $M^\infty:=\pi^*M_0^\infty$. Then the induced morphism $\pi:M\to M_0$ of $\ulMP^\fin$ is in $\ul{\Sigma}^\fin$, hence invertible in $\ulMP$. The corollary readily follows.
\end{proof}

We also have the following important lemma:

\begin{lemma}\label{l1.3}
Let $M, L, N \in \ulMP$.
Let $f: L \to N$ be a minimal morphism in $\ulMP^\fin$
such that $\ol{f} : \ol{L} \to \ol{N}$ is  faithfully flat. 
Then the diagram 
\[
\xymatrix{
\ulMCor(N, M) \ar[r]^{f^*} \ar@{^{(}->}[d]
&\ulMCor(L, M) \ar@{^{(}->}[d]
\\
\Cor(N^\o, M^\o) \ar[r]_{(f^\o)^*}
&\Cor(L^\o, M^\o) 
}
\]
%
%
is cartesian.
The same holds when 
$\ulMCor$ is replaced by $\ulMCor^\fin$.
\end{lemma}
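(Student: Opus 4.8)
The plan is to show that a left proper admissible correspondence $\alpha \in \Cor(L^\o,M^\o)$ which arises as $(f^\o)^*\beta$ for some $\beta \in \ulMCor(N,M)$ already lies in $\ulMCor(N,M)$; that is, the admissibility condition for $\beta$ with respect to $(N,M)$ follows from the admissibility of $\alpha$ with respect to $(L,M)$, and the left properness of $\beta$ over $\ol N$ follows from that of $\alpha$ over $\ol L$ together with the faithful flatness of $\ol f$. Concretely, one reduces immediately to the case of a single integral component. On the side of Voevodsky correspondences, $\alpha = (f^\o)^*\beta$ means that $\alpha$ is (a sum of components of) the pullback of $\beta$ along $f^\o: L^\o \to N^\o$; since $f^\o$ is an isomorphism in $\Sm$ (it is minimal in $\ulMSm^\fin$ and $\ol f$ is faithfully flat, hence $\ol f$ is surjective with $L^\o \to N^\o$ forced to be an isomorphism because $L^\o, N^\o$ are the smooth interiors), the correspondence $\beta$ is determined by $\alpha$, and the whole content is about closures and pullbacks of Cartier divisors.

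First I would set up the relevant normalizations: let $\ol\beta$ be the closure of $\beta$ in $\ol N \times \ol M$ with normalization $\ol\beta^N \by{\phi_\beta} \ol N \times \ol M$, and let $\ol\alpha$ be the closure of (the corresponding component of) $\alpha$ in $\ol L \times \ol M$ with normalization $\ol\alpha^N \by{\phi_\alpha} \ol L \times \ol M$. Since $\ol f : \ol L \to \ol N$ is faithfully flat, the base change $\ol L \times_{\ol N} \ol\beta \to \ol\beta$ is faithfully flat, and one checks that its normalization (or the normalization of the relevant component) maps onto $\ol\alpha^N$ via a finite surjective morphism $g: \ol\alpha^N \to \ol\beta^N$ compatible with the projections, using that $f^\o$ is an isomorphism on interiors so that the closures match up birationally — here the "no extra fibre" Lemma \ref{nexfib} and the containment Lemma \ref{containment-lemma} are the natural tools. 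The key diagram is then
\[
\xymatrix{
\ol\alpha^N \ar[r]^-{g} \ar[d]_{\phi_\alpha} & \ol\beta^N \ar[d]^{\phi_\beta}\\
\ol L \times \ol M \ar[r]_-{\ol f \times \id} & \ol N \times \ol M.
}
\]
The admissibility of $\alpha$ says $\phi_\alpha^*(\ol L \times M^\infty) \le \phi_\alpha^*(L^\infty \times \ol M)$. Now $L^\infty = \ol f^* N^\infty$ by minimality of $f$, so $\phi_\alpha^*(L^\infty \times \ol M) = g^* \phi_\beta^*(N^\infty \times \ol M)$, and likewise $\phi_\alpha^*(\ol L \times M^\infty) = g^*\phi_\beta^*(\ol N \times M^\infty)$. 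Thus $g^*$ of the difference $\phi_\beta^*(N^\infty \times \ol M) - \phi_\beta^*(\ol N \times M^\infty)$ is effective; since $g$ is surjective (and $\ol\beta^N$, $\ol\alpha^N$ are normal integral, after reducing to components), Lemma \ref{lKL} gives that $\phi_\beta^*(\ol N \times M^\infty) \le \phi_\beta^*(N^\infty \times \ol M)$, which is exactly the admissibility of $\beta$ for $(N,M)$.

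It remains to handle left properness: $\beta$ left proper means $\ol\beta \to \ol N$ is proper. We are given $\alpha = (f^\o)^*\beta$ is left proper, i.e. $\ol\alpha \to \ol L$ is proper, and that $\ol f$ is faithfully flat. Since $\ol L \times_{\ol N}\ol\beta$ surjects onto $\ol\beta$ (faithful flatness) and also dominates $\ol\alpha$, one shows $\ol\alpha = (\ol L \times_{\ol N} \ol\beta)_{\red}$ on the relevant component using Lemma \ref{nexfib} and the fact that $f^\o$ is an isomorphism on interiors (so there are no extra fibres over $\ol L$ either); then $\ol\alpha \to \ol L$ being proper, and $\ol L \to \ol N$ being faithfully flat (in particular surjective), fppf descent of properness gives that $\ol\beta \to \ol N$ is proper. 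Finally one must go back the other way: starting from $\beta \in \ulMCor(N,M)$, the pullback $(f^\o)^*\beta$ is automatically in $\ulMCor(L,M)$ — admissibility is preserved under pullback of Cartier divisor inequalities and left properness is preserved under base change — so the square is a genuine fibre product and not merely a monomorphism into one. The identical argument works verbatim with the finiteness condition in place of properness, giving the $\ulMCor^\fin$ statement. I expect the main obstacle to be the bookkeeping needed to identify $\ol\alpha$ (or its components) precisely with (components of) $\ol L \times_{\ol N}\ol\beta$ and to produce the finite surjective map $g$ between the normalizations cleanly when $\ol N$ (hence $\ol L$, $\ol\beta$, $\ol\alpha$) is not assumed normal — this is where Lemmas \ref{nexfib} and \ref{containment-lemma} must be invoked carefully, and where reductions to integral components via Remark \ref{l3.3} are needed.
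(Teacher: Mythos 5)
Your overall skeleton is the same as the paper's (descend the admissibility inequality via Lemma \ref{lKL} after using minimality to rewrite $L^\infty=\ol{f}^*N^\infty$, and descend left properness along the faithfully flat $\ol{f}$), but the argument is built on a false claim: that $f^\o$ is an isomorphism in $\Sm$. Minimality plus faithful flatness only give $L^\o=\ol{f}^{-1}(N^\o)$, so $f^\o$ is faithfully flat but by no means injective; in the lemma's main use (Proposition \ref{prop:rep-sheaf}) $\ol{f}$ is an \'etale cover, and even $\ol{L}=\ol{N}\times\P^1$ with $L^\infty=N^\infty\times\P^1$ satisfies the hypotheses. You invoke this claim precisely where the geometric content sits: to say the closures ``match up birationally'', to produce a \emph{finite} surjective $g:\ol{\alpha}^N\to\ol{\beta}^N$ (finiteness fails in the $\P^1$-example, though it is not needed for Lemma \ref{lKL}), and to identify $\ol{\alpha}$ with $(\ol{L}\times_{\ol{N}}\ol{\beta})_{\red}$ via the ``no extra fibre'' Lemma \ref{nexfib}, which is not the relevant tool here. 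The correct justification, and the one the paper uses, is flatness itself: since $\ol{f}$ is flat, every irreducible component of $\ol{L}\times_{\ol{N}}\ol{\beta}$ dominates $\ol{\beta}$, hence meets the preimage of $\beta\cap(N^\o\times M^\o)$; consequently $(\ol{L}\times_{\ol{N}}\ol{\beta})_{\red}$ is the union of the closures of the components of $(f^\o)^*\beta$, all proper over $\ol{L}$ by hypothesis, so properness of $\ol{\beta}\to\ol{N}$ follows by faithfully flat descent \cite[expos\'e~VIII, corollaire~4.8]{SGA1}; the same surjectivity (not birationality) of $\ol{\alpha}^N\to\ol{\beta}^N$ is what Lemma \ref{lKL} needs for the divisor inequality. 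Once stated this way your divisor computation using $L^\infty=\ol{f}^*N^\infty$ is exactly the paper's.

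A second, smaller gap: the reduction to a single integral component is not ``immediate''. Coefficients in $\Cor(N^\o,M^\o)$ may be negative, so if the pullbacks of two distinct integral correspondences $V,V'$ on $N^\o$ shared a component, cancellation in $(f^\o)^*\beta$ could hide a non-admissible or non-left-proper component of $(f^\o)^*V$ from the hypothesis. One must therefore prove that such pullbacks have no common component; the paper does this by reducing, via the injectivity of $\Cor(N^\o,M^\o)\to\Cor(k(N^\o),M^\o)$, to the case where the sources are fields. With these two repairs (flatness in place of the bogus isomorphism claim, and the no-common-component argument), your proof becomes essentially the paper's, including the verbatim transposition to $\ulMCor^\fin$.
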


\begin{proof}
As the second statement is proven in a 
completely parallel way,
we only prove the first one.
Take $\alpha \in \Cor(N^\o, M^\o)$
such that $(f^\o)^*(\alpha) \in \ulMCor(L, M)$.
We need to show $\alpha \in \ulMCor(N, M)$.

We first reduce to the case where $\alpha$ is integral.
To do this, it suffices to show that
for two distinct integral finite correspondences
$V, V' \in \Cor(N^\o, M^\o)$, $(f^\o)^*(V)$ and $(f^\o)^*(V')$ have no common component. For this, we may assume $M^\o$ and $N^\o$ integral.
By the injectivity of 
$\Cor(N^\o, M^\o) \to \Cor(k(N^\o), M^\o)$,
this can be reduced to the case  where $N^\o$ and $L^\o$ are fields, 
and then the claim is obvious.

Now assume $\alpha$ is integral
and put $\beta := (f^\o)^*(\alpha)$.
We have a commutative diagram
\[ 
\xymatrix{
\ol{\beta}^N
\ar@/^3ex/[rr]^{\phi_{\beta}}
\ar[r]^{} \ar[d]_{f^N}
& 
\ol{\beta}
\ar@/^3ex/[rr]^{a'}
\ar[r] \ar[d]
&
\ol{L} \times \ol{M}
\ar[r] \ar[d]
&
\ol{L} \ar[d]_{\ol{f}}
\\
\ol{\alpha}^N
\ar@/_3ex/[rr]_{\phi_{\alpha}}
\ar[r]^{}
&
\ol{{\alpha}}
\ar@/_3ex/[rr]_{a}
\ar[r]
&
\ol{N} \times \ol{M}
\ar[r]
&
\ol{N}.
} 
\]
Here 
$\ol{\alpha}$ (resp. $\ol{\beta}$)
is the closure of $\alpha$ (resp. $\beta$)
in $\ol{N} \times \ol{M}$ (resp. $\ol{L} \times \ol{M}$)
and $\ol{\alpha}^N$ (resp. $\ol{\beta}^N$) 
is the normalization of 
$\ol{\alpha}$ (resp. $\ol{\beta}$).
By hypothesis
$a'$ is proper and $\ol{f}$ is faithfully flat.
This implies that $a$ is proper \cite[expos\'e~VIII, corollaire~4.8]{SGA1}.
We also have
\begin{align*}
(f^N)^*&(\phi_{\alpha}^*(N^\infty \times \ol{M}))
=
\phi_{\beta}^*(\ol{f}^*(N^\infty) \times \ol{M}))
\\
&=\phi_{\beta}^*(L^\infty \times \ol{M}) \geq
 \phi_{\beta}^*(\ol{L} \times M^\infty)
=(f^N)^* (\phi_{\alpha}^*(\ol{N} \times M^\infty))
\end{align*}
(the second equality by the minimality of $f$).  Note that $f^N$ is surjective since $\ol{f}$ is. Hence Lemma \ref{lKL} shows that
$\phi_{\alpha}^*(N^\infty \times \ol{M}) \geq
\phi_{\alpha}^*(\ol{N} \times M^\infty)$,
and we are done.
\end{proof}

\subsection{Fiber products and squarable morphisms}\label{sect:squarable} We need the following elementary lemma.

\begin{lemma}\label{lem:have-sup}
Let $X$ be a scheme.
For two effective Cartier divisors $D$ and $E$ on $X$,
the following conditions are equivalent:
\begin{enumerate}
\item $D \times_X E$ is an effective Cartier divisor on $X$.
\item There exist effective Cartier divisors
$D', E'$ and $F$ on $X$ such that
$D=D'+F, ~E=E'+F$ and $|D'| \cap |E'|=\emptyset$.
\end{enumerate}
Moreover, the divisors $D', E'$ and $F$ satisfying the conditions in (2) 
are uniquely determined by $D$ and $E$.
\end{lemma}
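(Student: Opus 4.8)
The plan is to work Zariski-locally on $X$, since all the conditions in question (being an effective Cartier divisor, being a sum of effective Cartier divisors, the supports being disjoint, and uniqueness) can be checked on an affine open cover. So write $X = \Spec A$ and let $D = \Div(f)$, $E = \Div(g)$ for non-zero-divisors $f, g \in A$; then $D \times_X E$ is the closed subscheme $\Spec A/(f,g)$, and the claim is that this is an effective Cartier divisor precisely when $f, g$ admit a common factorization of the stated type.

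For the direction (2) $\Rightarrow$ (1): if $D = D' + F$, $E = E' + F$ with $|D'| \cap |E'| = \emptyset$, write $F = \Div(h)$, $D' = \Div(f')$, $E' = \Div(g')$ so that $f = f'h$, $g = g'h$ up to units. Then $(f,g) = (f'h, g'h)$, and I want to show this ideal is locally principal, generated by $h$. The point is that $|D'| \cap |E'| = \emptyset$ means the ideals $(f')$ and $(g')$ together generate the unit ideal locally — more precisely, at any prime $\fp$, either $f'$ or $g'$ is a unit in $A_\fp$ — so localizing further I may assume $f'$ is a unit, whence $(f'h, g'h) = (h)$, which is principal generated by a non-zero-divisor. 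This shows $D \times_X E = \Div(h) = F$ is an effective Cartier divisor (and incidentally identifies it with $F$).

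For the direction (1) $\Rightarrow$ (2), which I expect to be the main obstacle: the natural approach is to first reduce to the case where $A$ is a local ring, then further reduce to the case where $A$ is a discrete valuation ring or at least a UFD by a suitable argument — but since $X$ is merely a scheme, $A$ need not be normal or factorial, so one cannot simply factor $f$ and $g$ into primes. The correct framework is to use that effective Cartier divisors on $X$ form a commutative monoid under addition in which one can \emph{take gcd's} exactly when the relevant fiber product is Cartier: the hypothesis that $(f,g)$ is locally principal, say $(f,g) = (h)$ with $h$ a non-zero-divisor (this is automatic: $h$ divides $f$ which is a non-zero-divisor), lets me write $f = f'h$, $g = g'h$ and $h = af + bg = (af' + bg')h$, so $af' + bg' = 1$ since $h$ is a non-zero-divisor. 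Thus $(f') + (g') = A$, which means $|\Div(f')| \cap |\Div(g')| = \emptyset$ (the two closed subschemes $V(f'), V(g')$ are disjoint). Setting $F = \Div(h)$, $D' = \Div(f')$, $E' = \Div(g')$ gives the desired decomposition, provided I check that $f', g', h$ are non-zero-divisors — $h$ is because it divides $f$, and $f', g'$ are because multiplying by them (times $h$) gives the non-zero-divisors $f, g$, using that $h$ is a non-zero-divisor to cancel.

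For uniqueness: suppose $D = D_1' + F_1 = D_2' + F_2$ and similarly for $E$, with both decompositions satisfying the disjointness condition. Working locally and using the first part, $F_i = D \times_X E$ in each case (this is exactly what the (2) $\Rightarrow$ (1) computation showed — the common part $F$ is forced to equal the fiber product), so $F_1 = F_2$; then $D_1' = D - F_1 = D - F_2 = D_2'$ by cancellation in the monoid of effective Cartier divisors (subtraction of a fixed effective Cartier divisor is well-defined and injective on the sub-monoid of those $\geq F$), and likewise $E_1' = E_2'$. The only genuinely delicate point throughout is keeping track of non-zero-divisor hypotheses so that all the cancellations are legitimate, and making sure the local statements glue — but since effective Cartier divisors are a Zariski-local notion and the constructed $D', E', F$ are canonical (hence automatically glue), this causes no trouble.
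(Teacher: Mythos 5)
Your proof is correct and follows essentially the same route as the paper's: reduce to the affine case with $D,E$ cut out by non-zero-divisors, factor out a generator $h$ of $(f,g)$ and cancel the non-zero-divisor $h$ to get $(f')+(g')=A$ (equivalently, disjoint supports), with the converse and the uniqueness both coming from the identity $F=D\times_X E$. The only differences are cosmetic (you spell out the localization in (2)$\Rightarrow$(1) and use $h=af+bg$ where the paper writes $(f)=(d',e')(f)$).
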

\begin{proof}
We may suppose $X=\Spec A$ is affine
and $D, E$ are defined by non-zero-divisors $d, e \in A$, respectively.

Suppose (1).
This means that $(d, e)=(f)$ for some non-zero-divisor $f \in A$,
because  $D \times_X E = \Spec A/(d, e)$.
Thus there are $d', e' \in A$ such that $d=d'f$ and $e=e'f$.
Since $(f)=(d', e')(f)$ and $f$ is a non-zero-divisor,
we have $(d', e')=A$.
Now (2) holds by taking $D', E', F$ to be
the Cartier divisors defined by $(d'), (e')$ and $(f)$.

(2) immediately implies $F=D \times_X E$, whence (1).
This formula also implies the uniqueness of $F$,
hence $D'=D-F$ and $E'=E-F$ are unique as well.
\end{proof}

\begin{definition}
Let $D$ and $E$ be effective Cartier divisors on a scheme $X$.
If the conditions of Lemma \ref{lem:have-sup} hold,
we say that $D$ and $E$ \emph{have a universal supremum},
and write 
\[ \sup(D, E):=D'+E'+F (= D+E-F). \]
\end{definition}

\begin{remark}\label{rem:sup-div}
Let $D$ and $E$ be effective Cartier divisors on $X$
having a universal supremum.
The following are obvious from the definition.
\begin{enumerate}
\item We have $|\sup(D, E)|=|D| \cup |E|$.
\item 
If $f : Y \to X$ is a morphism
such that $f(T) \not\subset |D|\cup |E|$ 
for any irreducible component $T$ of $Y$,
then $f^*D$ and $f^*E$ have a universal supremum 
which is equal to  $f^* \sup(D, E)$ (hence the name ``universal'').
\item 
If moreover $Y$ is normal,
then $f^* \sup(D, E)$ agrees with
the supremum of $f^*D$ and $f^*E$
computed as a Weil divisor on $Y$. 
\end{enumerate}
\end{remark}

Let $u_i : U_i \to M$ be morphisms in $\ulMSm^\fin$ for $i=1, 2$
with projections
$p_i : \ol{W}_0 := \ol{U}_1 \times_{\ol{M}} \ol{U}_2 \to \ol{U}_i$.
Denote by $\ol{W}_1$ the union of irreducible components $T$
of $\ol{W}_0$ such that $p_i(T) \not\subset |U_i^\infty|$
for each $i=1, 2$.
Observe that $\ol{W}_1$ is the closure of $U:=U_1^\o \times_{M^\o} U_2^\o$ in $\ol{W}_0$.
Indeed, let $Z$ be the closure of $U$ in $\ol{W}_0$. Then any irreducible component $T$ of $Z$ meets $U$, which implies that $T\subset \ol{W}_1$. Conversely, any irreducible component $T$ of $\ol{W}_1$ meets $U$, hence $T \cap U$ is dense in $T$ and thus $T \subset  Z$.

We write $q_i : \ol{W}_1 \to \ol{U}_i$ for the composition of
the inclusion $\ol{W}_1 \to \ol{W}_0$ and $p_i$.
By definition, we have 
effective Cartier divisors $q_i^*(U^\infty_i)$ on $\ol{W}_1$
and $q_1 \times q_2$ restricts to an isomorphism
\begin{equation}\label{eq:inside}
\ol{W}_1 \setminus |q_1^*(U_1^\infty)+q_2^*(U_2^\infty)|
\simeq U_1^\o \times_{M^\o} U_2^\o.
\end{equation}

\begin{prop}\label{prop:fiber-prod}
Suppose that
$U_1^\o \times_{M^\o} U_2^\o$ is smooth over $k$.
\begin{enumerate}
\item 
If $q_1^*U_1^\infty$ and $q_2^*U_2^\infty$ have a universal supremum,
then 
\[ W_1:=(\ol{W}_1, \sup(q_1^*U_1^\infty, q_2^*U_2^\infty)) 
\in \ulMSm^\fin
\]
represents the fiber product of $U_1$ and $U_2$ over $M$
in $\ulMSm^\fin$ as well as in $\ulMSm$. 
If further $U_1, U_2, M \in \MSm^\fin$,
then it holds in $\MSm^\fin$ as well as in $\MSm$.
\item If $u_1$ is minimal and $\ol{U}_2$ is normal, then $q_1^\ast U_1^\infty$ and $q_2^\ast U_2^\infty$ have a  universal supremum,  namely $q_2^\ast U_2^\infty$, and the morphism $W_1 \to U_2$ is a minimal morphism in $\ulMSm^\fin$. If moreover $\ol{u}_1$ is flat\footnote{By the local criterion of flatness \cite[Chapter~III, Lemma~10.3.A]{hartshorne}, this is equivalent to the flatness of $U_1^\infty\to M^\infty$.}, we have $\ol{W}_1=\ol{W}_0$.
\item 
In general, 
there is a proper birational morphism
$\pi : \ol{W}_2 \to \ol{W}_1$
which restricts to an isomorphism over
$\ol{W}_1 \setminus |q_1^*(U_1^\infty)+q_2^*(U_2^\infty)|$,
and such that
$r_1^*U_1^\infty$ and $r_2^*U_2^\infty$ have a universal supremum,
where $r_i := q_i \pi$ for $i=1, 2$.
For such $\ol{W}_2$,
\[ W_2:=(\ol{W}_2, \sup(r_1^*U_1^\infty, r_2^*U_2^\infty)) 
\in \ulMSm
\]
represents the fiber product of $U_1$ and $U_2$ over $M$
in $\ulMSm$.
If further $U_1, U_2, M \in \MSm$,
then it holds in $\MSm$.
\end{enumerate}
\end{prop}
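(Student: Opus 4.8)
The plan is to prove (1) directly, derive (2) from it by a divisor computation, and obtain (3) from (1) by blowing up; statements in $\ulMSm$ (resp. $\MSm$) will be deduced from those in $\ulMSm^\fin$ (resp. $\MSm^\fin$) via the calculus of right fractions for $\ul{\Sigma}^\fin$ (resp. $\Sigma^\fin$) established in Proposition \ref{peff1}. For part (1), since $|\sup(q_1^*U_1^\infty,q_2^*U_2^\infty)| = |q_1^*U_1^\infty| \cup |q_2^*U_2^\infty|$ by Remark \ref{rem:sup-div}(1), the isomorphism \eqref{eq:inside} identifies the interior $W_1^\o$ with $U_1^\o \times_{M^\o} U_2^\o$, which is smooth by hypothesis; hence $W_1$ is a modulus pair, and as $W_1^\infty \ge q_i^*U_i^\infty$ the projections $q_i : \ol W_1 \to \ol{U}_i$ define morphisms $q_i : W_1 \to U_i$ in $\ulMSm^\fin$ with $u_1 q_1 = u_2 q_2$. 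For the universal property in $\ulMSm^\fin$, given $g_i : T \to U_i$ in $\ulMSm^\fin$ ($T \in \ulMSm^\fin$) agreeing over $M$, the underlying scheme maps combine to $\ol g : \ol T \to \ol W_0$, which factors through $\ol W_1$ because $\ol W_1$ is the closure of $U_1^\o \times_{M^\o} U_2^\o$ in $\ol W_0$ and $T^\o$ is dense in $\ol T$; one then checks that $\ol g$ defines a morphism $T \to W_1$ in $\ulMSm^\fin$, necessarily the unique one over $(g_1,g_2)$ (injectivity being clear from $\ol W_1 \subseteq \ol{U}_1 \times_{\ol M} \ol{U}_2$). The admissibility of $\ol g$ amounts to $T^\infty \ge \ol g^* W_1^\infty$; since $T^\infty \ge \ol g^* q_i^* U_i^\infty$ for $i=1,2$ and the two divisors $\ol g^* q_1^* U_1^\infty$, $\ol g^* q_2^* U_2^\infty$ have universal supremum $\ol g^* W_1^\infty$ (Remark \ref{rem:sup-div}(2)), this follows from the elementary fact — which I would record as a lemma, proved by a one-line local computation from the coprimality in Lemma \ref{lem:have-sup}(2) — that $D, E \le G$ forces $\sup(D,E) \le G$ whenever $D, E$ have a universal supremum.

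To pass to $\ulMSm$ in part (1): by Proposition \ref{peff1}, $\ul{b}_s : \ulMSm^\fin \to \ulMSm$ is the localization at $\ul{\Sigma}^\fin$, which admits a calculus of right fractions, and $\ul{\Sigma}^\fin \downarrow T$ is cofiltered (Corollary \ref{cor:sigma-fin-cofil}); hence $\ulMSm(T,-) = \colim_{(T' \to T) \in \ul{\Sigma}^\fin \downarrow T} \ulMSm^\fin(T',-)$, and since filtered colimits of sets commute with finite limits, the fiber-product property of $W_1$ passes from $\ulMSm^\fin$ to $\ulMSm$. When $U_1, U_2, M \in \MSm^\fin$, the ambient $\ol W_0 = \ol{U}_1 \times_{\ol M} \ol{U}_2$ is proper over $k$, so $\ol W_1$ and $W_1$ are too, and the same two arguments with the $\Sigma^\fin$-version of Proposition \ref{peff1} give the $\MSm^\fin$ and $\MSm$ statements. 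For part (2), minimality of $u_1$ gives $q_1^* U_1^\infty = q_1^* \ol{u}_1^* M^\infty = q_2^* \ol{u}_2^* M^\infty$ (using $\ol{u}_1 q_1 = \ol{u}_2 q_2$), and admissibility of $u_2$ with $\ol{U}_2$ normal gives $\ol{u}_2^* M^\infty \le U_2^\infty$, whence $q_1^* U_1^\infty \le q_2^* U_2^\infty$; as $D \le E$ forces $D \times_X E = D$ to be Cartier, such $D, E$ have universal supremum $E$, so $W_1^\infty = q_2^* U_2^\infty$, part (1) applies, and $q_2 : W_1 \to U_2$ is minimal. If $\ol{u}_1$ is flat then $p_2 : \ol W_0 \to \ol{U}_2$ is flat, hence generizing, so every component of $\ol W_0$ dominates a component of $\ol{U}_2$ and is not contained in $|U_2^\infty|$; via $\ol{u}_1 p_1 = \ol{u}_2 p_2$ and $|U_1^\infty| = \ol{u}_1^{-1}|M^\infty|$ it is not contained in $|U_1^\infty|$ either, so $\ol W_1 = \ol W_0$.

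For part (3), I would take $\ol W_2 := \Bl_Z(\ol W_1)$ with $Z$ defined by the ideal $\sI_{q_1^*U_1^\infty} + \sI_{q_2^*U_2^\infty}$: then $\pi : \ol W_2 \to \ol W_1$ is proper birational, an isomorphism over $\ol W_1 \setminus (|q_1^*U_1^\infty| \cap |q_2^*U_2^\infty|) \supseteq W_1^\o$, and $\pi^{-1}(\sI_{q_1^*U_1^\infty} + \sI_{q_2^*U_2^\infty})\sO_{\ol W_2} = \sI_{r_1^*U_1^\infty} + \sI_{r_2^*U_2^\infty}$ is invertible, so $r_1^*U_1^\infty, r_2^*U_2^\infty$ have a universal supremum (Lemma \ref{lem:have-sup}); this settles the existence claim. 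Any $\ol W_2$ as in the statement dominates this standard one, and the induced morphism of modulus pairs is minimal, hence invertible in $\ulMSm$, so it suffices to treat the standard choice. Then $W_2^\o \cong W_1^\o = U_1^\o \times_{M^\o} U_2^\o$ is smooth, $W_2 \in \ulMSm$, and the $r_i : W_2 \to U_i$ lie in $\ulMSm^\fin$ and agree over $M$. In the universal property, uniqueness holds because $W_2^\o = U_1^\o \times_{M^\o} U_2^\o$ as a scheme. For existence, given $h_i : T \to U_i$ in $\ulMSm$ agreeing over $M$, apply the graph trick (Lemma \ref{l-gt}) twice to produce $s : T' \to T$ in $\ul{\Sigma}^\fin$ with $g_i := h_i s \in \ulMSm^\fin(T', U_i)$ agreeing over $M$; form $g : \ol{T'} \to \ol W_1$ as in part (1), blow up $\ol{T'}$ along $\sI_{g_1^*U_1^\infty} + \sI_{g_2^*U_2^\infty}$ to get $\sigma : \ol{T''} \to \ol{T'}$, an isomorphism away from $|g_1^*U_1^\infty| \cap |g_2^*U_2^\infty| \subseteq |{T'}^\infty|$ so that $T'' := (\ol{T''}, \sigma^* {T'}^\infty) \to T'$ lies in $\ul{\Sigma}^\fin$, and lift $g \circ \sigma$ to $g'' : \ol{T''} \to \ol W_2$ by the universal property of the blow-up. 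That $g''$ is admissible for $(T'', W_2)$ follows from $r_i g'' = g_i \sigma$, the inequalities $\sigma^* {T'}^\infty \ge \sigma^* g_i^* U_i^\infty = {g''}^* r_i^* U_i^\infty$, the identity ${g''}^* W_2^\infty = \sup({g''}^* r_1^* U_1^\infty, {g''}^* r_2^* U_2^\infty)$ (Remark \ref{rem:sup-div}(2)), and the sup-comparison lemma; composing with the $\ulMSm$-inverse of $T'' \to T$ yields the required $h : T \to W_2$. The proper case follows since a blow-up of the proper $\ol W_1$ is proper and $\MSm \hookrightarrow \ulMSm$ is fully faithful.

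The step I expect to be the main obstacle is part (3): besides manufacturing $\ol W_2$, one must lift an arbitrary morphism into $W_1$ up to the blow-up $W_2$ without losing control of the modulus, which forces the double use of the graph trick followed by a second blow-up on the source, and the bookkeeping with universal suprema (the sup-comparison lemma together with Remark \ref{rem:sup-div}(2)) is the delicate point. A secondary subtlety is the descent from $\ulMSm^\fin$ to $\ulMSm$ in part (1), which hinges on the calculus of fractions and the cofilteredness of $\ul{\Sigma}^\fin \downarrow T$ from Proposition \ref{peff1} and Corollary \ref{cor:sigma-fin-cofil}.
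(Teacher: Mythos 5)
Your treatment of (1) and (2) follows the paper's proof essentially verbatim, and your route through (3) is a workable variant, but there is one recurring flaw: you verify admissibility via divisor inequalities on the ambient spaces themselves, whereas by Definition \ref{d2.2} (2) the admissibility of a morphism $g_i\colon T\to U_i$ in $\ulMSm^\fin$ only gives (and only requires) the inequality after pull-back to the normalization $\nu\colon \ol{T}^N\to\ol{T}$; Remark \ref{r2.1} (5) shows the inequality on $\ol{T}$ itself may genuinely fail. Concretely, in (1) your input ``$T^\infty\ge \ol g^*q_i^*U_i^\infty$'' and in (3) the inequality ``$\sigma^*{T'}^\infty\ge\sigma^*g_i^*U_i^\infty$'' are not consequences of the admissibility of the $g_i$, and your target ``admissibility of $\ol g$ amounts to $T^\infty\ge\ol g^*W_1^\infty$'' is only a sufficient condition, not an equivalence; the unnormalized inequality is neither available nor needed. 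The repair is exactly the paper's maneuver and stays inside your own toolkit: pull everything back to $\ol{T}^N$ (or normalize the source first via Remark \ref{rk-graph-trick} \eqref{gt2}, as the paper does in part (3)); universal suprema commute with this pull-back by Remark \ref{rem:sup-div} (2), and your sup-comparison lemma, which is correct and valid on any scheme, then applies on the normalization and yields precisely the normalized inequality that admissibility requires.

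Apart from this, the comparison is as follows. In (1) the paper computes the supremum on the normal scheme $\ol{N}^N$ as a supremum of Weil divisors (Remark \ref{rem:sup-div} (3)); your sup-comparison lemma ($D,E\le G$ with $D,E$ admitting a universal supremum implies $\sup(D,E)\le G$) is a fine substitute and does not even need normality. Your descent from $\ulMSm^\fin$ to $\ulMSm$ by filtered colimits merely unwinds the paper's citation of Propositions \ref{peff1} and \ref{p1.10} (left exactness of the localization), so it is fine, as is the properness argument for the $\MSm$ statements. In (3) you genuinely diverge: the paper extends $h^\o$ to $\ol{N}\to\ol{W}_2$ for an arbitrary admissible $\ol{W}_2$ by the graph trick (Lemma \ref{l-gt}, applicable because $\pi$ is proper) and then reduces to $\ol{N}$ normal, whereas you first reduce to the standard blow-up---correctly, since by Lemma \ref{lem:have-sup} the pulled-back ideal $\sI_{r_1^*U_1^\infty}+\sI_{r_2^*U_2^\infty}$ is invertible, so any admissible $\ol{W}_2$ dominates $\Bl_Z(\ol{W}_1)$ and the comparison map is minimal, proper and an isomorphism on interiors, hence lies in $\ul{\Sigma}^\fin$---and then blow up the source so as to lift by the universal property of blow-ups. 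This costs more bookkeeping than the paper's argument but is self-contained; just carry out the admissibility check for $g''$ on the normalization, as explained above.
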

\begin{proof}
(1) 
Let $f_i : N \to U_i$ be morphisms in $\ulMSm^\fin$ for $i=1, 2$
such that $u_1f_1=u_2f_2$. 
Then the morphisms $\ol{f}_i : \ol{N} \to \ol{U}_i$ for $i=1, 2$
induce a unique morphism $\ol{h} :\ol{N} \to \ol{W}_0$ with $\ol{f}_i = p_i\ol{h}$ for $i=1, 2$.
Since $f_i$ are morphisms in $\ulMSm^\fin$,
for any irreducible component $T$ of $\ol{N}$
we have $\ol{f}_i(T) \not\subset |U_i^\infty|$,
and hence $\ol{h}$ factors though $\ol{g} : \ol{N} \to \ol{W}_1$
so that we have $\ol{f}_i=q_i \ol{g}$.
It remains to prove $\nu^*N^\infty \ge \nu^* \ol{g}^* W_1^\infty$,
where $\nu : \ol{N}^N \to \ol{N}$ is the normalization.
As we have 
$\nu^* \ol{g}^*W_1^\infty 
= \nu^*\ol{g}^*\sup(q_1^* U_1^\infty, q_2^* U_2^\infty)
= \sup(\nu^*\ol{f}_1^* U_1^\infty, \nu^*\ol{f}_1^* U_2^\infty)$
by definition and Remark \ref{rem:sup-div},
this follows from
the admissibility of $f_i$,
that is,
$\nu^* \ol{f}_i^\ast U_i^\infty \leq \nu^* N^\infty$.
We have shown that $W_1$ represents
the fiber product in $\ulMSm^\fin$.
Propositions \ref{peff1} and \ref{p1.10}
show that the same holds in $\ulMSm$ as well.
(This also follows from (3) below.)
The last statement is an immediate consequence of the first. 

(2) Let $p_W : \ol{W}_1^N \to \ol{W}_1$ and $p_{U_1} : \ol{U}_1^N \to \ol{U}_1$ be the normalizations.
By the minimality of $u_1$, we have $q_1^\ast U_1^\infty = q_1^\ast \ol{u}_1^\ast M^\infty = q_2^\ast \ol{u}_2^\ast M^\infty \leq q_2^\ast U_2^\infty$, where the last inequality holds by the admissibility  of $u_2$ and the normality of $\ol{U}_2$.
Then $q_1^\ast U_1^\infty$ and $q_2^\ast U_2^\infty$ have a universal supremum since $q_1^\ast U_1^\infty \subset q_2^\ast U_2^\infty$ implies Condition (1) of Lemma \ref{lem:have-sup}, which also implies that $W_1^\infty = \sup (q_1^\ast U_1^\infty,q_2^\ast U_2^\infty) = q_2^\ast U_2^\infty$.
This shows the minimality of $W_1 \to U_2$.

Suppose now $\ol{u}_1$ flat, and let $T$ be an irreducible component of $\ol{W}_0$. Then $p_2:\ol{W}_0\to \ol{U}_2$ is also flat, hence $T$ dominates an irreducible component $E$ of $\ol{U}_2$ \cite[Chapter~III, Proposition~9.5]{hartshorne} and we cannot have $p_2(T)\subset |U_2^\infty|$ since $U_2^\infty$ is everywhere of codimension $1$ in $\ol{U}_2$. Suppose that $p_1(T)\subset |U_1^\infty|$. By the minimality of $u_1$, this implies $u_2p_2(T)=u_1p_1(T)\subset |M^\infty|$, hence $u_2(E)\subset |M^\infty|$, contradicting the admissibility of $u_2$.

(3)
If $\pi$ is the blow-up of $\ol{W}_1$
with center $q_1^*(U_1^\infty) \times_{\ol{W}_1} q_2^*(U_1^\infty)$,
then
$r_1^*U^\infty_1 \times_{\ol{W}_2} r_2^*U^\infty_2$
is precisely the exceptional divisor by definition,
which is therefore an effective Cartier divisor,
showing the first assertion.
Note that $W_2^\o \cong U_1^\o \times_{M^\o} U_2^\o$
by \eqref{eq:inside}.

Now let $f_i : N \to U_i$ be morphisms in $\ulMSm$ for $i=1, 2$
such that $u_1f_1=u_2f_2$. 
Then the morphisms $f_i^\o : N^\o \to U_i^\o$ for $i=1, 2$
induce a unique morphism 
$h^\o :N^\o \to W_2^\o$ 
with $f_i^\o = p_ih^\o$ for $i=1, 2$.
It suffices to prove that $h^\o$ defines a [unique] morphism in $\ulMSm$.  
By the graph trick (Lemma \ref{l-gt}), we may assume that $f_i^\o$ and $h^\o$ extend to  morphisms $\ol{f}_i: \ol{N} \to \ol{U}_i$ and $\ol{h} : \ol{N} \to \ol{W}_2$. 
Moreover we may assume that $\ol{N}$ is normal by Remark \ref{rk-graph-trick} \eqref{gt2}. 
It remains to prove $N^\infty \ge \ol{h}^* W_2^\infty$.
As we have 
$\ol{h}^*W_2^\infty 
= \sup(\ol{f}_1^* U_1^\infty , \ol{f}_1^*U_2^\infty)$
by the assumption and Remark \ref{rem:sup-div},
this follows from
the admissibility of $f_i$,
that is,
$\ol{f}_i^\ast U_i^\infty \leq N^\infty$.
\end{proof}

\begin{remark}\label{rem:fiberprod-int}
If $W$ represents a fiber product $U_1 \times_M U_2$
(either in $\ulMSm$ or in $\ulMSm^\fin$),
then we have $W^\o = U_1^\o \times_{M^\o} U_2^\o$.
Indeed, the functors $\ulMSm \to \Sm$
and $\ulMSm^\fin \to \Sm$
given by $M \mapsto M^\o$
have the left adjoint
$X \mapsto (X, \emptyset)$
(Lemma \ref{l1.2}),
hence commute with limits.
\end{remark}

\begin{exs}
Let $B=k[x_1,x_2]$, $\A^2=\Spec B$, $D_i=\Spec(B/x_iB)$ and $P=D_1\cap D_2$. Let now $M=(D_1\cup D_2, P)$ and $U_i=(D_i,P)$ for $i=1, 2$. 
Then $\ol{W}_0$ is a point but $\ol{W}_1=\emptyset$,
and
$W_1=(\emptyset,  \emptyset)$  indeed represents
the fiber product $U_1 \times_M U_2$.
In particular, fiber products do not commute with
the forgetful functor $M\mapsto \ol{M}$ from $\ulMSm^\fin$ to $\Sch$
of Definition \ref{d1.1} (2).  Another counterexample: let $M=(\A^2,D_1)$, $\ol{U}_1=\Bl_{P}(\A^2)$, $u_1:U_1\to M$ be the minimal induced modulus structure, $U_2=(D_2,P)$ and $U_2\to M$ be given by the inclusion. Then $\ol{W}_1\subsetneq \ol{W}_0$ is the proper transform of $u_1$.
 See however Corollary \ref{exist-pullback} (1).
\end{exs}

Recall \cite[expos\'e~IV, d\'efinition~1.4.0]{SGA3} that a morphism $f:M\to N$ in a category $\sC$ is \emph{squarable} if, for any $g:N'\to N$, the fibred product $N'\times_N M$ is representable in $\sC$. We have:

\begin{cor}\label{exist-pullback}
The following assertions hold.
\begin{enumerate}
\item
If $f : U \to M$ is a minimal morphism in $\ulMSm^\fin$
(see Definition \ref{def:minimality})
such that $f^\o$ is smooth,
then $f$ is squarable in $\ulMSm^\fin$. If $f\in \MSm^\fin$, it is squarable in this category.
 If moreover $\ol{f}$ is flat, then the pull-back by $f$ of morphisms from normal modulus pairs commutes with the forgetful functor $M\mapsto \ol{M}$ from $\ulMSm^\fin$ to $\Sch$  of Definition \ref{d1.1} (2). 
\item If $f : U \to M$ is a morphism in $\ulMSm$ such that $f^\o$ is smooth, then $f$ is squarable in $\ulMSm$. If $f\in \MSm$, it is squarable in this category.
\end{enumerate}
\end{cor}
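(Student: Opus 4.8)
The plan is to deduce Corollary~\ref{exist-pullback} directly from Proposition~\ref{prop:fiber-prod}, by checking in each case that the hypotheses of the relevant part of that proposition are satisfied for an arbitrary base change.

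\medskip

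\emph{Proof of (1).} Let $f=u_1 : U \to M$ be a minimal morphism in $\ulMSm^\fin$ with $f^\o$ smooth, and let $u_2 : U_2 \to M$ be an arbitrary morphism in $\ulMSm^\fin$. First I would reduce to the case where $\ol{U}_2$ is normal: by Remark~\ref{rk-graph-trick}~\eqref{gt2}, the normalization $U_2^N \to U_2$ is an isomorphism in $\ulMSm$, but not in $\ulMSm^\fin$, so a little care is needed here; in fact it is cleaner to argue directly, observing that $U^\o \times_{M^\o} U_2^\o$ is smooth over $k$ because $f^\o$ is smooth and smoothness is stable under base change, so part~(1) of Proposition~\ref{prop:fiber-prod} applies \emph{provided} $q_1^*U_1^\infty$ and $q_2^*U_2^\infty$ have a universal supremum. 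To secure the latter, reduce to $\ol{U}_2$ normal via \eqref{gt2} on the $U_2$-side only (this is legitimate since we only need representability of the fiber product, which is a statement about the image of $U_2$ in $\ulMSm$ up to the isomorphism $U_2^N\iso U_2$ in that category — but for the \emph{finite} category one instead checks directly that $q_1^\ast U_1^\infty \le q_2^\ast U_2^\infty$ as Weil divisors and invokes Remark~\ref{rem:sup-div}~(3)). Once $\ol{U}_2$ is normal, part~(2) of Proposition~\ref{prop:fiber-prod} gives that $q_1^*U_1^\infty \subset q_2^*U_2^\infty$, hence they have a universal supremum equal to $q_2^*U_2^\infty$, so $W_1 = (\ol{W}_1, q_2^*U_2^\infty)$ represents the fiber product in $\ulMSm^\fin$; this proves squarability. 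The statement for $\MSm^\fin$ follows from the last clause of Proposition~\ref{prop:fiber-prod}~(1): if $U, U_2, M$ are proper, so is $\ol{W}_1$, being closed in $\ol{U}_1 \times_{\ol{M}} \ol{U}_2$. Finally, if $\ol{f}=\ol{u}_1$ is flat, the last sentence of Proposition~\ref{prop:fiber-prod}~(2) gives $\ol{W}_1 = \ol{W}_0 = \ol{U}_1 \times_{\ol{M}} \ol{U}_2$, i.e.\ the fiber product in $\ulMSm^\fin$ has ambient space the scheme-theoretic fiber product; this is exactly the assertion that base change by $f$ commutes with $M \mapsto \ol{M}$ (applied to morphisms from normal modulus pairs, as in the proof of that part).

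\medskip

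\emph{Proof of (2).} Let $f : U \to M$ be a morphism in $\ulMSm$ with $f^\o$ smooth, and $u_2 : U_2 \to M$ any morphism in $\ulMSm$. By the graph trick (Lemma~\ref{l-gt}), there is a minimal morphism $p : U' \to U$ in $\ulMSm^\fin$ invertible in $\ulMSm$ such that $f \circ p : U' \to M$ lies in $\ulMSm^\fin$; similarly replace $u_2$ by a morphism coming from $\ulMSm^\fin$, and use Remark~\ref{rk-graph-trick}~\eqref{gt2} to assume the relevant ambient spaces normal. Then apply part~(3) of Proposition~\ref{prop:fiber-prod}: after a proper birational modification $\pi:\ol{W}_2\to\ol{W}_1$ which is an isomorphism over the interior, $r_1^*U_1^\infty$ and $r_2^*U_2^\infty$ acquire a universal supremum, and $W_2 = (\ol{W}_2, \sup(r_1^*U_1^\infty, r_2^*U_2^\infty))$ represents the fiber product in $\ulMSm$. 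Since $W_2^\o \cong U^\o \times_{M^\o} U_2^\o$ and smoothness is preserved under base change, the interior is smooth over $k$, so $W_2 \in \ulMSm$ as required; and the representability of a fiber product is invariant under the isomorphisms in $\ulMSm$ used to make these reductions. The proper case again follows from the last clause of Proposition~\ref{prop:fiber-prod}~(3).

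\medskip

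The main obstacle is bookkeeping rather than a genuine difficulty: one must be careful that the reductions (graph trick, normalization of ambient spaces via \eqref{gt2}) are legitimate in the category being considered --- they are invertible in $\ulMSm$ but generally not in $\ulMSm^\fin$ --- so in the $\fin$-case of~(1) one should avoid freely normalizing and instead verify the universal supremum hypothesis of Proposition~\ref{prop:fiber-prod}~(1) by the direct divisor comparison supplied by part~(2) of that proposition (using admissibility of $u_2$ together with Remark~\ref{rem:sup-div}~(3) on the normalization of $\ol{W}_1$). Everything else is a direct invocation of Proposition~\ref{prop:fiber-prod}.
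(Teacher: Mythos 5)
Your overall route is the paper's: both assertions are deduced from Proposition \ref{prop:fiber-prod}, part (1) from its parts (1) and (2) (including the proper case via the last clause of part (1), and the flatness clause, restricted to normal modulus pairs, via the last sentence of part (2)), and part (2) from its part (3) after the graph-trick reduction of Lemma \ref{l-gt}, which the paper leaves implicit and you rightly make explicit (your extra normalization step there is unnecessary but harmless, since $U^N \to U$ is invertible in $\ulMSm$). That portion of your argument is in order.

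The problem is your treatment of a non-normal $\ol{U}_2$ in part (1): neither of your two patches works. First, you cannot ``reduce to $\ol{U}_2$ normal via Remark \ref{rk-graph-trick} \eqref{gt2} on the $U_2$-side'': the normalization $U_2^N \to U_2$ is not invertible in $\ulMSm^\fin$, and a fiber product in $\ulMSm^\fin$ is a limit in that category, so its representability is not ``a statement about the image of $U_2$ in $\ulMSm$''; knowing that $U \times_M U_2^N$ exists says nothing about $U \times_M U_2$ in $\ulMSm^\fin$. Second, the fallback of ``checking $q_1^*U_1^\infty \le q_2^*U_2^\infty$ as Weil divisors and invoking Remark \ref{rem:sup-div} (3)'' is backwards: that remark presupposes that the universal supremum already exists and only compares it with the Weil-divisor supremum after pullback to a normal scheme; it cannot produce a universal supremum on the possibly non-normal $\ol{W}_1$, which is what Proposition \ref{prop:fiber-prod} (1) requires. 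Moreover the inequality genuinely fails without normality: take $C$ the cuspidal curve of Remark \ref{r2.1} (5), $M=(C,(y))$, $f=\id_M$ (minimal, smooth interior) and $u_2=\id_C:(C,(x))\to (C,(y))$ (admissible); then $\ol{W}_1=C$, $q_1^*U_1^\infty=(y)$, $q_2^*U_2^\infty=(x)$, the inequality $(x)\ge (y)$ fails on $C$, and $(x)\times_C(y)$ is the reduced origin, which is not Cartier, so no universal supremum exists --- even though the fiber product trivially exists (it is $U_2$, whose modulus is not a supremum). So verifying the hypothesis of Proposition \ref{prop:fiber-prod} (1) directly cannot handle general $U_2$. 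You correctly sensed a real subtlety here --- the paper's own one-line proof tacitly relies on the normality hypothesis of Proposition \ref{prop:fiber-prod} (2) --- but your proposed repair is not valid: either confine yourself to the situation that proposition covers, as the paper effectively does, or give a genuinely different argument for non-normal targets.
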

\begin{proof} (1) follows from Proposition \ref{prop:fiber-prod} (1) and (2); (2) follows from 
Proposition \ref{prop:fiber-prod} (3).
\end{proof}

\begin{cor} Finite products exist in $\ulMSm$ and $\MSm$.
\end{cor}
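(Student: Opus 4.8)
The plan is to deduce the existence of all finite products from two facts: that $\ulMSm$ (resp. $\MSm$) has a terminal object, and that it has binary products, the latter being obtained as fibre products over the terminal object by invoking the squarability result of Corollary \ref{exist-pullback}.

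First I would check that $\un := (\Spec k, \emptyset)$ is a terminal object of $\ulMSm$, and also of $\MSm$ (note that $\Spec k$ is proper over $k$). For any modulus pair $M$ there is a unique $k$-morphism $M^\o \to \Spec k$; its graph is all of $M^\o$ inside $M^\o \times_k \Spec k = M^\o$, and the closure of this graph in $\ol{M} \times_k \Spec k = \ol{M}$ is $\ol{M}$ itself. This closure is proper over $\ol{M}$ (it is the identity morphism), and the admissibility condition reduces to $M^\infty \geq 0$, which holds since $M^\infty$ is an effective Cartier divisor. Hence the graph lies in $\ulMSm(M, \un)$, and it is visibly the only element, so $\un$ is terminal.

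Next I would use the general fact that a category with a terminal object and all binary products has all finite products: the empty product is the terminal object, and an $n$-fold product for $n \ge 2$ is built by iterating binary products. So it suffices to construct $U_1 \times U_2$ for $U_1, U_2 \in \ulMSm$. Consider the structure morphism $f : U_1 \to \un$; its interior $f^\o : U_1^\o \to \Spec k$ is smooth because $U_1^\o \in \Sm$. By Corollary \ref{exist-pullback} (2), $f$ is then squarable in $\ulMSm$, so the fibre product $U_2 \times_{\un} U_1$ is representable; this object represents $U_1 \times U_2$. The same reasoning works in $\MSm$: for $U_1, U_2 \in \MSm$ the morphism $f$ lies in $\MSm$, and the second clause of Corollary \ref{exist-pullback} (2) makes it squarable in $\MSm$, so $U_1 \times U_2$ is again representable there.

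There is essentially no serious obstacle: all the hard work sits in Corollary \ref{exist-pullback}, which rests on Proposition \ref{prop:fiber-prod} (3) (the construction of the blow-up $\ol{W}_2 \to \ol{W}_1$ that handles the general non-proper, non-normal situation). The only things needing a moment's care are verifying that the tautological graph making $\un$ terminal is indeed left proper and admissible, and noting that the interior of $U_1 \to \un$ is smooth so that the squarability hypothesis of Corollary \ref{exist-pullback} (2) applies; both are immediate.
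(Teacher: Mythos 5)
Your proof is correct and follows essentially the same route as the paper, whose entire proof is that the statement is the special case $M=(\Spec k,\emptyset)$ of Corollary \ref{exist-pullback} (2); you have merely made explicit the routine points left implicit there (that $(\Spec k,\emptyset)$ is terminal in $\ulMSm$ and $\MSm$, and that binary products plus a terminal object give all finite products).
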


\begin{proof} This is the special case $M=(\Spec k,\emptyset)$ in Corollary \ref{exist-pullback} (2).
\end{proof}

\section{Presheaf theory}

\subsection{Modulus presheaves with transfers}

\begin{definition}\label{d2.7}
By a presheaf we mean a contravariant functor to the category of abelian groups.
\begin{enumerate}
\item
The category of presheaves on $\MP$ (resp. $\ulMP$, $\ulMP^\fin$)
is denoted by $\MPS$ (resp. $\ulMPS$, $\ulMPS^\fin$).
\item
The category of additive presheaves on $\MCor$ 
(resp. $\ulMCor$, $\ulMCor^\fin$)
is denoted by $\MPST$ (resp. $\ulMPST$, $\ulMPST^\fin$.)
\end{enumerate}
\end{definition}

All these categories are abelian Grothendieck, with projective sets of generators: this is classical for those of (1) and follows from Theorem \ref{t.mon} for those of (2). (See also proof of Proposition \ref{eq:c-functor} below.)

\begin{nota}\label{n2.1} 
We write 
\begin{align*}
\Z_\tr:& \ulMCor\to\ulMPST, 
\quad \MCor\to \MPST,
\\
\Z_\tr^\fin:& \ulMCor^\fin\to \ulMPST^\fin, 
\\
\Z_\tr:&\Cor\to \PST
\end{align*}
for the associated representable presheaves
(\emph{i.e.} $\Z_\tr(M) \in \ulMPST$ is given by
$\Z_\tr(M)(N)=\ulMCor(N, M)$, etc.)
We shall use the common notation $\Z_\tr$
but they will be distinguished by the context.
\end{nota}

We now briefly describe the main properties of the functors induced by those of the previous section.

\subsection{$\protect\MPST$ and $\protect\PST$}

We say $(f_1, f_2, \dots, f_n)$ is a string of adjoint functors
if $f_i$ is a left adjoint of $f_{i+1}$ for each $i=1, \dots, n-1$.

\begin{prop}\label{lem:counit}
The functor $\omega:\MCor\to \Cor$ of \S \ref{s1.2} yields a string of three adjoint functors $(\omega_!,\omega^*,\omega_*)$:
\[\MPST\begin{smallmatrix}\omega_!\\\longrightarrow\\\omega^*\\\longleftarrow\\\omega_*\\ \longrightarrow\end{smallmatrix}\PST, 
\]
where $\omega^*$ is fully faithful and $\omega_!,\omega_*$ are localisations; $\omega_!$ 
has a pro-left adjoint $\omega^!$, hence is exact. 

Similarly, $\omega_s:\MSm\to \Sm$
yields a string of three adjoint functors $(\omega_{s!},\omega_s^*,\omega_{s*})$; 
$\omega_s^*$ is fully faithful and $\omega_{s!},\omega_{s*}$ are localisations; 
$\omega_{s!}$ 
has a pro-left adjoint $\omega_s^!$, hence is exact. 
\end{prop}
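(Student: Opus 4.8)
The statement to prove is Proposition \ref{lem:counit}: the functor $\omega\colon\MCor\to\Cor$ induces a string $(\omega_!,\omega^*,\omega_*)$ of three adjoint functors between $\MPST$ and $\PST$, with $\omega^*$ fully faithful, $\omega_!$ and $\omega_*$ localisations, and $\omega_!$ admitting a pro-left adjoint $\omega^!$, hence exact; and likewise for $\omega_s$. The plan is to run the standard machinery of Kan extensions along a functor of small additive categories, using that $\omega$ is a localisation of additive categories (Proposition \ref{prop:localization}) together with the existence of the pro-left adjoint $\omega^!$ (Theorem \ref{t2.1}).

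First I would set up $\omega^*$ as precomposition with $\omega$ on additive presheaves; since $\omega$ is essentially surjective and full (being a localisation by Proposition \ref{prop:localization} b)), the induced $\omega^*\colon\PST\to\MPST$ is fully faithful — this is the general fact that restriction along a localisation functor of additive categories is fully faithful, proved by computing the counit/unit via the calculus of right fractions for $\Sigma$ established in Proposition \ref{prop:localization} a). Next, $\omega_!$ is defined as the left Kan extension of $\omega$ along the Yoneda embedding (equivalently, the unique colimit-preserving extension of $M\mapsto\Z_\tr(\omega M)$), and $\omega_*$ as the right Kan extension; these exist because the source categories are small and the targets are Grothendieck abelian. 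The adjunctions $(\omega_!,\omega^*)$ and $(\omega^*,\omega_*)$ are then formal. To see that $\omega_!$ and $\omega_*$ are localisations of abelian categories, I would use that $\omega^*$ is fully faithful: then $\omega_!\omega^*\cong\id$ and $\omega_*\omega^*\cong\id$ by a standard argument (the unit/counit of the respective adjunctions are isomorphisms because $\omega^*$ is fully faithful), so both $\omega_!$ and $\omega_*$ exhibit $\PST$ as a (Gabriel) quotient of $\MPST$.

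For exactness of $\omega_!$: in general a left adjoint need not be exact, but here I would invoke Theorem \ref{t2.1}, which gives a pro-left adjoint $\omega^!\colon\Cor\to\pro{}\MCor$ to $\omega$. The point is that $\omega^!$ induces, at the level of presheaf categories, a genuine \emph{left adjoint} to $\omega_!$: for $F\in\MPST$ and $X\in\Cor$ one computes $\omega_!F(X)=\indlim{M\in\Sigma\downarrow X}F(M)=\colim_{M\in(\Sigma\downarrow X)^{\op}}F(M)$ using Proposition \ref{p2.6} and the formula $\omega^!X=\limcat{M\in\Sigma\downarrow X}M$; the key structural input (from Proposition \ref{prop:localization} a), and Lemma \ref{ln2}) is that $\Sigma\downarrow X$ is cofiltered, so this colimit is \emph{filtered}, hence exact. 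Thus $\omega_!$, being pointwise a filtered colimit of the exact evaluation functors, is exact. (Alternatively, one phrases this as: $\omega_!$ has a right adjoint $\omega^*$ and a ``left adjoint up to pro-objects'' which makes it commute with finite limits as well.)

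The statement for $\omega_s\colon\MSm\to\Sm$ is genuinely parallel: $\MPS$ and $\PS$ are presheaves (not additive presheaves) on the respective categories, $\omega_s$ is a localisation by Proposition \ref{prop:localization} b), and it has a pro-left adjoint $\omega_s^!$ by Theorem \ref{t2.1}; the same three-step argument (full faithfulness of $\omega_s^*$, the Kan-extension adjunctions, and exactness of $\omega_{s!}$ via the filtered-colimit formula over the cofiltered category $\Sigma\downarrow X$) applies verbatim, so I would simply remark that the proof is identical. The main obstacle I anticipate is not any single hard computation but rather bookkeeping: making sure the calculus of right fractions for $\Sigma$ is used correctly to identify $\omega_!F(X)$ with a filtered colimit indexed by (the opposite of) $\Sigma\downarrow X$ — in particular checking that this indexing category is cofiltered so the colimit over its opposite is filtered — and verifying that $\omega^*$ being fully faithful really does force both $\omega_!$ and $\omega_*$ to be localisations. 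Once those two points are in place, exactness of $\omega_!$ and the whole adjoint string follow formally.
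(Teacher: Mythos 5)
Your proof is correct and follows essentially the same route as the paper, which simply cites Proposition \ref{prop:localization} (calculus of right fractions, $\omega$ is a localisation) together with Theorem \ref{lem:omega-sh} (the general additive-presheaf version of your Kan-extension/cofiltered-colimit argument, itself proved via Propositions \ref{p.funct} and \ref{p2.6}). You have unpacked exactly the content of those references — full faithfulness of $\omega^*$ from $\omega$ being a localisation, the localisation property of $\omega_!$ and $\omega_*$ from full faithfulness of $\omega^*$, and exactness of $\omega_!$ from the pointwise formula as a filtered colimit over $(\Sigma\downarrow X)^{\mathrm{op}}$.
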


\begin{proof} 
This follows from Theorems \ref{prop:localization} and 
\ref{lem:omega-sh}. 
\end{proof}

Let $X \in \Sm$ and let $M \in \MP(X)$. 
Lemma \ref{ln2} and Proposition \ref{p.funct} show that
the inclusions $\{M^{(n)}\mid n>0\}\subset \MP(\ol{M}!X) \subset \MP(X)$ induce isomorphisms
(see Definition~\ref{def:mpx})
\begin{equation}\label{rem:single-cpt}
\omega_!(F)(X)\simeq\colim_{N \in \MP(X)} F(N)\simeq \colim_{N \in \MP(\ol{M}!X)} F(N)\simeq \colim_{n>0} F(M^{(n)}).
\end{equation}

\bigskip

\subsection{$\protect\ulMPST$ and $\protect\PST$} 

\begin{prop}\label{p4.2} The adjoint functors $(\lambda,\ulomega)$ of Lemma \ref{l1.2} induce a string $(\lambda_!=\ulomega^!,\lambda^*=\ulomega_!,\lambda_*=\ulomega^*,\ulomega_*)$ of four adjoint functors:
\[\ulMPST\begin{smallmatrix}\ulomega^!\\\longleftarrow\\\ulomega_!\\\longrightarrow\\\ulomega^*\\\longleftarrow\\\ulomega_*\\ \longrightarrow\end{smallmatrix}\PST, \]
where $\ulomega_!,\ulomega_*$ are localisations while $\ulomega^!$ and $\ulomega^*$ are fully faithful. 
Moreover, if $X\in \Cor$ is proper, we have a canonical isomorphism $\ulomega^*\Z_\tr(X)\allowbreak\simeq \Z_\tr(X,\emptyset)$.
\end{prop}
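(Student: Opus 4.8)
The final statement to prove is Proposition~\ref{p4.2}, which packages the adjunctions attached to the functor $\ulomega\colon\ulMCor\to\Cor$ together with its left adjoint $\lambda$. Here is how I would organize the proof.

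\textbf{Step 1: From the adjunction $(\lambda,\ulomega)$ to Kan extensions.}
The plan is to start from the pair of adjoint functors $\lambda\colon\Cor\to\ulMCor$, $\ulomega\colon\ulMCor\to\Cor$ of Lemma~\ref{l1.2} and apply the standard nonsense for presheaf categories: for a functor $u\colon\sA\to\sB$ between small (additive) categories, the restriction $u^*\colon\PST_{\sB}\to\PST_{\sA}$ has both a left adjoint $u_!$ (left Kan extension) and a right adjoint $u_*$ (right Kan extension). Applying this to $u=\lambda$ gives $(\lambda_!,\lambda^*,\lambda_*)$, and applying it to $u=\ulomega$ gives $(\ulomega_!,\ulomega^*,\ulomega_*)$. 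The crucial bookkeeping point is that, because $\lambda$ is left adjoint to $\ulomega$, their induced restriction functors satisfy $\lambda^*\simeq\ulomega_!$ (restricting along a functor and taking left Kan extension along its adjoint coincide — this is the elementary identity $\Hom_{\Cor}(\ulomega(M),X)=\Hom_{\ulMCor}(M,\lambda X)$ at the level of representables, extended by cocontinuity) and dually $\lambda_*\simeq\ulomega^*$. Concatenating $(\lambda_!,\lambda^*,\lambda_*)=(\lambda_!,\ulomega_!,\ulomega^*)$ with the adjoint $\ulomega_*$ on the right of $\ulomega^*$ yields the string of four adjoints. I would also record $\lambda_!=\ulomega^!$: since $\ulomega$ is right adjoint to $\lambda$, precomposition $\ulomega^*$ is left adjoint to... one has to be careful, but the point is that $\ulomega^! := \lambda_!$ is the promised further left adjoint to $\ulomega_!=\lambda^*$, consistent with the notation $\ulomega^!$ used for the pro-left adjoint refinement of $\ulomega_!$; alternatively cite Theorem~\ref{t2.1} / the localisation statements to identify it.

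\textbf{Step 2: Full faithfulness of $\ulomega^*$ and $\ulomega^!$; localisation of $\ulomega_!,\ulomega_*$.}
The functor $\ulomega^*$ is fully faithful because $\ulomega$ is (essentially) surjective and, more to the point, because $\ulomega\lambda\simeq\id_{\Cor}$ (this follows from Lemma~\ref{l1.2}: $\ulomega\tau=\omega$ and the structure of $\lambda(X)=(X,\emptyset)$, whose interior is $X$), so the counit $\ulomega_!\ulomega^*\to\id$ is an isomorphism; equivalently $\lambda^*\lambda_*\simeq\id$. For the same reason $\ulomega^!=\lambda_!$ is fully faithful, since $\lambda$ itself is fully faithful on objects up to the localisation statement — more precisely, $\lambda$ is faithful (stated in \S\ref{s1.2}) and one checks the unit $\id\to\ulomega_!\ulomega^!$ is an isomorphism using $\ulomega\lambda\simeq\id$. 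A fully faithful functor with a fully faithful right (resp.\ left) adjoint forces its adjoint on the other side, $\ulomega_!$ (resp.\ $\ulomega_*$), to be a Bousfield localisation (a Serre quotient functor), which gives the ``localisations'' claim. I would phrase this via the general lemma: if $u^*$ is fully faithful then $u_!$ and $u_*$ are localisations.

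\textbf{Step 3: The explicit formula $\ulomega^*\Z_\tr(X)\simeq\Z_\tr(X,\emptyset)$ for $X$ proper.}
This is the concrete payoff and, I expect, the only step requiring an actual computation rather than formal adjunction-shuffling — though even here it is short. Since $\ulomega^*=\lambda_*$ is a right Kan extension, for $M\in\ulMCor$ we have $(\ulomega^*\Z_\tr(X))(M)=\Hom_{\PST}(\lambda^*\Z_\tr(M),\Z_\tr(X))$; unwinding, $\lambda^*\Z_\tr(M)=\ulomega_!\Z_\tr(M)=\Z_\tr(\ulomega M)=\Z_\tr(M^\o)$, so $(\ulomega^*\Z_\tr(X))(M)=\Cor(M^\o,X)$. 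On the other hand $\Z_\tr(X,\emptyset)(M)=\ulMCor(M,(X,\emptyset))$. By the ``right adjoint'' part of Lemma~\ref{l1.2} — valid precisely because $X$ is proper, so $\lambda(X)=(X,\emptyset)$ lies in the subcategory where $\lambda$ is right adjoint to $\ulomega$ — we have $\ulMCor(M,(X,\emptyset))=\Cor(M^\o,X)$, naturally in $M$. Comparing the two gives the desired natural isomorphism. The hypothesis ``$X$ proper'' enters exactly to guarantee that every finite correspondence $M^\o\to X$ has closure in $\Mb\times X$ proper over $\Mb$ and automatically admissible (the divisor $X^\infty=\emptyset$ imposes nothing), which is what makes $\Cor(M^\o,X)=\ulMCor(M,(X,\emptyset))$.

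\textbf{Main obstacle.}
The only genuinely delicate point is not any single step but keeping the web of adjunctions and the identifications $\lambda^*=\ulomega_!$, $\lambda_*=\ulomega^*$, $\lambda_!=\ulomega^!$ straight, and making sure the ``localisation'' assertions are justified at the level of \emph{abelian} presheaf categories (one should check the Kan extensions are computed in additive presheaves, so that $\ulomega_!$ of a representable is a representable — which is immediate — and then extend by colimits). Everything else is formal once Lemma~\ref{l1.2} and the general Kan-extension machinery are in hand; the reference to Theorems~\ref{prop:localization}/\ref{t2.1} (or their presheaf-level counterparts, e.g.\ a ``Theorem~\ref{lem:omega-sh}''-type statement) can be invoked to upgrade ``$\ulomega_!$ is a localisation'' and to produce the pro-left adjoint $\ulomega^!$ if a direct construction is preferred.
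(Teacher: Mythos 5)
Your proposal is correct and takes essentially the same route as the paper: the string $(\lambda_!=\ulomega^!,\lambda^*=\ulomega_!,\lambda_*=\ulomega^*,\ulomega_*)$, the full faithfulness of $\ulomega^!$, $\ulomega^*$ and the localisation property of $\ulomega_!$, $\ulomega_*$ are the standard presheaf yoga of \S\ref{s.presh} (Proposition \ref{p.funct}, Lemma \ref{lA.6}) applied to the adjunction $(\lambda,\ulomega)$ with $\ulomega\lambda=\id$, which is exactly what the paper treats as the "obvious" part. Your Step 3 identifies the same key input as the paper's one-line proof, namely the "right adjoint on proper objects" half of Lemma \ref{l1.2}, giving $\ulomega^*\Z_\tr(X)(M)=\Cor(M^\o,X)=\ulMCor(M,(X,\emptyset))=\Z_\tr(X,\emptyset)(M)$ naturally in $M$.
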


\begin{proof} The only non obvious statement is the last claim, which follows from Lemma \ref{l1.2}.
\end{proof}

\subsection{\protect{$\MPST$ and $\ulMPST$}}

\begin{prop}\label{eq.tau}
The functor $\tau:\MCor\to \ulMCor$ of  \eqref{eq.taulambda} yields a string of three adjoint functors $(\tau_!,\tau^*,\tau_*)$:
\[\MPST\begin{smallmatrix}\tau_!\\\longrightarrow\\\tau^*\\\longleftarrow\\\tau_*\\ \longrightarrow\end{smallmatrix}\ulMPST, \]
where $\tau_!,\tau_*$ are fully faithful and $\tau^*$ is a localisation; $\tau_!$ 
has a pro-left adjoint $\tau^!$, hence is exact.
There are natural isomorphisms
\[\omega_!\simeq \ulomega_!\tau_!, \quad\omega_* \simeq \ulomega_* \tau_*, \quad \omega^! \simeq \tau^! \ulomega^!.\]
The same holds for the functor 
$\tau_s$ from Theorem \ref{t2.1}.
Namely, we have
a string of three adjoint functors $(\tau_{s!},\tau_s^*,\tau_{s*})$
and they satisfy
\[\omega_{s!}\simeq \ulomega_{s!}\tau_{s!}, 
\quad \omega_{s*} \simeq \ulomega_{s*} \tau_{s*}, 
\quad \omega_s^! \simeq \tau_s^! \ulomega_s^!.
\]
\end{prop}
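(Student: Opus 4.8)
The plan is to derive everything formally from the adjunction $(\tau_!, \tau^*)$ (and $(\tau^*, \tau_*)$) together with the pro-left adjoint $\tau^!$ of $\tau$ constructed in Theorem \ref{t2.1}, plus the analogous data for $\omega$ from Proposition \ref{lem:counit} and for $\ulomega$ from Proposition \ref{p4.2}. First I would recall the general nonsense: for any additive functor $u : \sA \to \sB$ between small additive categories, the restriction $u^* : \PST_{\sB} \to \PST_{\sA}$ on additive presheaf categories has both a left Kan extension $u_!$ and a right Kan extension $u_*$, giving a string $(u_!, u^*, u_*)$; moreover $u_!$ is fully faithful precisely when $u$ is fully faithful, and then $u^*$ is a Bousfield localisation (this is the standard argument via $u^* u_! \simeq \id$). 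Applying this to $\tau : \MCor \to \ulMCor$, which is fully faithful by the discussion after \eqref{eq.taulambda}, gives the string $(\tau_!, \tau^*, \tau_*)$ with $\tau_!, \tau_*$ fully faithful and $\tau^*$ a localisation. The existence of a pro-left adjoint $\tau^!$ to $\tau$ (Theorem \ref{t2.1}) induces, by the formalism of \S\ref{s1.1} on pro-adjoints (Proposition \ref{p2.6} and its surroundings), a genuine left adjoint to $\tau_!$ on presheaf categories — concretely $\tau^! F = \colim_{N \in \Comp(M)} F(\tau N)$ by Lemma \ref{l1.1} — and since $\tau_!$ admits a left adjoint it is left exact, while being a left Kan extension it is right exact, hence exact.

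Next I would establish the three displayed natural isomorphisms. For $\omega_! \simeq \ulomega_! \tau_!$: since $\omega = \ulomega \circ \tau$ by Lemma \ref{l1.2}, we have $\omega^* = \tau^* \ulomega^*$ on presheaves, so passing to left adjoints gives $\omega_! \simeq \ulomega_! \tau_!$ by uniqueness of adjoints. (Here one uses Proposition \ref{p4.2}, which identifies $\lambda^* = \ulomega_!$, so $\ulomega_!$ is genuinely the left adjoint of $\ulomega^* = \lambda_*$.) Dually, $\omega_* \simeq \ulomega_* \tau_*$ follows by taking right adjoints of $\omega^* = \tau^* \ulomega^*$. For $\omega^! \simeq \tau^! \ulomega^!$: this is an identity of \emph{pro-left adjoints}, i.e. of left adjoints to $\omega_! \simeq \ulomega_! \tau_!$; since a left adjoint of a composite is the composite of the left adjoints in the reverse order, and $\ulomega^!$ is (pro-)left adjoint to $\ulomega_!$ while $\tau^!$ is (pro-)left adjoint to $\tau_!$, we get $\omega^! \simeq \tau^! \ulomega^!$. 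One should check this is compatible with the pro-object descriptions — i.e. that the pro-object $\omega^! X = \lim_{M \in \Sigma \downarrow X} M$ of Proposition \ref{prop:localization}/\S\ref{starts-pf-1.5.2} agrees with applying $\tau^!$ levelwise to $\ulomega^! X$ — which amounts to cofinality between $\Sigma \downarrow X$ and the iterated indexing category $\{\Comp(M)\}_{M \in \MP(X)}$; this is where Lemma \ref{ln2} and Lemma \ref{c1.1} enter.

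Finally, the statement for $\tau_s : \MSm \to \ulMSm$ is proven \emph{verbatim}: $\tau_s$ is fully faithful (same discussion after \eqref{eq.taulambda}), it has a pro-left adjoint $\tau_s^!$ by Theorem \ref{t2.1} with the explicit formula from the end of \S\ref{end-pf-1.5.2}, and $\omega_s = \ulomega_s \tau_s$ by Lemma \ref{l1.2}, so the same three manipulations of adjoints yield $\omega_{s!} \simeq \ulomega_{s!} \tau_{s!}$, $\omega_{s*} \simeq \ulomega_{s*} \tau_{s*}$, $\omega_s^! \simeq \tau_s^! \ulomega_s^!$; here $\ulMPS$ etc. are additive presheaves on $\ulMSm$ etc., so the Kan extension formalism applies identically. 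I expect the only non-formal point — hence the main obstacle — to be the bookkeeping that makes $\tau^! \ulomega^!$ literally agree with $\omega^!$ as pro-objects rather than merely as abstract pro-left adjoints: one must match the double colimit $\colim_{M \in \MP(X)} \colim_{N \in \Comp(M)} F(\tau N)$ against $\colim_{N \in \MP(X)} F(N)$, which requires the cofinality results just cited and a small diagram chase; everything else is a mechanical application of "left adjoints compose contravariantly" and "fully faithful $\Rightarrow$ unit is iso $\Rightarrow$ the other adjoint is a localisation".
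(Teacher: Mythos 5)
Your overall route is the paper's: the proposition is deduced there from Theorem \ref{t2.1} together with the Kan-extension formalism of Proposition \ref{p.funct} (full faithfulness of $\tau$ gives full faithfulness of $\tau_!$ and $\tau_*$, hence $\tau^*$ is a localisation by Lemma \ref{lA.6}; $\omega=\ulomega\tau$ from Lemma \ref{l1.2} gives the displayed isomorphisms by uniqueness of (pro-)adjoints; verbatim for $\tau_s$), and this is what you do. One step of your write-up is wrong, however: the existence of the pro-left adjoint of $\tau$ does \emph{not} produce a genuine left adjoint of $\tau_!$ on presheaf categories. What Proposition \ref{p.funct} a) yields is a \emph{pro}-left adjoint $\tau^!$ of $\tau_!$ --- which is exactly what the statement asserts --- and left exactness of $\tau_!$ then follows (Proposition \ref{p.proadj}); combined with right exactness (it is a left adjoint) this gives exactness. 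A genuine left adjoint would force $\tau_!$ to preserve arbitrary small limits, in particular infinite products; but $\tau_!G(M)=\colim_{N\in\Comp(M)}G(N)$ (Lemma \ref{lem:tau-colim}) is a filtered colimit, which has no reason to commute with infinite products, so you should not expect one. Relatedly, the formula you attach to this purported adjoint, $\colim_{N\in\Comp(M)}F(\tau N)$, is not a formula for any adjoint of $\tau_!$: it computes $\tau_!\tau^*F(M)$, i.e.\ it is the formula for $\tau_!$ itself; the pro-left adjoint is instead described on representables in Remark \ref{rem:formula-tau}.

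Your other deviation is harmless but unnecessary: you single out as the ``main obstacle'' the identification of $\tau^!\ulomega^!$ with $\omega^!$ as explicit pro-objects. No such bookkeeping is required by the statement. Pro-left adjoints are unique up to canonical isomorphism (they represent the functors appearing in Proposition \ref{p.proadj} (ii)), and a (pro-)left adjoint of a composite is the composite of the (pro-)left adjoints in the opposite order; since $\ulomega^!$ is an honest left adjoint of $\ulomega_!$ (Proposition \ref{p4.2}) and $\tau^!$ is a pro-left adjoint of $\tau_!$, the composite $\tau^!\ulomega^!$ is a pro-left adjoint of $\ulomega_!\tau_!\simeq\omega_!$, hence canonically isomorphic to $\omega^!$; the comparison with the explicit indexing categories (Lemma \ref{ln2}, Lemma \ref{c1.1}) is then automatic and need not be carried out. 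Once you replace the ``genuine left adjoint'' claim by the pro-left adjoint statement and drop or correct the colimit formula, your argument coincides with the paper's proof.
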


\begin{proof} This follows from Theorem \ref{t2.1} and Proposition \ref{p.funct}.
\end{proof}

\begin{lemma}\label{lem:tau-colim}\
\begin{enumerate}
\item
For 
$G \in \MPST, G' \in \MPS$ and $M \in \ulMP$,
we have
\[ \colim_{N\in \Comp(M)} G(N) \simeq \tau_!G(M),
\quad
\colim_{N\in \Comp(M)} G'(N) \simeq \tau_{s !}G'(M).
\]
\item
The unit maps 
$\id \to \tau^* \tau_!$ and 
$\id \to \tau_s^* \tau_{s !}$ 
are isomorphisms.
\item
There is an natural isomorphism 
$\tau_! \omega^*\simeq \ulomega^*$.
\end{enumerate}
\end{lemma}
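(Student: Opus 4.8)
The plan is to deduce all three parts from the explicit pro-left adjoint of $\tau$ computed in Lemma~\ref{l1.1} (and its $\tau_s$-analogue stated immediately after it), together with the general formalism (Proposition~\ref{p.funct}) that expresses the induced adjunction $(\tau_!,\tau^*)$ on presheaves in terms of that pro-left adjoint. For part~(1), recall that $\tau_!$ is left adjoint to restriction $\tau^*$, hence a left Kan extension: for $G\in\MPST$ and $M\in\ulMCor$, the group $\tau_!G(M)$ is the colimit of $G$ over the comma category whose objects are pairs $(L\in\MCor,\ a\in\ulMCor(M,\tau L))$. The assignment $N\mapsto(N,j_N)$ (well defined since morphisms of $\Comp(M)$ are by definition the $\gamma$ with $\gamma\circ j_{N_1}=j_{N_2}$, see Definition~\ref{d2.6}) defines a functor from $\Comp(M)$ to this comma category, and the assertion that $\Phi$ of Lemma~\ref{l1.1} is an isomorphism is exactly the statement that this functor is cofinal: surjectivity of $\Phi$ provides an object over every $(L,a)$, and injectivity provides the required connectedness. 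Hence $\tau_!G(M)\simeq\colim_{N\in\Comp(M)}G(N)$, the colimit being filtered because $\Comp(M)$ is cofiltered (Lemma~\ref{c1.1}); equivalently one invokes Proposition~\ref{p.funct} for the identification of $\tau^!M$ with the pro-object indexed by $\Comp(M)$. The statement for $\tau_{s!}$ and $G'\in\MPS$ is proved verbatim, using the bijectivity of $\phi$ (the $\tau_s$-analogue of $\Phi$, established right after Lemma~\ref{l1.1}) in place of that of $\Phi$.

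For part~(2), this is just the full faithfulness of $\tau_!$ and $\tau_{s!}$ already recorded in Proposition~\ref{eq.tau}; but I would also note that it drops out of~(1). For $L\in\MCor$, part~(1) gives $\tau^*\tau_!G(L)=\tau_!G(\tau L)\simeq\colim_{N\in\Comp(\tau L)}G(N)$, and $(L,\id_{\ol L})$ is a \emph{terminal} object of $\Comp(\tau L)$: if $(N,j)\in\Comp(\tau L)$, then $j\colon\ol L\hookrightarrow\ol N$ is a dense open immersion with $\ol L$ proper, so $j(\ol L)$ is open and closed in $\ol N$, while $\ol N\setminus j(\ol L)$ is the support of an effective Cartier divisor, hence of pure codimension one and containing no irreducible component of $\ol N$; therefore $j(\ol L)=\ol N$, $j$ is an isomorphism, and minimality makes $j_N\colon L\to N$ an isomorphism in $\ulMSm^\fin$. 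The colimit thus collapses to $G(L)$, and one checks that the resulting identification is the unit map; the argument for $\tau_s$ is identical.

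For part~(3), Lemma~\ref{l1.2} gives $\ulomega\tau=\omega$, hence $\tau^*\ulomega^*=\omega^*$, and the counit of $(\tau_!,\tau^*)$ furnishes a natural transformation $\tau_!\omega^*=\tau_!\tau^*\ulomega^*\to\ulomega^*$; it remains to check it is an isomorphism. Fix $H\in\PST$ and $M\in\ulMCor$. By~(1), $\tau_!\omega^*H(M)\simeq\colim_{N\in\Comp(M)}H(N^\o)$, and since $N^\o=j(M^\o)$ for every $(N,j)\in\Comp(M)$, the isomorphisms $H((j_N)^\o)\colon H(N^\o)\iso H(M^\o)$ are compatible with the transition maps of the diagram (this is the relation $\gamma^\o\circ(j_{N_1})^\o=(j_{N_2})^\o$ on interiors, with $H$ applied). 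Thus the diagram $N\mapsto H(N^\o)$ is isomorphic to the constant diagram with value $H(M^\o)$, and as $\Comp(M)$ is cofiltered (Lemma~\ref{c1.1}), hence connected and nonempty, its colimit is $H(M^\o)=\ulomega^*H(M)$; unwinding the construction identifies the comparison map with the counit, and naturality in $M$ and in $H$ is then formal.

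The only step that is not pure formalism is the cofinality of $\Comp(M)$ used in part~(1), and that is precisely the content of Lemma~\ref{l1.1} (which itself rests on Theorem~\ref{t2.1}); so I expect no genuine obstacle, the single concrete geometric input being the elementary observation about Cartier divisors used in part~(2).
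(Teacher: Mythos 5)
Your proposal is correct and takes essentially the same route as the paper: part~(1) via Lemma~\ref{l1.1}, Theorem~\ref{t2.1} and Proposition~\ref{p.funct}; part~(2) from~(1) by the observation that $\Comp(\tau L)$ collapses to a point when $\ol L$ is proper (the paper phrases this tersely as $\Comp(M)=\{M\}$ for $M\in\MP$, while you spell out the geometric reason $j$ must be an isomorphism); and part~(3) by the same direct computation of the colimit using $N^\o=M^\o$. Your added remark that~(2) also follows from the full faithfulness of $\tau_!$ recorded in Proposition~\ref{eq.tau} is a valid and non-circular shortcut, since that proposition rests only on Theorem~\ref{t2.1} and Proposition~\ref{p.funct}.
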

\begin{proof}
(1) This follows from Lemma \ref{l1.1}, Theorem \ref{t2.1}
and Proposition \ref{p.funct}.

(2) This follows from (1)
since $\Comp(M)=\{ M \}$ for $M \in \MP$.

(3)
For $F\in \PST$ and $M\in \MCor$, we compute
$$
\tau_! \omega^* F(M) = \colim_{N\in \Comp(M)} \omega^*F(N)
 =\colim_{N\in \Comp(M)} F(N^\o) = F(M^\o)=\ulomega^*F(M).
$$
We are done.
\end{proof}

\begin{rk}\label{rem:formula-tau}
By Lemma \ref{l1.1} we have the formulas
\[\tau^! \Z_\tr(M) = \underset{N\in \Comp(M)}{``\lim"} \Z_\tr(N), \quad \tau^* \Z_\tr(M) = \lim_{N\in \Comp(M)} \Z_\tr(N),
\]
where the latter inverse limit is computed in $\MPST$.
\end{rk}

\begin{qn}\label{q.exact} Is $\tau^!$ exact?
\end{qn}

\subsection{\protect{$\ulMPST^\fin$ and $\ulMPST$}}

\begin{prop} \label{eq:bruno-functor} 
Let 
$\ul{b}_s : \ulMSm^\fin \to \ulMSm$ and 
$\ul{b}:\ulMCor^\fin\to \ulMCor$ be the inclusion functors from \eqref{eq:def-b}.
Then $\ul{b}_s$ and $\ul{b}$ yield strings of three adjoint functors  $(\ul{b}_{s !},\ul{b}_s^*,\ul{b}_{s *})$ and $(\ul{b}_!,\ul{b}^*,\ul{b}_*)$:
\[
\ulMPS^\fin\begin{smallmatrix}\ul{b}_{s !}\\\longrightarrow\\ \ul{b}_s^*\\\longleftarrow\\ \ul{b}_{s *}\\ \longrightarrow\end{smallmatrix}\ulMPS ,
\quad
\ulMPST^\fin\begin{smallmatrix}\ul{b}_!\\\longrightarrow\\ \ul{b}^*\\\longleftarrow\\ \ul{b}_*\\ \longrightarrow\end{smallmatrix}\ulMPST, 
\]
where $\ul{b}_{s !},\ul{b}_{s *}$, $\ul{b}_!,\ul{b}_*$ are localisations;  $\ul{b}_s^*$, $\ul{b}^*$ are exact and fully faithful;  $\ul{b}_{s !}$, $\ul{b}_!$  
have pro-left adjoints, hence are exact.
The counit maps $\ul{b}_{s !} \ul{b}_s^\ast \to \id$ and $\ul{b}_! \ul{b}^\ast \to \id$ are isomorphisms.
For $F_s \in \ulMPS^\fin$, $F\in \ulMPST^\fin$ and $M\in \mathrm{Ob}(\ulMSm) = \mathrm{Ob}(\ulMCor)$, we have
(see Def. \ref{deff})
\begin{equation}\label{eq:b-sh-explicit}
\ul{b}_{s !} F_s (M) = \colim_{N\in \ul{\Sigma}^{\fin}\downarrow M} F_s (N), 
\quad 
\ul{b}_! F(M) = \colim_{N\in \ul{\Sigma}^{\fin}\downarrow M} F(N).
\end{equation}
\end{prop}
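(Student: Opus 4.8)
The plan is to deduce the whole statement formally from Proposition~\ref{peff1}, together with the general facts on pro-objects and (pro-)adjoints recorded in \S\S\ref{sec:pro-obj}--\ref{s1.1}. I will argue for $\ul{b}$; the case of $\ul{b}_s$ is word-for-word identical, and the compatibility of everything with the functors $\ul{c}^\fin$ and $\ul{c}$ of \eqref{eq:six-cat-diag0} is automatic.

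First, $\ulMCor^\fin$ and $\ulMCor$ being essentially small additive categories, restriction along $\ul{b}$ gives a functor $\ul{b}^*\colon \ulMPST\to\ulMPST^\fin$ which has a left adjoint $\ul{b}_!$ (the additive left Kan extension) and a right adjoint $\ul{b}_*$ (the right Kan extension); this produces the two strings of three adjoint functors. The functor $\ul{b}^*$ is exact, since kernels and cokernels of presheaves are computed objectwise and $\ul{b}^*$ is just evaluation at the objects of $\ulMCor^\fin=\mathrm{Ob}(\ulMCor)$. By Proposition~\ref{peff1}, $\ul{b}$ is a localisation at the class $\ul{\Sigma}^\fin$, which enjoys a calculus of right fractions; restriction of presheaves along a localisation functor is fully faithful (by the universal property of the localisation, as in the proof of Proposition~\ref{lem:counit}), so $\ul{b}^*$ is fully faithful. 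Consequently the counit $\ul{b}_!\ul{b}^*\to\id$ and the unit $\id\to\ul{b}_*\ul{b}^*$ are isomorphisms, and since a functor possessing a fully faithful right (resp.\ left) adjoint is a localisation, $\ul{b}_!$ and $\ul{b}_*$ are localisations.

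It remains to obtain the pro-left adjoint and the explicit formula. By Proposition~\ref{peff1}, $\ul{b}$ has a pro-left adjoint $b^!$, which by Proposition~\ref{p2.6} is $b^!M=\underset{N\in\ul{\Sigma}^\fin\downarrow M}{``\lim" N}$, with $\ul{\Sigma}^\fin\downarrow M$ cofiltered by Corollary~\ref{cor:sigma-fin-cofil}. Passing to presheaf categories via Proposition~\ref{p.funct} shows that $\ul{b}_!$ has a pro-left adjoint and that
\[
\ul{b}_! F(M)=\colim_{N\in\ul{\Sigma}^\fin\downarrow M}F(N),
\]
a filtered colimit because $\ul{\Sigma}^\fin\downarrow M$ is cofiltered; this is the stated formula, and it also gives exactness of $\ul{b}_!$, which is right exact as a left adjoint and left exact as a pointwise filtered colimit of abelian groups.

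The argument is essentially formal; the only point needing attention is the last one, namely that the comma category $\ul{b}\downarrow M$ indexing the left Kan extension can be replaced, when computing the colimit of $F$, by the small cofiltered category $\ul{\Sigma}^\fin\downarrow M$. This is exactly where the calculus of right fractions of Proposition~\ref{peff1} (and the cofilteredness from Corollary~\ref{cor:sigma-fin-cofil}) does the work, and it is already packaged in the general statement of Proposition~\ref{p.funct}, so in the write-up this step amounts to citing that proposition.
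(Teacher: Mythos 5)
Your argument is correct and is essentially the paper's own proof: the paper disposes of this proposition in one line by invoking ``the usual yoga'' of \S\ref{s.presh} together with Proposition~\ref{peff1} and Lemma~\ref{lA.6}, and your write-up simply unpacks exactly those ingredients (Kan extensions and Proposition~\ref{p.funct} for the adjoint string, exactness and full faithfulness of $\ul{b}^*$, Lemma~\ref{lA.6} for the counit/localisation statements, and Propositions~\ref{p2.6} and \ref{p.funct} a) -- i.e.\ Theorem~\ref{lem:omega-sh} -- for the pro-left adjoint and the colimit formula over $\ul{\Sigma}^\fin\downarrow M$). No changes needed.
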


\begin{proof} This follows from the usual yoga applied with Proposition \ref{peff1} and Lemma \ref{lA.6}.
\end{proof}

\subsection{With and without transfers}

\begin{prop} \label{eq:c-functor} 
Let $\ul{c} :\ulMSm \to \ulMCor$ be the functor from \eqref{eq:def-c}.
Then 
$\ul{c}$ yields a string of three adjoint functors $(\ul{c}_!,\ul{c}^*,\ul{c}_*)$:
\[\ulMPS\begin{smallmatrix}\ul{c}_!\\\longrightarrow\\ \ul{c}^*\\\longleftarrow\\ \ul{c}_*\\ \longrightarrow\end{smallmatrix}\ulMPST,
\]
where $\ul{c}^*$ is exact and faithful (but not full). We have
\begin{equation}\label{eq2.6}
\ul{c}_! \Z^p(M) = \Z_\tr(M)
\end{equation}
for any $M\in \ulMSm$, 
where $\Z^p(M)$ is 
\footnote{
We put a superscript $p$ to distinguish it from
its associated sheaf $\Z(M)$, to be introduced in  \eqref{Z}.}
the presheaf $N\mapsto \Z[\ulMSm(N,M)]$.

The same statements hold for
$c : \MSm \to \MCor$ and 
$\ul{c}^\fin ; \ulMSm^\fin \to \ulMCor^\fin$ 
from \eqref{eq:def-c}.
Precisely, 
they yield strings of three adjoint functors 
$(c_!, c^*, c_*)$
and $(\ul{c}^\fin_!,\ul{c}^{\fin *},\ul{c}^\fin_*)$;
$c^*$ and $\ul{c}^{\fin *}$ are exact and faithful.
(The analogue of \eqref{eq2.6} also holds for $c$ and $\ul{c}^\fin$,
but we will not need it.)
\end{prop}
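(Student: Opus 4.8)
The plan is to invoke the standard formalism (``the usual yoga'') attaching a category of presheaves with transfers to a functor of additive categories, exactly as in the proof of Proposition~\ref{eq:bruno-functor}. Concretely, for any additive functor $u:\sA\to \sB$ between (small) additive categories, left Kan extension along $u$ produces a string of three adjoints $(u_!,u^*,u_*)$ between the categories of additive presheaves, where $u^*$ is restriction along $u$, $u_!$ is the left Kan extension (pointwise colimit over the comma category), and $u_*$ is the right Kan extension. Here $u^*$ is always exact (colimits and finite limits of presheaves are computed pointwise and $u^*$ is pointwise restriction). Applying this to $\ul c$, $c$ and $\ul c^\fin$ gives the three desired strings of adjoint functors. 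The formula \eqref{eq2.6} is the general fact that $u_!$ carries the representable presheaf $\Z[\sA(-,M)]$ to the representable presheaf $\Z_\tr(uM)=\sB(-,uM)$, which is immediate from the defining adjunction $\Hom_{\PS(\sB)}(u_!\Z[\sA(-,M)],G)=\Hom_{\PS(\sA)}(\Z[\sA(-,M)],u^*G)=G(uM)=\Hom_{\PS(\sB)}(\Z_\tr(uM),G)$, i.e.\ the Yoneda lemma on both sides; the same computation works verbatim for $c$ and $\ul c^\fin$.

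The only assertions that are not pure formalism are the \emph{faithfulness} of $\ul c^*$ (and of $c^*$, $\ul c^{\fin*}$) and the claim that these functors are \emph{not full}. First I would prove faithfulness. By the general description of counits, $\ul c^*$ is faithful if and only if $\ul c$ is ``full on objects'' in the sense that every object of $\ulMCor$ is a retract of (a sum of) objects in the image of $\ul c$; but here $\ul c$ is the identity on objects, so for $F\in\ulMPS$ the presheaf $\ul c^* F$ has the \emph{same} underlying family of abelian groups $\{F(M)\}_{M}$ as $F$, only forgetting the transfer structure. Hence if $\phi:F\to G$ in $\ulMPST$ has $\ul c^*\phi=0$, then $\phi(M)=0$ for every $M$, so $\phi=0$: faithfulness is immediate and needs no geometry. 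For non-fullness, I would exhibit a single explicit morphism of presheaves-without-transfers between representable objects that does not respect transfers — for instance a transpose/correspondence-valued map such as the one giving a map $\Z_\tr(M)\to\Z_\tr(N)$ in $\ulMPST$ not induced by a morphism in $\ulMSm$ (a multivalued finite correspondence); the induced map on $\ul c^*$ shows the functor $\Hom_{\ulMPST}\to\Hom_{\ulMPS}$ is not surjective. Since the statement only asserts non-fullness as an aside (``but not full''), a brief remark suffices and I would not belabor it.

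The exactness claims follow from the formalism together with the results of the previous section: $\ul c^*$ is exact because it is computed pointwise; $\ul c_!$ has a pro-left adjoint, hence is exact, by the same argument used for $\tau_!$, $\omega_!$ and $\ul b_!$ — one either cites Theorem~\ref{t2.1}-style input or, more directly, observes that $\ul c$ being the identity on objects makes the comma categories $\ul c\downarrow M$ have a terminal-up-to-the-relevant-sense structure so that $\ul c_!F(M)$ is a particularly simple colimit; in any case the exactness of $\ul c_!$ is a formal consequence once its pro-left adjoint (dual to $\ul c^*$'s exactness plus Proposition~\ref{p2.6}) is in place. The Grothendieck-abelian-with-projective-generators assertion promised in the paragraph after Definition~\ref{d2.7} for $\ulMPST$, $\MPST$, $\ulMPST^\fin$ also drops out here: the representable presheaves $\Z_\tr(M)=\ul c_!\Z^p(M)$ form a projective generating set because each $\Z^p(M)$ is projective in $\ulMPS$ (Yoneda: $\Hom(\Z^p(M),-)=\operatorname{ev}_M$ is exact) and $\ul c_!$ is a left adjoint sending a generating family to a generating family (its right adjoint $\ul c^*$ is faithful and exact).

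The main obstacle, such as it is, is bookkeeping rather than mathematics: one must make sure all three functors $\ul c$, $c$, $\ul c^\fin$ fit into a single commutative diagram of additive categories compatible with the functors $\tau_s,\ul b_s$ etc.\ of diagram~\eqref{eq:six-cat-diag0}, so that the naturality of the adjunction strings is coherent across the whole hexagon — but this compatibility is forced because all the functors in sight are identities on objects (or fully faithful inclusions), so the induced presheaf-level functors automatically commute with one another up to the canonical isomorphisms. I would therefore organize the proof as: (1) recall the left-Kan-extension yoga giving $(u_!,u^*,u_*)$ for additive $u$; (2) apply it to $\ul c, c, \ul c^\fin$ and record exactness of $u^*$ and of $u_!$ (the latter via its pro-left adjoint, citing the relevant earlier results); (3) prove $\ul c^*$ faithful by the pointwise argument above; (4) prove \eqref{eq2.6} by Yoneda; (5) remark on non-fullness via an explicit correspondence; (6) deduce the Grothendieck/projective-generators statement for the transfer categories. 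I expect step (2)'s exactness-of-$\ul c_!$ to be the one place where I lean hardest on earlier machinery, and step (5) to be the one requiring an honest (if tiny) example.
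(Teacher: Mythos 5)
Your overall strategy (Kan-extension yoga, pointwise exactness of restriction, faithfulness from essential surjectivity, Yoneda for \eqref{eq2.6}) is the same as the paper's, but there is one genuine gap, and it sits exactly where the paper's proof has its only real content: you invoke the three-adjoint-functor formalism ``for an additive functor $u:\sA\to\sB$ between additive categories'', yet $\ul{c}:\ulMSm\to\ulMCor$ is not such a functor. The source $\ulMSm$ is not additive, and by Definition \ref{d2.7} $\ulMPS$ is the category of \emph{all} $\Ab$-valued presheaves on $\ulMSm$, whereas $\ulMPST$ is the category of \emph{additive} presheaves on the additive category $\ulMCor$. So the cited yoga does not literally apply, and the unenriched pointwise Kan extension of $F\in\ulMPS$ along $\ul{c}$ is not obviously an additive functor on $\ulMCor$, i.e.\ need not land in $\ulMPST$. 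The paper bridges precisely this mismatch: it introduces the free additive category $\Z\ulMSm$ with its canonical functor $\gamma:\ulMSm\to\Z\ulMSm$, uses the $2$-universality to get an equivalence $\gamma^*:\Mod\Z\ulMSm\cong\ulMPS$ and an induced additive functor $\tilde{\ul{c}}:\Z\ulMSm\to\ulMCor$ (with $\Z\ulMSm(\gamma N,\gamma M)\cong\Z[\ulMSm(N,M)]$), and only then applies the yoga of \S\ref{s.presh} to $\tilde{\ul{c}}$, setting $\ul{c}_!=\tilde{\ul{c}}_!\circ(\gamma^*)^{-1}$, etc. You need this device (or a substitute, e.g.\ the adjoint functor theorem applied to the limit- and colimit-preserving $\ul{c}^*$ between Grothendieck categories) before the existence of $\ul{c}_!$ and $\ul{c}_*$ is established; after that, your remaining steps do agree with the paper: $\ul{c}^*$ is restriction along an essentially surjective functor, hence exact and faithful (your pointwise argument, up to the slip that $\ul{c}^*$ takes $F\in\ulMPST$ to $\ulMPS$, not the other way), and \eqref{eq2.6} is your Yoneda computation, equivalently the fact that $\tilde{\ul{c}}_!$ sends representables to representables.

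Two secondary points. First, your claim that $\ul{c}_!$ is exact because $\ul{c}$ has a pro-left adjoint, citing Proposition \ref{p2.6}, is unsupported: $\ul{c}$ is not a localisation and has none of the calculus-of-fractions structure used for $\omega_!$, $\tau_!$ and $\ul{b}_!$; fortunately the proposition never asserts exactness of $\ul{c}_!$, so you should simply delete this. Second, your non-fullness example is aimed at the wrong functor: a finite correspondence $\Z_\tr(M)\to\Z_\tr(N)$ not coming from $\ulMSm$ shows that $\ul{c}$ (equivalently $\ul{c}_!$ on representables) is not full, whereas non-fullness of $\ul{c}^*$ requires a morphism $\ul{c}^*F\to\ul{c}^*G$ in $\ulMPS$ that is not compatible with transfers; since the paper treats ``(but not full)'' as a parenthetical remark, this is minor, but as written your example does not prove it.
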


\begin{proof} To define $\ul{c}_!,\ul{c}^*$ and $\ul{c}_*$, we use the free additive category $\Z\ulMSm$ on $\ulMSm$ \cite[Chapter~VIII, Section~3, Exercises~5 \& 6]{mcl}: it comes with a canonical functor $\gamma: \ulMSm \to \Z\ulMSm$ and is $2$-universal for contravariant functors to additive categories. In particular:
\begin{itemize}
\item The functor $\ul{c}$ induces an additive functor $\tilde{\ul{c}}:\Z\ulMSm\to \ulMCor$.
\item 
By the $2$-universality,
the functor $\gamma$ induces an equivalence 
$\gamma^* : \Mod\Z\ulMSm \cong \ulMPS$,
where $\Mod\Z\ulMSm$ denotes the category
of additive contravariant functors $\Z \ulMSm \to \Ab$
\item For $M,N\in \ulMSm$, we have a canonical isomorphism 
\[\Z\ulMSm(\gamma(N),\gamma(M))\simeq \Z[\ulMSm(N,M)].\]
\end{itemize}
As usual, $\tilde{\ul{c}}$ induces
a string of three adjoint functors 
$(\tilde{\ul{c}}_!, \tilde{\ul{c}}^*, \tilde{\ul{c}}_*)$
(see \S \ref{s.presh}).
We then define $\ul{c}_!$ as 
$\tilde{\ul{c}}_! \circ (\gamma^*)^{-1}$, etc. 
Everything follows from this except the faithfulness of $\ul{c}^*$,
which is a consequence of the essential surjectivity of $\ul{c}$. The cases of $\ul{c}^\fin$ and $c$ are dealt with similarly.
\end{proof}

\begin{lemma}\label{lem:b-c-tau}
\begin{enumerate}
\item 
We have
\begin{equation}\label{eq:b-and-c}
\ul{c}^{\fin *} \ul{b}^* = \ul{b}_s^* \ul{c}^*,
\quad
\ul{b}_! \ul{c}^\fin_! = \ul{c}_! \ul{b}_{s !},
\quad
\ul{c}^* \ul{b}_!= \ul{b}_{s !} \ul{c}^{\fin *}.
\end{equation}
\item 
We have
\begin{equation}\label{eq:c-and-tau}
c^* \tau^* = \tau_s^* \ul{c}^*,
\quad
\ul{c}^* \tau_! = \tau_{s !} c^*,
\quad
\ul{c}^{\fin *} \tau_!^\fin = \tau_{s !}^\fin c^{\fin *}.
\end{equation}
\end{enumerate}
\end{lemma}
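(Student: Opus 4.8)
The plan is to reduce everything to adjunction formalism plus the commutativity of the underlying diagrams of categories, namely \eqref{eq:six-cat-diag0}. Recall the general principle: if a square of categories $\sC \to \sD$, $\sC \to \sC'$, $\sD \to \sD'$, $\sC' \to \sD'$ commutes (say $v \circ u = u' \circ w$ as functors), then passing to presheaf categories one gets $w^* u^* = u'^* v^*$ on the nose, and by taking left adjoints $u_! w_! = v_! u'_!$; moreover there is always a canonical base-change (``Beck--Chevalley'') natural transformation $u'_! w^* \Rightarrow v^* u_!$ (or its mate), which is an isomorphism whenever one can check it on representables or, more robustly here, whenever the relevant left Kan extensions can be computed by the same colimit on both sides. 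So the first two identities in each of \eqref{eq:b-and-c} and \eqref{eq:c-and-tau} are formal (restriction functors compose strictly, hence so do their left adjoints, up to canonical isomorphism), and only the third identity in each line — a genuine base-change statement — needs an argument.

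For part (1): the identity $\ul{c}^{\fin *}\ul{b}^* = \ul{b}_s^*\ul{c}^*$ is just functoriality of $(-)^*$ applied to the right-hand square of \eqref{eq:six-cat-diag0}, $\ul{b}\circ \ul{c}^\fin = \ul{c}\circ \ul{b}_s$. Taking left adjoints gives $\ul{b}_!\ul{c}^\fin_! = \ul{c}_!\ul{b}_{s!}$ (uniqueness of adjoints). For the third, $\ul{c}^*\ul{b}_! = \ul{b}_{s!}\ul{c}^{\fin *}$: I would first write down the canonical mate $\ul{b}_{s!}\ul{c}^{\fin *} \Rightarrow \ul{c}^*\ul{b}_!$ coming from the commutation of the square, then evaluate both sides at $M \in \mathrm{Ob}(\ulMCor^\fin)$ using the explicit colimit formula \eqref{eq:b-sh-explicit}: for $F \in \ulMPST^\fin$ one has $(\ul{c}^*\ul{b}_! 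F)(M) = (\ul{b}_! F)(\ul{c}M) = \colim_{N\in \ul\Sigma^\fin\downarrow \ul c M}(\ul b^* F)(N)$, while $(\ul{b}_{s!}\ul{c}^{\fin *}F)(M) = \colim_{N\in \ul\Sigma^\fin\downarrow M}(\ul c^{\fin *}F)(N)$. The point is that the comma category $\ul\Sigma^\fin\downarrow M$ computed in $\ulMSm^\fin$ maps to the one computed in $\ulMCor^\fin$, and by Proposition \ref{peff1}(c) (combined with Remark \ref{rk-graph-trick}, which lets one pass from correspondences in $\ul\Sigma^\fin$ to genuine morphisms of modulus pairs after a finite birational modification) this functor is cofinal; since both indexing categories are cofiltered (Corollary \ref{cor:sigma-fin-cofil}), the two colimits agree, and one checks the mate induces this agreement. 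I expect this cofinality — that every $N \to \ul c M$ in $\ul\Sigma^\fin$ can be dominated by one coming from $\ulMSm^\fin$ — to be the one nontrivial input, though it is essentially contained in the proof of Proposition \ref{peff1}(a).

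For part (2): identically, $c^*\tau^* = \tau_s^*\ul{c}^*$ and $\ul{c}^{\fin *}\tau_!^\fin = \tau_{s!}^\fin c^{\fin *}$ are functoriality of $(-)^*$ and of $(-)_!$ applied to the left square $\tau\circ c = \ul c\circ \tau_s$ of \eqref{eq:six-cat-diag0} (and its finite analogue), and $\ul{c}^*\tau_! = \tau_{s!}c^*$ is the base-change along that square. Here I would evaluate using the colimit formula for $\tau_!$ and $\tau_{s!}$ from Lemma \ref{lem:tau-colim}(1): for $G \in \MPST$ and $M \in \ulMCor$, $(\ul c^*\tau_! G)(M) = (\tau_! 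G)(\ul c M) = \colim_{N\in\Comp(\ul c M)} G(N)$ and $(\tau_{s!}c^* G)(M) = \colim_{N\in\Comp(M)} c^*G(N) = \colim_{N\in\Comp(M)} G(c N)$, where now $\Comp(M)$ is computed in $\ulMSm$ versus in $\ulMCor$. But $\Comp(M)$ depends only on the modulus pair $M$ and on morphisms of modulus pairs over it — indeed $\Comp(M)(N_1,N_2)$ consists of morphisms in $\MSm$ inducing an isomorphism $N_1^\o \iso N_2^\o$, so the indexing category literally coincides whether viewed in $\MSm$ or via $c$ in $\MCor$ — so the two colimits are taken over the same cofiltered poset (Lemma \ref{c1.1}) and agree; again one checks the canonical mate realizes this. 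The main obstacle, as in part (1), is just verifying that the base-change mate is the comparison map between these two colimit descriptions rather than some a priori different map; this is a diagram chase using the universal properties of Kan extensions and the explicit formulas cited, and contains no real difficulty once set up. All assertions for the ``$s$''-decorated and ``$\fin$''-decorated variants are proved verbatim with the same squares.
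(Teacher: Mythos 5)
Your overall approach is essentially the paper's: the ``same-variance'' identities follow formally from the commutative squares of categories in \eqref{eq:six-cat-diag0}, and the mixed-variance (Beck--Chevalley) identities are checked by comparing the explicit colimit formulas \eqref{eq:b-sh-explicit} and Lemma \ref{lem:tau-colim}\,(1). Two remarks on where you diverge from (or fall short of) the paper.

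First, a small inefficiency in part (1). You set up a comparison of two a priori different comma categories (one in $\ulMSm^\fin$, one in $\ulMCor^\fin$) and appeal to a cofinality argument via Proposition~\ref{peff1}\,(c) and Remark~\ref{rk-graph-trick}. The paper short-circuits this: \eqref{eq:b-sh-explicit} already expresses \emph{both} $\ul{b}_!F(M)$ and $\ul{b}_{s!}F_s(M)$ as a colimit over the \emph{same} indexing category $\ul{\Sigma}^\fin\downarrow M$, whose morphisms live in $\ulMSm^\fin$ (the class $\ul{\Sigma}^\fin$ is defined inside $\ulMSm^\fin$ by Definition~\ref{deff}). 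So no separate cofinality check is required; one just evaluates and sees literally the same colimit on both sides. Your cofinality route would also work, but it is doing extra work.

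Second, a genuine gap in part (2). You group $c^*\tau^*=\tau_s^*\ul{c}^*$ and $\ul{c}^{\fin*}\tau_!^\fin=\tau_{s!}^\fin c^{\fin*}$ together and call them ``functoriality of $(-)^*$ and of $(-)_!$,'' isolating only $\ul{c}^*\tau_!=\tau_{s!}c^*$ as a base change. But the third identity $\ul{c}^{\fin*}\tau_!^\fin=\tau_{s!}^\fin c^{\fin*}$ has one $(-)^*$ and one $(-)_!$ on each side: it is a Beck--Chevalley base change of exactly the same type as the middle one, not a consequence of functoriality of $(-)_!$ (which would give $\tau_!^\fin c^\fin_!=\ul{c}^\fin_!\tau^\fin_{s!}$, a different statement). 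The paper says ``The last one of (2) is similar,'' meaning similar to the middle identity, i.e.\ it again requires computing $\tau_!^\fin$ and $\tau_{s!}^\fin$ by a colimit formula (a $\Comp$-type formula in the finite setting, analogous to Lemma~\ref{lem:tau-colim}\,(1)) and observing that the indexing categories match. As written, your proof does not supply that argument, because the misclassification hides the need for it.

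The fix is straightforward: treat $\ul{c}^{\fin*}\tau_!^\fin=\tau_{s!}^\fin c^{\fin*}$ by the same colimit comparison you already use for $\ul{c}^*\tau_!=\tau_{s!}c^*$, after noting that the analogue of Lemma~\ref{lem:tau-colim}\,(1) holds in the ``$\fin$'' setting.
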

\begin{proof}
The first two equalities of (1) follows from the equality
$\ul{b} ~\ul{c}^\fin = \ul{c} ~\ul{b}_s$ 
(see \eqref{eq:six-cat-diag0}).
Similarly, the first equality of (2) follows from
$\tau c = \ul{c} \tau_s$.
By \eqref{eq:b-sh-explicit}, we have
\[ \ul{c}^* \ul{b}_! F(M)
\cong \colim_{N \in \ul{\Sigma}^\fin \downarrow M} F(N) 
\cong \ul{b}_{s !} \ul{c}^{\fin *}F(M)
\]
for any $F \in \MPST^\fin$ and $M \in \ulMSm$.
(Note that all morphisms of $\ul{\Sigma}^\fin \downarrow M$
are in $\ulMSm^\fin$,
and that both of $\ul{b}_!$ and $\ul{b}_{s !}$
can be computed by using the same $\ul{\Sigma}^\fin \downarrow M$.)
This proves the last formula of (1).
Lemma \ref{lem:tau-colim} (1) shows that
\[ \ul{c}^*\tau_!F(M)
\cong \colim_{N \in \Comp(M)} F(N) 
\cong \tau_{s !}c^*F(M)
\]
for any $F \in \MPST$ and $M \in \ulMSm$.
The last one of (2) is similar.
\end{proof}

\subsection{A patching lemma}

By the previous lemma, 
we obtain a commutative diagram of categories
(cf. \eqref{eq:six-cat-diag0}):
\begin{equation}\label{eq:six-cat-diag}\vcenter{
\xymatrix{
\MPST \ar[r]^{\tau_!} \ar[d]^{c^*} &
\ulMPST \ar[r]^{\ul{b}^*} \ar[d]^{\ul{c}^*} &
\ulMPST^{\fin} \ar[d]^{\ul{c}^{\fin *}}
\\
\MPS \ar[r]^{\tau_{s!}} &
\ulMPS \ar[r]^{\ul{b}_s^*}  &
\ulMPS^{\fin}.
}}
\end{equation}
All vertical arrows are faithful
and horizontal ones fully faithful.

\begin{lemma}\label{lem:patching}
Both squares of \eqref{eq:six-cat-diag} are ``$2$-Cartesian''.
More precisely, the following assertions hold.
\begin{enumerate}
\item
Let 
$\ulMPS \times_{\ulMPS^\fin} \ulMPST^\fin$
be the category of pairs
$(F_s, F_t)$ consisting of
$F_s \in \ulMPS$ and $F_t \in \ulMPST^\fin$ 
such that their restriction to the common subcategory
$\ulMSm^\fin$ are equal.
The functor
\[ \ulMPST \to \ulMPS \times_{\ulMPS^\fin} \ulMPST^\fin,
\]
defined by $F \mapsto (\ul{c}^*F, \ul{b}^* F)$
is an equivalence of categories.
\item 
Let $\MPS \times_{\ulMPS} \ulMPST$
be the category of triples
$(F_s, F_t, \phi)$ consisting of
$F_s \in \MPS, ~F_t \in \ulMPST$ and
an isomorphism $\phi : \tau_{s!} F_s \cong \ul{c}^* F_t$
in $\ulMPS$.
The functor
\[ \MPST \to \MPS \times_{\ulMPS} \ulMPST,
\]
defined by $F \mapsto (c^*F, \tau_! F, \theta_F)$,
where $\theta_F : \tau_{s!} c^*F \cong \ul{c}^* \tau_! F$
is from \eqref{eq:c-and-tau},
is an equivalence of categories.
\end{enumerate}
\end{lemma}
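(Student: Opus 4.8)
The plan is to prove both statements by constructing an explicit quasi-inverse and checking it is adjoint to the given functor on both sides; the essential input is the localisation theory already established (Propositions \ref{peff1} and \ref{prop:localization}) together with the patching Corollaries \ref{cor:fiber-cat} and \ref{cor:sigma-fin-cofil}. I will treat (1) first, then deduce the shape of the argument for (2) by the same pattern.

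For (1), given a pair $(F_s, F_t)$ in $\ulMPS \times_{\ulMPS^\fin} \ulMPST^\fin$, I want to produce $F \in \ulMPST$ with $\ul{c}^* F = F_s$ and $\ul{b}^* F = F_t$. The recipe is dictated by Corollary \ref{cor:fiber-cat}: on the subcategory $\ulMCor^\fin$ we must take $F = F_t$, and on $\ulMSm \subset \ulMPS$-side we must take $F = F_s$; since these agree on $\ulMSm^\fin$, Corollary \ref{cor:fiber-cat} patches them into a single additive presheaf $F$ on $\ulMCor$ (additivity is automatic from additivity of $F_t$ on $\ulMCor^\fin$, which contains all the biproduct data of $\ulMCor$ by Remark \ref{l3.3}). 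Concretely, for $M \in \ulMCor$ one uses the cofiltered category $\ul{\Sigma}^\fin \downarrow M$ (Corollary \ref{cor:sigma-fin-cofil}) to write any morphism as $s_1 s_2^{-1}$ with $s_i \in \ul{\Sigma}^\fin$ and transports $F_t$ along it; this is exactly the content of $\ul{b}_!$ and $\ul{b}^*$ in Proposition \ref{eq:bruno-functor}, so one may even define $F$ by $F(M) = F_t(M)$ for $M$ viewed in $\ulMCor^\fin$ and check functoriality in $\ulMCor$ via $\ul{b}^* \ul{b}_!$-type formulas. Then $\ul{b}^* F = F_t$ by construction, and $\ul{c}^* F = F_s$ because both sides restrict to $F_s|_{\ulMSm^\fin}$ on the cofinal-in-$\ulMSm$-up-to-$\ul{\Sigma}^\fin$ subcategory $\ulMSm^\fin$, using Proposition \ref{peff1} b) (localisation $(\ul{\Sigma}^\fin)^{-1}\ulMSm^\fin \cong \ulMSm$). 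Faithfulness and fullness of $F \mapsto (\ul{c}^* F, \ul{b}^* F)$ follow since a morphism of presheaves on $\ulMCor$ is determined by its values on objects and on morphisms of $\ulMCor^\fin$ together with morphisms coming from $\ulMSm$ — which is precisely the data of a morphism of pairs — again by Corollary \ref{cor:fiber-cat} applied to natural transformations.

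For (2), the structure is parallel but now the gluing is along $\tau_{s!}$ rather than an equality, so the isomorphism $\phi$ is genuine data. Given $(F_s, F_t, \phi)$ with $F_s \in \MPS$, $F_t \in \ulMPST$ and $\phi : \tau_{s!} F_s \cong \ul{c}^* F_t$, I want $F \in \MPST$ with $c^* F \cong F_s$ and $\tau_! F \cong F_t$ compatibly. Since $\tau_!$ and $\tau_s^*$ are (by Lemma \ref{lem:tau-colim} (2)) such that the units $\id \to \tau^*\tau_!$, $\id \to \tau_s^* \tau_{s!}$ are isomorphisms, one recovers $F$ as $\tau^* F_t$: set $F := \tau^* F_t \in \MPST$. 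Then $\tau_! F = \tau_! \tau^* F_t$; one needs this to be naturally isomorphic to $F_t$, which is \emph{not} automatic since $\tau_!\tau^*$ is not the identity in general — but the hypothesis that $F_t$ lies in the essential image of $\tau_!$ (equivalently, by Proposition \ref{eq.tau}, that the counit $\tau_!\tau^* F_t \to F_t$ is an isomorphism) is exactly what the datum $\phi$ plus fullness of $\ul c^*$ on the relevant morphisms should force: indeed $\ul{c}^* \tau_! \tau^* F_t = \tau_{s!} c^* \tau^* F_t = \tau_{s!}\tau_s^* \ul c^* F_t \cong \tau_{s!} \tau_s^* \tau_{s!} F_s \cong \tau_{s!} F_s \cong \ul c^* F_t$ using \eqref{eq:c-and-tau}, Lemma \ref{lem:tau-colim} (2), and $\phi$; faithfulness of $\ul c^*$ then upgrades this to $\tau_!\tau^* F_t \cong F_t$. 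Similarly $c^* F = c^* \tau^* F_t = \tau_s^* \ul c^* F_t \cong \tau_s^* \tau_{s!} F_s \cong F_s$. One then checks these isomorphisms are compatible with $\theta_F$ and $\phi$, giving a functor in the reverse direction, and verifies it is quasi-inverse on both sides.

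The main obstacle I anticipate is in part (2): showing the reverse functor lands where it should, i.e. that $\tau^* F_t$ really does have $\tau_!(\tau^* F_t) \cong F_t$. This requires genuinely using that $(F_s, F_t, \phi)$ is a \emph{compatible} triple — the isomorphism $\phi$ cannot be dropped — and the cleanest route is the chain of identifications above reducing everything, via the faithful exact functor $\ul c^*$ and the equalities \eqref{eq:c-and-tau}, to statements about $\tau_{s!}, \tau_s^*$ on the transfer-free side, where Lemma \ref{lem:tau-colim} gives $\id \cong \tau_s^* \tau_{s!}$ outright. One must be careful that "faithful" alone does not let you conclude a morphism is an isomorphism from its image being one; here one instead produces the inverse morphism as $\ul c^*$ of an explicit candidate and uses faithfulness only to check the composites are identities. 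The analogous subtlety in part (1) is milder because the gluing is along an honest equality of restrictions, so Corollary \ref{cor:fiber-cat} applies verbatim to both objects and morphisms.
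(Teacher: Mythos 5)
Your treatment of part (1) is essentially the paper's: the paper disposes of it by citing Corollary \ref{cor:fiber-cat}, and your added remarks on additivity and on morphisms are harmless. For part (2) you take a genuinely different route from the paper, and that is where there is a real gap. The paper does not recover $F$ as $\tau^*F_t$; it builds $F$ by hand, setting $F(M):=F_s(M)$ for $M\in\MCor$ and transporting the action of finite correspondences from $F_t$ through the isomorphisms $\til{\phi}_M:=\phi_{\tau M}$ (which are available because $\Comp(M)=\{M\}$ for proper $M$), and then checks this gives a quasi-inverse. Your route instead stands or falls with the claim that the counit $\epsilon_{F_t}:\tau_!\tau^*F_t\to F_t$ is an isomorphism whenever $\ul{c}^*F_t\cong\tau_{s!}F_s$.

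The gap is in how you establish that claim. You deduce $\tau_!\tau^*F_t\cong F_t$ from your chain of isomorphisms $\ul{c}^*\tau_!\tau^*F_t\cong\ul{c}^*F_t$ ``by faithfulness of $\ul{c}^*$''; but a faithful functor does not reflect isomorphy of objects, and your proposed repair --- exhibit the inverse as $\ul{c}^*$ of an explicit candidate and use faithfulness on the composites --- cannot be carried out as stated: there is no evident morphism $F_t\to\tau_!\tau^*F_t$ in $\ulMPST$ to serve as that candidate, and your chain is built from $\phi$, which lives only in $\ulMPS$ (recall from Proposition \ref{eq:c-functor} that $\ul{c}^*$ is \emph{not} full, so the chain is not visibly $\ul{c}^*$ of anything). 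What actually closes the argument is: (i) $\ul{c}^*$ reflects isomorphisms, not because it is faithful but because $\ul{c}$ is the identity on objects, so a morphism of $\ulMPST$ whose components are all bijective is invertible; and (ii) a computation using the naturality of $\phi$ at the morphisms $j_N:M\to\tau N$ for $N\in\Comp(M)$, which identifies each component of $\ul{c}^*\epsilon_{F_t}$ --- by Lemma \ref{lem:tau-colim} (1) this is the map $\colim_{N\in\Comp(M)}F_t(\tau N)\to F_t(M)$ induced by the $F_t(j_N)$ --- with your chain of isomorphisms, hence shows it is bijective. Step (ii) is the actual content and is absent from your write-up; it is the same kind of verification the paper leaves as ``straightforward'' for its hand-built $F$, but in your formulation it is precisely the missing hinge rather than a routine check. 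With (i) and (ii) supplied, your adjunction-theoretic argument does go through and yields, as a by-product, that any $F_t$ occurring in a compatible triple lies in the essential image of $\tau_!$; the remaining compatibilities with $\theta_F$ and $\phi$ that you defer are then of the same routine nature as in the paper.
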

\begin{proof}
(1) is the content of Corollary \ref{cor:fiber-cat}.
We show (2). Given $(F_s, F_t, \phi)$,
we shall construct $F \in \MPST$ as follows.
Set $F(M):=F_s(cM)$ for any $M \in \MCor$.
Since $M$ is proper, we have
an isomorphism
\[ F(M)=F_s(cM) = \tau_{s!}F_s(\tau_s cM)
\overset{\phi_M}{\longrightarrow}
\ul{c}^*F_t(\ul{c}\tau M) = F_t(\tau M),
\]
which we denote by $\til{\phi}_M$.
For $\gamma \in \MCor(M, N)$, we define
$F(\gamma):=\til{\phi}_M^{-1} F_t(\gamma) \til{\phi}_N$.
It is straightforward to see that
$(F_s, F_t, \phi) \mapsto F$ gives a quasi-inverse.
\end{proof}

\subsection{The functors $n_!$ and $n^*$}

As in \S \ref{s.presh}, the functor $(-)^{(n)}$ of Definition \ref{dn1} 
induces a string of adjoint endofunctors $(n_!,n^*,n_*)$ of $\MPST$,
where $n^*$ is given by $n^*(F)(M)=F(M^{(n)})$. We shall not use $n_*$ in the sequel.



\begin{lemma}\label{ln3} The functor $n_!$ is fully faithful.
\end{lemma}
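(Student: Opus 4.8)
The plan is to exploit the fact that the functor $(-)^{(n)}\colon\MCor\to\MCor$ is fully faithful (Lemma \ref{ln1}), and to transport this along the Yoneda-type formalism that produces $(n_!,n^*,n_*)$. Recall that $n_!$ is the left Kan extension of $\Z_\tr\circ(-)^{(n)}$ along $\Z_\tr\colon\MCor\to\MPST$; equivalently $n^*(F)(M)=F(M^{(n)})$, and $n_!$ is its left adjoint. To show $n_!$ is fully faithful it suffices to show that the unit $\id\to n^*n_!$ is an isomorphism. Since every object of $\MPST$ is a colimit of representables $\Z_\tr(M)$ and $n_!$ (being a left adjoint, indeed having a pro-left adjoint in the analogous cases, but here simply a left adjoint) commutes with colimits while $n^*$ also commutes with colimits (it is computed objectwise), it is enough to check that the unit is an isomorphism on representables.

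First I would compute $n_!\Z_\tr(M)$. By the standard description of the left Kan extension along Yoneda — exactly as in the proof of Proposition \ref{eq:c-functor} — one has $n_!\Z_\tr(M)=\Z_\tr(M^{(n)})$. (Concretely, $n_!$ is determined by sending the representable presheaf on $M$ to the representable presheaf on $M^{(n)}$, since $(-)^{(n)}$ is the functor being extended.) Then for any $N\in\MCor$,
\[
(n^*n_!\Z_\tr(M))(N)=n^*\Z_\tr(M^{(n)})(N)=\Z_\tr(M^{(n)})(N^{(n)})=\MCor(N^{(n)},M^{(n)}),
\]
and the unit map is the natural map $\MCor(N,M)\to\MCor(N^{(n)},M^{(n)})$ induced by the functor $(-)^{(n)}$. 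By Lemma \ref{ln1} this map is bijective. Hence the unit is an isomorphism on all representables, and therefore on all of $\MPST$, so $n_!$ is fully faithful.

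The only point requiring care — and the step I expect to be the main (minor) obstacle — is the bookkeeping that lets one reduce from general presheaves to representables: one must confirm that both $n_!$ and $n^*$ preserve the relevant colimits so that checking the unit on representables suffices. For $n^*$ this is immediate since it is defined objectwise by $n^*(F)(M)=F(M^{(n)})$, hence commutes with all colimits; for $n_!$ it is a left adjoint, hence commutes with all colimits. Since the unit transformation is natural and an isomorphism on the generating family of representables, and both functors involved in $n^*n_!$ commute with the colimits expressing an arbitrary $F$ in terms of representables, the unit is an isomorphism in general. This is precisely the ``usual yoga'' already invoked in Propositions \ref{eq:bruno-functor} and \ref{eq:c-functor}, so no new ingredient beyond Lemma \ref{ln1} is needed.
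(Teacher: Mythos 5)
Your proof is correct and is essentially the paper's argument: the paper simply invokes the formal fact that full faithfulness of $(-)^{(n)}$ (Lemma \ref{ln1}) transfers to $n_!$, namely Proposition \ref{p.funct} b). What you have done is spell out the standard proof of that transfer (unit is an isomorphism on representables, then pass to colimits), so no genuinely different route is involved.
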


\begin{proof} This follows formally from the same properties of $(-)^{(n)}$. 
\end{proof}

\begin{prop}\label{p4.1} For any $F\in \MPST$, there is a natural isomorphism
\[\omega^*\omega_! F \simeq \infty^*F, \]
where $\infty^* F(M):= \colim_n F(M^{(n)})$ $($for the natural transformations \eqref{eqn1}$)$.
\end{prop}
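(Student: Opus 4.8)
I would prove Proposition \ref{p4.1} by unwinding both sides on an object $M \in \MCor$ and comparing the resulting colimits directly. By Proposition \ref{lem:counit}, $\omega^* \omega_! F$ is computed by first applying $\omega_!$ and then restricting along $\omega$; concretely, since $\omega^*$ is fully faithful, for $M = (\Mb, M^\infty) \in \MCor$ we have $(\omega^* \omega_! F)(M) = (\omega_! F)(M^\o)$. Now the formula \eqref{rem:single-cpt} (a consequence of Lemma \ref{ln2} and Proposition \ref{p.funct}) identifies $(\omega_! F)(X)$ for $X \in \Sm$ with $\colim_{n>0} F(M'^{(n)})$ for any chosen $M' \in \MP(X)$, i.e. any modulus pair with interior $X$. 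Applying this with $X = M^\o$ and the natural choice $M' = M$ itself, we get
\[
(\omega^* \omega_! F)(M) = (\omega_! F)(M^\o) \simeq \colim_{n>0} F(M^{(n)}) = (\infty^* F)(M),
\]
where the last identification is just the definition of $\infty^* F$ together with the observation that the transition maps $F(M^{(n)}) \to F(M^{(m)})$ coming from \eqref{rem:single-cpt} are exactly those induced by the natural transformations \eqref{eqn1}, since $(-)^{(n)}$ sends $M^\infty$ to $nM^\infty$ and the cofinal subcategory $\{M^{(n)}\}_{n\ge 1}$ of $\MP(M^\o)$ carries precisely the maps \eqref{eqn1}.

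\textbf{Naturality.} The remaining point is to promote this pointwise isomorphism to an isomorphism of presheaves, i.e. to check it is natural in $M$. For a morphism $\gamma \in \MCor(M, N)$ one must verify that the square relating $(\omega^*\omega_! F)(N) \to (\omega^*\omega_! F)(M)$ and $(\infty^* F)(N) \to (\infty^* F)(M)$ commutes. On the left side this is the map $(\omega_! F)(N^\o) \to (\omega_! F)(M^\o)$ induced by $\omega(\gamma) \in \Cor(M^\o, N^\o)$, and the identification \eqref{rem:single-cpt} is itself natural (it is an instance of Proposition \ref{p.funct} applied to the cofinal inclusions of indexing categories). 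On the right side, $\infty^*$ is a colimit of the endofunctors $n^*$, each of which is functorial on $\MPST$, so $(\infty^* F)(\gamma)$ is the colimit over $n$ of the maps $F(\gamma^{(n)}) : F(N^{(n)}) \to F(M^{(n)})$, where $\gamma^{(n)} \in \MCor(M^{(n)}, N^{(n)})$ is the image of $\gamma$ under $(-)^{(n)}$ (well-defined by Lemma \ref{ln1}, and admissible because raising the modulus only makes the admissibility inequality easier). Tracing through, both composites are the canonical map on colimits induced by $\gamma$, so they agree.

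\textbf{Main obstacle.} The conceptual content is entirely in \eqref{rem:single-cpt}, which is already established; so the only real work is bookkeeping to make sure the transition maps in the two colimits genuinely match. The subtle point to be careful about is that $\{M^{(n)}\}_{n}$ is cofinal in $\MP(M^\o)$ but the colimit defining $\omega_!$ runs a priori over the larger (cofiltered, by Lemma \ref{ln2}) index category $\MP(M^\o)$; I would want to make explicit that restricting to the $M^{(n)}$ replaces the abstract transition maps by the specific ones of \eqref{eqn1}, and that this replacement is compatible with functoriality in $M$ — that is the place where a slip could occur. Once that is pinned down the proposition follows formally, and I expect the write-up to be just a few lines citing \eqref{rem:single-cpt}, the cofinality in Lemma \ref{ln2}b), and Proposition \ref{p.funct} for the naturality.
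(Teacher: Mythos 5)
Your proof is correct and follows essentially the same route as the paper's, which evaluates $(\omega^*\omega_! F)(M)=(\omega_! F)(M^\o)=\colim_{M'\in \MP(M^\o)}F(M')$ and concludes by the cofinality of $\{M^{(n)}\}_{n\ge 1}$ from Lemma \ref{ln2}~b) (the content of \eqref{rem:single-cpt}), with naturality handled by Proposition \ref{p.funct}. Only a cosmetic remark: the identification $(\omega^*G)(M)=G(M^\o)$ is just the definition of $\omega^*$ as restriction along $\omega$, so full faithfulness of $\omega^*$ need not be invoked there.
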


\begin{proof} Let $M\in \MCor$ and $X=\omega M$. Then
\[\omega^*\omega_! F(M) = \colim_{M'\in \MP(X)} F(M'), \]
and the claim follows from Lemma \ref{ln2}.
\end{proof}

\begin{prop}\label{pn1} For all $n\ge 1$, the natural transformation $\omega_!\to\omega_!n^*$ stemming from \eqref{eqn1} is an isomorphism.
\end{prop}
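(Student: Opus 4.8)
The plan is to evaluate both functors on an arbitrary $X\in\Sm$ and $F\in\MPST$ using the explicit colimit formula \eqref{rem:single-cpt}, and to recognise the transformation $\omega_!\to\omega_! n^*$ as the comparison map attached to a cofinal subsystem.

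First I would fix a Cartier compactification and a modulus pair $M\in\MP(X)$ with that ambient space. By \eqref{rem:single-cpt} there is a canonical isomorphism $\omega_!(F)(X)\cong\colim_{m\ge 1}F(M^{(m)})$, a filtered colimit over $(\N_{\ge 1},\le)$ whose transition maps are $F$ applied to the structure morphisms \eqref{eqn1}. Applying the same formula to the presheaf $n^*F$ and using the identity $(M^{(m)})^{(n)}=M^{(mn)}$, one gets $\omega_!(n^*F)(X)\cong\colim_{m\ge 1}F(M^{(mn)})$.

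Next I would unwind the transformation: $\omega_!\to\omega_! n^*$ is obtained by applying $\omega_!$ to the transformation $\id_{\MPST}\to n^*$ which, at $F$ and $M$, is $F$ applied to the morphism $M^{(n)}\to M$ of \eqref{eqn1}. Tracing this through \eqref{rem:single-cpt}, on the $m$-th term the induced map $\omega_!(F)(X)\to\omega_!(n^*F)(X)$ is the map $F(M^{(m)})\to F(M^{(mn)})$ coming from the morphism $M^{(mn)}\to M^{(m)}$ of \eqref{eqn1}, i.e.\ exactly the transition map from level $m$ to level $mn$ inside the system $\{F(M^{(j)})\}_{j\ge 1}$. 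Since $\{mn\mid m\ge 1\}$ is cofinal in $(\N_{\ge 1},\le)$, the canonical map $\colim_{m\ge 1}F(M^{(mn)})\to\colim_{j\ge 1}F(M^{(j)})$ is an isomorphism, and composing it with our map recovers the identity of $\colim_{j\ge 1}F(M^{(j)})$ (transition maps become identities in the colimit). Hence $\omega_!(F)(X)\to\omega_!(n^*F)(X)$ is an isomorphism; as $F$ and $X$ were arbitrary, $\omega_!\to\omega_! n^*$ is an isomorphism of functors.

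I do not expect a real obstacle here; the only points requiring care are purely bookkeeping: the transition maps in $\MP(X)$ go from larger to smaller modulus, so that increasing the multiplicity $m$ advances one in the direct system computing $\omega_!$, and one must check that the morphism $(M^{(m)})^{(n)}\to M^{(m)}$ provided by \eqref{eqn1} is indeed the transition map of that system from $m$ to $mn$ — both immediate from the definitions of $(-)^{(n)}$ and \eqref{eqn1}. Equivalently, one can note that $N\mapsto N^{(n)}$ is a cofinal endofunctor of $\MP(X)$, a reindexing by multiples.
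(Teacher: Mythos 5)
Your proof is correct and takes essentially the same route as the paper: both arguments reduce to the cofinality of the endofunctor $(-)^{(n)}$ on $\MP(X)$ from Lemma \ref{ln2}, the only difference being that you pass through the $\N$-indexed reformulation \eqref{rem:single-cpt} and track the transition maps explicitly, whereas the paper states the cofinality isomorphism directly at the level of the $\MP(X)$-indexed colimit. Your bookkeeping of arrow directions and of the identification of the natural transformation with the transition map $m\to mn$ is accurate, and in fact makes explicit a step the paper leaves implicit.
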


\begin{proof} Let $F\in \MPST$. For $X\in \Cor$, we have
\[\omega_!n^*F(X)=\colim_{M\in \MP(X)} n^*F(M)
=\colim_{M\in \MP(X)} F(M^{(n)})=\colim_{M\in \MP(X)} F(M),
\]
where the last isomorphism follows from Lemma \ref{ln2}.
\end{proof}

\section{Sheaves on {$\protect\ulMP^\protect\fin$ and $\protect\ulMCor^\protect\fin$} }

\subsection{Nisnevich topology on $\protect\ulMP^{\protect\fin}$}

\begin{definition}\label{def:groth-top-mp}
We call a morphism $p : U \to M$ in $\ulMP^{\fin}$
a Nisnevich cover if
\begin{thlist}
\item $\ol{p} : \ol{U} \to \ol{M}$ is a Nisnevich cover
of $\ol{M}$ in the usual sense;
\item $p$ is minimal (that is, $U^\infty = \ol{p}^*(M^\infty)$).
\end{thlist}
Since the morphisms appearing in
the Nisnevich covers are squarable by
Corollary \ref{exist-pullback} (1),
we obtain a Grothendieck topology on $\ulMP^{\fin}$. 
The category $\ulMP^{\fin}$ endowed with
this topology will be called 
the big Nisnevich site of $\ulMP^{\fin}$ and denoted by 
$\ulMP^{\fin}_\Nis$.
\end{definition}

\begin{definition}\label{d3.3}
Let us fix $M \in \ulMP^{\fin}$.
Let $M_{\Nis}$ be the category
of minimal morphisms $f : N \to M$ in $\ulMP^{\fin}$
such that $\ol{f}$ is \'etale,
endowed with the topology induced by $\ulMP^{\fin}_{\Nis}$.
\end{definition}

The following lemma is obvious from the definitions:

\begin{lemma}\label{lem:equiv-smallsites}
Let $M \in \ulMP^{\fin}$.
Let $(\ol{M})_\Nis$ be 
the (usual) small Nisnevich site on $\ol{M}$.
Then we have an isomorphism of sites
\[ M_\Nis \to (\ol{M})_\Nis, \qquad N \mapsto \ol{N},
\]
whose inverse is given by 
$(p : X \to \ol{M}) \mapsto (X, p^*(M^\infty))$. (This isomorphism of sites depends on the choice of $M^\infty$.) \qed
\end{lemma}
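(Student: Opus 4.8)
The plan is to unwind both sides of the claimed isomorphism of sites and check that the two assignments are mutually inverse functors which respect covers. First I would describe the functor $M_\Nis \to (\ol M)_\Nis$ on the level of objects: an object of $M_\Nis$ is a minimal morphism $f : N \to M$ in $\ulMP^\fin$ with $\ol f : \ol N \to \ol M$ \'etale, and I send it to the \'etale $\ol M$-scheme $\ol N$. On morphisms, if $f : N \to M$ and $g : N' \to M$ are two such objects, then by Definition \ref{d1.1} (2) a morphism $N \to N'$ over $M$ in $\ulMP^\fin$ is an $\ol M$-morphism $\ol N \to \ol N'$ (the extension to ambient spaces being unique, and the minimality/admissibility conditions being automatic once $N' \to M$ is minimal); so the functor is fully faithful on morphisms essentially by definition.

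Next I would produce the inverse functor. Given an \'etale $p : X \to \ol M$, form $N := (X, p^* M^\infty)$. This is a modulus pair: $p^{-1}(M^\infty)$ is an effective Cartier divisor on $X$ and its complement $p^{-1}(M^\o)$ is \'etale over the smooth $k$-scheme $M^\o$, hence smooth. The structure morphism $N \to M$ is in $\ulMP^\fin$ (its ambient part is $p$, which is \'etale hence finite type, and each component of $X$ dominates a component of $\ol M$ because $p$ is \'etale, so nothing maps into $|M^\infty|$), and it is minimal by construction since $N^\infty = p^*(M^\infty)$. That the two functors are mutually quasi-inverse is then a direct check: starting from $(f : N \to M)$, pulling back $M^\infty$ along $\ol f$ gives back $N^\infty$ precisely by minimality of $f$; starting from $(p : X \to \ol M)$, the ambient space of $(X, p^* M^\infty)$ is $X$ with its given map to $\ol M$.

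Finally I would verify that this equivalence of underlying categories is an isomorphism of \emph{sites}, i.e.\ matches the Grothendieck topologies. By Definition \ref{d3.3} the topology on $M_\Nis$ is induced from $\ulMP^\fin_\Nis$, and by Definition \ref{def:groth-top-mp} a Nisnevich cover $p : U \to N$ in $\ulMP^\fin$ is exactly a minimal morphism with $\ol p : \ol U \to \ol N$ a Nisnevich cover in the usual sense; minimality of $U \to N$ composed with minimality of $N \to M$ gives minimality of $U \to M$, and conversely any minimal $\ol M$-\'etale cover of $\ol N$ is tautologically a cover in $M_\Nis$. Under the functor above this says precisely that a family in $M_\Nis$ is covering if and only if the corresponding family of \'etale $\ol M$-schemes is a Nisnevich cover of $\ol N$ in $(\ol M)_\Nis$. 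The note that the isomorphism depends on the choice of $M^\infty$ is just the observation that the reverse functor $p \mapsto (X, p^* M^\infty)$ visibly uses $M^\infty$.

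The whole statement is genuinely routine once the definitions are laid side by side; the only place that needs a line of argument rather than pure bookkeeping is checking that $(X, p^*M^\infty)$ is a legitimate object of $\ulMP^\fin$ over $M$ — specifically that no component of $X$ is swallowed by $|M^\infty|$ — which I would get from \'etaleness of $p$ (each component of $X$ dominates a component of $\ol M$, and $M^\o$ is dense in $\ol M$ by Remark \ref{r2.1} (3)), and that the induced morphism is minimal, which is immediate. So I expect no real obstacle; the lemma is correctly labelled ``obvious from the definitions.''
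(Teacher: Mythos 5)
Your proof is correct and matches the paper's intent: the paper gives no explicit argument, just remarks that the lemma is obvious from the definitions and closes with \qed, and your write-up is exactly the straightforward unwinding of those definitions. The only tiny imprecision is in the parenthetical on morphisms, where the admissibility of an $\ol M$-morphism $\ol N\to\ol N'$ uses minimality of \emph{both} $N\to M$ and $N'\to M$, not just of $N'\to M$; but both are built into being objects of $M_\Nis$, so the argument is sound.
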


\begin{lemma}\label{lem:refine-cover}
Let $\alpha : M \to N$ be a morphism in $\ulMCor^\fin$ and
let $p : U \to N$ be a Nisnevich over in $\ulMP^\fin$.
Then there is a commutative diagram
\[
\begin{CD}
V @>\alpha'>> U \\
@V{p'}VV @VV{p}V \\
M @>>\alpha> N,
\end{CD}
\]
where $\alpha' : V \to U$ is a morphism in $\ulMCor^\fin$
and $p' : V \to M$ is a Nisnevich cover in $\ulMP^\fin$.
\end{lemma}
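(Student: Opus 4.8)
The statement is a "refinement of a Nisnevich cover along a finite correspondence" lemma; the strategy is to first reduce to the case where $\alpha$ is the graph of a scheme morphism in $\ulMP^\fin$, and then pull back the cover. Concretely, write $\alpha = \sum_i n_i Z_i$ with $Z_i$ integral, and for each $i$ let $\ol{Z}_i$ be the closure of $Z_i$ in $\ol{M}\times\ol{N}$, finite over $\ol{M}$ by definition of $\ulMCor^\fin$; replace $\ol{Z}_i$ by its normalization $\ol{Z}_i^N$ and equip it with the pulled-back modulus, giving modulus pairs $P_i = (\ol{Z}_i^N, q_i^*M^\infty)$ where $q_i : \ol{Z}_i^N \to \ol{M}$. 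Each $q_i$ is finite surjective onto a component of $\ol{M}$; the induced morphism $P_i \to M$ is minimal by construction, and the projection $\ol{Z}_i^N \to \ol{N}$ defines a morphism $r_i : P_i \to N$ in $\ulMSm^\fin$ (admissibility being exactly the admissibility of $Z_i$ for $(M,N)$). So it suffices to handle, for each $i$, the diagram where $\alpha$ is replaced by the morphism $r_i$ in $\ulMSm^\fin$, pull back there, and then push the resulting covers back along the finite surjective maps $P_i \to M$ — but here one must be slightly careful, because $P_i \to M$ need not itself be a Nisnevich cover.

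The cleaner route, which I would actually follow, avoids that last difficulty: work directly on ambient spaces. By Lemma \ref{lem:equiv-smallsites}, a Nisnevich cover $p : U \to N$ in $\ulMP^\fin$ is the same data as a Nisnevich cover $\ol{p} : \ol{U} \to \ol{N}$ of schemes together with $U^\infty = \ol{p}^*N^\infty$. Now form $\ol{V} := \ol{M}\times_{\ol{N}}\ol{U}$ using the structure map $\ol{M} \to \ol{N}$ — wait: there is no such map, since $\alpha$ is only a correspondence. This is precisely why the reduction in the first paragraph is needed. So: after reducing to a single integral component and replacing $M$ by the normalized closure $\ol{Z}^N$ (call it $M'$, a modulus pair mapping to $M$ by a minimal finite surjective morphism and to $N$ by a morphism $f$ in $\ulMSm^\fin$), set $\ol{V'} := \ol{M'}\times_{\ol{N}}\ol{U}$, with its Nisnevich projection $\ol{V'}\to\ol{M'}$, and equip $\ol{V'}$ with the pullback of $M'^\infty$; since $\ol{f}^*N^\infty = M'^\infty$ (minimality) is irrelevant — what matters is that the projection $V' \to M'$ is minimal and étale-locally a Nisnevich cover. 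Then $V'\to M'\to M$ composes the étale cover with a finite surjection, and one checks this composite, suitably interpreted as a correspondence $p' : V' \to M$ (the transpose of the finite surjective map composed with the cover), is a Nisnevich cover in the required sense — or rather, one sums over $i$ and over the components to produce the actual cover $V \to M$.

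I expect the main obstacle to be the bookkeeping in this last step: after pulling back along each normalized component $\ol{Z}_i^N$, the pieces $\ol{Z}_i^N$ live over possibly different components of $\ol{M}$ and are only finite (not étale) over $\ol{M}$, so assembling them into a single honest Nisnevich cover $p' : V \to M$ of modulus pairs — as opposed to a mere cover "up to correspondence" — requires care. The natural fix is to note that a Nisnevich cover of $\ol{M}$ can be refined Nisnevich-locally, pass to a Nisnevich neighborhood of each point of $\ol{M}$ over which the relevant $\ol{Z}_i^N \to \ol{M}$ splits into sections, and use those sections to define genuine étale (indeed open-immersion-after-étale-localization) maps; the modulus condition is then automatically minimal because everything is pulled back. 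The admissibility of $\alpha' : V \to U$ in $\ulMCor^\fin$ is then immediate: its components are among the base changes of the components $Z_i$, whose closures are finite over $\ol{V}$ (base change of finite is finite) and which satisfy the admissibility inequality after normalization by functoriality of pullback of Cartier divisors together with the original admissibility of $\alpha$. Commutativity $p\alpha' = \alpha p'$ holds by construction since everything is built from genuine scheme morphisms on ambient spaces, and identities of correspondences can be checked on the dense open interiors.
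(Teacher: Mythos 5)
Your overall strategy (reduce to an integral component, exploit finiteness of the closure over $\ol{M}$, pull the cover back, then descend to an honest Nisnevich cover of $M$) is the right one, but the step where you actually manufacture $p':V\to M$ rests on a false claim. Your fix is to ``pass to a Nisnevich neighborhood of each point of $\ol{M}$ over which $\ol{Z}_i^N\to\ol{M}$ splits into sections''. A finite surjective morphism does not split into sections Nisnevich-locally unless it is unramified: already for $\ol{M}=\A^1=\Spec k[x]$ and $\ol{Z}^N=\Spec k[t]$ with $x=t^2$, over the henselization at the origin the pullback is a single local henselian scheme of degree $2$ and admits no section (a section would be a square root of the uniformizer $x$). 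Since admissible correspondences are only finite over $\ol{M}$, this ramified situation is the typical one, so the ``sections'' you want in order to produce genuine \'etale maps to $\ol{M}$ do not exist, and the assembly of $V\to M$ does not get off the ground. Your interim alternative --- interpreting the composite $V'\to M'\to M$ ``as a correspondence'' that is ``a Nisnevich cover in the required sense'' --- is not available either: by Definition~\ref{def:groth-top-mp} a cover in $\ulMP^\fin$ is a minimal \emph{morphism} whose map of ambient spaces is a Nisnevich cover of schemes, never a correspondence. (A smaller point: $(\ol{Z}_i^N,q_i^*M^\infty)$ need not be an object of $\ulMSm^\fin$, since its interior need not be smooth; such objects should be kept as auxiliary schemes with divisors rather than modulus pairs.)

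The statement that is true, and that the paper's proof uses, is a splitting statement for the pulled-back \emph{cover}, not for the finite map. Assume $\alpha$ integral and let $\ol{\alpha}\subset\ol{M}\times\ol{N}$ be its closure (no normalization needed), finite over $\ol{M}$. Pull back $\ol{p}:\ol{U}\to\ol{N}$ to the Nisnevich cover $\ol{\alpha}\times_{\ol{N}}\ol{U}\to\ol{\alpha}$. Because $\ol{\alpha}$ is finite over $\ol{M}$, over the henselization of $\ol{M}$ at any point it becomes a finite disjoint union of henselian local schemes, over each of which this cover splits; spreading out and using quasi-compactness, one finds a Nisnevich cover $p':\ol{V}\to\ol{M}$ such that $\ol{V}\times_{\ol{M}}(\ol{\alpha}\times_{\ol{N}}\ol{U})\to\ol{V}\times_{\ol{M}}\ol{\alpha}$ has a section $s$. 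Put $V=(\ol{V},p'^*M^\infty)$; the image of $s$ is closed in $\ol{V}\times\ol{U}$ and finite over $\ol{V}$, and it defines the desired $\alpha'\in\ulMCor^\fin(V,U)$: admissibility is inherited from $\alpha$ by pulling back the modulus inequality (as you indicate at the end), and $p\alpha'=\alpha p'$ holds by construction, checked on interiors. Replacing your third paragraph by this argument makes the proof work and brings it essentially in line with the paper's.
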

\begin{proof}
We may assume $\alpha$ is integral.
Let $\ol{\alpha}$ be the closure of $\alpha$
in $\ol{M} \times \ol{N}$.
Since $\ol{\alpha}$ is finite over $\ol{M}$,
we may find a Nisnevich cover $p' : \ol{V} \to \ol{M}$
such that 
$\tilde{p}$ in the diagram
(all squares being cartesian)
\[
\begin{CD}
\ol{V} \times_{\ol{M}} \times (\ol{\alpha} \times \ol{U})
@>>> \ol{\alpha} \times_{\ol{N}} \ol{U} @>>> \ol{U}
\\
@V{\tilde{p}}VV @VVV @VV{\ol{p}}V
\\
\ol{V} \times_{\ol{M}} \times \ol{\alpha}
@>>> \ol{\alpha} @>>> \ol{N}
\\
@VVV @VVV
\\
\ol{V} 
@>>{p'}> \ol{M}
\end{CD}
\]
has a splitting $s$.
Put $V:=(\ol{V}, {p'}^*(M^\infty)) \in \ulMP$.
The image of $s$ gives us a desired correspondence
$\alpha'$.
\end{proof}

\begin{remark}
One can also define the Zariski and \'etale topologies
on $\ulMSm^\fin$.
Most results of this section
(notably Theorems \ref{thm:cech}, \ref{thm:sheaf-transfer},
and Corollary \ref{rem:faith-exact-c})
remain true for the \'etale topology,
but not for the Zariski topology
(e.g. Lemma \ref{lem:refine-cover} already fails for it).

However, from the next section onward
we will make essential use of cd-structures.
As the \'etale topology cannot be defined by a cd-structure,
we decide to stick to the Nisnevich topology from the beginning.
\end{remark}

\subsection{A cd-structure on $\protect\ulMP^{\protect\fin}$}\label{s3.2}Let $\Sq$ be the product category 
of $[0]=\{ 0 \to 1 \}$ with itself, depicted as
\[
\xymatrix{
00 \ar[r] \ar[d] &01 \ar[d]\\
10 \ar[r] & 11.
}
\]
For any category $\sC$,
denote by $\sC^\Sq$ for the category of 
functors from $\Sq$ to $\sC$.
A functor $f : \sC \to \sC'$ 
induces a functor $f^\Sq : \sC^\Sq \to {\sC'}^\Sq$.

We refer to \S \ref{sa-cd} for the notion of cd-structure, and its properties.

\begin{defn}\label{d3.2} \
\begin{enumerate}
\item 
A Cartesian square
\begin{equation}\label{eq.cd}
\begin{CD}
W@>v>> V\\
@VqVV @Vp VV\\
U@>u>> X,
\end{CD}
\end{equation}
in $\Sch$ 
is called an \emph{elementary Nisnevich square}
if $p$ is \'etale, $p^{-1}(X \setminus U)_\red \to (X \setminus U)_\red$
is an isomorphism and $u$ is an open embedding,.
In this situation, we say $U \sqcup V \to X$
is an \emph{elementary Nisnevich cover}.
Recall that an additive presheaf 
is a Nisnevich sheaf if and only if 
it transforms any elementary Nisnevich square
into a cartesian square 
\cite[Corollary~2.17]{cdstructures}, \cite[Theorem~2.2]{unstableJPAA}.
\item 
A diagram \eqref{eq.cd} in $\ulMP^\fin$ is 
called 
an \emph{$\ulMVfin$-square} 
if all morphisms are minimal and it becomes an elementary Nisnevich square (in $\Sch$) 
after applying the forgetful functor of Definition \ref{d1.1} (2). 
\end{enumerate}
\end{defn}

\begin{lemma}\label{lem;d3.2} 
A $\ulMVfin$-square \eqref{eq.cd} is cartesian in $\ulMSm^\fin$.
\end{lemma}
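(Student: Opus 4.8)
The statement asserts that a $\ulMVfin$-square, which by definition is a diagram in $\ulMP^\fin$ all of whose morphisms are minimal and which becomes an elementary Nisnevich square in $\Sch$ after applying the forgetful functor $M \mapsto \ol{M}$, is in fact cartesian in $\ulMSm^\fin$. The plan is to show that the modulus pair $W$ sitting at the top-left corner represents the fiber product $U \times_X V$ in $\ulMSm^\fin$ by directly verifying the universal property, using the already-established results on fiber products, in particular Proposition \ref{prop:fiber-prod} (2) and Corollary \ref{exist-pullback} (1).

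\textbf{Key steps.} First I would observe that since the square becomes elementary Nisnevich in $\Sch$, the morphism $\ol{p} : \ol{V} \to \ol{X}$ is \'etale, hence flat, and the square $\ol{W} = \ol{U} \times_{\ol{X}} \ol{V}$ is cartesian at the level of ambient schemes. Next, since $u : U \to X$ is minimal and $\ol{u}$ is the open immersion of an elementary Nisnevich square (in particular $\ol{U}$ is an open subscheme of $\ol{X}$, hence normal if $\ol{X}$ is — but more simply one argues directly with the flatness available), Proposition \ref{prop:fiber-prod} (2), applied with $u_1 = p$ (minimal, $\ol{p}$ flat) and $u_2 = u$, shows that $q_1^* V^\infty$ and $q_2^* U^\infty$ have a universal supremum equal to $q_1^* V^\infty$, that $\ol{W}_1 = \ol{W}_0 = \ol{U} \times_{\ol{X}} \ol{V}$ (no extraneous components get removed, because $\ol{p}$ is flat and $U^\infty$ has codimension one everywhere), and that $W_1 = (\ol{W}_1, q_1^* V^\infty)$ represents $U \times_X V$ in $\ulMSm^\fin$. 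It then remains to check that this $W_1$ coincides with the given corner $W$ of the square: by definition of a $\ulMVfin$-square, $\ol{W}$ equals $\ol{U} \times_{\ol{X}} \ol{V}$ and all four morphisms are minimal, so $W^\infty = q_1^* V^\infty$ (the pullback of $V^\infty$ along $\ol{W} \to \ol{V}$, using minimality of $\ol{W} \to \ol{V}$), which is exactly the modulus structure of $W_1$. Hence $W = W_1$ represents the fiber product.

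\textbf{Main obstacle.} The only subtlety is the coherence of the two ``minimality'' descriptions of $W^\infty$: the square being $\ulMVfin$ gives both that $\ol{W} \to \ol{V}$ is minimal, so $W^\infty$ is the pullback of $V^\infty$, and that $\ol{W} \to \ol{U}$ is minimal, so $W^\infty$ is the pullback of $U^\infty$; one must check these are consistent, which follows because $\ol{u} p_2 = \ol{p} p_1$ (both equal the structure map to $\ol{X}$) together with minimality of $u$ and $p$ forces $q_1^* V^\infty = q_1^* \ol{p}^* X^\infty = q_2^* \ol{u}^* X^\infty = q_2^* U^\infty$. This is precisely the equality already recorded in the proof of Proposition \ref{prop:fiber-prod} (2), so no new work is needed; the whole lemma is essentially a bookkeeping corollary of that proposition. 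I expect the proof to be a few lines: cite Proposition \ref{prop:fiber-prod} (2) for the representability of $U \times_X V$ by $(\ol{U} \times_{\ol{X}} \ol{V}, q^* V^\infty)$, then identify this with $W$ using the definition of $\ulMVfin$-square.
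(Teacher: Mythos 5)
Your proof is correct and follows essentially the same route as the paper's: the flatness statement at the end of Proposition \ref{prop:fiber-prod} (2) (with the minimal, étale hence flat $p$) gives $\ol{W}_1=\ol{W}_0=\ol{W}$, minimality of all four morphisms gives $q^*U^\infty=v^*V^\infty=W^\infty$ so the universal supremum exists trivially, and Proposition \ref{prop:fiber-prod} (1) then identifies $W$ with the fiber product in $\ulMSm^\fin$. The only slight imprecision is invoking the first assertion of Proposition \ref{prop:fiber-prod} (2), which assumes $\ol{U}_2$ normal (not available here); but, as you note yourself, minimality of both $u$ and $p$ yields the equality of the two pullbacks directly, which is exactly the paper's argument, so there is no gap.
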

\begin{proof}
The last part of Proposition \ref{prop:fiber-prod} (2)
shows that no irreducible component of $\ol{X}$ has its image
inside $|U^\infty|$ or $|V^\infty|$
(\emph{i.e.} $\ol{W}_1$ in \emph{loc.~cit.} agrees with $\ol{W}$),
and then Proposition \ref{prop:fiber-prod} (1)
shows that $X$ is the fiber product 
since $q^*U^\infty=v^*V^\infty=W^\infty$ by minimality.
\end{proof}

\begin{prop}\label{p3.1} The following assertions hold.
\begin{enumerate}
\item  The topology on $\ulMSm_\Nis^\fin$ (cf. Def. \ref{def:groth-top-mp}) coincides with the topology associated with the cd-structure $P_{\ulMVfin}$ consisting of $\ulMVfin$-squares. 
\item The cd-structure $P_{\ulMV^\fin}$ is strongly complete and strongly regular in the sense of Definition \ref{dA.5}, hence complete and regular in the sense of \cite{cdstructures} (cf. Definition \ref{d3.1}).
\end{enumerate}
\end{prop}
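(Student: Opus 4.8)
The plan is to derive both assertions from the well-known facts about the elementary Nisnevich cd-structure on $\Sch$, transported along the forgetful functor $\ulMSm^\fin\to\Sch$ of Definition \ref{d1.1} (2) together with the minimality condition.

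For (1), I would first observe that every $\ulMVfin$-cover $U\sqcup V\to X$ is a Nisnevich cover in the sense of Definition \ref{def:groth-top-mp}: all the morphisms are minimal by definition, and applying the forgetful functor yields an elementary Nisnevich cover $\ol U\sqcup\ol V\to\ol X$, which in particular is a Nisnevich cover of $\ol X$. Hence $t_{P_{\ulMVfin}}$ is coarser than the topology of Definition \ref{def:groth-top-mp}. For the converse, let $p:U\to M$ be a Nisnevich cover in $\ulMSm^\fin$; then $\ol p:\ol U\to\ol M$ is a Nisnevich cover of schemes, so by Voevodsky's comparison of the scheme Nisnevich topology with the cd-topology of elementary Nisnevich squares (\cite[Corollary~2.17]{cdstructures}, \cite[Theorem~2.2]{unstableJPAA}) there is a cover of $\ol M$ built from iterated elementary Nisnevich squares that refines $\ol p$. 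Equipping every scheme appearing with the pullback of $M^\infty$ turns this into a cover by iterated $\ulMVfin$-squares refining $p$: at each stage the structural maps are étale and minimal, so the ambient space of the fiber product is the naive one and its modulus is the minimal one by Lemma \ref{lem;d3.2} and Proposition \ref{prop:fiber-prod} (1), (2). Thus $p$ generates a $t_{P_{\ulMVfin}}$-covering sieve and the two topologies coincide.

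For (2), the strategy is to check the conditions of Definition \ref{dA.5} directly, once more reducing to the completeness and regularity of the elementary Nisnevich cd-structure on $\Sch$ (\cite[\S2]{cdstructures}). For strong completeness one must show $\ulMVfin$-squares are stable (up to refinement) under base change in $\ulMSm^\fin$: given an $\ulMVfin$-square with lower-right corner $X$ and a morphism $Y\to X$ in $\ulMSm^\fin$, base-changing the underlying elementary Nisnevich square along $\ol Y\to\ol X$ again gives an elementary Nisnevich square (étaleness and the condition $p^{-1}(X\setminus U)_\red\iso(X\setminus U)_\red$ are stable under base change), and endowing each term with the minimal modulus produces a $\ulMVfin$-square over $Y$ — here one uses that a pullback of a minimal morphism along a minimal morphism is minimal, which in the notation of Proposition \ref{prop:fiber-prod} holds because $q_1^\ast U_1^\infty$ and $q_2^\ast U_2^\infty$ both equal the pullback of $X^\infty$, so that $W^\infty=\sup(q_1^\ast U_1^\infty,q_2^\ast U_2^\infty)$ is this common divisor. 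For strong regularity, an $\ulMVfin$-square is Cartesian by Lemma \ref{lem;d3.2}, the morphism $u$ is a monomorphism (being an open immersion), and the auxiliary square whose distinguishedness Definition \ref{dA.5} requires is built from fiber products over minimal morphisms, hence — by Proposition \ref{prop:fiber-prod} and Remark \ref{rem:sup-div}, which identify the relevant ambient schemes and suprema of pullback divisors — is itself a $\ulMVfin$-square; this reduces the claim to the regularity of the Nisnevich cd-structure on $\Sch$. Combining these gives that $P_{\ulMVfin}$ is strongly complete and strongly regular, and the final clause follows from the implications ``strongly complete/regular $\Rightarrow$ complete/regular'' built into Definitions \ref{dA.5} and \ref{d3.1}.

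The step I expect to be the main obstacle is the bookkeeping underlying both parts: ensuring that every fiber product, and in particular every ``distinguished'' square produced in the completeness and regularity axioms, when formed inside $\ulMSm^\fin$ rather than in $\Sch$, has the expected ambient scheme — no irreducible components dropped — and carries exactly the minimal modulus structure, so that it qualifies as a genuine $\ulMVfin$-square. This is precisely what Proposition \ref{prop:fiber-prod} (1) and (2) deliver, once one notes that all structural maps in a Nisnevich cd-square are minimal and flat (so the ``$\ol W_1=\ol W_0$'' alternative applies), together with Remark \ref{rem:sup-div} to compute suprema of pullback Cartier divisors. Granting this, nothing beyond the classical theory of the Nisnevich cd-structure on schemes is needed.
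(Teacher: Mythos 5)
Your part (1) and your treatment of strong regularity follow, in more detail, exactly the route the paper takes: the paper disposes of (1) by the isomorphism of small sites (Lemma \ref{lem:equiv-smallsites}) together with Voevodsky's comparison of the Nisnevich topology with the cd-topology \cite{unstableJPAA}, and of (2) by transporting the verification in the proof of \cite[Theorem~2.2]{unstableJPAA} and then invoking \cite[Lemmas~2.5 and 2.11]{cdstructures} for ``strongly complete/regular $\Rightarrow$ complete/regular''. So for those pieces you are simply writing out the content behind the paper's citations, and the bookkeeping you do via Lemma \ref{lem;d3.2}, Proposition \ref{prop:fiber-prod} and Remark \ref{rem:sup-div} is the right bookkeeping.

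There is, however, a genuine gap in your verification of strong completeness. Condition (2) of Lemma \ref{lA.7} (i.e.\ Definition \ref{dA.5}) quantifies over \emph{all} morphisms $Y \to X$ in $\ulMSm^\fin$, whereas your justification --- ``a pullback of a minimal morphism along a minimal morphism is minimal, because $q_1^\ast U_1^\infty$ and $q_2^\ast U_2^\infty$ both equal the pullback of $X^\infty$'' --- only treats the case where the morphism you base-change along is itself minimal, and even then misidentifies what happens in general: for an arbitrary admissible $Y \to X$ the corners of $Y \times_X S$ must carry the pullback of $Y^\infty$, not of $X^\infty$, and what has to be proved is that this is the universal supremum (equivalently, that the legs over $Y$ are minimal), that no irreducible components are dropped ($\ol{W}_1 = \ol{W}_0$), and that the resulting pair really represents the categorical fiber product in $\ulMSm^\fin$ demanded by $S'=Y\times_X S$. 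That is precisely Proposition \ref{prop:fiber-prod} (2), applied with $u_1$ the minimal flat (\'etale or open) leg of the square and $u_2$ the \emph{arbitrary} morphism $Y \to X$ --- and that statement carries a normality hypothesis on $\ol{Y}$ which you never address (cf.\ Remark \ref{r2.1} (5): for non-normal $\ol{Y}$ the admissibility inequality need not hold as an inequality of Cartier divisors on $\ol{Y}$ itself). Relatedly, ``stable up to refinement under base change'' is the shape of the \emph{completeness} condition, not of strong completeness, which requires the pulled-back square itself to be a $\ulMVfin$-square. So the completeness step needs to be redone with the correct quantifier and with Proposition \ref{prop:fiber-prod} (2) (or, as the paper does, by transporting the proof of \cite[Theorem~2.2]{unstableJPAA} through Lemma \ref{lem:equiv-smallsites} and Corollary \ref{exist-pullback} (1)); as it stands, the argument you give would only establish base change along minimal morphisms, which is not what Definition \ref{dA.5} asks.
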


\begin{proof} (1) follows from Lemma \ref{lem:equiv-smallsites} and \cite[Remark after Proposition~2.17]{unstableJPAA}. 
The first assertion of (2) follows from (the proof of) \cite[Theorem~2.2]{unstableJPAA}.
The second assertion of (2) follows from \cite[Lemma~2.5, Lemma~2.11]{cdstructures}
\end{proof}

\subsection{Sheaves on $\protect\ulMP^{\protect\fin}$}

\begin{definition}
We define $\ulMNS^{\fin}$ to be
the full subcategory of $\ulMPS^{\fin}$
consisting of Nisnevich sheaves.
\end{definition}

\begin{thm}\label{t.cd} 
Let $F\in\ulMNS^{\fin}$. Then
$H^i_\Nis(X,F)=0$ for any $X \in \ulMP^\fin$ 
and $i>\dim X$ $($where $\dim X$ is defined as $\dim X:=\dim X^\o =\dim \Xb)$. 
\end{thm}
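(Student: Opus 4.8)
The plan is to reduce the statement to the classical cohomological dimension bound for the small Nisnevich site and then transfer it along the isomorphism of sites from Lemma~\ref{lem:equiv-smallsites}. The point is that $H^i_\Nis(X,F)$ is by definition computed on the small site $X_\Nis$ (or, what amounts to the same, on $X^\o_\Nis$ after identifying things), and $X_\Nis \cong (\ol X)_\Nis$ by Lemma~\ref{lem:equiv-smallsites}. So the whole content is: if $\sF$ is a Nisnevich sheaf on the small site of a Noetherian scheme $\ol X$ of dimension $d$, then $H^i((\ol X)_\Nis,\sF)=0$ for $i>d$. This is a theorem of Nisnevich (it follows from the fact that the Nisnevich site has enough points indexed by the points of $\ol X$, together with Grothendieck's vanishing theorem bounding cohomological dimension by the Krull dimension, or alternatively from the explicit analysis via the coniveau/hypercohomology spectral sequence). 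What must be checked is that the restriction functor $F \mapsto F_X$ from $\ulMNS^{\fin}$ to sheaves on $(\ol X)_\Nis$ lands in \emph{sheaves} and is compatible with cohomology — but this is exactly how $\ulMNS^{\fin}$ and $H^i_\Nis(X,F)$ were set up.

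First I would fix $X=(\ol X,X^\infty)\in\ulMP^\fin$ and observe, via Lemma~\ref{lem:equiv-smallsites}, that $X_\Nis$ is isomorphic as a site to the small Nisnevich site $(\ol X)_\Nis$, the equivalence sending $N\mapsto \ol N$ and $(p:Y\to\ol X)\mapsto(Y,p^*X^\infty)$. Under this equivalence the presheaf $F_X$ on $X_\Nis$ — which by hypothesis $F\in\ulMNS^{\fin}$ is a Nisnevich sheaf — corresponds to a Nisnevich sheaf $\sF$ on $(\ol X)_\Nis$, and by definition $H^i_\Nis(X,F)=H^i(X_\Nis,F_X)=H^i((\ol X)_\Nis,\sF)$. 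Next I would invoke the classical vanishing: for a Noetherian scheme $Y$ of Krull dimension $d$ and any Nisnevich sheaf $\sG$ on $Y_\Nis$, one has $H^i(Y_\Nis,\sG)=0$ for $i>d$. Finally I would note $\dim\ol X = \dim X$ by the definition given in the statement, concluding $H^i_\Nis(X,F)=0$ for $i>\dim X$.

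For the classical input one can either cite it directly (it is standard, due to Nisnevich; see e.g.\ the references on cd-structures already used in the paper) or, to be self-contained, argue as follows: the Nisnevich topos has conservative family of points given by the henselizations $\sO_{\ol X,x}^h$ at points $x\in\ol X$, so it is a site with enough points; by Grothendieck's theorem the cohomological dimension of a Noetherian topological space of dimension $d$ is $\le d$, and since stalks of Nisnevich sheaves at $x$ are the stalks at $x$ of the associated Zariski sheaf on $\Spec\sO_{\ol X,x}^h$, one gets the Leray/hypercohomology spectral sequence $H^p_\Zar(\ol X, \sH^q)\Rightarrow H^{p+q}_\Nis(\ol X,\sF)$ with $\sH^q$ the Zariski sheafification of $x\mapsto H^q_\Nis(\Spec\sO^h_{\ol X,x},\sF)$; since each henselian local ring contributes nothing in positive degrees, $\sH^q=0$ for $q>0$ and one is reduced to $H^p_\Zar(\ol X,-)$, which vanishes for $p>\dim\ol X=\dim X$ by Grothendieck.

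**The main obstacle** is essentially bookkeeping rather than a genuine difficulty: one must be careful that ``$H^i_\Nis(X,F)$'' as defined here really is sheaf cohomology on the small site $X_\Nis$ (equivalently $(\ol X)_\Nis$), so that the classical bound applies verbatim — this is immediate from Lemma~\ref{lem:equiv-smallsites} and the definition of $\ulMNS^\fin$, but it is the only place where the modulus structure could a priori interfere, and the clean statement of Lemma~\ref{lem:equiv-smallsites} is exactly what makes it a non-issue. No use of the cd-structure of \S\ref{s3.2} is needed for this particular theorem; it is purely the transfer of Grothendieck's classical vanishing through the site isomorphism.
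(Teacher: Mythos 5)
Your proposal is correct and is essentially the paper's own argument: the proof there simply cites Lemma \ref{lem:equiv-smallsites} together with the known cohomological dimension bound for the small Nisnevich site of a Noetherian scheme, which is exactly your reduction (your sketch of the classical input via points/henselizations and Grothendieck vanishing is just an expansion of ``known properties of the Nisnevich site''). Only the parenthetical aside that this ``amounts to the same'' as working on $X^{\o}_{\Nis}$ is imprecise --- the relevant site is $(\ol X)_\Nis$, not the small site of the interior --- but your actual argument uses $(\ol X)_\Nis$ correctly, and the dimensions agree by definition.
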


\begin{proof} This is clear from Lemma \ref{lem:equiv-smallsites} 
and the known properties of the Nisnevich site.
\end{proof}

\begin{defn}\label{dA.2} An additive functor $F$ between additive categories is \emph{strongly additive} if it commutes with infinite direct sums.
\end{defn}


This property is not used in the present paper, but it will be essential in \cite{modsheafII} when we deal with unbounded derived categories. 


\begin{lemma}\label{lcom1} The category $\ulMNS^{\fin}$ is closed under infinite direct sums and the inclusion functor denoted by
$\ul{i}_{s  \Nis}^{\fin} : \ulMNS^{\fin} \to \ulMPS^{\fin}$  is strongly additive.
\end{lemma}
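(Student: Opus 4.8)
The plan is to verify two separate assertions: first that $\ulMNS^{\fin}$ is closed under infinite direct sums computed in $\ulMPS^{\fin}$, and second that the inclusion $\ul{i}_{s\Nis}^{\fin}$ is strongly additive, i.e. that the direct sum of a family of sheaves taken in $\ulMPS^{\fin}$ is already a sheaf (so that it also computes the sum in $\ulMNS^{\fin}$, and the inclusion commutes with it). Both reduce, via Lemma~\ref{lem:equiv-smallsites}, to the corresponding statement for the usual small Nisnevich site $(\ol{M})_\Nis$ of a scheme, evaluated object by object.

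First I would recall that a presheaf $F \in \ulMPS^{\fin}$ is a Nisnevich sheaf if and only if, for every $M \in \ulMP^{\fin}$, the restriction $F_M$ is a sheaf on $M_\Nis \cong (\ol{M})_\Nis$; equivalently, by Proposition~\ref{p3.1}, if and only if $F$ sends every $\ulMVfin$-square into a cartesian square of abelian groups. Now let $\{F_\lambda\}_{\lambda \in \Lambda}$ be a family in $\ulMNS^{\fin}$ and let $F := \bigoplus_\lambda F_\lambda$ be their direct sum in the presheaf category $\ulMPS^{\fin}$, which is computed sectionwise: $F(N) = \bigoplus_\lambda F_\lambda(N)$ for every $N$. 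Given a $\ulMVfin$-square \eqref{eq.cd}, each $F_\lambda$ sends it to a cartesian square, i.e. to a short exact sequence
\[
0 \to F_\lambda(X) \to F_\lambda(U) \oplus F_\lambda(V) \to F_\lambda(W);
\]
since the underlying scheme square is an elementary Nisnevich square, this is the standard sheaf condition \cite[Corollary~2.17]{cdstructures}. Taking the direct sum over $\lambda$ is exact in $\Ab$, so we obtain the same left-exact sequence for $F$, which is precisely the statement that $F$ sends the $\ulMVfin$-square to a cartesian square. Hence $F \in \ulMNS^{\fin}$, proving closure under direct sums.

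For strong additivity, note that what we have just shown is exactly that the presheaf-level direct sum $F = \bigoplus_\lambda F_\lambda$ is already a Nisnevich sheaf; therefore no sheafification is needed, and $F$ also represents the direct sum of the $F_\lambda$ inside the full subcategory $\ulMNS^{\fin}$ (a left adjoint, here the sheafification, applied to a coproduct gives a coproduct, but since $F$ is already a sheaf the sheafification is the identity on it). Consequently the inclusion functor $\ul{i}_{s\Nis}^{\fin}$ carries the direct sum in $\ulMNS^{\fin}$ to the direct sum in $\ulMPS^{\fin}$, i.e. it is strongly additive in the sense of Definition~\ref{dA.2}. The only mild subtlety — the main point to get right — is the interchange of the infinite direct sum with the (finite) limit defining the cartesian-square condition; this is harmless because the limit in question is a finite left-exact limit and direct sums are exact in $\Ab$, but one should state it explicitly rather than invoke a general ``limits commute with colimits'' slogan, which is false in the wrong direction. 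Everything else is formal.
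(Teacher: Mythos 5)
Your proof is correct and follows essentially the same route as the paper: one checks that the presheaf-level direct sum already satisfies the sheaf condition, because that condition is a finite-limit condition (tested on $\ulMVfin$-squares via Proposition \ref{p3.1} and \cite[Corollary~2.17]{cdstructures}), and then concludes that the inclusion preserves the sum. The only cosmetic difference is the justification of the interchange: the paper invokes the commutation of small filtered colimits with finite limits in $\Ab$, while you use exactness (AB4) of infinite direct sums applied to the left-exact sequences — both are valid.
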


\begin{proof} Indeed, the sheaf condition is tested on finite diagrams, hence the presheaf given by a direct sum of sheaves is a sheaf (small filtered colimits commute with finite limits, \cite[Chapter~IX, Section~2, Theorem~1]{mcl}).
\end{proof}

\begin{proposition}\label{prop:rep-sheaf} 
For any $M \in \ulMP$ 
we have
\[ \ul{c}^{\fin *} \Z_{\tr}^\fin(M), \quad 
\ul{c}^{\fin *} \ul{b}^* \Z_{\tr}(M) 
\in \ulMNS^{\fin},
\]
where 
$\Z_\tr^\fin,\Z_\tr$ are the representable presheaves 
(notation \ref{n2.1}) and 
the functors 
$\ul{b}^*$ and
$\ul{c}^{\fin *}$
are from  Propositions \ref{eq:bruno-functor}, \ref{eq:c-functor}.
\end{proposition}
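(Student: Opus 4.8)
The plan is to reduce both claims to the statement that a representable presheaf on $\ulMSm^\fin$, restricted to any small site $M_\Nis$, is a Nisnevich sheaf, and to verify this using the cd-structure $P_{\ulMVfin}$ of Proposition \ref{p3.1}. First I would fix $M\in \ulMP$ and, writing $G$ for either $\ul{c}^{\fin *}\Z_\tr^\fin(M)$ or $\ul{c}^{\fin *}\ul{b}^*\Z_\tr(M)$, note that $G$ is an object of $\ulMPS^\fin$ whose value on $N\in \ulMSm^\fin$ is $\Z[\ulMSm^\fin(N,M)]$, respectively $\Z[\ulMSm(N,M)]$ (using the description of $\ul{c}^{\fin *}$, $\ul{b}^*$ and Notation \ref{n2.1}; here we use that $\Z_\tr^\fin(M)$ is literally $\ulMCor^\fin(-,M)$ and that $\ul{c}^{\fin *}$ just forgets transfers, sending a morphism of $\ulMSm^\fin$ to its graph, similarly for $\ul{b}^*$ which is restriction along $\ulMCor^\fin\hookrightarrow \ulMCor$). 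By Proposition \ref{p3.1} (1)–(2), the Nisnevich topology on $\ulMSm^\fin$ is the one associated to the complete, regular cd-structure $P_{\ulMVfin}$, so by \cite{cdstructures} it suffices to check that $G$ sends every $\ulMVfin$-square to a cartesian square of abelian groups, plus that $G(\emptyset)=0$ (the ``degenerate square'' condition), which is clear.

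Next I would take an arbitrary $\ulMVfin$-square
\[
\xymatrix{
W \ar[r]^v \ar[d]_q & V \ar[d]^p\\
U \ar[r]_u & X
}
\]
in $\ulMSm^\fin$; by Lemma \ref{lem;d3.2} it is cartesian in $\ulMSm^\fin$, and by Proposition \ref{prop:fiber-prod} (and Remark \ref{rem:fiberprod-int}) it is also cartesian in $\ulMSm$ with the same ambient-scheme square $\ol{W}=\ol{U}\times_{\ol{X}}\ol{V}$, which is an elementary Nisnevich square in $\Sch$. We must show that for $N\in\{M\}$ — more precisely for the fixed $M$ — the square of sets
\[
\ulMSm^\fin(X,M) \to \ulMSm^\fin(U,M)\times_{\ulMSm^\fin(W,M)} \ulMSm^\fin(V,M)
\]
is a bijection (and likewise with $\ulMSm$ in place of $\ulMSm^\fin$), since applying $\Z[-]$ to a cartesian square of sets that is moreover a ``cover'' square gives a cartesian square of abelian groups — more directly, for a sheaf condition of Mayer–Vietoris type it suffices that the map on sections into the fibre product over $W$ is bijective, because then applying the free abelian group functor and using that $\ulMSm^\fin(V,M)\sqcup\ulMSm^\fin(U,M)\to\ulMSm^\fin(W,M)$ behaves compatibly, the associated sequence $0\to G(X)\to G(U)\oplus G(V)\to G(W)$ is exact. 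The injectivity is immediate: a morphism in $\ulMSm^\fin$ is determined by its underlying morphism of interiors (Definition \ref{d1.1}), and $X^\o=U^\o\cup V^\o$ with $W^\o=U^\o\cap V^\o$ (the interior square is an elementary Nisnevich cover of $X^\o$), so two morphisms $X\to M$ agreeing after restriction to $U$ and to $V$ agree on $X^\o$, hence agree.

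For surjectivity — which I expect to be the main obstacle — I would be given $f_U:U\to M$ and $f_V:V\to M$ in $\ulMSm^\fin$ agreeing on $W$, and I need a morphism $f:X\to M$ in $\ulMSm^\fin$ restricting to them. On ambient schemes, $\ol f_U:\ol U\to\ol M$ and $\ol f_V:\ol V\to\ol M$ agree on $\ol W=\ol U\times_{\ol X}\ol V$, so by descent for morphisms along the elementary Nisnevich cover $\ol U\sqcup\ol V\to\ol X$ in $\Sch$ (an étale, hence fppf, cover — morphisms into any scheme descend) there is a unique $\ol f:\ol X\to\ol M$ with $\ol f\circ\ol u=\ol f_U$, $\ol f\circ\ol p=\ol f_V$. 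Then $\ol f^{-1}(|M^\infty|)$ pulls back along the cover to $|U^\infty|\cup\ldots$; more precisely, since $U\to X$ and $V\to X$ are minimal, $U^\infty=\ol u^*X^\infty$-compatibility combined with $f_U^*M^\infty\le U^\infty$ (automatic as $f_U$ is a morphism in $\ulMSm^\fin$, using Remark \ref{rk-graph-trick}\eqref{gt1} or directly the admissibility condition) shows, after pulling back to the Nisnevich cover $\ol U\sqcup\ol V\to\ol X$ — which is faithfully flat — that $\ol f^*M^\infty\le X^\infty$ as Cartier divisors on $\ol X$ by Lemma \ref{lKL} (or Lemma \ref{lem:refine-cover}-type descent of divisor inequalities); one must also check $\ol f(\ol X^\o)\subseteq M^\o$, i.e. no component of $\ol X$ maps into $|M^\infty|$, which follows because $\ol U\sqcup\ol V\to\ol X$ is surjective and the statement holds over $U$ and $V$. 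Hence $f:=(\ol f\ \mathrm{with}\ \ol f^*M^\infty\le X^\infty)$ defines a morphism $X\to M$ in $\ulMSm^\fin$ with the required restrictions, which is what we wanted. The case of $\ulMSm(-,M)$ (needed for $\ul{c}^{\fin *}\ul{b}^*\Z_\tr(M)$) is identical, using that the $\ulMVfin$-square is cartesian in $\ulMSm$ as well and that admissibility of the graph correspondences is tested on normalizations of closures, to which the faithfully flat descent argument via Lemma \ref{lKL} again applies. This proves $G\in\ulMNS^\fin$ in both cases. $\qed$
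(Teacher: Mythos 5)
There is a genuine gap, and it occurs at the very first step: you misidentify the presheaf you need to show is a sheaf. The functor $\ul{c}^{\fin *}$ is restriction along the graph functor $\ul{c}^\fin:\ulMSm^\fin\to\ulMCor^\fin$, so $\bigl(\ul{c}^{\fin *}\Z_\tr^\fin(M)\bigr)(N)=\ulMCor^\fin(N,M)$, the group of admissible finite correspondences with finite closures, \emph{not} the free abelian group $\Z[\ulMSm^\fin(N,M)]$ on scheme-theoretic morphisms (that latter presheaf is $\Z^p(M)$, related to $\Z_\tr(M)$ only through $\ul{c}_!$ in \eqref{eq2.6}, not through $\ul{c}^{\fin*}$). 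Consequently your entire descent argument glues morphisms of schemes along an elementary Nisnevich square, whereas the statement requires gluing finite correspondences; the cycle-theoretic input that $\Cor(-,M^\o)$ is an \'etale (hence Nisnevich) sheaf --- \cite[Lemma 6.2]{mvw}, which is the nontrivial core --- does not appear anywhere in your argument. Moreover, even for the presheaf you do treat, the deduction of exactness of $0\to G(X)\to G(U)\oplus G(V)\to G(W)$ from a bijection of Hom-sets is unjustified: the free abelian group functor does not preserve equalizers, and indeed $\Z^p(M)$ is \emph{not} a sheaf in general, which is exactly why the paper introduces its sheafification $\Z(M)=\ul{a}_{s\Nis}\Z^p(M)$ in \eqref{Z}.

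The paper's proof instead checks the sheaf condition directly for $\ulMCor^\fin(-,M)$ and $\ulMCor(-,M)$ (in fact for arbitrary \'etale covers): it compares the would-be exact sequence for these groups with the known exact sequence for $\Cor(-^\o,M^\o)$ from \cite[Lemma 6.2]{mvw}, and then uses Lemma \ref{l1.3} --- which says that for a minimal morphism with faithfully flat ambient map, admissibility and finiteness/properness of a correspondence can be tested after pullback --- to transport exactness from the bottom row to the top and middle rows. Your use of Lemma \ref{lKL} and faithfully flat descent of divisor inequalities is in the right spirit of Lemma \ref{l1.3}, but it is applied only to graphs of morphisms; to repair the proof you would need (i) to work with sections equal to finite correspondences, (ii) to invoke (or reprove) descent of finite correspondences along Nisnevich/\'etale covers, and (iii) to verify that the modulus and finiteness conditions descend, which is precisely the content of Lemma \ref{l1.3}.
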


\begin{proof}
We show the stronger statement
that $\Z_\tr(M)$ restricts to an \'etale sheaf
on $\ul{N}_\et$ for any $N \in \ulMCor^\fin$.
Let $p : \ol{U} \to \ol{N}$ be an \'etale cover
and let $U:=(\ol{U}, p^*N^\infty)$.
We have a commutative diagram
\[
\xymatrix{
0 \ar[r] &
\ulMCor^\fin(N, M) \ar[r] \ar@{^{(}->}[d] &
\ulMCor^\fin(U, M) \ar[r] \ar@{^{(}->}[d] &
\ulMCor^\fin(U \times_N U, M) \ar@{^{(}->}[d]
\\
0 \ar[r] &
\ulMCor(N, M) \ar[r] \ar@{^{(}->}[d] &
\ulMCor(U, M) \ar[r] \ar@{^{(}->}[d] &
\ulMCor(U \times_N U, M) \ar@{^{(}->}[d] 
\\
0 \ar[r] &
\Cor(N^\o, M^\o) \ar[r] &
\Cor(U^\o, M^\o) \ar[r] &
\Cor(U^\o \times_{N^\o} U^\o, M^\o). 
}
\]
%
%
%
The bottom row is exact by \cite[Lemma 6.2]{mvw}.
The exactness of the top and middle row now follows from Lemma \ref{l1.3}.
\end{proof}

\subsection{\protect\v{C}ech complex}

Let $p : U \to M$ be a Nisnevich cover in $\ulMP^{\fin}$.
We write $U \times_M U$ for the modulus pair corresponding 
to $\ol{U}\times_{\ol{M}}\ol{U}$ 
under the isomorphism of sites from Lemma \ref{lem:equiv-smallsites}. 
Note that it is a fibre product in 
$\ulMSm^\fin$ and in $\ulMSm$, 
thanks to Proposition \ref{prop:fiber-prod}.
Iterating this construction, 
we obtain the \v{C}ech complex
\begin{equation}\label{eq:ceck}
 \dots \to \ul{c}^{\fin *}\Z_{\tr}^\fin(U \times_M U)
 \to \ul{c}^{\fin *}\Z_{\tr}^\fin(U)
 \to \ul{c}^{\fin *}\Z_{\tr}^\fin(M)
 \to 0
\end{equation}
in $\ulMNS^{\fin}$.

\begin{thm}\label{thm:cech}
The complex
 \eqref{eq:ceck}
is exact in $\ulMNS^{\fin}$.
\end{thm}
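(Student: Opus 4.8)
The plan is to reduce the exactness of the \v{C}ech complex \eqref{eq:ceck} in $\ulMNS^{\fin}$ to a statement about ordinary Nisnevich sheaves on the ambient schemes, using the isomorphism of sites from Lemma \ref{lem:equiv-smallsites}. First I would recall that exactness in the Grothendieck abelian category $\ulMNS^{\fin}$ can be tested stalkwise, or equivalently on the small Nisnevich sites $N_\Nis$ for $N \in \ulMP^\fin$, via the sheafification functor; so it suffices to show that after restricting to each $N_\Nis \cong (\ol{N})_\Nis$ the complex \eqref{eq:ceck} becomes an exact complex of Nisnevich sheaves on $\ol{N}$. By Proposition \ref{prop:rep-sheaf}, each term $\ul{c}^{\fin *}\Z_{\tr}^\fin(U\times_M \cdots \times_M U)$ is already a Nisnevich sheaf on $\ulMP^\fin$, and its restriction to $\ol{N}_\Nis$ sends an \'etale $\ol{V}\to \ol{N}$ to $\ulMCor^\fin(V, U\times_M\cdots\times_M U)$ where $V = (\ol{V}, \text{pullback of }N^\infty)$.

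The key step is then to identify this restricted complex with the \v{C}ech resolution associated to the Nisnevich cover $\ol{p}:\ol{U}\to\ol{M}$ in the classical sense, at least after passing to stalks. Concretely, for a henselian local $\ol{V}$ \'etale over some ambient space, I would argue that $\ulMCor^\fin(V, U\times_M\cdots\times_M U)$ decomposes as a direct sum over the lifts of the structure map through $\ol{p}$, mirroring the classical computation of \v{C}ech stalks for a Nisnevich cover (where the cover becomes split over a henselian local scheme). The splitting comes from Lemma \ref{lem:refine-cover} (or directly from the definition of elementary Nisnevich covers): over a henselian local base a Nisnevich cover admits a section, and this section, being minimal, lifts to $\ulMP^\fin$. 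Given such a section $s$, the usual extra-degeneracy/contracting homotopy argument for \v{C}ech complexes of a covering with a section shows the stalk complex is exact (acyclic with the right augmentation). One must check that the contracting homotopy is compatible with the additive structure of $\ulMCor^\fin$, i.e. that composing a correspondence with the graph of the section stays within $\ulMCor^\fin$; this follows from Proposition \ref{p1a} and the finiteness condition defining $\ulMCor^\fin$ (Definition \ref{d1.1}(1)), since the section $s$ has finite — indeed \'etale — ambient part.

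Alternatively, and perhaps more cleanly, I would invoke the cd-structure machinery: by Proposition \ref{p3.1}, the Nisnevich topology on $\ulMSm^\fin$ is the topology of the complete, regular cd-structure $P_{\ulMVfin}$, and for such cd-structures the \v{C}ech complex of a covering built from the distinguished squares is exact in the category of sheaves (this is a standard consequence of completeness together with the fact that \'etale-locally every Nisnevich cover refines to iterated elementary covers). One reduces to the case of an $\ulMVfin$-square $W, V, U, X$ as in \eqref{eq.cd}, where the \v{C}ech exactness is the assertion that the sheafification sends the associated Mayer--Vietoris-type complex to an exact one, which is precisely built into the sheaf condition of Definition \ref{d3.2}(1) plus Lemma \ref{lem;d3.2} (so that $W = U\times_X V$ in $\ulMSm^\fin$). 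General covers are handled by refining to iterated elementary Nisnevich covers and using that a filtered colimit of exact complexes is exact, together with Proposition \ref{prop:rep-sheaf} to know all terms remain sheaves.

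The main obstacle I anticipate is the bookkeeping needed to pass from the single-square case to a general Nisnevich cover $p: U\to M$ while staying inside $\ulMCor^\fin$ rather than $\ulMCor$: one must ensure that the iterated fibre products $U\times_M\cdots\times_M U$ are computed correctly in $\ulMSm^\fin$ (which is fine by Proposition \ref{prop:fiber-prod}, since all maps are minimal with \'etale ambient part, so $\ol{W}_1 = \ol{W}_0$), and that the contracting homotopy — or the cd-structure refinement argument — produces morphisms in $\ulMCor^\fin$ and not merely in $\ulMCor$. Once the correspondences involved are checked to have finite (even \'etale) ambient projections over the base, so that Definition \ref{d1.1}(1) is satisfied, the rest is the standard \v{C}ech acyclicity argument transported verbatim through the isomorphism of sites of Lemma \ref{lem:equiv-smallsites}.
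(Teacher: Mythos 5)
Your reduction to the small sites via Lemma \ref{lem:equiv-smallsites} and the idea of working over henselian local modulus pairs agree with the paper, but the core of your argument has a genuine gap. The stalk complex has terms $\ulMCor^\fin(S, U\times_M\cdots\times_M U)$ where $S$ is a henselian local modulus pair, and $S$ carries \emph{no structure map to $M$}: elements of these groups are cycles in $\ol{S}\times\ol{U}^k$ finite over $\ol{S}$, not morphisms, so there is no map to lift through $\ol{p}$, the group does not decompose as a direct sum over lifts, and the cover $\ol{U}\to\ol{M}$ cannot be split ``over $S$'' at all. The splitting that does exist is over the \emph{support} $Z\subset\ol{S}\times\ol{M}$ of a given correspondence: $Z$ is finite over the henselian local $\ol{S}$, hence a disjoint union of henselian local schemes, so $Z\times_{\ol{M}}\ol{U}\to Z$ has a section. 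Since the section depends on $Z$, there is no single contracting homotopy of the stalk complex; the paper (following Voevodsky's proof of \cite[Proposition~3.1.3]{voetri}) filters the complex by the subcomplexes $L(Z\times_{\ol{M}}\ol{U}^{\bullet}/S)$ indexed by supports $Z$ finite over $\ol{S}$, contracts each one by \emph{pushing cycles forward} along the section $s_k$, checks that this pushforward preserves the modulus condition (Lemma \ref{l3.4}, which rests on Lemma \ref{lKL}), and concludes because a filtered colimit of exact complexes is exact. The filtration by supports and the compatibility of cycle pushforward with the modulus constraint are the actual content of the proof and are absent from your sketch; ``composing with the graph of the section'' is not the right operation, because the section exists only over $Z$, not over $S$.

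Your alternative route through the cd-structure also does not work as stated. Completeness and regularity of $P_{\ulMVfin}$ control sheaves on the site $\ulMSm^\fin$ and the sheafifications of its representable presheaves, i.e.\ objects of the form $\Z(-)$ \emph{without} transfers; they say nothing about exactness of a complex whose terms are $\ul{c}^{\fin *}\Z_\tr^\fin(-)$, which are representable objects \emph{with} transfers. That transfers are compatible with Nisnevich-local exactness is precisely the non-formal point of the theorem: in the paper the Mayer--Vietoris exact sequence for $\Z_\tr$ (Theorem \ref{thm:cech2}) is \emph{deduced from} Theorem \ref{thm:cech} (via Corollary \ref{rem:faith-exact-c}), not the other way around, so invoking it, or the sheaf condition for distinguished squares, at this stage would be circular.
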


\begin{remark} 
This result will be refined several times,
see Corollary \ref{rem:faith-exact-c} and 
Theorem \ref{thm:cech2}. 
Its proof is adapted from \cite[Proposition~3.1.3]{voetri}.
\end{remark}

Before starting the proof of Theorem \ref{thm:cech},
it is convenient to generalize the notion of relative cycles to the modulus setting.

\begin{defn}\label{d3.4} Let $S=(\ol{S},D)$, $Z=(\ol{Z},Z^\infty)$ be two pairs formed of a scheme and an effective Cartier divisor, and let $f:\ol{Z}\to\ol{S}$ be a morphism. (We don't put any regularity requirement on $\ol{S}-|D|$ or $\ol{Z}-|Z^\infty|$.) We write $L(Z/S)$ for the free abelian group with basis the closed integral subschemes $T\subset \ol{Z}$ such that
$T$ is finite and surjective over an irreducible component of $\ol{S}$ and $D|_{T^N} \ge Z^\infty|_{T^N}$,
where $T^N \to T$ is normalization and $(-)|_{T^N}$ denotes pull-back of Cartier divisors.
\end{defn}

\begin{ex} If $S$ is a modulus pair and $M=(\ol{M},M^\infty)$ is another modulus pair, then we have a canonical isomorphism $\MCor^\fin(S,M) \simeq L(\ol{S}\times M/S)$, where $\ol{S}\times M$ is the modulus pair $(\ol{S}\times \ol{M},\ol{S}\times M^\infty)$: this follows from Remark \ref{rk-graph-trick} \eqref{gt4}.
\end{ex}

Define a category $\sD(S)$ as follows: objects are pairs $(Z,f)$ as in Definition \ref{d3.4}. A morphism in $\sD(S)$, $(Z,f)\to (Z',f')$, is a minimal morphism $\phi:Z\to Z'$ such that $f=f'\circ \ol{\phi}$. Composition is obvious.

\begin{lemma}\label{l3.4} The push-forward of cycles makes $(Z,f)\mapsto L(Z/S)$ a covariant functor on $\sD(S)$.
\end{lemma}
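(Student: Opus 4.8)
The plan is to define, for a morphism $\phi : (Z,f) \to (Z',f')$ of $\sD(S)$, the homomorphism $L(\phi) : L(Z/S) \to L(Z'/S)$ by proper push-forward of cycles along $\ol\phi$; the only real work is to check that this push-forward actually lands in $L(Z'/S)$, and functoriality will then be inherited from that of push-forward of cycles. So first I would fix a generator $T$ of $L(Z/S)$, i.e.\ an integral closed subscheme $T\subset\ol Z$ finite and surjective over an irreducible component $W$ of $\ol S$ with $D|_{T^N}\ge Z^\infty|_{T^N}$, and analyse its image $T':=\ol\phi(T)$. Since $f=f'\circ\ol\phi$ and $T\to W$ is finite, the morphism $T\to\ol{Z'}$ factors through the closed subscheme $f'^{-1}(W)$, over which it is proper (being $\ol S$-proper after composing with the separated morphism $f'^{-1}(W)\to W$); hence $\ol\phi|_T$ is proper, $T'$ is a closed integral subscheme of $\ol{Z'}$, and $T\to T'$ is finite surjective. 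Applying the cancellation property of proper and of quasi-finite morphisms to $T\to T'\to W$ shows that $T'\to W$ is itself finite and surjective, so $T'$ satisfies the first requirement for membership in $L(Z'/S)$.

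The heart of the matter is the modulus inequality $D|_{(T')^N}\ge (Z')^\infty|_{(T')^N}$. Here I would lift $T\to T'$, via the universal property of normalisation, to a finite surjective morphism $\psi:T^N\to (T')^N$ of normal integral schemes, and then, using that $\phi$ is minimal (so $Z^\infty=\ol\phi^*(Z')^\infty$, by Definition~\ref{def:minimality}) together with $f=f'\circ\ol\phi$, observe that the natural composites $T^N\to\ol Z\xrightarrow{f}\ol S$ and $T^N\to\ol Z\xrightarrow{\ol\phi}\ol{Z'}$ both factor through $(T')^N$ precisely via $\psi$. This gives the two equalities $D|_{T^N}=\psi^*\bigl(D|_{(T')^N}\bigr)$ and $Z^\infty|_{T^N}=\psi^*\bigl((Z')^\infty|_{(T')^N}\bigr)$, so the hypothesis on $T^N$ is exactly the $\psi$-pullback of the sought inequality on $(T')^N$; since $\psi$ is surjective, Lemma~\ref{lKL} yields the inequality on $(T')^N$. (The same bookkeeping also shows that no component of $(T')^N$ maps into $|(Z')^\infty|$ or into $f'^{-1}(|D|)$, so all the Cartier-divisor pullbacks involved are well-defined, as required by Definition~\ref{d3.4}.) Thus $[T']\in L(Z'/S)$, and I set $L(\phi)[T]:=\ol\phi_*[T]=[\kappa(T):\kappa(T')]\,[T']$, extended $\Z$-linearly; note $[\kappa(T):\kappa(T')]\ge 1$ since $T\to T'$ is finite dominant with $\dim T=\dim T'$.

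For functoriality, $L(\mathrm{id})=\mathrm{id}$ is immediate, and for a composite $(Z,f)\xrightarrow{\phi}(Z',f')\xrightarrow{\phi'}(Z'',f'')$ one first checks that $\phi'\phi$ is again a morphism of $\sD(S)$ — minimality is stable under composition, as is the compatibility $f=f''\circ\overline{\phi'\phi}$ with the structure maps to $\ol S$ — and then $L(\phi'\phi)=L(\phi')\circ L(\phi)$ follows from the identity $(\overline{\phi'\phi})_*=\ol{\phi'}_*\circ\ol\phi_*$ for push-forward of cycles, once one notes that each intermediate push-forward is legitimate because, by the argument of the first step applied to $\phi'$, the cycle $[T']$ is proper over $\ol{Z''}$.

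I expect the main obstacle to be the divisor bookkeeping in the second step: verifying on the nose that the relevant maps out of $T^N$ factor through $(T')^N$ via $\psi$, and that every Cartier divisor in sight pulls back to an honest Cartier divisor, so that Lemma~\ref{lKL} (equivalently, the containment Lemma~\ref{containment-lemma}) applies without any implicit hypothesis being violated. Everything else — properness of $\ol\phi|_T$, finiteness of $T'\to W$, functoriality of cycle push-forward — is standard.
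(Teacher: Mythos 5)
Your proposal is correct and follows essentially the same route as the paper: push forward a generator $T$, check its image is finite and surjective over a component of $\ol S$ (the paper simply cites \cite[Lemma 1.4]{mvw} where you give the properness/cancellation argument directly), and deduce the modulus inequality from the minimality of $\phi$ together with Lemma \ref{lKL} applied to the normalizations, functoriality then being standard for push-forward of cycles. The divisor bookkeeping you flag (factorization through $(T')^N$ and well-definedness of the pullbacks) is exactly what the paper leaves implicit, and your verification of it is sound.
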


\begin{proof} Let $\phi:(Z,f)\to (Z',f')$ be a morphism in $\sD(S)$, and let $T\in L(Z/S)$. Then $\phi(T)$ is still finite and surjective over a component of $\ol{S}$ \cite[Lemma 1.4]{mvw}. Moreover, it still verifies the modulus condition: this follows from the minimality of $\phi$ and from  Lemma \ref{lKL}. We set as usual $\phi_* T = [k(T):k(\phi(T))]\phi(T)$: this defines $\phi_*:L(Z/S)\to L(Z'/S)$, and the functoriality $(\psi\circ \phi)_*=\psi_*\circ \phi_*$ is obvious.
\end{proof}

\begin{proof}[Proof of Theorem \ref{thm:cech}] 
In view of Lemma \ref{lem:equiv-smallsites}, 
it suffices to show the exactness of
\eqref{eq:ceck} evaluated at
$S$
\begin{equation}\label{eq:ceck2}
\dots 
\to \ulMCor^\fin(S, U \times_M U)
\to \ulMCor^\fin(S, U)
\to \ulMCor^\fin(S, M)
\to 0
\end{equation}
for the henselisation $S=(\ol{S}, D)$ of any modulus pair $N=(\ol{N},N^\infty)$ at any point of $\ol{N}$. As in \cite{voetri}, the strategy is to write \eqref{eq:ceck2} as a filtered colimit of contractible chain complexes.

Write $\sE(S,M)$ for the collection of integral closed subsets of $S^\o\times M^\o$ which belong to $\ulMCor^\fin(S,M)$ (this is the canonical basis of $\ulMCor^\fin(S,M)$). 
Let $\sC(M)$ be the set of closed subschemes of $\ol{S} \times \ol{M}$
that are quasi-finite over  
$\ol{S}$ and not contained in $\ol{S}\times M^\infty$, viewed as an (ordered, cofiltered) category. To $Z\in \sC(M)$
we associate the subset $\sE(Z)\subset \sE(S,M)$ of those $F$ such that $F\subset Z$. 

Provide $Z\in \sC(M)$ with the minimal modulus structure induced by the projection $Z\to \ol{M}$ (in a sense slightly generalized from Remark \ref{r2.1} (4), as in Definition \ref{d3.4}: the open subset $Z-Z^\infty$ is not necessarily smooth). This yields a functor
\[\sC(M)\to \sD(S)\]
where $\sD(S)$ is the category defined above. In particular, we have a subgroup $L(Z/S)\subset \ulMCor^\fin(S,M)$: it is the free abelian group on $\sE(Z)$.

Let $u:M'\to M$ be an \'etale morphism in $\ulMSm^\fin$, as in Definition \ref{d3.3}. For $Z\in \sC(M)$, define $u^*Z = Z\times_{\ol{M}} \ol{M}' $. Then $u^*Z\in \sC(M')$, and there is a commutative diagram
\[\begin{CD}
L(u^*Z/S)@>>> L(Z/S)\\
@VVV @VVV\\
\ulMCor^\fin(S,M')@>u_*>> \ulMCor^\fin(S,M),
\end{CD}\]
where the bottom horizontal map is composition 
by the graph of $u$. This yields subcomplexes
\begin{equation}\label{eq:ceck3}
 \dots \to
L(Z \times_{\ol{M}} (\ol{U} \times_{\ol{M}} \ol{U})) \to
L(Z \times_{\ol{M}} \ol{U}) \to
L(Z) \to 0
\end{equation}
of \eqref{eq:ceck2}, for $Z\in \sC(M)$.

Let $\sC_f(M)\subset  \sC(M)$ be the subset of those $Z$ which are finite over $\ol{S}$. It is a filtered subcategory, and we have
\[ \sE(S,M')= \bigcup_{Z\in \sC_f(M)} \sE(u^*Z).\]

Indeed, for $Z'\in \sC(M')$, let $Z=\ol{(\id_{\ol{S}} \times \ol{u})(Z')}$ and let $Z_f= \bigcup_{F\in \sE(Z)} F$. Then $\sE(Z')\subset \sE(u^*Z_f)$ since $(\id_{\ol{S}} \times \ol{u})(F)$ is finite over $\ol{S}$ for $F\in \sE(Z')$.

This proves that
\eqref{eq:ceck2} is obtained as the filtered inductive limit of the complexes \eqref{eq:ceck3}
when $Z$ ranges over $\sC_f(M)$. 
It suffices to show the exactness of \eqref{eq:ceck3} for such a $Z$.

Since $Z$ is finite over the henselian local scheme $\ol{S}$,
$Z$ is a disjoint union of henselian local schemes.
Thus the Nisnevich cover
$Z \times_{\ol{M}} \ol{U} \to Z$ admits a section
$s_0 : Z \to Z \times_{\ol{M}} \ol{U}$.
Define for $k \geq 1$
\[ 
s_k := s_0 \times_{\ol{M}} \id_{\ol{U}^k} :
Z \times_{\ol{M}} \ol{U}^k \to 
Z \times_{\ol{M}} \ol{U} \times_{\ol{M}} \ol{U}^k =
Z \times_{\ol{M}} \ol{U}^{k+1}
\]
where
$\ol{U}^{k} := \ol{U} \times_{\ol{M}} \dots \times_{\ol{M}} \ol{U}$.
Then the maps
\[
L(Z \times_{\ol{M}} \ol{U}^k)\to
L(Z \times_{\ol{M}} \ol{U}^{k+1})
\]
induced by $s_k$ via Lemma \ref{l3.4}
give us a homotopy from the identity to zero.
\end{proof}

\subsection{Sheafification preserves finite transfers}

Let $\ul{a}^{\fin}_{s \Nis} : \ulMPS^{\fin}\allowbreak \to \ulMNS^{\fin}$
be the sheafification functor,
that is, the left adjoint of the inclusion functor
$\ul{i}^{\fin}_{s \Nis} : \ulMNS^{\fin}  \hookrightarrow \ulMPS^{\fin}$. It exists for general reasons and is exact \cite[expos\'e~II, th\'eor\`eme~3.4]{SGA4}.

\begin{definition}\label{def:mpst-fin}
Let $\ulMNST^\fin$ be the
full subcategory of $\ulMPST^\fin$ consisting of all objects
$F \in \ulMPST^\fin$ such that 
$\ul{c}^{\fin *} F \in \ulMNS^\fin$
(see Proposition \ref{eq:c-functor} for $\ul{c}^{\fin *}$).
\end{definition}

\begin{lemma}\label{lcom2} The category $\ulMNST^{\fin}$ is closed under infinite direct sums in $\ulMPST^{\fin}$, and the inclusion functor 
$\ul{i}_{\Nis}^{\fin} : \ulMNST^{\fin} \to \ulMPST^{\fin}$  is strongly additive
$($Definition \ref{dA.2}\,$)$. 
The objects $\Z_\tr^\fin(M)$ and $b^*\Z_\tr(M)$ belong to  $\ulMNST^{\fin}$ for any $M\in \ulMCor$.
\end{lemma}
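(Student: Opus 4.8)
The plan is to deduce all three assertions formally from the corresponding properties of $\ulMNS^{\fin}$ via the functor $\ul{c}^{\fin *}$. Recall from Proposition~\ref{eq:c-functor} that $\ul{c}^{\fin *} \colon \ulMPST^{\fin} \to \ulMPS^{\fin}$ is exact and faithful, and fits into a string of adjoints $(\ul{c}^{\fin}_{!}, \ul{c}^{\fin *}, \ul{c}^{\fin}_{*})$; in particular $\ul{c}^{\fin *}$ is a left adjoint, hence commutes with all colimits. By Definition~\ref{def:mpst-fin}, $\ulMNST^{\fin}$ is precisely the full subcategory of $\ulMPST^{\fin}$ consisting of those $F$ with $\ul{c}^{\fin *}F \in \ulMNS^{\fin}$.

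First I would treat the direct sums. Let $(F_{i})_{i \in I}$ be a family of objects of $\ulMNST^{\fin}$ and let $F := \bigoplus_{i \in I} F_{i}$ denote its direct sum computed in the Grothendieck abelian category $\ulMPST^{\fin}$. Since $\ul{c}^{\fin *}$ commutes with colimits, $\ul{c}^{\fin *}F \cong \bigoplus_{i \in I} \ul{c}^{\fin *}F_{i}$, the latter direct sum being taken in $\ulMPS^{\fin}$. Each $\ul{c}^{\fin *}F_{i}$ lies in $\ulMNS^{\fin}$ by hypothesis, and $\ulMNS^{\fin}$ is closed under infinite direct sums in $\ulMPS^{\fin}$ by Lemma~\ref{lcom1}, so $\ul{c}^{\fin *}F \in \ulMNS^{\fin}$; hence $F \in \ulMNST^{\fin}$. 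This shows that $\ulMNST^{\fin}$ is closed under infinite direct sums in $\ulMPST^{\fin}$. As the direct sum $F$ formed in $\ulMPST^{\fin}$ already lies in the full subcategory $\ulMNST^{\fin}$, it is \emph{a fortiori} the direct sum computed in $\ulMNST^{\fin}$; consequently the inclusion functor $\ul{i}_{\Nis}^{\fin}$ commutes with infinite direct sums, i.e. is strongly additive in the sense of Definition~\ref{dA.2}.

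Finally, for the representable objects I would invoke Proposition~\ref{prop:rep-sheaf} directly. Since $\ulMP$ and $\ulMCor$ have the same objects (Definition~\ref{d2.5}), any $M \in \ulMCor$ is in particular an object of $\ulMP$, so that proposition gives $\ul{c}^{\fin *}\Z_{\tr}^{\fin}(M) \in \ulMNS^{\fin}$ and $\ul{c}^{\fin *}\ul{b}^{*}\Z_{\tr}(M) \in \ulMNS^{\fin}$; by Definition~\ref{def:mpst-fin} this is exactly the assertion that $\Z_{\tr}^{\fin}(M)$ and $\ul{b}^{*}\Z_{\tr}(M)$ belong to $\ulMNST^{\fin}$. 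I do not anticipate a genuine obstacle here: the lemma is entirely formal once one has the exact faithful left adjoint $\ul{c}^{\fin *}$ together with Lemma~\ref{lcom1} and Proposition~\ref{prop:rep-sheaf}. The one point deserving a moment's attention is that ``strongly additive'' refers to the direct sum computed in $\ulMPST^{\fin}$, which is exactly what the closure statement established above provides.
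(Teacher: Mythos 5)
Your proof is correct and follows essentially the same route as the paper: deduce closure under direct sums from Lemma~\ref{lcom1} using that $\ul{c}^{\fin *}$ commutes with colimits as a left adjoint, and invoke Proposition~\ref{prop:rep-sheaf} for the representable objects. The paper's proof is just a terser version of the same argument.
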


\begin{proof} This follows from Lemma \ref{lcom1}, 
because $\ul{c}^{\fin *}$ is strongly additive as a left adjoint. The last claim follows from Proposition \ref{prop:rep-sheaf}.
\end{proof}

We write $\ul{c}^{\fin \Nis} : \ulMNST \to \ulMNS$
for the functor induced by $\ul{c}^{\fin *}$.
By definition, we have
\begin{equation}\label{eq:c-i-fin}
\ul{c}^{\fin *} \ul{i}_{\Nis}^{\fin}
=\ul{i}_{s  \Nis}^{\fin} \ul{c}^{\fin \Nis}.
\end{equation}

\begin{thm}\label{thm:sheaf-transfer}
The following assertions hold.
\begin{enumerate}
\item
Let $F \in \ulMPST^{\fin}$.
There exists a unique object $F_\Nis \in \ulMPST^{\fin}$
such that 
$\ul{c}^{\fin *}(F_\Nis)
=\ul{a}^{\fin}_{s \Nis}(\ul{c}^{\fin *}(F))$ 
and such that
the canonical morphism 
$u: \ul{c}^{\fin *}(F) \to \ul{a}^{\fin}_{s \Nis}(\ul{c}^{\fin *}(F))
=\ul{c}^{\fin *}(F_\Nis)$
extends to a morphism in $\ulMPST^{\fin}$.
\item 
The functor $\ul{i}_{ \Nis}^{\fin}$ has an exact left adjoint 
$\ul{a}_{ \Nis}^{\fin} : \ulMPST^{\fin} \to \ulMNST^{\fin}$
satisfying
\begin{equation}\label{eq:c-a-fin}
\ul{c}^{\fin \Nis} \ul{a}_{ \Nis}^{\fin}
= \ul{a}_{s \Nis}^{\fin} \ul{c}^{\fin *}.
\end{equation}
In particular the category $\ulMNST^{\fin}$ 
is Grothendieck (\S \ref{s.groth}).
\item
The functor $\ul{c}^{\fin \Nis}$ 
has 
a left adjoint $\ul{c}^{\fin}_\Nis=\ul{a}^\fin_{\Nis}\ul{c}^{\fin}_! \ul{i}_{s  \Nis}^\fin$.
Moreover, $\ul{c}^{\fin \Nis}$ 
is exact, strongly additive $($Definition~\ref{dA.2}\,$)$, and faithful.
\end{enumerate}
\end{thm}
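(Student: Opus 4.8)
The plan is to prove the three assertions in order, each building on the previous one, with the bulk of the work concentrated in (1) and then formal arguments for (2) and (3).

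\textbf{Step 1 (Assertion (1)).} Fix $F \in \ulMPST^\fin$. The candidate object $F_\Nis$ is forced: on objects we must set $F_\Nis(N) := (\ul{c}^{\fin *}(F))_\Nis(N) = (F_N)_\Nis(\ol{N})$, the Nisnevich sheafification in the small site of $\ol{N}$, which makes sense by Lemma \ref{lem:equiv-smallsites}. So the real content is to equip this assignment with a \emph{transfer} structure, i.e.\ to extend it from a functor on $\ulMSm^\fin$ to one on $\ulMCor^\fin$, compatibly with $u$. The key input is Theorem \ref{thm:cech}: for any Nisnevich cover $p : U \to M$ in $\ulMP^\fin$, the \v{C}ech complex \eqref{eq:ceck} is exact in $\ulMNS^\fin$. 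Dualizing, the sheafification $(F_M)_\Nis$ can be computed (using that the Nisnevich topology on $M_\Nis$ has finite cohomological dimension, Theorem \ref{t.cd}, and is generated by the cd-structure $P_{\ulMVfin}$, Proposition \ref{p3.1}) as a filtered colimit over hypercovers, or more directly: for an elementary Nisnevich square, $F_\Nis$ must send it to a cartesian square. Given a correspondence $\alpha \in \ulMCor^\fin(M, N)$, one defines $F_\Nis(\alpha)$ by using Lemma \ref{lem:refine-cover} to refine any Nisnevich cover of $N$ through $\alpha$, then transporting $F(\alpha)$ across the sheafification maps, exactly as Voevodsky does for $\Cor$ (cf.\ \cite[Proposition~3.1.3 and after]{voetri}). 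Well-definedness and functoriality of $F_\Nis(\alpha)$ follow from Theorem \ref{thm:cech} (exactness of the \v{C}ech complex guarantees independence of the chosen refinement); uniqueness of $F_\Nis$ is clear because $\ul{c}^{\fin *}$ is faithful (Proposition \ref{eq:c-functor}) and a transfer structure compatible with $u$ is determined by its restriction to $\ulMSm^\fin$ after sheafification.

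\textbf{Step 2 (Assertion (2)).} Define $\ul{a}_\Nis^\fin(F) := F_\Nis$ with the transfer structure from (1). That it is left adjoint to $\ul{i}_\Nis^\fin$ is formal: a morphism $F \to \ul{i}_\Nis^\fin(G)$ in $\ulMPST^\fin$ is the same as a morphism $\ul{c}^{\fin *}F \to \ul{c}^{\fin *}G$ compatible with transfers, and since $\ul{c}^{\fin *}G$ is already a sheaf, this factors uniquely through $u$, i.e.\ through $\ul{c}^{\fin *}(F_\Nis)$; the compatibility with transfers of the factored map is automatic by the uniqueness in (1) applied to the (sub)object generated. Equation \eqref{eq:c-a-fin} holds by construction. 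Exactness of $\ul{a}_\Nis^\fin$ follows from exactness of $\ul{a}_{s\Nis}^\fin$ (which is \cite[exposé~II, théorème~3.4]{SGA4}) together with \eqref{eq:c-a-fin} and the fact that $\ul{c}^{\fin *}$ is exact and faithful (Proposition \ref{eq:c-functor}): a sequence in $\ulMPST^\fin$ is exact iff its image under $\ul{c}^{\fin *}$ is, and $\ul{c}^{\fin *}\ul{a}_\Nis^\fin = \ul{a}_{s\Nis}^\fin \ul{c}^{\fin *}$ is a composite of exact functors. The Grothendieck property of $\ulMNST^\fin$ then follows from the standard criterion (reflective subcategory of a Grothendieck category via an exact accessible left adjoint), as in \S\ref{s.groth}.

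\textbf{Step 3 (Assertion (3)).} The left adjoint of $\ul{c}^{\fin\Nis}$ is obtained by composing the three left adjoints $\ul{a}_\Nis^\fin \dashv \ul{i}_\Nis^\fin$, $\ul{c}^\fin_! \dashv \ul{c}^{\fin *}$ (Proposition \ref{eq:c-functor}) and $\ul{i}^\fin_{s\Nis} \dashv \ul{a}^\fin_{s\Nis}$: indeed for $G \in \ulMNST^\fin$ and $H \in \ulMNS^\fin$ one checks $\Hom(\ul{a}^\fin_\Nis \ul{c}^\fin_! \ul{i}^\fin_{s\Nis} H, G) \cong \Hom(\ul{c}^\fin_! \ul{i}^\fin_{s\Nis} H, \ul{i}^\fin_\Nis G) \cong \Hom(\ul{i}^\fin_{s\Nis}H, \ul{c}^{\fin *}\ul{i}^\fin_\Nis G) = \Hom(\ul{i}^\fin_{s\Nis}H, \ul{i}^\fin_{s\Nis}\ul{c}^{\fin\Nis}G) \cong \Hom(H, \ul{c}^{\fin\Nis}G)$, using \eqref{eq:c-i-fin} in the middle. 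Faithfulness of $\ul{c}^{\fin\Nis}$ follows from faithfulness of $\ul{c}^{\fin *}$ via \eqref{eq:c-i-fin} and the fact that $\ul{i}^\fin_{s\Nis}$ is faithful. Strong additivity (Definition \ref{dA.2}) follows from Lemmas \ref{lcom1} and \ref{lcom2}: $\ul{c}^{\fin\Nis}$ commutes with the inclusions of sheaves into presheaves, which are strongly additive, and $\ul{c}^{\fin *}$ is strongly additive as a left adjoint. For exactness of $\ul{c}^{\fin\Nis}$: left exactness is automatic (it has a left adjoint); for right exactness, given a surjection $G \twoheadrightarrow G'$ in $\ulMNST^\fin$, its cokernel as presheaves has zero sheafification, and applying $\ul{c}^{\fin *}$ and then $\ul{a}^\fin_{s\Nis}$ (exact) shows $\ul{c}^{\fin\Nis}G \to \ul{c}^{\fin\Nis}G'$ is surjective in $\ulMNS^\fin$, using \eqref{eq:c-a-fin} again.

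\textbf{Main obstacle.} The crux is Step 1: constructing the transfer structure on the sheafification and proving it is well-defined and functorial. All of this rests on the exactness of the \v{C}ech complex (Theorem \ref{thm:cech}) and the cd-structure machinery (Proposition \ref{p3.1}), so the argument is essentially a transcription of Voevodsky's \cite[\S3.1]{voetri} into the modulus setting; the delicate point is checking that Lemma \ref{lem:refine-cover} supplies \emph{enough} refinements of Nisnevich covers through correspondences in $\ulMCor^\fin$, and that the resulting maps glue. Steps 2 and 3 are then purely formal manipulations of adjoint strings.
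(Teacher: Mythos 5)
Your overall strategy matches the paper's: both treat the theorem as a modulus analogue of Voevodsky's \cite[Theorem~3.1.4]{voetri}, with the substance in (1) resting on the exactness of the \v{C}ech complex (Theorem \ref{thm:cech}) and the refinement Lemma \ref{lem:refine-cover}, and with (2) and (3) obtained by formal adjunction/exactness manipulations (the paper disposes of (3) via Lemma \ref{lem:lr-adjoint}; your Hom-computation and additivity/exactness arguments are the same in substance). One mechanical difference worth noting: the paper constructs the transfers on $F':=\ul{a}^{\fin}_{s\Nis}\ul{c}^{\fin*}F$ not by directly refining covers through $\alpha$, but via a map $\Phi_M\colon F'(M)\to \ulMPS^\fin(\ul{c}^{\fin*}\Z_\tr^\fin(M),F')$ built from Theorem \ref{thm:cech} together with Proposition \ref{prop:rep-sheaf} (representables are sheaves), and then defines $\alpha^*$ by precomposition with $\ul{c}^{\fin*}(\alpha)$; Lemma \ref{lem:refine-cover} enters only in the uniqueness part. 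Your direct construction can be made to work, but the descent and independence-of-choices verification that you compress into ``follows from Theorem \ref{thm:cech}'' is exactly the point that the $\Phi_M$ mechanism is designed to handle cleanly.

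There is, however, one genuine gap as written: the uniqueness in (1) is \emph{not} ``clear because $\ul{c}^{\fin*}$ is faithful''. Uniqueness compares two objects $G,G'\in\ulMPST^\fin$ having the \emph{same} underlying presheaf $F'$ and both receiving a map extending $u$; what must be shown is that the transfer maps $G(\alpha)$ and $G'(\alpha)$ coincide as homomorphisms $F'(N)\to F'(M)$, and faithfulness of $\ul{c}^{\fin*}$ (a statement about morphisms in $\ulMPST^\fin$) gives no handle on this, since there is no given morphism between $G$ and $G'$ to compare. The second clause of your sentence (``a transfer structure compatible with $u$ is determined by its restriction to $\ulMSm^\fin$ after sheafification'') is precisely the assertion to be proved. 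The paper's argument is: for $\alpha\in\ulMCor^\fin(M,N)$ and $f\in F'(N)$, lift $f$ to $g\in F(U)$ over a Nisnevich cover $U\to N$, apply Lemma \ref{lem:refine-cover} to get $\alpha'\colon V\to U$ in $\ulMCor^\fin$ over a cover $p'\colon V\to M$ with $\alpha p'=p\alpha'$, compute that any transfer structure compatible with $u$ forces $G(p')G(\alpha)(f)=u(F(\alpha')(g))$, and conclude $G(\alpha)(f)=G'(\alpha)(f)$ from the separatedness of the sheaf $F'$. The same remark applies to your Step~2, where the transfer-compatibility of the presheaf map factored through $u$ is ``automatic by the uniqueness in (1) applied to the (sub)object generated'': this is not an application of the uniqueness statement as such, but again requires the refinement-plus-separatedness computation. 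Since you already have Lemma \ref{lem:refine-cover} in hand, the fix is available within your own framework, but as stated these two justifications do not go through.
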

\begin{proof}
This can be shown by a rather trivial modification
of \cite[Theorem~3.1.4]{voetri},
but for the sake of completeness we include a proof.
To ease the notation, 
put $F':={\ul{a}_{s  \Nis}^\fin}\ul{c}^{\fin *}F \in \ulMPS^\fin$.
First we construct a homomorphism 
\[ 
\Phi_M: 
F'(M) \to \ulMPS^{\fin}(\ul{c}^{\fin *}\Z_{\tr}^\fin(M), F')
\]
for any $M \in \ulMP$.
Take $f \in F'(M)$.
There exists a Nisnevich cover $p : U \to M$ in $\ulMP^{\fin}$
and $g \in \ul{c}^{\fin *}F(U)=F(U)$ such that
$f|_U = u(g)$ in $F'(U)$.
There also exists a Nisnevich cover $W \to U \times_M U$
such that $g|_W=0$ in $F(W)$.
We have
$\ul{a}_{s  \Nis}^\fin \ul{c}^{\fin *} \Z_{\tr}^\fin(M)
=\ul{c}^{\fin *} \Z_{\tr}^\fin(M)$
because $\ul{c}^{\fin *}\Z_{\tr}^\fin(M) \in \ulMPS^{\fin}_\Nis$ by Proposition~\ref{prop:rep-sheaf}.
Thus we get a commutative diagram
in which the horizontal maps are induced by
$\ul{a}^{\fin}_{s \Nis}\ul{c}^{\fin *}$
\[
\xymatrix{
0  \ar[d]
\\
\ulMPS^\fin(\ul{c}^{\fin *}\Z_{\tr}^\fin(M), F')
\ar[d]_{s}
\\
\ulMPS^\fin(\ul{c}^{\fin *}\Z_{\tr}^\fin(U), F')
\ar[d]
&
\ulMPST^\fin(\Z_{\tr}^\fin(U), F)
\ar[l]_{s'} \ar[d] \ar@/^17ex/[dd]^{s''}
\\
\ulMPS^\fin(\ul{c}^{\fin *}\Z_{\tr}^\fin(U \times_M U), F')
\ar@{^{(}->}[d]_{l}
&
\ulMPST^\fin(\Z_{\tr}^\fin(U \times_M U), F)
\ar[l] \ar[d] 
\\
\ulMPS^\fin(\ul{c}^{\fin *}\Z_{\tr}^\fin(W), F')
&
\ulMPST^\fin(\Z_{\tr}^\fin(W), F).
\ar[l]
}
\]
Since $F'$ is a sheaf,
Theorem \ref{thm:cech} implies that
the left vertical column is exact except at the last spot,
and that the map $l$ is injective.
Since $g \in F(U)=
\ulMPST^\fin(\Z_{\tr}^\fin(U), F)$
satisfies $s''(g)=g|_W=0$,
there exists a unique 
$h \in 
\ulMPS^\fin(c^*\Z_{\tr}^\fin(M), F')$
such that $s(h)=s'(g)$.
One checks that $h$ does not depend on
the choices of $p:U \to M$, $g \in F(U)$ and $W \to U \times_M U$
by taking a refinement of covers.
We define $\Phi_M(f):=h$.

Now we define $G$.
On objects we put $G(M) = F'(M)$ for $M \in \ulMP$.
For $\alpha \in \ulMCor^{\fin}(M, N)$,
we define $\alpha^* : F'(N) \to  F'(M)$
as the composition of
$$
F'(N) \overset{\Phi_N}{\longrightarrow}
\ulMPS^\fin(\ul{c}^{\fin *}\Z_{\tr}^\fin(N), F')
\longrightarrow
\ulMPS^\fin(\ul{c}^{\fin *}\Z_{\tr}^\fin(M), F')
\longrightarrow F'(M),
$$
where the middle map is induced by 
$\ul{c}^{\fin *}(\alpha) : 
\ul{c}^{\fin *}\Z_\tr^\fin(M) \to \ul{c}^{\fin *}\Z_\tr^\fin(N)$,
and the last map is given by
$f \mapsto f_M(\id_M)$.
One checks that, with this definition,
$G$ becomes an object of $\ulMPST^\fin$.

To prove uniqueness, take $G, G' \in \ulMPST^\fin$
which enjoy the stated properties.
For any $M \in \ulMP$ we have $G(M)=G'(M)=F'(M)$.
We also have $G(\ul{c}^{\fin *}(q))=G'(\ul{c}^{\fin *}(q))=F'(q)$
 for any morphism $q$ in $\ulMP^\fin$.
Let $\alpha : M \to N$ be a morphism in $\ulMCor^\fin$
and let $f \in F'(N)$.
Take a Nisnevich cover $p : U \to N$ of $\ulMP^\fin$
and $g \in \ul{c}^{\fin *}F(U)=F(U)$ such that $f|_U = u(g)$ in $F'(U)$.
Apply Lemma \ref{lem:refine-cover} to
get a morphism $\alpha' : V \to U$ in $\ulMCor^\fin$
and a Nisnevich cover $p' : V \to M$ of $\ulMP^\fin$
such that $\alpha p'=p \alpha'$.
Then we have
\begin{align*}
G(p')G(\alpha)(f) &=
G(\alpha')G(p)(f) =
G(\alpha')(u(g)) =
 u(F(\alpha')(g))
\\
&=
G'(\alpha')(u(g)) =
G'(\alpha')G'(p)(f) =
G'(p')G'(\alpha)(f) =
G(p')G'(\alpha)(f).
\end{align*}
Since $p': V \to M$ is a Nisnevich cover 
and $G$ is separated,
this implies
$G(\alpha)(f) =G'(\alpha)(f)$.
This completes the proof or (1).

(2) is a consequence of (1)
and the fact that $\ulMPST^\fin$ is Grothendieck 
as a category of modules (see Theorem \ref{t.groth} d)).  
Then (3) follows from Lemma \ref{lem:lr-adjoint}.
\end{proof}

\begin{rk} A different argument may be given by mimicking the proof of \cite[Corollary~2.2.26]{ayoubrig}.
\end{rk}

\begin{defn} An additive functor
$\phi:\mathcal{C}\to\mathcal{C}'$ between abelian categories is \emph{faithfully exact}
if a complex $F'\to F\to F''$ is exact 
if and only if $\phi F'\to \phi F\to \phi F''$ is.
\end{defn}

This happens if $\phi$ is exact and either faithful or conservative.
By Theorems \ref{thm:sheaf-transfer} and \ref{thm:cech},
we get:

\begin{cor}\label{rem:faith-exact-c}
The functor $\ul{c}^{\fin \Nis}$ is faithfully exact.
In particular, 
if $p : U \to M$ is a Nisnevich cover in $\ulMP^{\fin}$,
then the \v{C}ech complex
\begin{equation}\label{eq:ceck-without-c}
 \dots \to \Z_{\tr}^\fin(U \times_M U)
 \to \Z_{\tr}^\fin(U)
 \to \Z_{\tr}^\fin(M)
 \to 0
\end{equation}
is exact in $\ulMNST^{\fin}$.
\end{cor}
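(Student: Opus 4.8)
The plan is to deduce both assertions formally from Theorem~\ref{thm:sheaf-transfer}~(3) and Theorem~\ref{thm:cech}; all the real content sits in those two results, so what remains is only a bit of bookkeeping, and I do not expect a genuine obstacle.

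First I would settle the faithful exactness of $\ul{c}^{\fin \Nis}$. By Theorem~\ref{thm:sheaf-transfer}~(3) the functor $\ul{c}^{\fin \Nis} : \ulMNST^\fin \to \ulMNS^\fin$ is exact and faithful, so it is enough to recall (as noted just before the statement) that an exact and faithful additive functor $\phi$ between abelian categories is faithfully exact. One implication is immediate, since an exact functor sends exact complexes to exact complexes. For the converse, let $F' \xrightarrow{f} F \xrightarrow{g} F''$ be a complex whose image under $\phi$ is exact and set $Q := \Ker g/\Im f$. Exactness of $\phi$ yields $\phi(Q) \cong \Ker(\phi g)/\Im(\phi f) = 0$, and then $\phi(1_Q) = 1_{\phi Q} = 0 = \phi(0_Q)$, so faithfulness forces $1_Q = 0_Q$, that is $Q = 0$, which is exactness at $F$.

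Next I would treat the ``in particular'' clause. By Lemma~\ref{lcom2} the representable objects $\Z_\tr^\fin(N)$ belong to $\ulMNST^\fin$, so \eqref{eq:ceck-without-c} is genuinely a complex in $\ulMNST^\fin$ and we may apply $\ul{c}^{\fin \Nis}$ to it. The key point is that \eqref{eq:c-i-fin}, together with Proposition~\ref{prop:rep-sheaf} (which guarantees that $\ul{c}^{\fin *}\Z_\tr^\fin(N)$ is a Nisnevich sheaf), identifies the result of applying $\ul{c}^{\fin \Nis}$ to \eqref{eq:ceck-without-c} with the \v{C}ech complex \eqref{eq:ceck}; the latter is exact in $\ulMNS^\fin$ by Theorem~\ref{thm:cech}. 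Since $\ul{c}^{\fin \Nis}$ is faithfully exact it reflects exactness, so \eqref{eq:ceck-without-c} is exact in $\ulMNST^\fin$, as asserted. The only step needing care is this identification of the two complexes through \eqref{eq:c-i-fin}; everything else is formal.
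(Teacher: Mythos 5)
Your proposal is correct and follows essentially the same route as the paper: faithful exactness comes from the exactness and faithfulness of $\ul{c}^{\fin \Nis}$ in Theorem \ref{thm:sheaf-transfer} (3) (the paper notes right after the definition that exact plus faithful implies faithfully exact, which you simply spell out), and the exactness of \eqref{eq:ceck-without-c} is then reflected from the exactness of \eqref{eq:ceck} in Theorem \ref{thm:cech} via the identification \eqref{eq:c-i-fin} and Proposition \ref{prop:rep-sheaf}. The paper leaves these details implicit, but your bookkeeping matches its intended argument exactly.
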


\subsection{Cohomology in $\protect\ulMNST^\protect\fin$}

\begin{nota}\label{n3.7a} 
Let $M \in \ulMP^\fin$ and 
let $F \in \ulMNS^\fin$ (resp. $F \in \ulMNST^\fin$).
We write $F_M$ for the sheaf on $(\ol{M})_\Nis$ induced from $F$ 
(resp. $\ul{c}^{\fin \sigma} F$)
via the isomorphism of sites from Lemma \ref{lem:equiv-smallsites}. 
(Note that $F_M$ depends not only on  $\ol{M}$, but also on $M^\infty$.)
We thus have canonical isomorphisms
\begin{align}\label{eq3.62}
&H^i_\Nis(M, F)\simeq H^i_\Nis(\ol{M}, F_M),
\\
&
\label{eq3.6}
H^i_\Nis(M, \ul{c}^{\fin \Nis} F)\simeq H^i_\Nis(\ol{M}, F_M),
\end{align}
where the right hand sides denote the cohomology of 
the (usual) small site $(\ol{M})_\Nis$.
\end{nota}

\begin{definition}\
\begin{enumerate}
\item
Let $S$ be a scheme.
We say a sheaf $F$ on $S_\Nis$ is \emph{flasque} if
$F(V) \to F(U)$ is surjective 
for any open dense immersion $U \to V$.
Flasque sheaves are flabby in the sense of 
Definition \ref{def:flabby}
(see \cite[lemme 1.40]{riou}).
\item
We say $F \in \ulMNS^\fin$ is flasque
if $F_M$ is flasque for any $M \in \ulMP^\fin$  (see Notation \ref{n3.7a}).
Again, flasque sheaves are flabby by \eqref{eq3.62}.
\end{enumerate}
\end{definition}

\begin{lemma}\label{lif1} 
Let  $I\in \ulMNST^\fin$ be an injective object. 
Then $\ul{c}^{\fin \Nis}(I)\allowbreak\in \ulMNS^\fin$ is flasque,
and hence flabby.
\end{lemma}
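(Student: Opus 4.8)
The strategy is the standard one for showing that injectives become flasque/flabby under a sheafification-type functor: reduce to a surjectivity statement about sections over open dense immersions, and solve the lifting problem using the exact left adjoint. Concretely, let $I \in \ulMNST^\fin$ be injective. To show $\ul{c}^{\fin\Nis}(I)$ is flasque, I must show that for every $M \in \ulMP^\fin$ and every open dense immersion $j:\ol{U} \hookrightarrow \ol{M}$, the restriction map $(\ul{c}^{\fin\Nis}I)_M(\ol{M}) \to (\ul{c}^{\fin\Nis}I)_M(\ol{U})$ is surjective. Via the isomorphism of sites of Lemma \ref{lem:equiv-smallsites}, the open $\ol{U}$ corresponds to a modulus pair $U=(\ol{U}, j^*M^\infty) \in \ulMP^\fin$ with a minimal morphism $U \to M$ whose ambient map is the open immersion $j$; and the section groups are $I(M)=I(\ul{c}^\fin U \to \ul{c}^\fin M)$-type evaluations. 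So the claim becomes: the map $I(M) \to I(U)$ induced by the morphism $U \to M$ in $\ulMP^\fin$ (or rather its image in $\ulMCor^\fin$) is surjective.

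The key step is then to interpret this as a lifting problem against the injective object $I$. For a morphism $q:U \to M$ in $\ulMP^\fin$ coming from an open dense immersion $j:\ol{U}\hookrightarrow\ol{M}$, I want to produce a map $\Z_\tr^\fin(M) \to \Z_\tr^\fin(U)$ in $\ulMNST^\fin$ — or more precisely realize $\Z_\tr^\fin(U) \to \Z_\tr^\fin(M)$ (induced by $\ul{c}^\fin(q)$, using that $q\in\ulMSm^\fin$) as a monomorphism in $\ulMNST^\fin$, so that by injectivity of $I$ every map $\Z_\tr^\fin(U) \to I$ extends to $\Z_\tr^\fin(M) \to I$. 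By Yoneda, $\Hom_{\ulMPST^\fin}(\Z_\tr^\fin(M), I) = I(M)$ and likewise for $U$, and $I\in\ulMNST^\fin$ means $\ul{c}^{\fin *}I$ is a Nisnevich sheaf, so this $\Hom$-computation descends to $\ulMNST^\fin$ after sheafification; the extension property then gives exactly the surjectivity of $I(M)\to I(U)$. So the heart of the matter is:

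\emph{For an open dense immersion $j:\ol{U}\hookrightarrow\ol{M}$ with $U=(\ol U, j^*M^\infty)$, the induced map $\ul{a}_\Nis^\fin\Z_\tr^\fin(U) \to \ul{a}_\Nis^\fin\Z_\tr^\fin(M)$ is a monomorphism in $\ulMNST^\fin$.} Since $\ul{a}_\Nis^\fin$ is exact (Theorem \ref{thm:sheaf-transfer}(2)) and $\ul{c}^{\fin\Nis}$ is faithfully exact (Corollary \ref{rem:faith-exact-c}), it suffices to check injectivity after applying $\ul{c}^{\fin\Nis}$ and evaluating on henselian local schemes $S=(\ol S, D)$; there it reduces to: the map $\ulMCor^\fin(S,U) \to \ulMCor^\fin(S,M)$ is injective. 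But both are subgroups of $\Cor(S^\o, M^\o)$ — indeed $U^\o=M^\o$ since $j$ is dense and $\ol U$ is open in $\ol M$, so $\ulMCor^\fin(S,U)\subset\Cor(S^\o,U^\o)=\Cor(S^\o,M^\o)$ — and the map is literally the inclusion of one subgroup into another (an admissible correspondence into $U$ is a fortiori one into $M$, using that closure in $\ol S\times\ol M$ of something landing in $\ol S\times\ol U$ still computes the modulus condition, cf. Lemma \ref{l1.3} / Remark \ref{rk-graph-trick}). Hence it is injective, and we are done.

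\textbf{Main obstacle.} The delicate point is the bookkeeping in passing between $\ulMPST^\fin$, $\ulMNST^\fin$ and $\ulMNS^\fin$ under $\ul{c}^{\fin *}$, $\ul{a}_{s\Nis}^\fin$, and $\ul{c}^{\fin\Nis}$ — specifically, making sure that ``injective in $\ulMNST^\fin$'' really does give the extension property against the monomorphism $\ul a_\Nis^\fin\Z_\tr^\fin(U)\hookrightarrow\ul a_\Nis^\fin\Z_\tr^\fin(M)$ of \emph{sheaves with transfers} (not merely of presheaves), and that the resulting extension, after applying $\ul c^{\fin\Nis}$ and the identification \eqref{eq3.62}, gives back exactly the section-level surjectivity for $F_M$ on $(\ol M)_\Nis$. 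This uses Theorem \ref{thm:sheaf-transfer} (the compatibility \eqref{eq:c-a-fin}, i.e.\ $\ul c^{\fin\Nis}\ul a_\Nis^\fin = \ul a_{s\Nis}^\fin \ul c^{\fin *}$) and the fact that $\Z_\tr^\fin(M)$ is already a Nisnevich sheaf with transfers (Lemma \ref{lcom2}, via Proposition \ref{prop:rep-sheaf}), so that $\ul a_\Nis^\fin\Z_\tr^\fin(M)=\Z_\tr^\fin(M)$ and similarly for $U$; the monomorphism claim is then checked directly as above. Once this identification is set up cleanly, the argument is short. The final clause — ``hence flabby'' — is immediate from the definition of flasque $\Rightarrow$ flabby recorded just before the lemma (via \eqref{eq3.62}), so no extra work is needed there.
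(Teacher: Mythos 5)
Your proof is correct and follows essentially the same route as the paper's: its one-line argument is precisely that $\Z_\tr^\fin(j)$ is a monomorphism of sheaves for a minimal dense open immersion $j:U\to M$ in $\ulMP^\fin$, so injectivity of $I$ gives surjectivity of $I(M)\to I(U)$, i.e.\ flasqueness (the paper also records an alternative via Lemma \ref{lem:inj-flabby} and the \v{C}ech complex \eqref{eq:ceck-without-c}). One small correction: $U^\o$ need not equal $M^\o$ (e.g.\ when $M^\infty=\emptyset$), only $U^\o=\ol{U}\cap M^\o$ is open and dense in $M^\o$; your subgroup argument survives unchanged because pushforward along the open immersion $U^\o\hookrightarrow M^\o$ is injective on finite correspondences, so $\ulMCor^\fin(S,U)\to\ulMCor^\fin(S,M)$ is still an inclusion of subgroups.
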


\begin{proof}
Let $j:U\inj M$ be a minimal open immersion of modulus pairs in $\ulMP^\fin$. The morphism of sheaves $\Z_\tr^\fin(j)$ is a monomorphism, hence $j^*:I(M)\to I(U)$ is surjective.
Alternatively, one can apply
Lemma \ref{lem:inj-flabby} with \eqref{eq:ceck-without-c}
to show that $\ul{c}^{\fin \sigma}(I)$ is flabby.
(This proof also works for the \'etale topology.)
\end{proof}

\section{Sheaves on {$\protect\ulMP$ and $\protect\ulMCor$} }

\subsection{A cd-structure on $\protect\ulMSm$}\label{sec:sd-str-ulMSm}  Let $P_{\ul{\MV}}$ be the collection of commutative squares in $\ulMSm$ which are isomorphic in $\ulMSm^\Sq$ to 
$\ul{b}_s^\Sq(Q)$ for some $\ulMVfin$-square $Q$ 
in Definition \ref{d3.2}.  Then $P_{\ul{\MV}}$ defines a cd-structure on $\ulMSm$ (see  \S \ref{s3.2}).

\begin{defn}\label{d4.1}
The squares which belong to $P_{\ul{\MV}}$ 
are called \emph{$\ulMV$-squares}. 
\end{defn}

\begin{thm}\label{thm;cd-str-ulMSm}
The cd-structure $P_{\ul{\MV}}$ is strongly complete and strong\-ly regular, in particular complete and regular (see Definitions \ref{d3.1} and \ref{dA.5}).
\end{thm}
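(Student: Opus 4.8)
The plan is to transport the known cd-structure properties of $P_{\ulMVfin}$ on $\ulMSm^\fin$ (Proposition \ref{p3.1} (2)) across the localisation functor $\ul{b}_s : \ulMSm^\fin \to \ulMSm$ of \eqref{eq:def-b}, using that $P_{\ul{\MV}}$ is, by Definition \ref{d4.1}, generated by the images under $\ul{b}_s^\Sq$ of $\ulMVfin$-squares. The key structural fact I would invoke is Proposition \ref{peff1}: $\ul{b}_s$ is a localisation at $\ul{\Sigma}^\fin$ and this class enjoys a calculus of right fractions, so every morphism in $\ulMSm$ is of the form $s_1 s_2^{-1}$ with $s_1, s_2 \in \ul{\Sigma}^\fin$ (Proposition \ref{peff1} c)); moreover every object of $\ulMSm$ is isomorphic to one of $\ulMSm^\fin$ with normal (even smooth, under resolution) ambient space by Corollary \ref{c2.1}. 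First I would record the precise form of ``strongly complete'' and ``strongly regular'' from Definition \ref{dA.5} and reduce each axiom to a statement about $\ulMVfin$-squares.

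For \emph{strong completeness}: given an $\ulMV$-square $Q$ in $\ulMSm$, by definition it is isomorphic to $\ul{b}_s^\Sq(Q_0)$ for an $\ulMVfin$-square $Q_0$, and the completeness requirement concerns refinements of covers. I would use Lemma \ref{lem:refine-cover} (which says Nisnevich covers in $\ulMP^\fin$ can be pulled back along arbitrary morphisms of $\ulMCor^\fin$, hence \emph{a fortiori} along morphisms of $\ulMSm^\fin$, and — via the calculus of fractions — along morphisms of $\ulMSm$) together with Lemma \ref{lem;d3.2} and Proposition \ref{prop:fiber-prod} (1)--(2), which guarantee that the pullback of an $\ulMVfin$-square along a minimal morphism with smooth interior is again an $\ulMVfin$-square. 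Because $\ul{b}_s$ is a localisation and the $\ulMV$-squares are exactly the images of $\ulMVfin$-squares, the simplicial/refinement conditions of strong completeness for $P_{\ul{\MV}}$ then follow formally from those for $P_{\ulMVfin}$, which hold by Proposition \ref{p3.1} (2).

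For \emph{strong regularity}: one must check that in an $\ulMV$-square \eqref{eq.cd}, the morphism $u$ is a monomorphism and the induced square comparing $W$ with the ``diagonal'' is again an $\ulMV$-square. I would lift the square to an $\ulMVfin$-square $Q_0$ via Definition \ref{d4.1}; there $\ol{u}$ is an open embedding and $\ol{p}$ is étale with the usual Nisnevich isomorphism on complements, so regularity holds in $\Sch$, and then Proposition \ref{prop:fiber-prod} together with Lemma \ref{lem;d3.2} shows the relevant fibre products in $\ulMSm^\fin$ are computed ``as expected'' (no extra components, minimality of the modulus preserved), so $Q_0$ satisfies strong regularity by Proposition \ref{p3.1} (2). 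Applying the functor $\ul{b}_s^\Sq$ and invoking that $\ul{b}_s$ preserves the relevant fibre products (Proposition \ref{prop:fiber-prod} (1), last sentence; Remark \ref{rem:fiberprod-int}) transfers strong regularity to $P_{\ul{\MV}}$. Finally, ``strongly complete and strongly regular $\Rightarrow$ complete and regular'' is Definition \ref{dA.5} / the cited \cite[Lemmas~2.5, 2.11]{cdstructures}, exactly as in the proof of Proposition \ref{p3.1} (2).

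The main obstacle I anticipate is \emph{not} the formal transfer but the bookkeeping of fibre products: an $\ulMVfin$-square need not remain cartesian on ambient spaces after base change (the $\ol{W}_1 \subsetneq \ol{W}_0$ phenomenon in the Examples after Proposition \ref{prop:fiber-prod}), so I must be careful that the squares arising in the regularity and completeness axioms really are $\ulMVfin$-squares and not merely cartesian-in-$\ulMSm$ squares with worse ambient geometry. The key technical input controlling this is the flatness clause of Proposition \ref{prop:fiber-prod} (2) (if $\ol{u}_1$ is flat then $\ol{W}_1 = \ol{W}_0$), which applies because in a Nisnevich cover $\ol{p}$ is étale, hence flat; combined with minimality (preserved by Proposition \ref{prop:fiber-prod} (2)) this shows the pulled-back squares are genuinely $\ulMVfin$-squares. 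Once that point is secured, the rest is the routine localisation yoga already used for $P_{\ulMVfin}$.
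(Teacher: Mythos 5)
Your starting point is the same as the paper's — $P_{\ul{\MV}}$ is, by Definition~\ref{d4.1}, the cd-structure on $\ulMSm \cong (\ul{\Sigma}^\fin)^{-1}\ulMSm^\fin$ induced from $P_{\ulMVfin}$ along the localisation $\ul{b}_s$, and $P_{\ulMVfin}$ is already strongly complete and strongly regular by Proposition~\ref{p3.1}~(2). But the paper then applies a purely categorical result, Proposition~\ref{pA.4}, which says that under a calculus of right fractions, strong completeness and strong regularity automatically descend to the induced cd-structure on the localised category. The entire point of that proposition is that $Q=\ul{b}_s$ is left exact (Proposition~\ref{p1.10}~a)), so pullback squares, diagonals, and derived squares are preserved for free; the only genuinely non-formal step in Proposition~\ref{pA.4} is that $Q$ carries monomorphisms to monomorphisms, which is handled by Proposition~\ref{p.cf}~c). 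Once you invoke this, you are done in one line. Your geometric route through Proposition~\ref{prop:fiber-prod} and Lemma~\ref{lem;d3.2} is therefore re-proving Proposition~\ref{pA.4} by hand in the special case of modulus pairs, which is both more work and harder to verify.

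Beyond missing the abstract lemma, two citations in your sketch are misread. Proposition~\ref{peff1}~c) says that an \emph{isomorphism} in $\ulMSm$ can be written as $s_1 s_2^{-1}$ with $s_i \in \ul{\Sigma}^\fin$, not every morphism; arbitrary morphisms are fractions $Q(f)Q(s)^{-1}$ with $f$ in $\ulMSm^\fin$ and $s\in\ul{\Sigma}^\fin$, which is Proposition~\ref{p.cf}~c). And Lemma~\ref{lem:refine-cover} is not a pullback lemma: it takes a \emph{finite correspondence} $\alpha$ and produces a merely commutative (not cartesian) square in which the new arrow is a Nisnevich cover — it is about $\ulMCor^\fin$ and is tailored for the sheafification argument in Theorem~\ref{thm:sheaf-transfer}, not for verifying cd-structure axioms in $\ulMSm^\fin$. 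Your worry about $\ol{W}_1 \subsetneq \ol{W}_0$ is also a distraction: that phenomenon concerns whether fibre products commute with the forgetful functor to $\Sch$, which is irrelevant here; what matters is that $\ul{b}_s$ preserves the fibre products needed in Lemmas~\ref{lA.7} and~\ref{lA.8}, and that is a formal consequence of left exactness of the localisation, not of the flatness clause of Proposition~\ref{prop:fiber-prod}~(2). If you want to keep a hands-on flavour, the honest way is to reprove Proposition~\ref{pA.4}: pick a distinguished square $Q(S)$ and a morphism into $Q(X)$, replace it up to isomorphism by $Q(f)$ for some $f$ in $\ulMSm^\fin$ using the calculus of fractions, observe that $X'\times_X S$ is an $\ulMVfin$-square by strong completeness of $P_{\ulMVfin}$, and use left exactness to identify $Q(X'\times_X S)$ with the pullback in $\ulMSm$; for strong regularity, preserve diagonals by left exactness and preserve monos by the cancellation axiom of the calculus of fractions.
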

\def\Hka{H_\kappa}
\def\HNis{H_\Nis}

\begin{proof}
This follows from Propositions \ref{p3.1} and \ref{pA.4}.
\end{proof}

\subsection{Sheaves on $\protect\ulMP$}

\begin{definition}\label{def;sheavesulMSmul}
Consider 
the Grothendieck topology on $\ulMSm$ 
generated by the squares in $P_{\ul{\MV}}$. 
The resulting site will be denoted by $\ulMSm_\Nis$.
We write $\ulMNS$ for  
the full subcategory of sheaves in $\ulMPS$. 
We denote by 
$\ul{i}_{s  \Nis}: \ulMNS \to \ulMPS$
the inclusion functor.
\end{definition}

By the general properties of Grothendieck topologies \cite[expos\'e2]{SGA4}, we have:

\begin{thm}\label{thm:sheafification-ulMNS}
The inclusion functor
$\ul{i}_{s  \Nis}: \ulMNS \to \ulMPS$
has an exact left adjoint 
$\ul{a}_{s  \Nis}$. 
The category $\ulMNS$ is Grothendieck  $($\S \ref{s.groth}\,$)$.\qed
\end{thm}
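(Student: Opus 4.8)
The plan is to deduce the statement purely formally from the standard theory of Grothendieck topoi on small sites; the substantive part — organising the Nisnevich covers into the cd-structure $P_{\ul{\MV}}$ and checking its good behaviour — has already been carried out in Theorem \ref{thm;cd-str-ulMSm}, and is more than enough here.

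First I would record the one set-theoretic point that makes everything go through: $\ulMSm$ is an essentially small category, because a modulus pair is a pair $(\ol{M}, M^\infty)$ with $\ol{M}$ a separated $k$-scheme of finite type, and such schemes form an essentially small category. Consequently $\ulMPS$, the category of additive presheaves of abelian groups on $\ulMSm$, is a Grothendieck abelian category, with generating set the $\Z$-linearized representables $N \mapsto \Z[\ulMSm(N,M)]$ for $M \in \ulMSm$ (cf. the remark after Definition \ref{d2.7}, and the description of $\ulMPS$ as a module category in the proof of Proposition \ref{eq:c-functor}). By construction (Definition \ref{def;sheavesulMSmul}), $\ulMSm_\Nis$ is a genuine Grothendieck topology on this essentially small site, generated by the finite covering families attached to $\ulMV$-squares.

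Next I would invoke \cite[expos\'e~II, th\'eor\`eme~3.4]{SGA4}: for any Grothendieck topology on a small site, the inclusion of abelian sheaves into presheaves admits a left adjoint $\ul{a}_{s  \Nis}$, computed by applying the plus-construction twice. Being a left adjoint, $\ul{a}_{s  \Nis}$ is right exact; it is also left exact, since the plus-construction is a filtered colimit of finite limits (the sections of $F^+$ over an object are a filtered colimit, over covering sieves, of equalizers) and filtered colimits are exact in $\Ab$. Hence $\ul{a}_{s  \Nis}$ is exact.

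Finally I would observe that $\ulMNS$ is then a reflective subcategory of the Grothendieck category $\ulMPS$ with exact reflector: it is therefore abelian, with kernels computed in $\ulMPS$ and cokernels obtained by sheafifying presheaf cokernels; filtered colimits in $\ulMNS$ are the sheafifications of the corresponding presheaf filtered colimits and are thus exact, so AB5 holds; and $\{\ul{a}_{s  \Nis}\Z[\ulMSm(-,M)]\}_{M \in \ulMSm}$ is a generating set. So $\ulMNS$ is Grothendieck. There is no real obstacle here beyond the bookkeeping: the only potentially delicate issue is the set-theoretic one guaranteeing existence of sheafification, and it is harmless because $\Sch$ over $k$ is essentially small.
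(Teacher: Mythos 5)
Your argument is correct and is exactly what the paper's one-line proof ("By the general properties of Grothendieck topologies [SGA4, expos\'e II]") is invoking: $\ulMSm$ is essentially small, so the standard existence and exactness of sheafification on a small site applies, and the rest (reflective subcategory with exact reflector of a Grothendieck category is Grothendieck) is the usual bookkeeping. You have simply spelled out what the citation compresses.
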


\begin{lemma}\label{lem:sheaves-MNS}
The following conditions are equivalent 
for $F \in \ulMPS$.
\begin{thlist}
\item 
$F\in \ulMNS$.
\item 
$\ul{b}_s^*F \in \ulMNS^\fin$; in other words, 
$(\ul{b}_s^*F)_M$ is a Nisnevich 
sheaf for any $M \in \ulMSm$ 
$($see \eqref{eq:six-cat-diag0} for $\ul{b}_s$ and Notation \ref{n3.7a} for $(-)_M)$.
\item
$F$ transforms 
any $\ulMVfin$-square
\begin{equation}\label{Q0} Q_0 : \vcenter{\xymatrix{
W_0 \ar[r]^{} \ar[d]_{} & V_0 \ar[d]^{} \\
U_0 \ar[r]^{} & M
}}\end{equation}
into an exact sequence
\[ 0\ \to F(M) \to F(U_0) \times F(V_0) \to F(W_0).\]
\end{thlist}
\end{lemma}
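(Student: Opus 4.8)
The plan is to reduce everything to the cd-structure formalism of \S\ref{sa-cd}, using the standard fact that, for a complete regular cd-structure, a presheaf of abelian groups is a sheaf for the associated topology if and only if it sends every distinguished square
\[\begin{CD} W @>>> V\\ @VVV @VVV\\ U @>>> X \end{CD}\]
to a \emph{cartesian} square of abelian groups, i.e. if and only if $0\to F(X)\to F(U)\times F(V)\to F(W)$ is exact. (For the classical Nisnevich cd-structure on $\Sch$ this is exactly what Definition \ref{d3.2} (1) records.)

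\textbf{Step 1: $(i)\Leftrightarrow(iii)$.} By Theorem \ref{thm;cd-str-ulMSm} the cd-structure $P_{\ul{\MV}}$ is complete and regular, so $F\in\ulMNS$ if and only if $F$ carries every $\ulMV$-square to a cartesian square of abelian groups. By Definition \ref{d4.1}, every $\ulMV$-square is isomorphic in $\ulMSm^\Sq$ to $\ul{b}_s^\Sq(Q_0)$ for some $\ulMVfin$-square $Q_0$ as in \eqref{Q0}; since the property ``$F$ sends this square to a cartesian square'' depends only on the isomorphism class of the square in $\ulMSm^\Sq$, it follows that $F\in\ulMNS$ if and only if for every $\ulMVfin$-square $Q_0$ the sequence $0\to F(M)\to F(U_0)\times F(V_0)\to F(W_0)$ is exact, which is condition $(iii)$.

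\textbf{Step 2: $(ii)\Leftrightarrow(iii)$.} By Proposition \ref{p3.1}, the site $\ulMSm^{\fin}_\Nis$ is the one associated with the complete regular cd-structure $P_{\ulMVfin}$, whose distinguished squares are precisely the $\ulMVfin$-squares; hence $G\in\ulMPS^{\fin}$ lies in $\ulMNS^\fin$ if and only if $G$ carries every $\ulMVfin$-square to a cartesian square of abelian groups. Applying this to $G=\ul{b}_s^*F$ and using that $\ulMSm$ and $\ulMSm^{\fin}$ have the same objects and $\ul{b}_s$ is the inclusion, so $(\ul{b}_s^*F)(N)=F(N)$ for every object $N$, the condition $\ul{b}_s^*F\in\ulMNS^\fin$ unwinds verbatim to $(iii)$. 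For the reformulation ``$(\ul{b}_s^*F)_M$ is a Nisnevich sheaf for every $M$'', one combines the isomorphism of small sites $M_\Nis\cong(\ol M)_\Nis$ of Lemma \ref{lem:equiv-smallsites} with Proposition \ref{p3.1} (1): an object of $\ulMPS^{\fin}$ is a sheaf for the big site $\ulMP^{\fin}_\Nis$ if and only if its restriction to each small site $M_\Nis$ is a Nisnevich sheaf, because the distinguished squares that generate the topology are exactly the images, under the section of Lemma \ref{lem:equiv-smallsites}, of the elementary Nisnevich squares on the ambient schemes.

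\textbf{On the difficulty.} There is no analytic content here: everything is formal and is provided by Theorem \ref{thm;cd-str-ulMSm}, Proposition \ref{p3.1} and Lemma \ref{lem:equiv-smallsites}. The only points needing care are bookkeeping ones, namely (a) unravelling ``cartesian square of abelian groups'' into the left exact sequence $0\to F(M)\to F(U_0)\times F(V_0)\to F(W_0)$; (b) using the invariance of the sheaf condition under isomorphism of squares in order to pass freely between $\ulMV$-squares and their $\ulMVfin$-models; and (c) identifying the big-site sheaf condition with the small-site one for the ``in other words'' clause of $(ii)$. I expect (c) to be the most delicate, but it is fully covered by Proposition \ref{p3.1} (1) together with Lemma \ref{lem:equiv-smallsites}.
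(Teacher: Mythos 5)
Your proposal is correct and follows essentially the same route as the paper: the paper also deduces (i) $\iff$ (iii) from Theorem \ref{thm;cd-str-ulMSm} together with Voevodsky's criterion for complete regular cd-structures (\cite[Corollary~2.17]{cdstructures}), and (ii) $\iff$ (iii) from Proposition \ref{p3.1} plus the fact that $\ul{b}_s^*F$ is just the restriction of $F$ along the inclusion $\ul{b}_s$ (the paper's ``by adjunction''), with Lemma \ref{lem:equiv-smallsites} handling the small-site reformulation exactly as you do. Your write-up is simply a more detailed unwinding of the paper's two-line argument.
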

\begin{proof}
In view of Theorem \ref{thm;cd-str-ulMSm} and \cite[Corollary~2.17]{cdstructures}, we have (i) $\iff$ (iii). On the other hand, (ii) $\iff$ (iii) by adjunction and Proposition \ref{p3.1}.   
\end{proof}

\begin{cor}\label{lcom3} 
The category $\ulMNS$ is closed under infinite direct sums in $\ulMPS$
and $\ul{i}_{s  \Nis}$ is strongly additive $($Definition~\ref{dA.2}\,$)$.
\end{cor}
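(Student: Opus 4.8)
The plan is to deduce Corollary \ref{lcom3} from the preceding structural results in exactly the same way that Lemma \ref{lcom1} was deduced for the category $\ulMNS^\fin$. First I would invoke Lemma \ref{lem:sheaves-MNS}: being a sheaf in $\ulMNS$ is characterized by the exactness condition (iii), namely that $F$ sends every $\ulMVfin$-square $Q_0$ to an exact sequence
\[ 0 \to F(M) \to F(U_0)\times F(V_0) \to F(W_0). \]
This is a \emph{finite} limit condition (a kernel of a map between finite products), tested separately on each of the set-indexed family of $\ulMVfin$-squares.

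Next I would take an arbitrary small family $\{F_i\}_{i\in I}$ of objects of $\ulMNS$ and form the direct sum $\bigoplus_i F_i$ computed in $\ulMPS$ (which exists, as $\ulMPS$ is Grothendieck by Definition \ref{d2.7} and the remark following it). Since filtered colimits commute with finite limits in $\Ab$ (here one cites \cite[Chapter~IX, Section~2, Theorem~1]{mcl}, exactly as in the proof of Lemma \ref{lcom1}), and a direct sum is the filtered colimit of its finite partial sums, for each $\ulMVfin$-square $Q_0$ the sequence obtained by applying $\bigoplus_i F_i$ is the filtered colimit over finite subsets $J\subset I$ of the sequences obtained by applying $\bigoplus_{i\in J} F_i$. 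Each finite partial sum is a finite product of sheaves, hence satisfies condition (iii) since the defining exact sequence is preserved by finite products; therefore the colimit sequence is exact as well. By Lemma \ref{lem:sheaves-MNS} this shows $\bigoplus_i F_i \in \ulMNS$, i.e. $\ulMNS$ is closed under infinite direct sums in $\ulMPS$, and consequently $\ul{i}_{s\Nis}$ commutes with infinite direct sums, which is exactly the assertion that it is strongly additive in the sense of Definition \ref{dA.2}.

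There is essentially no obstacle here: the only point that needs care is making sure the sheaf condition really is a finite-limit condition, which is precisely the content of the equivalence (i) $\iff$ (iii) in Lemma \ref{lem:sheaves-MNS} (ultimately resting on the fact that the cd-structure $P_{\ul{\MV}}$ generates the topology, Theorem \ref{thm;cd-str-ulMSm}, and \cite[Corollary~2.17]{cdstructures}). Given that, the argument is formally identical to that of Lemma \ref{lcom1}, so I would keep the proof to one or two sentences, citing Lemma \ref{lem:sheaves-MNS} and \cite[Chapter~IX, Section~2, Theorem~1]{mcl}.
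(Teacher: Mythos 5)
Your argument is correct: by Lemma \ref{lem:sheaves-MNS} ((i) $\iff$ (iii)) the sheaf condition is tested on the finite diagrams coming from $\ulMVfin$-squares, direct sums in $\ulMPS$ are computed objectwise, and since filtered colimits (of the finite partial sums) commute with finite limits in $\Ab$, the exactness of $0 \to F(M) \to F(U_0)\times F(V_0) \to F(W_0)$ for each summand passes to the direct sum; closure under direct sums in $\ulMPS$ is exactly the strong additivity of $\ul{i}_{s\Nis}$. The route differs slightly from the paper's: instead of re-running the colimit argument at the level of $\ulMNS$ via (iii), the paper uses the equivalence (i) $\iff$ (ii) of the same lemma to reduce to the already-proved finite-level statement (Lemma \ref{lcom1}), combined with the purely formal pull-back Lemma \ref{lem:lr-adjoint} (2) and the observation that $\ul{b}_s^*$ is strongly additive, being a left adjoint (of $\ul{b}_{s*}$). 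Your version is self-contained and avoids Lemma \ref{lem:lr-adjoint}, at the cost of repeating the argument of Lemma \ref{lcom1}; the paper's version is a one-line formal transfer that reuses that lemma. Both are equally rigorous, and your appeal to Lemma \ref{lem:sheaves-MNS} (resting on Theorem \ref{thm;cd-str-ulMSm} and \cite[Corollary~2.17]{cdstructures}) is exactly the right justification that the sheaf condition is a finite-limit condition.
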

\begin{proof} This follows from Lemmas \ref{lcom1}, \ref{lem:sheaves-MNS} ((i) $\iff$ (ii)) and \ref{lem:lr-adjoint} (2)
because $\ul{b}_s^*$ is strongly additive as a left adjoint.
\end{proof}

\subsection{The adjunction $(\protect\ul{b}_{ s, \protect\Nis},\protect\ul{b}_s^\protect\Nis)$}

\begin{definition}
A map in $\ulMSm_\Nis$ 
is called a \emph{strict Nisnevich cover}
if it is the image of a cover of $\ulMSm^\fin_\Nis$
by $\ul{b}_s:\ulMSm^\fin\to \ulMSm$.
\end{definition}

By definition,
a strict Nisnevich cover is 
evidently a cover in $\ulMSm_\Nis$.
Up to isomorphism,
any cover of $\ulMSm_\Nis$ 
can be refined to such a cover.
More precisely, we have the following lemma.

\begin{lemma}\label{lem:cov-ulMSm}
Any cover  $U \to M$ in $\ulMSm_\Nis$
admits a refinement of the form
$V \to N \to M$,
where $V \to N$ is a strict Nisnevich cover
and $N \to M$ is a morphism in $\ul{\Sigma}^\fin$
$($see Definition \ref{deff}\,$)$.
\end{lemma}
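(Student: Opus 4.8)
The plan is to unwind the definition of a cover in the topology generated by the cd-structure $P_{\ul{\MV}}$ and then use the structure of $\ul{\Sigma}^\fin$-localisation. First I would recall that by Theorem \ref{thm;cd-str-ulMSm} the cd-structure $P_{\ul{\MV}}$ is complete, so by \cite{cdstructures} the covers of $\ulMSm_\Nis$ can be described explicitly: every covering sieve contains a finite iteration of distinguished squares, i.e. a cover $\{U_i \to M\}$ refining it is obtained by repeatedly taking, for a $\ulMV$-square $Q$ as in \eqref{eq.cd}, the family $\{U \to M, V \to M\}$ (with $U \sqcup V \to M$ the associated "elementary $\ulMV$-cover"). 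Thus, after refinement, I may assume the given cover $U \to M$ is such a composite of distinguished $\ulMV$-squares.

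Next, by the very definition of $P_{\ul{\MV}}$ (see \S\ref{sec:sd-str-ulMSm}), each $\ulMV$-square is \emph{isomorphic in $\ulMSm^\Sq$} to $\ul{b}_s^\Sq(Q^\fin)$ for some $\ulMVfin$-square $Q^\fin$ living in $\ulMSm^\fin$. The isomorphisms in $\ulMSm$ between the corners of these squares and the corners of the $\ul{b}_s$-image are, by Proposition \ref{peff1} (c), each of the form $s_1 s_2^{-1}$ with $s_1, s_2 \in \ul{\Sigma}^\fin$. The idea is then to "straighten" the whole diagram: absorb the $\ul{\Sigma}^\fin$-ambiguity at the bottom vertex $M$ into a single morphism $N \to M$ in $\ul{\Sigma}^\fin$, so that after base change along $N \to M$ (legitimate since morphisms in $\ul{\Sigma}^\fin$ are minimal with proper $\ol{f}$ and the squarability of Corollary \ref{exist-pullback} applies) the cover becomes the $\ul{b}_s$-image of an honest $\ulMVfin$-square over $N$, hence a strict Nisnevich cover. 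Concretely, for a single $\ulMV$-square I would pick a common $\ul{\Sigma}^\fin$-dominator $N$ of $M$ through which all the relevant comparison morphisms factor — using that $\ul{\Sigma}^\fin \downarrow M$ is cofiltered (Corollary \ref{cor:sigma-fin-cofil}) — and then use the graph trick / Lemma \ref{l-gt} together with Proposition \ref{prop:fiber-prod} to realise the pulled-back square inside $\ulMSm^\fin$. For an iterated cover I would proceed by induction on the number of distinguished squares, at each stage enlarging $N$ within $\ul{\Sigma}^\fin \downarrow M$ (again using cofilteredness) so that all the squares encountered so far simultaneously become $\ul{b}_s$-images of $\ulMVfin$-squares over $N$; composing the $\ulMVfin$-covers over $N$ gives the strict Nisnevich cover $V \to N$.

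The main obstacle I anticipate is the bookkeeping in the inductive step: ensuring that when one replaces $M$ by a larger $N \in \ul{\Sigma}^\fin \downarrow M$ to straighten one $\ulMV$-square, the squares already straightened at earlier stages remain $\ul{b}_s$-images of $\ulMVfin$-squares (they do, because $\ul{\Sigma}^\fin$-base change of a minimal elementary Nisnevich square is again one, by minimality and Proposition \ref{prop:fiber-prod} (2)), and that the fibre products computed in $\ulMSm$ along the way actually agree with those computed in $\ulMSm^\fin$ — which is exactly what Proposition \ref{prop:fiber-prod} and Lemma \ref{lem;d3.2} guarantee. Once this compatibility is in place, the factorisation $V \to N \to M$ with $V \to N$ strict and $N \to M$ in $\ul{\Sigma}^\fin$ drops out, and it refines the original cover because $N \to M$ is invertible in $\ulMSm$ (Proposition \ref{peff1} c)) so that $V \to N \to M$ and $V \to M$ have the same image sieve up to the isomorphism $N \cong M$ in $\ulMSm$.
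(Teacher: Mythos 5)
Your overall strategy—unwind the topology generated by $P_{\ul{\MV}}$ into iterated distinguished squares, use that each $\ulMV$-square is isomorphic in $\ulMSm^{\Sq}$ to $\ul{b}_s^{\Sq}$ of an $\ulMVfin$-square with comparison isomorphisms of the form $s_1s_2^{-1}$ ($s_i\in\ul{\Sigma}^\fin$), and then straighten everything inductively over a single object of $\ul{\Sigma}^\fin\downarrow M$—is the same general plan as the paper's, but it omits the one step that carries the real content. For a \emph{single} square the straightening is indeed just a fiber product along a morphism of $\ul{\Sigma}^\fin$ (Proposition \ref{prop:fiber-prod}), but in the iterated situation the $\ul{\Sigma}^\fin$-ambiguity you must absorb does not live over $M$: a later distinguished square sits over a top vertex $U$ of an earlier one, so straightening it produces a morphism $N'\to U$ in $\ul{\Sigma}^\fin$, i.e.\ an object of $\ul{\Sigma}^\fin\downarrow U$, where $\ol{U}$ is only étale over $\ol{N}$ for some $N\in\ul{\Sigma}^\fin\downarrow M$. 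Cofilteredness of $\ul{\Sigma}^\fin\downarrow M$ (Corollary \ref{cor:sigma-fin-cofil}) only lets you dominate finitely many objects of that \emph{same} comma category; it gives no way to dominate a proper birational modification of the étale cover $\ol{U}$ by the pullback of a modification of $\ol{M}$ (blow-up centres on $\ol{U}$ do not descend along étale maps), and composing $V'\to N'\to U\to N$ is not a strict Nisnevich cover because $N'\to U$ is proper birational, not étale. Neither the graph trick (Lemma \ref{l-gt}) nor Proposition \ref{prop:fiber-prod} supplies this descent, so your inductive step ``enlarge $N$ within $\ul{\Sigma}^\fin\downarrow M$ so that all squares so far become $\ul{b}_s$-images of $\ulMVfin$-squares over $N$'' is not justified as written.

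This is precisely where the paper invokes Lemma \ref{mainlem;blowup} (Raynaud--Gruson platification, as in Suslin--Voevodsky): given a strict Nisnevich cover $U\to N$ and $N'\to U$ in $\ul{\Sigma}^\fin$, there is $N''\to N$ in $\ul{\Sigma}^\fin$ such that $N''\times_N U\to U$ factors through $N'$. Concretely, the paper first refines the cover into a chain $U_n\to\cdots\to U_0=M$ whose arrows are either in $\ul{\Sigma}^\fin$, inverses of such, or strict Nisnevich covers, and inducts on $n$; the only nontrivial case (strict cover followed by a $\ul{\Sigma}^\fin$-refinement upstairs) is resolved by exactly that lemma. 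Your argument becomes correct once you insert Lemma \ref{mainlem;blowup} at the corresponding place; the remaining bookkeeping you describe (stability of strict covers under $\ul{\Sigma}^\fin$-base change, via minimality and Proposition \ref{prop:fiber-prod}(2)) is fine.
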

\begin{proof}
By Definition \ref{def;sheavesulMSmul} and Proposition \ref{peff1},
there is a refinement of $U \to M$ of the form
\[ U_n \overset{f_n}{\to} U_{n-1} \overset{f_{n-1}}{\to} 
\cdots \overset{f_1}{\to} U_0=M,
\]
where for each $i$ we have either
(i) $f_i \in \ul{\Sigma}^\fin$,
(ii) $f_i=g^{-1}$ for some $g \in \ul{\Sigma}^\fin$,
or 
(iii) $f_i$ is a strict Nisnevich cover.
We proceed by induction on $n$,
the case $n=0$ being trivial.
Suppose $n>0$.
By induction, we have a refinement of $U_n \to U_1$
of the form $V' \to N' \to U_1$
where $V' \to N'$ is a strict Nisnevich cover
and $N' \to U_1$ is in $\ul{\Sigma}^\fin$.

If $f_1 \in \ul{\Sigma}^\fin$,
then we can take $V=V'$ and $N=N'$,
as the composition 
$N' \to U_1 \to U_0$ belongs to $\ul{\Sigma}^\fin$.
Next, suppose $f_1=g^{-1}$ with $g \in \ul{\Sigma}^\fin$.
Then we can take 
$V=V' \times_{U_1} U_0$ and
$N=N' \times_{U_1} U_0$,
where $U_0$ is regarded as a $U_1$-scheme by $g$.
Finally, suppose $f_1$ is a strict Nisnevich cover.
By Lemma \ref{mainlem;blowup},
we may find a morphism $N \to U_0$ in $\ul{\Sigma}^\fin$
such that $N'':=N \times_{U_0} U_1 \to U_1$
factors through $N'$.
Then we can take $V=V' \times_{N'} N''$.
This completes the proof.
\end{proof}

We define
$\ul{b}_s^{\Nis}: \ulMNS \to \ulMNS^\fin$
to be 
the restriction of $\ul{b}_s^*$, cf. Lemma \ref{lem:sheaves-MNS} (ii).
By definition, 
we have 
\begin{equation}\label{eq:b-and-i}
\ul{b}_s^* \ul{i}_{s  \Nis} = \ul{i}_{s  \Nis}^\fin \ul{b}_s^\Nis.
\end{equation}

\begin{prop}\label{lem;b!ulMNS}
The following assertions hold.
\begin{enumerate}
\item 
We have $\ul{b}_{s  !}(\ulMNS^\fin) \subset \ulMNS$.
In particular,
$\ul{b}_{s  !}$ restricts to 
$\ul{b}_{s  \Nis} : \ulMNS^\fin \to \ulMNS$
so that we have 
\begin{equation}\label{eq:b-and-i2}
\ul{b}_{s  !} \ul{i}_{s  \Nis}^\fin = \ul{i}_{s  \Nis} \ul{b}_{s  \Nis}.
\end{equation}
\item
The functor
$\ul{b}_{s  \Nis}$ is an exact left adjoint of $\ul{b}_s^\Nis$.
The functor
$\ul{b}_s^\Nis$ is fully faithful and preserves injectives.
The counit map
$\ul{b}_{s  \Nis} \ul{b}_s^\Nis \to \id$ is an isomorphism and $\ul{b}_{s  \Nis} R^q\ul{b}_s^\Nis=0$ for $q>0$.
\end{enumerate}
\end{prop}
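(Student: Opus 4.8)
The plan is to derive everything from the general formalism of localizations and pro-adjoints, using that $\ul b_s : \ulMSm^\fin \to \ulMSm$ is a localization with respect to $\ul\Sigma^\fin$ (Proposition \ref{peff1} b)) together with the explicit colimit formula \eqref{eq:b-sh-explicit} for $\ul b_{s!}$.

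\textbf{Step 1: $\ul b_{s!}$ preserves sheaves.} First I would show that for $F_s \in \ulMNS^\fin$, the presheaf $\ul b_{s!}F_s$ satisfies condition (iii) of Lemma \ref{lem:sheaves-MNS}, i.e.\ sends every $\ulMV^\fin$-square $Q_0$ to an exact sequence. By \eqref{eq:b-sh-explicit}, $\ul b_{s!}F_s(M) = \colim_{N \in \ul\Sigma^\fin\downarrow M} F_s(N)$, and the comma category $\ul\Sigma^\fin\downarrow M$ is cofiltered (Corollary \ref{cor:sigma-fin-cofil}), so filtered colimits commute with the finite limits appearing in the sheaf condition. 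The key point is that any $\ulMV^\fin$-square over $M$ can be pulled back along a morphism $N \to M$ in $\ul\Sigma^\fin$ to a $\ulMV^\fin$-square over $N$, compatibly, so that $\ul b_{s!}F_s$ evaluated on $Q_0$ is a filtered colimit of the exact sequences obtained by applying the sheaf $F_s$ to these pulled-back squares; here I would use Lemma \ref{mainlem;blowup} (as in the proof of Lemma \ref{lem:cov-ulMSm}) to propagate the square. Hence $\ul b_{s!}F_s \in \ulMNS$, giving (1); the identity \eqref{eq:b-and-i2} is then formal since $\ul b_{s!}$ restricted to sheaves lands in $\ulMNS$ and $\ul i_{s\Nis}$ is a full inclusion.

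\textbf{Step 2: adjunction, exactness, full faithfulness.} The adjunction $\ul b_{s\Nis} \dashv \ul b_s^\Nis$ follows by restricting the adjunction $\ul b_{s!} \dashv \ul b_s^*$ along the fully faithful inclusions $\ul i_{s\Nis}^\fin$, $\ul i_{s\Nis}$, using \eqref{eq:b-and-i} and \eqref{eq:b-and-i2}. Exactness of $\ul b_{s\Nis}$: it is right exact as a left adjoint, and left exactness follows because on an abelian-category level $\ul b_{s\Nis} = \ul a_{s\Nis}\,\ul b_{s!}\,\ul i_{s\Nis}^\fin$ — but $\ul b_{s!}\ul i_{s\Nis}^\fin$ already lands in sheaves, $\ul b_{s!}$ is exact (Proposition \ref{eq:bruno-functor}, it has a pro-left adjoint), $\ul i_{s\Nis}^\fin$ is exact, and $\ul a_{s\Nis}$ is exact, so the composite is exact. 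Full faithfulness of $\ul b_s^\Nis$: the counit $\ul b_{s!}\ul b_s^* \to \id$ is an isomorphism (Proposition \ref{eq:bruno-functor}), hence so is its restriction $\ul b_{s\Nis}\ul b_s^\Nis \to \id$, and a right adjoint whose counit is an isomorphism is fully faithful. That $\ul b_s^\Nis$ preserves injectives is formal since it is a right adjoint of the exact functor $\ul b_{s\Nis}$.

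\textbf{Step 3: vanishing of $\ul b_{s\Nis}R^q\ul b_s^\Nis$.} For the last assertion, given $F_s \in \ulMNS^\fin$ I would take an injective resolution $F_s \to I^\bullet$ in $\ulMNS^\fin$; then $R^q\ul b_s^\Nis(F_s) = \mathcal H^q(\ul b_s^\Nis I^\bullet)$. Applying the exact functor $\ul b_{s\Nis}$ and using $\ul b_{s\Nis}\ul b_s^\Nis \cong \id$, one gets $\ul b_{s\Nis}R^q\ul b_s^\Nis(F_s) = \mathcal H^q(\ul b_{s\Nis}\ul b_s^\Nis I^\bullet) = \mathcal H^q(I^\bullet)$, which vanishes for $q>0$ since $I^\bullet$ is a resolution of $F_s$ and the $I^j$ form an exact complex in degrees $>0$. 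The main obstacle is Step 1: verifying carefully that the colimit formula \eqref{eq:b-sh-explicit} transports $\ulMV^\fin$-squares cofinally through $\ul\Sigma^\fin\downarrow M$ — this is where the blow-up/platification input (Lemma \ref{mainlem;blowup}) and the cofilteredness of $\ul\Sigma^\fin\downarrow M$ must be combined, exactly as in the proof of Lemma \ref{lem:cov-ulMSm}; once that is in place the rest is the standard adjunction yoga.
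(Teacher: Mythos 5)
Your proposal follows essentially the same route as the paper: part (1) via the colimit formula \eqref{eq:b-sh-explicit}, pullback of $\ulMVfin$-squares along $\ul{\Sigma}^\fin\downarrow M$ with cofinality supplied by Lemma \ref{mainlem;blowup} and exactness of filtered colimits, and part (2) by the standard adjunction yoga, exactness of $\ul{b}_{s\,\Nis}$, and the injective-resolution computation of $\ul{b}_{s\,\Nis}R^q\ul{b}_s^\Nis$ (the paper phrases the last step via Example \ref{exA.3}, your explicit resolution argument is the same content). One slip in Step 2: the inclusion $\ul{i}_{s\,\Nis}^\fin$ of sheaves into presheaves is \emph{not} exact, only left exact (cokernels of sheaves are sheafified presheaf cokernels), so your assertion that all three factors of $\ul{a}_{s\,\Nis}\,\ul{b}_{s!}\,\ul{i}_{s\,\Nis}^\fin$ are exact is false as stated; this is harmless here, since left exactness of $\ul{i}_{s\,\Nis}^\fin$ together with exactness of $\ul{b}_{s!}$ and $\ul{a}_{s\,\Nis}$ gives left exactness of the composite, and right exactness comes from the adjunction, which is all you need (and is a mild variant of the paper's Hom-argument using full faithfulness of $\ul{i}_{s\,\Nis}$).
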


\begin{proof}
Let $F \in \ulMNS^\fin$ and 
take $M \in \ulMSm$.
We shall show that $(\ul{b}_s^*\ul{b}_{s !}F)_M$ is a Nisnevich sheaf on $\ol{M}$.
For a given $\ulMVfin$-square in $\ulMSm^\fin$
\[\xymatrix{
W \ar[r]\ar[d] & V \ar[d]\\
U\ar[r] & M\\}\] 
its pullback via $(N\to M) \in \ul{\Sigma}^{\fin}\downarrow M$ 
(which exists by Corollary \ref{exist-pullback} (1))
\[\xymatrix{
W\times_M N  \ar[r]\ar[d] & V\times_M N \ar[d]\\
U\times_M N\ar[r] & N\\}\]
is also an $\ulMVfin$-square. 
By Proposition \ref{p3.1} (2) and by \cite[Corollary~2.17]{cdstructures}, the sequence
\[ 0\to F(N) \to F(U\times_M N) \oplus F(V\times_M N) \to F(W\times_M N)\]
is exact. By Lemma \ref{mainlem;blowup}, the pullback of 
$\ul{\Sigma}^{\fin}\downarrow M$ via $U \to M$ is cofinal in 
$\ul{\Sigma}^{\fin}\downarrow U$, and similarly for $V\to M$ and $W\to M$.
Hence, by taking its colimit over $N\in \ul{\Sigma}^{\fin}\downarrow M$, 
the above exact sequences 
and \eqref{eq:b-sh-explicit}
imply the desired exact sequence 
\[ 0\to b_{s !} F(M) \to b_{s !} F(U) \oplus b_{s !} F(V) \to b_{s !} F(W).\]
In view of Lemma \ref{lem:sheaves-MNS}, this finishes the proof of (1).

(2) The adjunction $(\ul{b}_{s \Nis} , \ul{b}_{s}^{\Nis})$ follows from the adjunction $(\ul{b}_{s !} , \ul{b}_s^\ast)$ (see Proposition \ref{eq:bruno-functor}), by the full faithfullness of $\ul{i}_{s  \Nis}$ and $\ul{i}_{s  \Nis}^\fin$, and by the formulas \eqref{eq:b-and-i} and \eqref{eq:b-and-i2}.
The full faithfulness of $\ul{b}_s^\Nis$ follows from that of $\ul{b}_s^\ast$ (see Proposition \ref{eq:bruno-functor}), $\ul{i}_{s  \Nis}$ and $\ul{i}_{s  \Nis}^\fin$.
Then the counit map $\ul{b}_{s  \Nis} \ul{b}_s^\Nis \to \id$ is an isomorphism by Lemma \ref{lA.6}.

We prove the exactness of $\ul{b}_{s \Nis}$ as follows. 
Since it is right exact as a left adjoint, it suffices to show its left exactness.

Assume given an exact sequence in $\ulMNS^\fin$:
\[ 0\to F\to G\to H \to 0.\]
Applying the left exact functor $\ul{i}_{s  \Nis}^\fin :\ulMNS^\fin\to \ulMPS^\fin$ and the exact functor $\ul{b}_{s !}: \ulMPS^\fin \to \ulMPS$
and using \eqref{eq:b-and-i2},
we get an exact sequence
\[ 0\to \ul{i}_{s  \Nis} \ul{b}_{s  \Nis} F\to \ul{i}_{s  \Nis} \ul{b}_{s  \Nis} G\to \ul{i}_{s  \Nis} \ul{b}_{s  \Nis} H.
\]
For every $Q\in \ulMNS$, this gives rise to an exact sequence
\begin{align*} 
0&\to \Hom_{\ulMPS}(\ul{i}_{s  \Nis} Q, \ul{i}_{s  \Nis} \ul{b}_{s  \Nis} F)
\to \Hom_{\ulMPS}(\ul{i}_{s  \Nis} Q, \ul{i}_{s  \Nis} \ul{b}_{s  \Nis} G)
\\
&\to 
\Hom_{\ulMPS}(\ul{i}_{s  \Nis} Q, \ul{i}_{s  \Nis} \ul{b}_{s  \Nis} H ).
\end{align*}
Since $\ul{i}_{s \Nis}$ is fully faithful, this gives an exact sequence
\[ 0\to \Hom_{\ulMNS}(Q, \ul{b}_{s  \Nis}  F)
\to \Hom_{\ulMNS}(Q, \ul{b}_{s  \Nis}  G)\to 
\Hom_{\ulMNS}(Q, \ul{b}_{s  \Nis} H),\]
which shows the exactness of 
\[ 0\to \ul{b}_{s  \Nis} F \to \ul{b}_{s  \Nis} G \to \ul{b}_{s  \Nis} H,\]
as desired. 
Therefore, $\ul{b}_{s \Nis}$ is exact. 

Then $\ul{b}_s^{\Nis}$ preserves injectives since it has an exact left adjoint $\ul{b}_{s \Nis}$.
Moreover, applying $R^q$ ($q>0$) to the counit isomorphism $\ul{b}_{s  \Nis} \ul{b}_s^\Nis \to \id$, we have  
\[
\ul{b}_{s  \Nis} R^q \ul{b}_s^\Nis  \simeq R^q (\ul{b}_{s  \Nis} \ul{b}_s^\Nis ) \simeq R^q \id \simeq 0,
\]
by Example \ref{exA.3} and the exactness of $\ul{b}_{s  \Nis}$.
This concludes the proof.
\end{proof}

\begin{cor} We have a natural isomorphism
$\ul{a}_{s  \Nis}\simeq \ul{b}_{s  \Nis} \ul{a}_{s  \Nis}^\fin \ul{b}_s^*$.
\end{cor}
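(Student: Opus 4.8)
The plan is to identify $\ul{b}_{s\Nis}\,\ul{a}^{\fin}_{s\Nis}\,\ul{b}_s^*$ as a left adjoint to the inclusion $\ul{i}_{s\Nis}:\ulMNS\to\ulMPS$, and then to conclude by uniqueness of adjoints, since $\ul{a}_{s\Nis}$ is such a left adjoint by Theorem \ref{thm:sheafification-ulMNS}. The whole argument is then a chain of natural bijections built out of adjunctions already available in the excerpt.

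Concretely, for $F\in\ulMPS$ and $G\in\ulMNS$ I would compute
\[
\Hom_{\ulMNS}(\ul{b}_{s\Nis}\ul{a}^{\fin}_{s\Nis}\ul{b}_s^*F,\,G)
\simeq \Hom_{\ulMNS^{\fin}}(\ul{a}^{\fin}_{s\Nis}\ul{b}_s^*F,\,\ul{b}_s^\Nis G)
\simeq \Hom_{\ulMPS^{\fin}}(\ul{b}_s^*F,\,\ul{i}^{\fin}_{s\Nis}\ul{b}_s^\Nis G),
\]
where the first isomorphism is the adjunction $(\ul{b}_{s\Nis},\ul{b}_s^\Nis)$ of Proposition \ref{lem;b!ulMNS} (2) and the second is the sheafification adjunction $(\ul{a}^{\fin}_{s\Nis},\ul{i}^{\fin}_{s\Nis})$ on $\ulMPS^{\fin}$. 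Next I would use the compatibility \eqref{eq:b-and-i}, i.e.\ $\ul{i}^{\fin}_{s\Nis}\ul{b}_s^\Nis=\ul{b}_s^*\ul{i}_{s\Nis}$, to rewrite the last group as $\Hom_{\ulMPS^{\fin}}(\ul{b}_s^*F,\ul{b}_s^*\ul{i}_{s\Nis}G)$, and finally the full faithfulness of $\ul{b}_s^*$ (Proposition \ref{eq:bruno-functor}) to identify it with $\Hom_{\ulMPS}(F,\ul{i}_{s\Nis}G)$. Since every step is natural in $F$ and in $G$, this exhibits $\ul{b}_{s\Nis}\ul{a}^{\fin}_{s\Nis}\ul{b}_s^*$ as left adjoint to $\ul{i}_{s\Nis}$, whence the desired natural isomorphism $\ul{a}_{s\Nis}\simeq\ul{b}_{s\Nis}\ul{a}^{\fin}_{s\Nis}\ul{b}_s^*$.

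There is no real obstacle here beyond bookkeeping: the only points that need care are to use $\ul{b}_s^\Nis$ in the precise form of the restriction of $\ul{b}_s^*$ (so that \eqref{eq:b-and-i} applies verbatim), and to keep track of naturality so that uniqueness of adjoints can be invoked. As a sanity check one may also run the dual argument: $\ul{b}_{s\Nis}\ul{a}^{\fin}_{s\Nis}\ul{b}_s^*$ is the composite of the left adjoints $\ul{b}_s^*\dashv\ul{b}_{s*}$, $\ul{a}^{\fin}_{s\Nis}\dashv\ul{i}^{\fin}_{s\Nis}$, $\ul{b}_{s\Nis}\dashv\ul{b}_s^\Nis$, hence left adjoint to $\ul{b}_{s*}\ul{i}^{\fin}_{s\Nis}\ul{b}_s^\Nis=\ul{b}_{s*}\ul{b}_s^*\ul{i}_{s\Nis}\simeq\ul{i}_{s\Nis}$, the last isomorphism again coming from the full faithfulness of $\ul{b}_s^*$; but the $\Hom$-computation above is the shortest path.
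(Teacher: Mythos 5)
Your proposal is correct and follows essentially the same route as the paper: identify $\ul{b}_{s\,\Nis}\,\ul{a}^{\fin}_{s\,\Nis}\,\ul{b}_s^*$ as a left adjoint of $\ul{i}_{s\,\Nis}$ via the double adjunction $(\ul{b}_{s\,\Nis},\ul{b}_s^{\Nis})$ and $(\ul{a}^{\fin}_{s\,\Nis},\ul{i}^{\fin}_{s\,\Nis})$, then use \eqref{eq:b-and-i} and the full faithfulness of $\ul{b}_s^*$, and conclude by uniqueness of adjoints. You have merely written out explicitly the Hom-chain that the paper's proof sketches.
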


\begin{proof}
By the uniqueness of left adjoints, it suffices to check that the right hand side is also left adjoint to $\ul{i}_{s  \Nis}$.
We first apply double adjunction by
$(\ul{b}_{s  \Nis}, \ul{b}_s^\Nis)$ (Proposition \ref{lem;b!ulMNS} (2))
and $(\ul{a}_{s  \Nis}^\fin, \ul{i}_{s  \Nis}^\fin)$, 
then use \eqref{eq:b-and-i}
and the full faithfulness of $\ul{b}_s^*$ (Proposition \ref{eq:bruno-functor}).
\end{proof}

\subsection{Cohomology in $\protect\ulMNS$}

\def\Zp{\Z^p}
\def\Zpfin{\Z^{p,\fin}}
\def\Zfin{\Z^\fin}

\begin{nota}\label{n:pre}
\begin{enumerate}
\item 
Let $M\in \ulMSm$ and $F\in \ulMNS$.
Using Notation \ref{n3.7a},
we define
$F_M :=(\ul{b}_s^\Nis F)_M$ 
which is a sheaf on $(\ol{M})_\Nis$.
\item 
For $M\in \ulMSm$, 
let $\Zp(M)\in \ulMPS$ be the associated representable additive presheaf 
(see \eqref{eq2.6}) and let
\begin{equation}\label{Z}
 \Z(M) =\ul{a}_{s \Nis} \Zp(M) \in \ulMNS
\end{equation}
be the associated sheaf. 
\end{enumerate}
\end{nota}

\begin{prop}\label{lem;cohMsigmaS}
For $M\in \ulMSm$, $F\in \ulMNS$ and $i\ge 0$, 
we have a natural isomorphism
\begin{equation}\label{eq:coh-ulMNS}
\Ext_{\ulMNS}^i(\Z(M), F)
\simeq
\colim_{N\in \ul{\Sigma}^{\fin}\downarrow M} H_\Nis^i(\ol{N},F_N)
:=\colim_{N\in \ul{\Sigma}^{\fin}\downarrow M} H_\Nis^i(\ol{N},(\ul{b}_s^\Nis F)_N).
\end{equation}
Moreover, we have
\begin{equation}\label{eq:coh-ulMNS1}
\colim_{N\in \ul{\Sigma}^{\fin}\downarrow M} H_\Nis^i(\ol{N},(R^q\ul{b}_s^\Nis F)_N)=0\text{ for all $q>0$.}
\end{equation}
\end{prop}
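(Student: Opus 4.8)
The plan is to exploit the Grothendieck spectral sequence associated with the composite functor computing $\Ext$-groups, together with the key structural facts about $\ul{b}_s^\Nis$ and $\ul{b}_{s\Nis}$ established in Proposition \ref{lem;b!ulMNS}. First I would reduce \eqref{eq:coh-ulMNS} to the computation of derived functors of $\Hom_{\ulMNS}(\Z(M),-)$. By adjunction and Notation \ref{n:pre} (2), for any $F \in \ulMNS$ we have $\Hom_{\ulMNS}(\Z(M),F) = \Hom_{\ulMNS}(\ul{a}_{s\Nis}\Zp(M),F) = \Hom_{\ulMPS}(\Zp(M),\ul{i}_{s\Nis}F) = F(M)$, and by definition $F(M) = (\ul{b}_s^\Nis F)_M(\ol{M})$ is the sections over $\ol{M}$ of the small-site sheaf $F_M$. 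So the functor $F \mapsto \Hom_{\ulMNS}(\Z(M),F)$ factors as $\ulMNS \xrightarrow{\ul{b}_s^\Nis} \ulMNS^\fin \xrightarrow{(-)_M} \Sh((\ol{M})_\Nis) \xrightarrow{\Gamma(\ol{M},-)} \Ab$. Actually it is cleaner to pass through $\ul{\Sigma}^\fin \downarrow M$ directly: by \eqref{eq:b-sh-explicit} and the fact that $\ul{b}_{s!}$ restricts to $\ul{b}_{s\Nis}$ (Proposition \ref{lem;b!ulMNS} (1)), one gets $F(M) = \colim_{N \in \ul{\Sigma}^\fin \downarrow M} (\ul{b}_s^\Nis F)_N(\ol{N})$; indeed $\ul{\Sigma}^\fin \downarrow M$ is cofiltered by Corollary \ref{cor:sigma-fin-cofil}, so this filtered colimit is exact and commutes with cohomology.

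Next I would compute the right derived functors. Since $\ulMNS$ is Grothendieck (Theorem \ref{thm:sheafification-ulMNS}), injective resolutions exist. Let $F \to I^\bullet$ be an injective resolution in $\ulMNS$. Because $\ul{b}_s^\Nis$ is exact (it is the restriction of the exact functor $\ul{b}_s^*$, cf. Lemma \ref{lem:sheaves-MNS}) and preserves injectives (Proposition \ref{lem;b!ulMNS} (2)), $\ul{b}_s^\Nis I^\bullet$ is an injective resolution of $\ul{b}_s^\Nis F$ in $\ulMNS^\fin$; hence $R^i(\ul{b}_s^\Nis F) = 0$ for $i>0$ and there is no contribution from a spectral sequence on that side — this is exactly where the condition \eqref{eq:coh-ulMNS1} comes from and is needed. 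Then for each fixed $N \in \ul{\Sigma}^\fin \downarrow M$, by Lemma \ref{lif1} the sheaf $(\ul{b}_s^\Nis I^j)_N = \ul{c}^{\fin\Nis}(\ul{b}^\Nis I^j)_N$ — wait, more precisely $\ul{b}_s^\Nis I^j \in \ulMNS^\fin$ and its image under $(-)_N$ is a flasque, hence flabby, sheaf on $(\ol{N})_\Nis$ (using that $\ul{b}_s^\Nis I^j$ restricts to a flasque sheaf: one checks, as in Lemma \ref{lif1}, that an injective of $\ulMNS$ restricts to something flasque on each small site, or one invokes that $\ul{b}_s^\Nis$ preserves injectives and Lemma \ref{lif1}-type reasoning applies to $\ulMNST^\fin$; the cleanest route is to note that $(\ul{b}_s^\Nis I^j)_N(\ol{V}) \to (\ul{b}_s^\Nis I^j)_N(\ol{U})$ is surjective for open dense $\ol{U}\subset\ol{V}$ because the corresponding sheaf map $\Z^p(\cdots)$ is mono, so $I^j$ being injective in $\ulMNS$ gives the surjectivity on sections). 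Therefore $H^i_\Nis(\ol{N}, (\ul{b}_s^\Nis F)_N) = H^i\big(\Gamma(\ol{N}, (\ul{b}_s^\Nis I^\bullet)_N)\big)$.

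Finally I would assemble: $\Ext^i_{\ulMNS}(\Z(M),F) = H^i\big(\Hom_{\ulMNS}(\Z(M),I^\bullet)\big) = H^i\big(\colim_{N} \Gamma(\ol{N},(\ul{b}_s^\Nis I^\bullet)_N)\big) = \colim_{N} H^i\big(\Gamma(\ol{N},(\ul{b}_s^\Nis I^\bullet)_N)\big) = \colim_{N \in \ul{\Sigma}^\fin \downarrow M} H^i_\Nis(\ol{N},(\ul{b}_s^\Nis F)_N)$, where the third equality is exactness of filtered colimits over the cofiltered category $\ul{\Sigma}^\fin \downarrow M$. For \eqref{eq:coh-ulMNS1}: apply the same argument to the identity $\ul{b}_{s\Nis}R^q\ul{b}_s^\Nis = 0$ for $q>0$ (Proposition \ref{lem;b!ulMNS} (2)); evaluating $\ul{b}_{s\Nis}$ via \eqref{eq:b-sh-explicit} gives $\colim_{N\in\ul{\Sigma}^\fin\downarrow M}(R^q\ul{b}_s^\Nis F)(N) = 0$, and $(R^q\ul{b}_s^\Nis F)(N)$ computes $H^q_\Nis(\ol{N},(R^q\ul{b}_s^\Nis F)_N)$ — more precisely one needs the local-global spectral sequence to see that the stalk-wise vanishing of $\ul{b}_{s\Nis}R^q\ul{b}_s^\Nis F$ plus flabbiness of injectives forces the colimit of the cohomologies to vanish; alternatively, filter $I^\bullet$ and use that $(\ul{b}_s^\Nis I^j)_N$ is flabby so the hypercohomology spectral sequence degenerates. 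The main obstacle I anticipate is the careful bookkeeping at the flabbiness step: verifying that restricting an injective of $\ulMNS$ (rather than of $\ulMNST^\fin$, which is what Lemma \ref{lif1} literally covers) to each small site $(\ol{N})_\Nis$ yields a flabby sheaf, uniformly in $N \in \ul{\Sigma}^\fin \downarrow M$, so that the Čech-to-derived comparison and the interchange of colimit with cohomology are both legitimate.
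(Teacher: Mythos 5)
There is a genuine gap, and it sits at the heart of your argument: you assert that $\ul{b}_s^\Nis$ is exact ``because it is the restriction of the exact functor $\ul{b}_s^*$''. This is false, and it is not what Proposition \ref{lem;b!ulMNS} says: that proposition proves exactness of the \emph{left adjoint} $\ul{b}_{s \Nis}$, full faithfulness and preservation of injectives for $\ul{b}_s^\Nis$, and the identity $\ul{b}_{s \Nis} R^q\ul{b}_s^\Nis=0$ for $q>0$ --- a statement that would be vacuous if $\ul{b}_s^\Nis$ were exact. The functor $\ul{b}_s^\Nis$ is only left exact (it is a right adjoint): an epimorphism in $\ulMNS$ is only locally split for the topology of $\ulMSm_\Nis$, whose covers may involve morphisms of $\ul{\Sigma}^\fin$ (blow-ups), and these are not covers in $\ulMSm^\fin_\Nis$; so surjectivity of sheaves is lost after restriction. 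Indeed the non-exactness of $\ul{b}^\Nis$ is precisely the error of the withdrawn preprint that this paper corrects, and it is the reason why \eqref{eq:coh-ulMNS1} is a nontrivial statement (the individual groups $H^i_\Nis(\ol{N},(R^q\ul{b}_s^\Nis F)_N)$ do not vanish in general; only their colimit does). Your reading of \eqref{eq:coh-ulMNS1} as ``$R^q\ul{b}_s^\Nis F=0$'' trivializes it incorrectly, and the fallback you sketch at the end conflates $(R^q\ul{b}_s^\Nis F)(N)$, which is an $H^0$, with the higher small-site cohomology groups appearing in \eqref{eq:coh-ulMNS1}, so it only handles the case $i=0$.

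This false exactness then propagates: since $\ul{b}_s^\Nis I^\bullet$ need not be a resolution of $\ul{b}_s^\Nis F$ (its cohomology sheaves are exactly the $R^q\ul{b}_s^\Nis F$), the identification $H^i_\Nis(\ol{N},(\ul{b}_s^\Nis F)_N)=H^i\bigl(\Gamma(\ol{N},(\ul{b}_s^\Nis I^\bullet)_N)\bigr)$ fails even granting flasqueness of each $(\ul{b}_s^\Nis I^j)_N$, and the final assembly collapses. The paper's proof runs in the opposite direction: it resolves injectively in $\ulMNS^\fin$, shows that $\ul{b}_{s \Nis}$ of an injective is flabby (Lemma \ref{lem:claim}, which rests on Lemma \ref{mainlem;blowup} and Lemma \ref{lem:cov-ulMSm}), applies the Grothendieck spectral sequence (Theorem \ref{tA.2}) to the factorization $\Gamma_M^\downarrow=\ul{\Gamma}_M\circ\ul{b}_{s \Nis}$ together with the exactness of $\ul{b}_{s \Nis}$ and Lemma \ref{lem:claim2}, obtaining $\Ext^i_{\ulMNS}(\Z(M),\ul{b}_{s \Nis}G)\cong\colim_N H^i_\Nis(\ol{N},G_N)$ for every $G\in\ulMNS^\fin$, and only then specializes $G=\ul{b}_s^\Nis F$ (counit isomorphism) and $G=R^q\ul{b}_s^\Nis F$ (killed by $\ul{b}_{s \Nis}$) to get \eqref{eq:coh-ulMNS} and \eqref{eq:coh-ulMNS1}. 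You would need to adopt this structure (or an equivalent one that never uses right exactness of $\ul{b}_s^\Nis$) for the proof to go through.
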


\begin{proof}
Define functors
$\Gamma_M^\downarrow : \ulMNS^\fin \to \Ab$
and
$\ul{\Gamma}_M : \ulMNS \to \Ab$
by
\[ \Gamma_M^\downarrow(G)
=\colim_{N \in \ul{\Sigma}^\fin \downarrow M} G(N),
\quad
\ul{\Gamma}_M(F)=F(M).
\]
We have $\Gamma_M^\downarrow=\ul{\Gamma}_M \ul{b}_{s  \Nis}$.
By Theorem \ref{tA.2} and Lemma \ref{lem:claim} below,
we get $(R^p \ul{\Gamma}_M)\ul{b}_{s  \Nis}=R^p \Gamma_M^\downarrow$
for any $p \ge 0$
since $\ul{b}_{s  \Nis}$ is exact.
Thus, by Lemma \ref{lem:claim2} below we obtain 
\[
\Ext_{\ulMNS}^p(\Z(M), \ul{b}_{s  \Nis} G)
\cong \colim_{N \in \ul{\Sigma}^\fin \downarrow M}
H^p_\Nis(\ol{N}, G_N)
\]
for any $G \in \ulMNS^\fin$ and $p \ge 0$.
Setting $G=R^q\ul{b}_s^\Nis F$,
we get \eqref{eq:coh-ulMNS} for $q=0$ (resp.  \eqref{eq:coh-ulMNS1} for $q>0$) thanks to 
Proposition \ref{lem;b!ulMNS} (2).
\end{proof}

\begin{lemma}\label{lem:claim}
For an injective $I\in \ulMNSfin$, 
$\ul{b}_{s  \Nis} I\in \ulMNS$ is flabby
$($see Definition \ref{def:flabby}\,$)$.
\end{lemma}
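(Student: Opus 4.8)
The plan is to show that $\ul{b}_{s\Nis}I$ is \emph{flasque} in $\ulMNS$ — meaning that for every $M \in \ulMSm$ the induced sheaf $(\ul{b}_{s\Nis}I)_M$ on the small Nisnevich site $\ol{M}_\Nis$ is flasque in the classical sense — and then to deduce flabbiness from this, exactly as in the case of $\ulMNS^\fin$: flasque objects of $\ulMNS$ are flabby by \cite[lemme~1.40]{riou} together with the cd-structure $P_{\ul{\MV}}$ of Theorem \ref{thm;cd-str-ulMSm} and Lemma \ref{lem:cov-ulMSm}. (If one prefers to take the condition ``$(\ul{b}_{s\Nis}I)_M$ flasque for all $M$'' as the very definition of flabbiness in Definition \ref{def:flabby}, this second step is vacuous.)

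The first ingredient I would record is that an injective object $I$ of $\ulMNS^\fin$ is itself flasque. Indeed, for any minimal dense open immersion $j : U \to N$ in $\ulMP^\fin$, the corresponding morphism of representable Nisnevich sheaves on $\ulMSm^\fin$ is a monomorphism — this is checked on stalks at the Hensel-local modulus pairs, where it is the injection of Hom-sets obtained by composing with the monomorphism $j$ — so injectivity of $I$ forces $I(N) \to I(U)$ to be surjective, and hence $I_N$ is flasque on $\ol{N}_\Nis$. This is the evident analogue of Lemma \ref{lif1}.

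For the main step, fix $M \in \ulMSm$; by Remark \ref{l3.3} we may assume $M^\o$ irreducible, so that $\ol{M}$ is integral. Let $V' \hookrightarrow V$ be a dense open immersion of étale $\ol{M}$-schemes, with associated objects $V_M, V'_M \in \ulMP^\fin$ (each equipped with the pullback of $M^\infty$). Since $\ul{b}_{s\Nis}I$ coincides with $\ul{b}_{s!}I$ (Proposition \ref{lem;b!ulMNS} (1)), the formula \eqref{eq:b-sh-explicit} gives
\[ (\ul{b}_{s\Nis}I)_M(V) \;=\; \colim_{N \in \ul{\Sigma}^{\fin}\downarrow V_M} I(N),
\qquad
(\ul{b}_{s\Nis}I)_M(V') \;=\; \colim_{N' \in \ul{\Sigma}^{\fin}\downarrow V'_M} I(N'), \]
and these are \emph{filtered} colimits, because $\ul{\Sigma}^{\fin}\downarrow V_M$ and $\ul{\Sigma}^{\fin}\downarrow V'_M$ are cofiltered (Corollary \ref{cor:sigma-fin-cofil}). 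As $V'_M \to V_M$ is minimal with smooth interior it is squarable (Corollary \ref{exist-pullback} (1)), so base change along it defines a functor $\rho : \ul{\Sigma}^{\fin}\downarrow V_M \to \ul{\Sigma}^{\fin}\downarrow V'_M$, $N \mapsto N\times_{V_M}V'_M$; for each such $N$ the projection $\rho(N) \to N$ is a minimal dense open immersion (its ambient space is the preimage of $V'$ in $\ol{N}$), so $I(N) \to I(\rho(N))$ is surjective by the first step, and therefore $\colim_N I(N) \to \colim_N I(\rho(N))$ is surjective. It then suffices to check that $\rho$ is cofinal, so that $\colim_N I(\rho(N)) \to \colim_{N'} I(N')$ is an isomorphism; combined with the preceding surjection this shows that $(\ul{b}_{s\Nis}I)_M(V) \to (\ul{b}_{s\Nis}I)_M(V')$ is surjective, as required.

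I expect the cofinality of $\rho$ to be the one point requiring genuine care: it asserts that every proper birational modification $\ol{N'}\to V'$ can be spread out to a proper birational modification $\ol{N}\to V$ whose restriction over $V'$ dominates $\ol{N'}$, and this is precisely what Lemma \ref{mainlem;blowup} delivers — applying it (componentwise over the integral components of $V$ if necessary) with the open immersion $V'\hookrightarrow V$ in the role of $f$ and $\ol{N'}\to V'$ in the role of $g$, one obtains a closed subscheme $Z\subset V$, supported on the nowhere-dense closed image of the non-isomorphism locus of $\ol{N'}\to V'$, such that $V'\times_V\Bl_Z(V)\to V'$ factors through $\ol{N'}$; then $N:=(\Bl_Z(V),\text{pullback of }M^\infty)$ lies in $\ul{\Sigma}^{\fin}\downarrow V_M$ and yields the desired $V'_M$-morphism $\rho(N)\to N'$. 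Everything else is formal: manipulation of filtered colimits, the classical behaviour of flasque Nisnevich sheaves, and routine bookkeeping of the pulled-back divisors and minimality along the base changes involved.
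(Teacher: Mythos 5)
Your argument reaches the right statement and is essentially sound, but it follows a genuinely different route from the paper's, so let me compare. The paper argues directly with the \v{C}ech criterion of Lemma \ref{lem:milne}: after reducing to a strict Nisnevich cover $U \to M$ via Lemma \ref{lem:cov-ulMSm}, it writes each term of the \v{C}ech complex of $\ul{b}_{s\Nis}I$ as a colimit over $\ulSigma^\fin\downarrow U_M^{q+1}$, uses Lemma \ref{mainlem;blowup} to dominate the models occurring in finitely many degrees by pullbacks of a single $L \in \ulSigma^\fin\downarrow M$, and concludes because $\check{H}^q(U\times_M L/L, I)$ vanishes by injectivity of $I$ in $\ulMNSfin$. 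You instead prove the stronger intermediate statement that $(\ul{b}_{s\Nis}I)_M$ is flasque on $(\ol{M})_\Nis$ for every $M$: injectivity of $I$ enters through the transfer-free analogue of Lemma \ref{lif1}, and Lemma \ref{mainlem;blowup} enters through the cofinality of the base-change functor $\ulSigma^\fin\downarrow V_M \to \ulSigma^\fin\downarrow V'_M$, so that a filtered colimit of surjections yields surjectivity of restriction. That part of your argument is correct: the projections $\rho(N)\to N$ are indeed minimal dense open immersions, cofinality suffices because these comma categories are ordered, and your componentwise use of Lemma \ref{mainlem;blowup} is at the same level of care as the paper's own uses of it.

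Two caveats on your concluding step. First, the parenthetical escape route is not available: Definition \ref{def:flabby} means acyclicity on the site $\ulMSm_\Nis$, and this is exactly what is consumed downstream (via Theorem \ref{tA.2} in the proof of Proposition \ref{lem;cohMsigmaS}), so you cannot redefine flabbiness as small-site flasqueness and declare the step vacuous. Second, ``flasque implies flabby'' is available in the paper only for $\ulMNSfin$, where it is immediate from \eqref{eq3.62} because big-site and small-site cohomology agree (Lemma \ref{lem:equiv-smallsites}); for $\ulMNS$ there is no such comparison, and the implication has to be argued from the ingredients you cite: any cover of $X$ in $\ulMSm_\Nis$ is refined by $V \to N \to X$ with $V \to N$ a strict Nisnevich cover and $N \to X$ in $\ulSigma^\fin$ (Lemma \ref{lem:cov-ulMSm}); since $N \cong X$ in $\ulMSm$ and the fibre products $V\times_N\cdots\times_N V$ taken in $\ulMSm$ agree with those in $\ulMSm^\fin$ (Proposition \ref{prop:fiber-prod}, Lemma \ref{lem;d3.2}), the \v{C}ech complex of $\ul{b}_{s\Nis}I$ for this cover is the small-site \v{C}ech complex of $(\ul{b}_{s\Nis}I)_N$ for $\ol{V}\to\ol{N}$, which is acyclic in positive degrees by your flasqueness statement, \cite[lemme~1.40]{riou} and Lemma \ref{lem:milne} applied to $(\ol{N})_\Nis$; hence $\check{H}^q(X,\ul{b}_{s\Nis}I)=0$ for all $X$ and $q>0$, and Lemma \ref{lem:milne} applied to $\ulMSm_\Nis$ gives flabbiness. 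With this bridge spelled out your proof is complete; what it buys over the paper's is the stronger flasqueness of $\ul{b}_{s\Nis}I$, at the price of this extra reduction, which the paper's direct \v{C}ech argument avoids.
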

\def\chH#1{\check{H}^{#1}}
\begin{proof}
Write $F=\ulb_\Nis I$. 
By Lemma \ref{lem:milne}, it suffices to show 
the vanishing of the canonical map
$\check{H}^q(U/M, F) \to \check{H}^q(M, F)$
for any cover $U \to M$ in $\ulMSm_\Nis$ and any $q>0$.
By Lemma \ref{lem:cov-ulMSm},
we may assume $U \to M$ is a strict Nisnevich cover
(as any morphism in $\ulSigma^\fin$
is an isomorphism in $\ulMSm$).
Denote by $U^n_M \in \ulMSm$ the $n$-fold fiber product of $U$ over $M$ in $\ulMSm$
(which exists by Corollary \ref{exist-pullback} (1)).
Then $\check{H}^q(U/M, F)$ is computed as the cohomology of
the complex whose term in degree $q$ is given by
\[ \colim_{L_q\in \ul{\Sigma}^{\fin}\downarrow U_M^{q+1}} I(L_q).\]
By Lemma \ref{mainlem;blowup}, 
for any integer $n>0$ and given $L_q\in \ulSigma^{\fin}\downarrow U_M^{q+1}$ for $0\leq q\leq n$, there exists 
$L\in \ulSigma^{\fin}\downarrow M$ in such that $L\times_M U_M^{q+1} \to U_M^{q+1}$
factor through $L_q$ for all $q=0, \dots, n$.
This implies that for $0\leq q\leq n-1$
the canonical map 
$\chH q(U/M, \ul{b}_{s  \Nis} I) \to  \chH q(M, \ul{b}_{s  \Nis} I)$
factors through 
\[ \colim_{L\in \ulSigma^{\fin}\downarrow M} \chH q (U\times_M L/L, I),
\]
where $\chH q (U\times_M L/L, I)$ is
the \v{C}ech cohomology of $I$
with respect to the cover $U\times_M L \to L$ in $\ulMSm^\fin$,
but it vanishes since $I$ is injective in $\ulMNS^{\fin}$.
This proves the desired vanishing and completes the proof of Lemma \ref{lem:claim}.
\end{proof}

\begin{lemma}\label{lem:claim2}
For any $G \in \ulMNS^\fin$ and $p \ge 0$,
we have
\[ R^p\Gamma_M^\downarrow(G)\cong
\colim_{N \in \ul{\Sigma}^\fin \downarrow M}
H^p_\Nis(\ol{N}, G_N).
\]
\end{lemma}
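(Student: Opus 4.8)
The plan is to compute the derived functors of $\Gamma_M^\downarrow$ by means of a single injective resolution and then to recognize the resulting groups as Nisnevich cohomology on the ambient spaces. First I would note that $\Gamma_M^\downarrow$ is left exact: by Corollary~\ref{cor:sigma-fin-cofil} the category $\ul{\Sigma}^\fin\downarrow M$ is cofiltered, so $\Gamma_M^\downarrow$ is a filtered colimit of the left exact evaluation functors $\ulMNS^\fin\to\Ab$, $F\mapsto F(N)$, and filtered colimits are exact in $\Ab$. Hence $R^p\Gamma_M^\downarrow$ is defined. Choosing an injective resolution $0\to G\to I^0\to I^1\to\cdots$ in $\ulMNS^\fin$ (a Grothendieck category, being a category of sheaves), and using once more that filtered colimits are exact together with the $\Gamma_M^\downarrow$-acyclicity of injective objects, one gets
\[
R^p\Gamma_M^\downarrow(G)=H^p\!\left(\colim_{N\in\ul{\Sigma}^\fin\downarrow M} I^\bullet(N)\right)=\colim_{N\in\ul{\Sigma}^\fin\downarrow M}H^p\!\left(I^\bullet(N)\right).
\]

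It then suffices to identify $H^p(I^\bullet(N))$ with $H^p_\Nis(\ol N, G_N)$ for each fixed $(N\to M)\in\ul{\Sigma}^\fin\downarrow M$. This rests on two points. \textbf{(a)} The restriction of sheaves along the inclusion of the small site $N_\Nis\cong(\ol N)_\Nis$ (Lemma~\ref{lem:equiv-smallsites}) into $\ulMP^\fin_\Nis$ is exact, so that $0\to G_N\to (I^0)_N\to(I^1)_N\to\cdots$ is a resolution of $G_N$ on $(\ol N)_\Nis$. \textbf{(b)} Each $(I^j)_N$ is flasque, hence flabby, hence acyclic for $\Gamma((\ol N)_\Nis,-)$; this follows by copying the proof of Lemma~\ref{lif1} in $\ulMNS^\fin$: for a minimal dense open immersion $U\inj V$ over $\ol N$ the map of representable presheaves $N'\mapsto\Z[\ulMP^\fin(N',U)]\to\Z[\ulMP^\fin(N',V)]$ is injective, hence so is the induced map of associated Nisnevich sheaves, and so $I^j(V)\to I^j(U)$ is surjective because $I^j$ is injective. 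Granting (a) and (b), we obtain $H^p_\Nis(\ol N, G_N)=H^p\!\left((I^\bullet)_N(\ol N)\right)=H^p(I^\bullet(N))$, the last equality by the very definition of $(-)_N$ (Notation~\ref{n3.7a}), whence the lemma via the display above.

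The step I expect to be the real obstacle is point (a)---equivalently, the classical comparison between the cohomology of the big site $\ulMP^\fin_\Nis$ at the object $N$ and the cohomology of the small site $(\ol N)_\Nis$ with coefficients in $G_N$. I would justify it using that the Nisnevich topology on $\ulMP^\fin$ is the one attached to the cd-structure $P_{\ulMVfin}$ (Proposition~\ref{p3.1}): every covering sieve of an object $T$ of $(\ol N)_\Nis$ in the big site is generated by $\ulMVfin$-squares, and every scheme occurring in such a square over $T$ is minimal and \'etale over $T$, hence again lies in $(\ol N)_\Nis$. Therefore restriction of \emph{presheaves} along $(\ol N)_\Nis\hookrightarrow\ulMP^\fin_\Nis$ commutes with sheafification; consequently restriction of sheaves is exact (e.g.\ because this inclusion is a cocontinuous functor of sites) and carries injectives to flasque, hence flabby, sheaves, and the Grothendieck spectral sequence for $\Gamma((\ol N)_\Nis,-)$ composed with restriction degenerates, yielding $R^p(F\mapsto F(N))(G)\cong H^p_\Nis(\ol N, G_N)$ as used above. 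Everything else is routine manipulation of filtered colimits and derived functors.
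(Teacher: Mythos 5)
Your proof is correct and takes essentially the same route as the paper: choose an injective resolution $G \to I^\bullet$ in $\ulMNS^\fin$, use that $\ul{\Sigma}^\fin \downarrow M$ is cofiltered (Corollary \ref{cor:sigma-fin-cofil}) to commute the filtered colimit with cohomology, and identify $H^p(I^\bullet(N))$ with $H^p_\Nis(\ol{N}, G_N)$ for each $N$. The only difference is that the paper disposes of this last identification by citing \eqref{eq3.62} (via Lemma \ref{lem:equiv-smallsites}), whereas you spell out the underlying big-site/small-site comparison (exactness of restriction and flasqueness of restricted injectives), which is a correct elaboration of the same step.
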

\begin{proof}
Take an injective resolution $G \to I^\bullet$ in $\ulMNSfin$.
Then we have
$$
R^p\Gamma_M^\downarrow(G)
= H^p(\Gamma_M^\downarrow I^\bullet)
= H^p(\colim_{N \in \ul{\Sigma}^\fin \downarrow M} I^\bullet)
\cong \colim_{N \in \ul{\Sigma}^\fin \downarrow M}H^p(I^\bullet(N))
\cong \colim_{N \in \ul{\Sigma}^\fin \downarrow M}H^p_\Nis(\ol{N}, G_N),
$$
where 
we used Corollary \ref{cor:sigma-fin-cofil}
for the last-but-one isomorphism,
and \eqref{eq3.62} for the last one.
\end{proof}

\subsection{Sheaves on $\protect\ulMCor$}

\begin{lemma}\label{lem:mnst-condition}
For $F \in \ulMPST$,
one has $\ul{c}^*F \in \ulMNS$ if and only if $b^*F \in \ulMNST^\fin$.
\end{lemma}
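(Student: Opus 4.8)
The statement compares two conditions on $F \in \ulMPST$: being a sheaf after applying $\ul{c}^*$ (i.e., $\ul{c}^*F \in \ulMNS$), and being a sheaf with finite transfers after applying $\ul{b}^*$ (i.e., $\ul{b}^*F \in \ulMNST^\fin$). The natural route is to reduce both conditions to a single condition on $\ul{c}^{\fin *}\ul{b}^*F = \ul{b}_s^*\ul{c}^*F$ (the equality being the first identity of \eqref{eq:b-and-c} in Lemma \ref{lem:b-c-tau}), namely that this object lies in $\ulMNS^\fin$.

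First I would unwind the right-hand side. By Definition \ref{def:mpst-fin}, $\ul{b}^*F \in \ulMNST^\fin$ means exactly that $\ul{c}^{\fin *}\ul{b}^* F \in \ulMNS^\fin$. Using the commutativity $\ul{c}^{\fin *}\ul{b}^* = \ul{b}_s^*\ul{c}^*$ from \eqref{eq:b-and-c}, this is equivalent to $\ul{b}_s^*(\ul{c}^*F) \in \ulMNS^\fin$. Next I would unwind the left-hand side: $\ul{c}^*F \in \ulMNS$ by definition means $\ul{c}^*F$ is a Nisnevich sheaf on $\ulMSm$, and Lemma \ref{lem:sheaves-MNS} (the equivalence (i) $\iff$ (ii)) says precisely that this holds if and only if $\ul{b}_s^*(\ul{c}^*F) \in \ulMNS^\fin$. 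Putting these two equivalences together gives the claim immediately, since both sides have been translated into the same condition $\ul{b}_s^*\ul{c}^*F \in \ulMNS^\fin$.

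So the proof is essentially a diagram chase assembling three already-established facts: Definition \ref{def:mpst-fin}, the compatibility \eqref{eq:b-and-c}, and the sheaf criterion of Lemma \ref{lem:sheaves-MNS}. I would write it as: ``By Definition \ref{def:mpst-fin}, $\ul{b}^*F \in \ulMNST^\fin$ holds if and only if $\ul{c}^{\fin *}\ul{b}^*F \in \ulMNS^\fin$. By the first identity of \eqref{eq:b-and-c}, $\ul{c}^{\fin *}\ul{b}^*F = \ul{b}_s^*\ul{c}^*F$, so this condition reads $\ul{b}_s^*\ul{c}^*F \in \ulMNS^\fin$. On the other hand, by Lemma \ref{lem:sheaves-MNS} applied to $\ul{c}^*F \in \ulMPS$, we have $\ul{c}^*F \in \ulMNS$ if and only if $\ul{b}_s^*\ul{c}^*F \in \ulMNS^\fin$. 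Comparing the two, we obtain the equivalence.'' This is short and requires no calculation.

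The only potential subtlety — and the one point I would double-check rather than treat as routine — is that Lemma \ref{lem:sheaves-MNS} is stated for an arbitrary object of $\ulMPS$, and here I am applying it to $\ul{c}^*F$, which is fine since $\ul{c}^*: \ulMPST \to \ulMPS$ lands in $\ulMPS$; no compatibility with transfers is needed for that lemma's statement. There is no real obstacle here: this is a bookkeeping lemma whose content is entirely carried by the cited results, and the proof is a one-line chain of equivalences.
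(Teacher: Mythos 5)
Your proof is correct and follows essentially the same route as the paper, which likewise reduces both conditions to $\ul{b}_s^*\ul{c}^*F \in \ulMNS^\fin$ via the identity $\ul{c}^{\fin *}\ul{b}^* = \ul{b}_s^*\ul{c}^*$ of \eqref{eq:b-and-c} together with Definitions \ref{def:mpst-fin} and \ref{def;sheavesulMSmul}. Your explicit appeal to Lemma \ref{lem:sheaves-MNS} (i)$\iff$(ii) just spells out the sheaf criterion that the paper's one-line proof leaves implicit in its citation of Definition \ref{def;sheavesulMSmul}.
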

\begin{proof}
This follows from \eqref{eq:b-and-c}
and Definitions \ref{def:mpst-fin} and \ref{def;sheavesulMSmul}.
\end{proof}

\begin{definition}\label{def;sheavesulMCor}
We define $\ulMNST$ to be 
the full subcategory of $\ulMPST$ consisting of
those $F$ enjoying the conditions of Lemma \ref{lem:mnst-condition}.
We denote by 
$\ul{i}_{\Nis}: \ulMNST \to \ulMPST$
the inclusion functor.
\end{definition}

\begin{lemma}\label{lcom3-2} 
The category $\ulMNST$ 
is closed under infinite direct sums in  $\ulMPST$,
and $\ul{i}_\Nis$ is strongly additive $($Definition \ref{dA.2}\,$)$. 
It contains $\Z_\tr(M)$ for any $M\in \ulMCor$.
\end{lemma}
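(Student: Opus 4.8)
The plan is to deduce Lemma \ref{lcom3-2} from the already-established analogue for the ``$\fin$'' categories, namely Lemma \ref{lcom2}, together with the patching description of $\ulMNST$ coming from Lemma \ref{lem:mnst-condition} and Definition \ref{def;sheavesulMCor}. Recall that by definition $F\in\ulMNST$ iff $\ul b^*F\in\ulMNST^\fin$ (equivalently $\ul c^*F\in\ulMNS$). So the two properties to check --- closure under infinite direct sums, and membership of $\Z_\tr(M)$ --- can both be tested after applying $\ul b^*$.

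First I would handle the closure under infinite direct sums. Let $(F_i)_{i\in I}$ be a family in $\ulMNST$ and form $F:=\bigoplus_i F_i$ in $\ulMPST$. The functor $\ul b^*:\ulMPST\to\ulMPST^\fin$ is a left adjoint (Proposition \ref{eq:bruno-functor}), hence commutes with arbitrary colimits, so $\ul b^*F\cong\bigoplus_i \ul b^*F_i$ in $\ulMPST^\fin$. By hypothesis each $\ul b^*F_i$ lies in $\ulMNST^\fin$, and by Lemma \ref{lcom2} that category is closed under infinite direct sums in $\ulMPST^\fin$; therefore $\ul b^*F\in\ulMNST^\fin$, i.e.\ $F\in\ulMNST$. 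The ``strongly additive'' statement for $\ul i_\Nis$ is then immediate: $\ul i_\Nis$ is the inclusion of a full subcategory closed under the direct sums formed in the ambient category, so the direct sum computed in $\ulMNST$ is carried to the direct sum in $\ulMPST$.

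For the last assertion, fix $M\in\ulMCor$ and consider $\Z_\tr(M)\in\ulMPST$. We must show $\ul b^*\Z_\tr(M)\in\ulMNST^\fin$ (equivalently $\ul c^*\Z_\tr(M)\in\ulMNS$). But the second half of Lemma \ref{lcom2} already records precisely that $\ul b^*\Z_\tr(M)\in\ulMNST^\fin$ for every $M\in\ulMCor$ (there written $b^*\Z_\tr(M)$); this in turn rests on Proposition \ref{prop:rep-sheaf}, which establishes that $\ul c^{\fin*}\ul b^*\Z_\tr(M)$ is a Nisnevich sheaf on each ambient space. Hence $\Z_\tr(M)\in\ulMNST$, finishing the proof.

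I do not expect any genuine obstacle here: every needed input is already in place, and the argument is purely a matter of chaining the left-adjoint (colimit-preserving) property of $\ul b^*$ with the $\fin$-level results. The only point requiring a line of care is making sure one uses the \emph{correct} characterisation of $\ulMNST$ --- via $\ul b^*F\in\ulMNST^\fin$ as in Lemma \ref{lem:mnst-condition} --- rather than trying to argue directly with $\ul c^*$ and the cd-structure on $\ulMSm$, which would work too but duplicates the work already done in passing from $\ulMNS^\fin$ to $\ulMNS$ (Corollary \ref{lcom3}).
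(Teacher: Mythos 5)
Your proof is correct and follows essentially the same route as the paper's one-line proof: apply $\ul{b}^*$ (a left adjoint, hence strongly additive), reduce to Lemma \ref{lcom2}, and observe by Lemma \ref{lA.9} that the inclusion of a full subcategory closed under ambient direct sums is strongly additive. You have simply spelled out the two or three steps that the paper compresses.
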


\begin{proof} This follows from Lemma \ref{lcom2}, 
because $\ul{b}^*$ is strongly additive as a left adjoint. The last statement follows from Lemma \ref{lcom2}.
\end{proof}

By Definition \ref{def;sheavesulMCor} and Lemma \ref{lem:mnst-condition},
the functors $\ul{b}^*$
and $\ul{c}^*$ restrict to
$\ul{b}^{\Nis}: \ulMNST \to \ulMNST^\fin$
and
$\ul{c}^\Nis : \ulMNST \to \ulMNS$.
It holds that
\begin{align}\label{eq:b-and-i4}
&\ul{b}^* \ul{i}_\Nis = \ul{i}_\Nis^\fin \ul{b}^\Nis,
\quad
\ul{c}^* \ul{i}_\Nis = \ul{i}_{s  \Nis} \ul{c}^\Nis,
\\
\notag
&\ul{b}_s^\Nis \ul{c}^{\Nis} 
= \ul{c}^{\fin \Nis} \ul{b}^\Nis,
\quad
\ul{b}_{s  \Nis} \ul{c}^{\fin \Nis}
=\ul{c}^\Nis \ul{b}_\Nis,
\end{align}
where for the last two formulas we used \eqref{eq:b-and-c}.

\begin{prop}\label{lem;b!ulMNST}
The following assertions hold.
\begin{enumerate}
\item 
We have $\ul{b}_{!}(\ulMNST^\fin) \subset  \ulMNST$.
\item 
Let
$\ul{b}_{\Nis} : \ulMNST^\fin \to \ulMNST$
be the restriction of $\ul{b}_{!}$ so that we have 
\begin{equation}\label{eq:b-and-i3}
\ul{b}_{!} \ul{i}_{\Nis}^\fin = \ul{i}_{\Nis} \ul{b}_{\Nis}.
\end{equation}
Then, the functor
$\ul{b}_{\Nis}$ is an exact left adjoint of $\ul{b}^\Nis$, which is fully faithful.
\item 
The functor 
$\ul{b}^\Nis$ preserves injectives.
\end{enumerate}
\end{prop}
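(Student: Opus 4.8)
The plan is to reduce everything to the analogous statement for sheaves without transfers, Proposition \ref{lem;b!ulMNS}, by means of the intertwining identities \eqref{eq:b-and-c} and \eqref{eq:b-and-i4}; the argument runs parallel to the proof of Proposition \ref{lem;b!ulMNS}.

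For (1), take $F \in \ulMNST^\fin$, so that $\ul{c}^{\fin *}F \in \ulMNS^\fin$ by Definition \ref{def:mpst-fin}. By Definition \ref{def;sheavesulMCor} together with Lemma \ref{lem:mnst-condition}, it suffices to check $\ul{c}^* \ul{b}_! F \in \ulMNS$. The last identity of \eqref{eq:b-and-c} gives $\ul{c}^* \ul{b}_! F = \ul{b}_{s !} \ul{c}^{\fin *} F$, and $\ul{b}_{s !}(\ulMNS^\fin) \subset \ulMNS$ by Proposition \ref{lem;b!ulMNS} (1); hence $\ul{c}^* \ul{b}_! F \in \ulMNS$, i.e.\ $\ul{b}_! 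F \in \ulMNST$. This also legitimizes the definition of $\ul{b}_{\Nis}$ and formula \eqref{eq:b-and-i3} used in (2).

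For (2), the adjunction $(\ul{b}_{\Nis}, \ul{b}^\Nis)$ and the full faithfulness of $\ul{b}^\Nis$ follow formally from the adjunction $(\ul{b}_!, \ul{b}^*)$ and the full faithfulness of $\ul{b}^*$ (Proposition \ref{eq:bruno-functor}), from the full faithfulness of $\ul{i}_\Nis$ and $\ul{i}_\Nis^\fin$, and from the compatibilities \eqref{eq:b-and-i4} and \eqref{eq:b-and-i3} --- precisely as for Proposition \ref{lem;b!ulMNS} (2). As a left adjoint $\ul{b}_{\Nis}$ is right exact, so the point is its left exactness. Given a short exact sequence $0 \to F \to G \to H \to 0$ in $\ulMNST^\fin$, I would apply the left exact functor $\ul{i}_\Nis^\fin$ (a right adjoint, by Theorem \ref{thm:sheaf-transfer} (2)) followed by the exact functor $\ul{b}_!$ (Proposition \ref{eq:bruno-functor}); using \eqref{eq:b-and-i3} this produces an exact sequence $0 \to \ul{i}_\Nis \ul{b}_{\Nis} F \to \ul{i}_\Nis \ul{b}_{\Nis} G \to \ul{i}_\Nis \ul{b}_{\Nis} H$ in $\ulMPST$. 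Applying $\Hom_{\ulMPST}(\ul{i}_\Nis Q, -)$ for an arbitrary $Q \in \ulMNST$ and invoking the full faithfulness of $\ul{i}_\Nis$ converts this into the exactness of $0 \to \Hom_{\ulMNST}(Q, \ul{b}_{\Nis} F) \to \Hom_{\ulMNST}(Q, \ul{b}_{\Nis} G) \to \Hom_{\ulMNST}(Q, \ul{b}_{\Nis} H)$ for every $Q$; hence $0 \to \ul{b}_{\Nis} F \to \ul{b}_{\Nis} G \to \ul{b}_{\Nis} H$ is exact in $\ulMNST$, which together with the right exactness already noted gives exactness of $\ul{b}_{\Nis}$.

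Finally, (3) is immediate: by (2) the functor $\ul{b}^\Nis$ is a right adjoint of the exact functor $\ul{b}_{\Nis}$, hence preserves injectives. The only genuine subtlety is the one already present in Proposition \ref{lem;b!ulMNS} (2): a short exact sequence of sheaves with transfers need not be exact as a sequence of presheaves with transfers, so $\ul{b}_!$ cannot be applied to it directly; the detour through the left exact inclusion $\ul{i}_\Nis^\fin$ and the criterion supplied by $\Hom_{\ulMPST}(\ul{i}_\Nis Q,-)$ is what makes the left exactness of $\ul{b}_{\Nis}$ go through. Everything else is formal manipulation of the adjunctions already in place.
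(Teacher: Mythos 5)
Your proof is correct and follows essentially the same route as the paper: part (1) via $\ul{c}^*\ul{b}_!=\ul{b}_{s!}\ul{c}^{\fin *}$ and Proposition \ref{lem;b!ulMNS} (1), part (2) by the formal adjunction argument plus the detour through the left exact $\ul{i}_\Nis^\fin$ and the exact $\ul{b}_!$ with the $\Hom_{\ulMPST}(\ul{i}_\Nis Q,-)$ criterion (which is exactly the argument the paper invokes by reference to the exactness proof for $\ul{b}_{s\Nis}$), and part (3) as a formal consequence. You merely spell out in full the step the paper delegates to the analogous no-transfers case.
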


\begin{proof}
(1) It suffices to show that $\ul{c}^*\ul{b}_{!}(\ulMNST^\fin) \subset  \ulMNS$. By \eqref{eq:b-and-c}, we have $\ul{c}^*\ul{b}_{!}=\ul{b}_{s!}\ul{c}^{\fin *}$. Moreover, $\ul{c}^{\fin *}\ulMNST^\fin\subset \ulMNS^\fin$ by Definition \ref{def:mpst-fin} and  $\ul{b}_{s!} \ulMNS^\fin\subset \ulMNS$
by Proposition \ref{lem;b!ulMNS} (1).
In (2), the adjointness and the full faithfulness are seen by using 
Proposition \ref{eq:bruno-functor}, \eqref{eq:b-and-i4}
and \eqref{eq:b-and-i3}.
This proves that $\ul{b}_\Nis$ is right exact,
and  it is also exact by \eqref{eq:b-and-i3} and Proposition \ref{eq:bruno-functor} (see also the proof of the exactness of $\ul{b}_{s \Nis}$ in Proposition \ref{lem;b!ulMNS} (2)).
(3) is a consequence of (2).
\end{proof}

\begin{thm}\label{thm:sheafification-ulMNST}
The inclusion functor
$\ul{i}_{\Nis}: \ulMNST \to \ulMPST$
has the exact left adjoint 
$\ul{a}_\Nis=\ul{b}_\Nis \ul{a}_\Nis^\fin \ul{b}^*$.
In particular, $\ulMNST$ is Grothendieck.
\end{thm}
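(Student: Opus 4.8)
The plan is to deduce the theorem by assembling the adjunctions that have already been established, exactly in the spirit of the earlier corollary computing $\ul{a}_{s\Nis}$. First I would observe that by the uniqueness of left adjoints it suffices to prove that $\ul{b}_\Nis\,\ul{a}_\Nis^\fin\,\ul{b}^*$ is left adjoint to $\ul{i}_\Nis$, and that this composite functor is exact. For the adjunction, I would compute, for $F\in\ulMPST$ and $G\in\ulMNST$,
\[
\Hom_{\ulMNST}(\ul{b}_\Nis\,\ul{a}_\Nis^\fin\,\ul{b}^*F,\ G)
\cong \Hom_{\ulMNST^\fin}(\ul{a}_\Nis^\fin\,\ul{b}^*F,\ \ul{b}^\Nis G)
\cong \Hom_{\ulMPST^\fin}(\ul{b}^*F,\ \ul{i}_\Nis^\fin\,\ul{b}^\Nis G)
\cong \Hom_{\ulMPST}(F,\ \ul{b}_*\,\ul{i}_\Nis^\fin\,\ul{b}^\Nis G),
\]
using in turn the adjunction $(\ul{b}_\Nis,\ul{b}^\Nis)$ from Proposition \ref{lem;b!ulMNST} (2), the adjunction $(\ul{a}_\Nis^\fin,\ul{i}_\Nis^\fin)$ from Theorem \ref{thm:sheaf-transfer} (2), and the adjunction $(\ul{b}^*,\ul{b}_*)$ from Proposition \ref{eq:bruno-functor}. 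Then I would identify $\ul{b}_*\,\ul{i}_\Nis^\fin\,\ul{b}^\Nis G$ with $\ul{i}_\Nis G$ inside $\ulMPST$: by \eqref{eq:b-and-i4} we have $\ul{b}^*\ul{i}_\Nis=\ul{i}_\Nis^\fin\ul{b}^\Nis$, so $\ul{b}_*\ul{b}^*\ul{i}_\Nis G = \ul{b}_*\ul{i}_\Nis^\fin\ul{b}^\Nis G$, and since $\ul{b}^*$ is fully faithful (Proposition \ref{eq:bruno-functor}) the unit $\id\to\ul{b}_*\ul{b}^*$ is an isomorphism, giving $\ul{b}_*\ul{i}_\Nis^\fin\ul{b}^\Nis G\cong\ul{i}_\Nis G$. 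This yields $\Hom_{\ulMNST}(\ul{a}_\Nis F,G)\cong\Hom_{\ulMPST}(F,\ul{i}_\Nis G)$ naturally, so $\ul{a}_\Nis$ is the desired left adjoint.

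Next I would check exactness. The functor $\ul{b}^*$ is exact by Proposition \ref{eq:bruno-functor}, $\ul{a}_\Nis^\fin$ is exact by Theorem \ref{thm:sheaf-transfer} (2), and $\ul{b}_\Nis$ is exact by Proposition \ref{lem;b!ulMNST} (2); hence the composite $\ul{a}_\Nis=\ul{b}_\Nis\,\ul{a}_\Nis^\fin\,\ul{b}^*$ is exact. Finally, since $\ul{i}_\Nis:\ulMNST\to\ulMPST$ is a fully faithful right adjoint whose left adjoint $\ul{a}_\Nis$ is exact, and $\ulMPST$ is Grothendieck (being a category of additive presheaves, as noted after Definition \ref{d2.7}), the category $\ulMNST$ is a reflective subcategory of a Grothendieck category with exact reflector, hence is itself Grothendieck by the general formalism recalled in \S\ref{s.groth}.

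I do not expect a serious obstacle here: all the hard work has been done in establishing the three constituent adjunctions and their exactness, and in particular in Proposition \ref{lem;b!ulMNST}, which is where the non-formal input (the interaction between $\ul{\Sigma}^\fin\downarrow(-)$, the cd-structure, and Lemma \ref{mainlem;blowup}) enters. The only point requiring a little care is the identification $\ul{b}_*\ul{i}_\Nis^\fin\ul{b}^\Nis\cong\ul{i}_\Nis$ on $\ulMNST$, which rests on the full faithfulness of $\ul{b}^*$ together with the compatibility \eqref{eq:b-and-i4}; once that is in place the proof is a routine chase through adjunctions, entirely parallel to the corollary $\ul{a}_{s\Nis}\simeq\ul{b}_{s\Nis}\ul{a}_{s\Nis}^\fin\ul{b}_s^*$ proved just above for the setting without transfers.
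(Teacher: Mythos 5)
Your proof is correct and follows essentially the same route as the paper: the adjunction is assembled from $(\ul{b}_\Nis,\ul{b}^\Nis)$, $(\ul{a}_\Nis^\fin,\ul{i}_\Nis^\fin)$ and the full faithfulness of $\ul{b}^*$ (your detour through $\ul{b}_*$ and the unit isomorphism $\id\to\ul{b}_*\ul{b}^*$ is just a rephrasing of that full faithfulness, together with \eqref{eq:b-and-i4}), and exactness plus the Grothendieck property are obtained exactly as in the paper, via the exactness of the three constituent functors and Theorem \ref{t.groth} d).
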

\begin{proof}
The formula defining $\ul{a}_\Nis$ yields a left adjoint to $\ul{i}_\Nis$ by the full faithfulness of $\ul{b}^*$ (Proposition \ref{eq:bruno-functor}) and the adjunctions $(\ul{a}_\Nis^\fin,\ul{i}_\Nis^\fin)$ and $(\ul{b}_\Nis,\ul{b}^\Nis)$ (use \eqref{eq:b-and-i4}). Its exactness follows from the exactness of the three functors.
\end{proof}

\begin{prop}\label{prop:c-Nis}
We have 
\begin{equation}\label{eq:a-c2}
\ul{b}_\Nis \ul{a}_\Nis^\fin = \ul{a}_\Nis \ul{b}_!,
\qquad
\ul{a}_{s  \Nis} \ul{c}^* = \ul{c}^\Nis \ul{a}_\Nis.
\end{equation}
Moreover, $\ul{c}^{\Nis}$ is faithful, exact, 
strongly additive $($Definition \ref{dA.2}\,$)$ and has a left adjoint 
$\ul{c}_\Nis=\ul{a}_{\Nis} \ul{c}_! \ul{i}_{s  \Nis}$ such that $\ul{c}_\Nis \ul{a}_{s \Nis} = \ul{a}_\Nis \ul{c}_!$.
\end{prop}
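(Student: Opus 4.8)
The plan is to derive everything from the commutative diagram \eqref{eq:six-cat-diag}, the patching Lemma \ref{lem:patching}, and the already-established properties of the ``finite'' versions of all functors (Theorem \ref{thm:sheaf-transfer}, Proposition \ref{lem;b!ulMNS}, Proposition \ref{lem;b!ulMNST}). The two displayed identities in \eqref{eq:a-c2} are the crux: once we have them, faithfulness, exactness and strong additivity of $\ul{c}^\Nis$ follow by transporting the corresponding properties of $\ul{c}^{\fin\Nis}$ and $\ul{c}^\Nis$ (on presheaves) across these identities, and the left adjoint $\ul{c}_\Nis$ is produced formally.

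First I would prove $\ul{b}_\Nis \ul{a}_\Nis^\fin = \ul{a}_\Nis \ul{b}_!$. By Theorem \ref{thm:sheafification-ulMNST} we have $\ul{a}_\Nis = \ul{b}_\Nis \ul{a}_\Nis^\fin \ul{b}^*$, so $\ul{a}_\Nis \ul{b}_! = \ul{b}_\Nis \ul{a}_\Nis^\fin \ul{b}^* \ul{b}_!$; since the counit $\ul{b}^* \ul{b}_! \to \id$ is... actually here one uses the other direction: $\ul{b}^*$ is fully faithful (Proposition \ref{eq:bruno-functor}), hence the unit $\id \to \ul{b}^* \ul{b}_!$ is... no — $\ul{b}_!$ is a localisation with $\ul{b}^*$ fully faithful, so the \emph{counit} $\ul{b}_! \ul{b}^* \to \id$ is an isomorphism, while $\ul{b}^* \ul{b}_!$ need not be the identity. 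The cleanest route is instead to check that both sides are left adjoint to the same functor, namely $\ul{b}^\Nis \ul{i}_\Nis^\fin$ regarded with target $\ulMPST^\fin$: for $F \in \ulMPST^\fin$ and $G \in \ulMNST$ one computes, using $(\ul{a}_\Nis^\fin, \ul{i}_\Nis^\fin)$, $(\ul{b}_\Nis, \ul{b}^\Nis)$ and $(\ul{b}_!, \ul{b}^*)$ together with \eqref{eq:b-and-i3}, \eqref{eq:b-and-i4}, that $\Hom(\ul{b}_\Nis \ul{a}_\Nis^\fin F, G) \cong \Hom(F, \ul{i}_\Nis^\fin \ul{b}^\Nis G)$ and $\Hom(\ul{a}_\Nis \ul{b}_! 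F, G) \cong \Hom(\ul{b}_! F, \ul{i}_\Nis G) \cong \Hom(F, \ul{b}^* \ul{i}_\Nis G) \cong \Hom(F, \ul{i}_\Nis^\fin \ul{b}^\Nis G)$, the last step by \eqref{eq:b-and-i4}. Uniqueness of left adjoints gives the identity. For the second identity $\ul{a}_{s\Nis} \ul{c}^* = \ul{c}^\Nis \ul{a}_\Nis$: again both sides are left adjoint to $\ul{c}^\Nis \ul{i}_{s\Nis} \;(=\ul{i}_{s\Nis} \ul{c}^\Nis$ viewed as a functor $\ulMNS \to \ulMPST$ via the identification behind $\ul{i}_\Nis$), using $(\ul{a}_{s\Nis}, \ul{i}_{s\Nis})$, $(\ul{a}_\Nis, \ul{i}_\Nis)$, the adjointness of $(\ul{c}_!, \ul{c}^*)$ from Proposition \ref{eq:c-functor}, and the relation $\ul{c}^* \ul{i}_\Nis = \ul{i}_{s\Nis} \ul{c}^\Nis$ from \eqref{eq:b-and-i4}.

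Next, faithfulness and exactness of $\ul{c}^\Nis$. Faithfulness: $\ul{c}^\Nis = \ul{b}_{s\Nis} \ul{c}^{\fin\Nis} \ul{b}^\Nis$ by the last formula of \eqref{eq:b-and-i4} (wait — that formula reads $\ul{b}_{s\Nis}\ul{c}^{\fin\Nis} = \ul{c}^\Nis \ul{b}_\Nis$; combining with $\ul{b}_\Nis \ul{b}^\Nis \cong \id$, which holds since $\ul{b}^\Nis$ is fully faithful and $\ul{b}_\Nis$ is its left adjoint, we get $\ul{c}^\Nis \cong \ul{b}_{s\Nis}\ul{c}^{\fin\Nis}\ul{b}^\Nis$). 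Then $\ul{c}^\Nis$ is a composite of faithful functors — $\ul{b}^\Nis$ is fully faithful, $\ul{c}^{\fin\Nis}$ is faithful by Theorem \ref{thm:sheaf-transfer}(3), and $\ul{b}_{s\Nis}$ is faithful (being exact with $\ul{b}_{s\Nis}\ul{b}_s^\Nis \cong \id$, hence conservative). Hence $\ul{c}^\Nis$ is faithful. Exactness: all three of $\ul{b}^\Nis$, $\ul{c}^{\fin\Nis}$, $\ul{b}_{s\Nis}$ are exact (Proposition \ref{lem;b!ulMNST}(2), Theorem \ref{thm:sheaf-transfer}(3), Proposition \ref{lem;b!ulMNS}(2)). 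Strong additivity: likewise each factor is strongly additive (left adjoints, resp. Theorem \ref{thm:sheaf-transfer}(3)). Finally, the left adjoint: since $\ulMPST$ is Grothendieck and $\ulMNS$ is a Grothendieck category, and $\ul{c}^\Nis$ is exact and commutes with all colimits (being a composite of left-adjoint-like functors — more precisely $\ul{c}^\Nis = \ul{a}_{s\Nis}\ul{c}^* \ul{i}_\Nis$ by the second identity in \eqref{eq:a-c2} applied after $\ul{i}_\Nis$, wait: $\ul{c}^\Nis \ul{a}_\Nis = \ul{a}_{s\Nis}\ul{c}^*$, precompose nothing — rather one checks $\ul{c}_\Nis := \ul{a}_\Nis \ul{c}_! \ul{i}_{s\Nis}$ is left adjoint to $\ul{c}^\Nis$ by the chain $\Hom(\ul{a}_\Nis \ul{c}_! \ul{i}_{s\Nis} F, G) \cong \Hom(\ul{c}_! \ul{i}_{s\Nis} F, \ul{i}_\Nis G) \cong \Hom(\ul{i}_{s\Nis} F, \ul{c}^* \ul{i}_\Nis G) \cong \Hom(\ul{i}_{s\Nis}F, \ul{i}_{s\Nis}\ul{c}^\Nis G) \cong \Hom(F, \ul{c}^\Nis G)$, using $(\ul{a}_\Nis,\ul{i}_\Nis)$, $(\ul{c}_!,\ul{c}^*)$, \eqref{eq:b-and-i4} and full faithfulness of $\ul{i}_{s\Nis}$). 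The relation $\ul{c}_\Nis \ul{a}_{s\Nis} = \ul{a}_\Nis \ul{c}_!$ then follows by comparing left adjoints to $\ul{c}^\Nis \ul{i}_{s\Nis} = \ul{i}_{s\Nis}\ul{c}^\Nis$ once more, exactly as in the proof of the second identity of \eqref{eq:a-c2}.

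I expect the main obstacle to be \emph{bookkeeping}: there are six categories, a dozen functors, and many compatibility squares \eqref{eq:b-and-i}–\eqref{eq:b-and-i4}, so the real work is picking, for each assertion, the right factorization of $\ul{c}^\Nis$ (either $\ul{b}_{s\Nis}\ul{c}^{\fin\Nis}\ul{b}^\Nis$ for the ``exactness/faithfulness'' package, or the adjoint-triangle form for the colimit formulas) and verifying that the cited natural isomorphisms compose coherently. No genuinely new geometric input is needed — everything is pure category theory on top of Theorem \ref{thm:sheaf-transfer} and Propositions \ref{lem;b!ulMNS}, \ref{lem;b!ulMNST} — but the argument must be assembled carefully to avoid circularity between \eqref{eq:a-c2} and the definition $\ul{a}_\Nis = \ul{b}_\Nis \ul{a}_\Nis^\fin \ul{b}^*$.
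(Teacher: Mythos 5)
Your treatment of the first identity of \eqref{eq:a-c2} is correct and is essentially the paper's proof (take left adjoints across the first formula of \eqref{eq:b-and-i4}), and your Hom-chain producing the left adjoint $\ul{c}_\Nis=\ul{a}_\Nis\ul{c}_!\ul{i}_{s  \Nis}$ is the content of Lemma \ref{lem:lr-adjoint} (3). But the second identity $\ul{a}_{s  \Nis}\ul{c}^*=\ul{c}^\Nis\ul{a}_\Nis$ is not proved by your argument. Taking left adjoints of the relation $\ul{c}^*\ul{i}_\Nis=\ul{i}_{s  \Nis}\ul{c}^\Nis$ yields $\ul{a}_\Nis\ul{c}_!=\ul{c}_\Nis\ul{a}_{s  \Nis}$, i.e.\ the \emph{last} relation of the statement, not the second identity, whose two sides are the right-adjoint-type functors $\ul{c}^*$ and $\ul{c}^\Nis$ themselves. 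To run a uniqueness-of-adjoints argument for it you would have to identify the right adjoints of the two sides: the right adjoint of $\ul{a}_{s  \Nis}\ul{c}^*$ is $\ul{c}_*\ul{i}_{s  \Nis}$, while $\ul{c}^\Nis\ul{a}_\Nis$ is only a left adjoint if $\ul{c}^\Nis$ admits a right adjoint, or equivalently (for the comparison to go through) if $\ul{c}_*$ carries $\ulMNS$ into $\ulMNST$ --- neither of which is established, and the functor you name as the common right adjoint does not even typecheck. The identity is in fact not formal: it encodes compatibility of sheafification with forgetting transfers, and the paper proves it by the computation $\ul{c}^\Nis\ul{a}_\Nis=\ul{c}^\Nis\ul{b}_\Nis\ul{a}_\Nis^\fin\ul{b}^*=\ul{b}_{s  \Nis}\ul{c}^{\fin \Nis}\ul{a}_\Nis^\fin\ul{b}^*=\ul{b}_{s  \Nis}\ul{a}_{s \Nis}^\fin\ul{c}^{\fin *}\ul{b}^*=\ul{b}_{s  \Nis}\ul{a}_{s \Nis}^\fin\ul{b}_s^*\ul{c}^*=\ul{a}_{s  \Nis}\ul{c}^*$, using Theorem \ref{thm:sheafification-ulMNST}, \eqref{eq:b-and-i4}, the key nonformal input \eqref{eq:c-a-fin} from Theorem \ref{thm:sheaf-transfer}, \eqref{eq:b-and-c}, and the corollary $\ul{a}_{s  \Nis}\simeq\ul{b}_{s  \Nis}\ul{a}_{s \Nis}^\fin\ul{b}_s^*$. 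Your plan never invokes \eqref{eq:c-a-fin} at this step, so the essential ingredient is missing.

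The second gap is in your exactness argument: you factor $\ul{c}^\Nis\cong\ul{b}_{s  \Nis}\ul{c}^{\fin \Nis}\ul{b}^\Nis$ and claim all three factors are exact, citing Proposition \ref{lem;b!ulMNST} (2) for $\ul{b}^\Nis$. That proposition asserts exactness of $\ul{b}_\Nis$ and full faithfulness of $\ul{b}^\Nis$ only; in fact $\ul{b}^\Nis$ is merely \emph{left} exact --- this is precisely the point of Ayoub's correction recorded in the introduction, and the appearance of possibly nonzero $R^q\ul{b}_s^\Nis$ in Proposition \ref{lem;cohMsigmaS} and Theorem \ref{c3.1v} would be vacuous otherwise. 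So the factorization route to exactness fails at its first factor. The correct route is Lemma \ref{lem:lr-adjoint} (4): exactness of $\ul{c}^\Nis$ follows from exactness of $\ul{c}^*$ and of $\ul{a}_{s  \Nis}$ \emph{together with} the second identity of \eqref{eq:a-c2} --- so the identity you short-circuited is exactly what the exactness proof needs, and your worry about circularity is resolved by proving it via the finite-level formulas as above. (A smaller point: faithfulness of $\ul{b}_{s  \Nis}$ does not follow from $\ul{b}_{s  \Nis}\ul{b}_s^\Nis\cong\id$ --- a reflector can kill objects --- but faithfulness of $\ul{c}^\Nis$ is immediate from Lemma \ref{lem:lr-adjoint} (1) applied to the faithful $\ul{c}^*$, and strong additivity likewise comes from Lemma \ref{lem:lr-adjoint} (2) rather than from your factorization.)
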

\begin{proof}
The first equality follows from 
the first formula of \eqref{eq:b-and-i4} by adjunction.
For the second, we use
Theorems \ref{thm:sheafification-ulMNS} 
and \ref{thm:sheafification-ulMNST},
together with
\eqref{eq:b-and-c}, 
\eqref{eq:c-a-fin}
and \eqref{eq:b-and-i4}.
The last statement follows from Lemma \ref{lem:lr-adjoint} (3).
\end{proof}

\begin{thm}\label{thm:cech2}
If $p : U \to M$ is a cover in $\ulMP_\Nis$,
then the \v{C}ech complex
\begin{equation}\label{eq:ceck3-2}
 \dots \to \Z_{\tr}(U \times_M U)
 \to \Z_{\tr}(U)
 \to \Z_{\tr}(M)
 \to 0
\end{equation}
is exact in $\ulMNST$. 
$($Note that the fiber products exist in $\ulMSm$
by Corollary \ref{exist-pullback} (1).$)$
Moreover, the sequence
\[0\to \Z_\tr(W)\to \Z_\tr(U)\oplus \Z_\tr(V)\to \Z_\tr(X)\to 0\]
is exact in $\ulMNST$ 
for any $\ulMVfin$-square \eqref{eq.cd} in $\ulMP^\fin$.
\end{thm}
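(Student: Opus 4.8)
The plan is to deduce everything from the ``finite'' statement already available, namely that \eqref{eq:ceck-without-c} is exact in $\ulMNST^\fin$ (Corollary~\ref{rem:faith-exact-c}), and then to transport exactness to $\ulMNST$ along the exact functor $\ul b_\Nis\colon\ulMNST^\fin\to\ulMNST$ of Proposition~\ref{lem;b!ulMNST}~(2). Two preliminary facts make this possible. First, for every $N\in\ulMCor^\fin$ one has $\ul b_\Nis\Z_\tr^\fin(N)=\Z_\tr(N)$: indeed $\Z_\tr^\fin(N)\in\ulMNST^\fin$ by Lemma~\ref{lcom2}, and $\ul b_\Nis$ is the restriction of $\ul b_!$, which, being the left Kan extension along $\ul b$ (Proposition~\ref{eq:bruno-functor}), sends representable presheaves to representable presheaves; equivalently, use the colimit formula \eqref{eq:b-sh-explicit} and Proposition~\ref{peff1}~b). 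Second, if $\tilde p\colon U\to M$ is a Nisnevich cover in $\ulMP^\fin$, then its iterated fibre products over $M$ formed in $\ulMSm^\fin$ also represent the corresponding fibre products in $\ulMSm$: this is Proposition~\ref{prop:fiber-prod}~(1)--(2), since all the divisors occurring are pullbacks of $M^\infty$ and hence possess the required universal suprema trivially.

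\textbf{Strict Nisnevich covers and $\ulMVfin$-squares.} If $p\colon U\to M$ is a \emph{strict} Nisnevich cover, i.e.\ $p=\ul b_s(\tilde p)$ for a Nisnevich cover $\tilde p$ in $\ulMP^\fin$, then by the two observations the \v{C}ech complex \eqref{eq:ceck3-2} is exactly $\ul b_\Nis$ applied term-by-term to \eqref{eq:ceck-without-c}; since $\ul b_\Nis$ is exact and \eqref{eq:ceck-without-c} is exact in $\ulMNST^\fin$, we conclude. For the second statement, I would first prove that for an $\ulMVfin$-square \eqref{eq.cd} in $\ulMP^\fin$ the sequence $0\to\Z_\tr^\fin(W)\to\Z_\tr^\fin(U)\oplus\Z_\tr^\fin(V)\to\Z_\tr^\fin(X)\to0$ is exact in $\ulMNST^\fin$: by the faithful exactness of $\ul c^{\fin\Nis}$ (Corollary~\ref{rem:faith-exact-c}) this reduces to exactness in $\ulMNS^\fin$ of the sequence obtained by applying $\ul c^{\fin *}$, whose terms are Nisnevich sheaves by Proposition~\ref{prop:rep-sheaf}; this exactness is checked on stalks, i.e.\ on henselian local modulus pairs where the elementary Nisnevich square splits, by the same homotopy argument as in the proof of Theorem~\ref{thm:cech} (equivalently it is the Mayer--Vietoris property of the cd-structure, cf.\ Proposition~\ref{p3.1}~(1) and \cite{cdstructures}). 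Applying the exact functor $\ul b_\Nis$ and using $\ul b_\Nis\Z_\tr^\fin=\Z_\tr$ then yields the second assertion of the theorem.

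\textbf{General covers.} For an arbitrary cover $p\colon U\to M$ in $\ulMP_\Nis$, Lemma~\ref{lem:cov-ulMSm} furnishes a refinement $V\xrightarrow{g}N\xrightarrow{h}M$ with $g$ a strict Nisnevich cover and $h\in\ul\Sigma^\fin$; by Proposition~\ref{peff1}~(c), $h$ is invertible in $\ulMSm$, so the \v{C}ech complex of the composite cover $V\to M$ is isomorphic to that of $g$ and hence exact by the previous step. It remains to deduce exactness of \eqref{eq:ceck3-2} for $p$ from exactness for the refinement $V\to M$. I would do this as follows: since $\ul a_\Nis$ is exact (Theorem~\ref{thm:sheafification-ulMNST}) and carries the presheaf-level \v{C}ech complexes to the sheaf-level ones \eqref{eq:ceck3-2} (the terms being already sheaves, Lemma~\ref{lcom3-2}), it suffices to know that the refinement map between the presheaf-level \v{C}ech complexes becomes a quasi-isomorphism after $\ul a_\Nis$ — a standard fact, which via the formula $\ul a_\Nis=\ul b_\Nis\ul a^\fin_\Nis\ul b^*$, \eqref{eq:b-sh-explicit}, Theorem~\ref{thm:sheaf-transfer} and the isomorphism of sites of Lemma~\ref{lem:equiv-smallsites} amounts to the statement that, for each modulus pair $S$ with $\ol S$ henselian local, the complexes $\ulMCor(S,U^{\times_M\bullet+1})$ and $\ulMCor(S,V^{\times_M\bullet+1})$ have the same homology; the cover $U\to M$ being split over such $S$ (because, through $h$, the strict cover $g$ is), both are contractible by the homotopy of the proof of Theorem~\ref{thm:cech}.

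\textbf{Main obstacle.} The genuinely new difficulty, beyond the results recalled in the excerpt, is this last reduction: passing from exactness of the \v{C}ech complex for a strict Nisnevich cover to exactness for an arbitrary cover of $\ulMP_\Nis$. In a classical category of sheaves this is routine topos theory, but here the role of ``points'' must be played through the isomorphism of sites of Lemma~\ref{lem:equiv-smallsites} and the $\ul\Sigma^\fin$-colimit formula \eqref{eq:b-sh-explicit}, and the \v{C}ech homotopy must be produced at the level of correspondences $\ulMCor(S,-)$ rather than morphisms; Lemma~\ref{lem:cov-ulMSm} is the tool that makes the splitting over henselian local modulus pairs available, and getting this book-keeping right (especially that $\ul a^\fin_\Nis$ does not alter sections over henselian local $\ol S$) is the crux of the argument.
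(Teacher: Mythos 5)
Your treatment of strict Nisnevich covers is precisely the paper's argument: one applies the exact functor $\ul{b}_\Nis$ of Proposition \ref{lem;b!ulMNST} to the finite-level exact complex \eqref{eq:ceck-without-c}, using (as you correctly note) that $\ul{b}_\Nis\Z_\tr^\fin(N)=\Z_\tr(N)$ and that the fibre products of a cover in $\ulMP^\fin$ formed in $\ulMSm^\fin$ also serve in $\ulMSm$ (Proposition \ref{prop:fiber-prod}). For the Mayer--Vietoris sequence you take a slightly different route: the paper deduces it from the first statement by the small computation of \cite[Proposition~6.14]{mvw}, whereas you propose proving the finite-level analogue first and then applying $\ul{b}_\Nis$. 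That is viable, but your parenthetical ``equivalently it is the Mayer--Vietoris property of the cd-structure'' is not correct: regularity of $P_{\ulMVfin}$ yields that sequence for the sheaves $\Z(-)$ \emph{without} transfers, while here the sections are finite correspondences, so some MVW-6.14-style bookkeeping (or a careful henselian-stalk argument exploiting that $W\to V$ is an open immersion) is still needed; the homotopy of Theorem \ref{thm:cech} by itself only gives the \v{C}ech exactness, not the four-term sequence.

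The substantive divergence is your ``general covers'' step, and there your argument has a gap. You claim the cover $U\to M$ splits over a henselian local modulus pair $S$ ``because, through $h$, the strict cover $g$ is'' split; but $h\in\ul{\Sigma}^\fin$ is in general a proper modification (e.g.\ a blow-up with pulled-back modulus), which admits no section over the henselization of $\ol{M}$ at a point, so the splitting of $g$ over henselian locals of $\ol{N}$ does not produce a splitting of $U\to M$ (nor of $V\to M$) over henselian locals of $\ol{M}$. If instead you allow replacing $S$ by objects of $\ul{\Sigma}^\fin\downarrow S$, then $\ol{S}$ is no longer henselian local and the finiteness on which the contracting homotopy of Theorem \ref{thm:cech} rests is lost. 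Note that the paper does not attempt this step: its proof reads Lemma \ref{lem:cov-ulMSm} as allowing one to assume the cover is of the refined form (strict Nisnevich cover composed with a morphism of $\ul{\Sigma}^\fin$), whose \v{C}ech complex is isomorphic to that of the strict cover because $\ul{\Sigma}^\fin$-morphisms are invertible in $\ulMSm$ and $\ulMCor$; exactness for this cofinal family of covers is exactly what the applications require (e.g.\ Lemma \ref{lif1v} via Lemma \ref{lem:inj-flabby}, whose hypothesis only asks for a refinement with exact \v{C}ech complex). So either restrict to covers of that form, as the paper effectively does, or replace your splitting claim by an actual argument (for instance a refinement double-complex argument, using the section of $U\times_M V^{j+1}\to V^{j+1}$ furnished by the factorization of the refinement through $U$), rather than the assertion that the original cover splits over henselian points.
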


\begin{proof} 
By Lemma \ref{lem:cov-ulMSm},
we may assume $M \to U$ is a strict Nisnevich cover.
Then, by \eqref{eq:ceck-without-c} the complex
\[
 \dots \to \Z_{\tr}^\fin(U \times_M U)
 \to \Z_{\tr}^\fin(U)
 \to \Z_{\tr}^\fin(M)
 \to 0
\]
is exact in $\ulMNST^{\fin}$.
Applying the exact functor $\ul{b}_\Nis$, 
we get \eqref{eq:ceck3-2}.
The second statement  follows from the first and a small computation 
(\emph{cf.} \cite[Proposition~6.14]{mvw}).
\end{proof}

\subsection{Cohomology in $\protect\ulMNST$}

\begin{lemma}\label{lif1v} 
Let  $I\in \ulMNST$ be an injective object. 
Then $\ul{c}^{\Nis}(I)\in \ulMNS$ is flabby.
\end{lemma}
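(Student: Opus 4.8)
The statement to prove is that for an injective object $I \in \ulMNST$, the sheaf $\ul{c}^{\Nis}(I) \in \ulMNS$ is flabby (in the sense of Definition \ref{def:flabby}, i.e.\ its higher \v{C}ech cohomology on any cover vanishes, equivalently its sheaf cohomology agrees with sections). The natural strategy is to transport the corresponding statement already available with finite transfers, namely Lemma \ref{lif1v}'s finite-transfer analogue (Lemma \ref{lif1}, together with Lemma \ref{lem:claim}), along the functor $\ul{b}_\Nis$. First I would recall the factorizations established earlier: by \eqref{eq:b-and-i4} we have $\ul{c}^\Nis \ul{b}_\Nis = \ul{b}_{s\Nis}\ul{c}^{\fin\Nis}$, and by Proposition \ref{lem;b!ulMNST} the functor $\ul{b}_\Nis : \ulMNST^\fin \to \ulMNST$ is an exact left adjoint whose right adjoint $\ul{b}^\Nis$ preserves injectives.

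The key step is therefore: given an injective $I \in \ulMNST$, apply $\ul{b}^\Nis$ to obtain an injective object $\ul{b}^\Nis I \in \ulMNST^\fin$ (using that $\ul{b}^\Nis$ preserves injectives, Proposition \ref{lem;b!ulMNST} (3)). By Lemma \ref{lif1} (applied to $\ul{c}^{\fin\Nis}$ of this injective object, via Lemma \ref{lem:claim}), the object $\ul{b}_{s\Nis}\ul{c}^{\fin\Nis}(\ul{b}^\Nis I) \in \ulMNS$ is flabby. Now I would use the counit isomorphism $\ul{b}_\Nis \ul{b}^\Nis \to \id$ on $\ulMNST$ (which is an isomorphism by the full faithfulness of $\ul{b}^\Nis$, Proposition \ref{lem;b!ulMNST} (2), cf.\ Lemma \ref{lA.6}), so that
\[
\ul{c}^\Nis(I) \simeq \ul{c}^\Nis \ul{b}_\Nis \ul{b}^\Nis I = \ul{b}_{s\Nis}\ul{c}^{\fin\Nis}(\ul{b}^\Nis I),
\]
which we have just seen is flabby.

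Alternatively—and this is probably cleaner to write—I would argue directly: $\ul{c}^\Nis(I)$ is flabby iff the \v{C}ech-to-derived-functor map vanishes on all covers of $\ulMSm_\Nis$; by Lemma \ref{lem:cov-ulMSm} it suffices to treat strict Nisnevich covers, and for those the computation reduces, exactly as in the proof of Lemma \ref{lem:claim}, to \v{C}ech cohomology of an injective object in $\ulMNST^\fin$ over covers of $\ulMSm^\fin_\Nis$, which vanishes because $\ul{b}^\Nis I$ is injective there and $\ul{c}^{\fin\Nis}$ of an injective is flabby by Lemma \ref{lif1}. The main obstacle is bookkeeping: one must be careful that the colimit over $\ul{\Sigma}^\fin\downarrow(-)$ appearing in the formula for $\ul{b}_\Nis$ (see \eqref{eq:b-sh-explicit}) commutes with the relevant \v{C}ech cohomology computations, which is handled precisely by Lemma \ref{mainlem;blowup} (cofinality of the pulled-back comma categories) as in Lemma \ref{lem:claim}—so in fact the cleanest route is simply to invoke Lemma \ref{lem:claim} after reducing to $\ul{b}^\Nis I$, rather than redoing the argument.
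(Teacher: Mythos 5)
Your argument is correct, but it takes a different route from the paper. The paper's proof is a two-line application of the abstract Lemma \ref{lem:inj-flabby} with $c^*=\ul{c}^\Nis$ and its left adjoint $\ul{c}_\Nis$ (Proposition \ref{prop:c-Nis}): since $\ul{c}_\Nis$ turns the \v{C}ech complex of a cover into the complex of $\Z_\tr$'s, which is exact in $\ulMNST$ by Theorem \ref{thm:cech2}, applying $\Hom_{\ulMNST}(-,I)$ to it kills all positive \v{C}ech cohomology of $\ul{c}^\Nis I$. You instead descend to the finite level: $\ul{b}^\Nis I$ is injective in $\ulMNST^\fin$ (Proposition \ref{lem;b!ulMNST} (3)), so $\ul{c}^{\fin\Nis}\ul{b}^\Nis I$ is flabby by Lemma \ref{lif1}, and the counit isomorphism together with the last identity of \eqref{eq:b-and-i4} gives $\ul{c}^\Nis I\simeq \ul{b}_{s\,\Nis}\ul{c}^{\fin\Nis}\ul{b}^\Nis I$, reducing everything to showing that $\ul{b}_{s\,\Nis}$ preserves flabbiness. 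One caveat: Lemma \ref{lem:claim} as stated takes an \emph{injective} object of $\ulMNSfin$ as input, and $\ul{c}^{\fin\Nis}\ul{b}^\Nis I$ is only flabby, so your first formulation cannot literally cite it; but its proof uses injectivity only through the vanishing of $\check{H}^q(U\times_M L/L,-)$, which is exactly condition (3) of Lemma \ref{lem:milne} for a flabby sheaf, so the lemma does extend to flabby inputs — and your "alternative" formulation, redoing the colimit/cofinality argument (Lemmas \ref{lem:cov-ulMSm} and \ref{mainlem;blowup}) with the vanishing supplied by Lemma \ref{lif1}, closes this gap. In the end both proofs rest on the same \v{C}ech exactness at the finite level; the paper's packaging via Theorem \ref{thm:cech2} is shorter once that theorem is available, while yours avoids invoking it and makes explicit how flabbiness of $\ul{c}^\Nis I$ is inherited along the colimit over $\ul{\Sigma}^\fin\downarrow(-)$ from the finite-transfer situation.
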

\begin{proof}
This follows from Lemma \ref{lem:inj-flabby}
and Theorem \ref{thm:cech2}.
\end{proof}

\begin{nota}\label{not4}
Let $M\in \ulMCor$ and $F \in \ulMNST$.
Using Notation \ref{n3.7a},
we define
$F_M :=(\ul{b}^\Nis F)_M$,
which is a sheaf on $(\ol{M})_\Nis$.
\end{nota}

\begin{thm}\label{c3.1v}
Let $F\in \ulMNST$, and let $M\in \ulMCor$. Then there are canonical isomorphisms for any $i\ge 0$:
\[\Ext^i_{\ulMNST}(\Z_\tr(M),F)\simeq 
  \Ext^i_{\ulMNS}(\Z(M), \ul{c}^\Nis F) \simeq 
\colim_{N\in \ulSigma^{\fin}\downarrow M}H^i_\Nis(\Nb,F_N).
\]
$($See \eqref{Z} for $\Z(M)$.$)$ Moreover, we have
\[
\colim_{N\in \ul{\Sigma}^{\fin}\downarrow M} H_\Nis^i(\ol{N},(R^q(\ul{b}_s^\Nis)\ul{c}^\Nis F)_N)=0\text{ for all $q>0$.}
\]
\end{thm}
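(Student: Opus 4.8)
The strategy is to reduce everything to the already-established computation of cohomology in $\ulMNS$ (Proposition~\ref{lem;cohMsigmaS}) by transporting it across the faithful exact functor $\ul{c}^\Nis$ and the adjunction $(\ul{c}_\Nis,\ul{c}^\Nis)$. First I would establish the identification $\Ext^i_{\ulMNST}(\Z_\tr(M),F)\simeq \Ext^i_{\ulMNS}(\Z(M),\ul{c}^\Nis F)$. For $i=0$ this is the adjunction $(\ul{c}_\Nis,\ul{c}^\Nis)$ from Proposition~\ref{prop:c-Nis}, combined with the fact that $\ul{c}_\Nis\ul{a}_{s\Nis}=\ul{a}_\Nis\ul{c}_!$ sends $\Z(M)=\ul{a}_{s\Nis}\Zp(M)$ to $\ul{a}_\Nis\ul{c}_!\Zp(M)=\ul{a}_\Nis\Z_\tr(M)=\Z_\tr(M)$ (using \eqref{eq2.6} and that $\Z_\tr(M)$ is already a sheaf by Lemma~\ref{lcom3-2}). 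For higher $i$, since $\ul{c}^\Nis$ is exact and preserves injectives (it has the exact left adjoint $\ul{c}_\Nis$), applying it to an injective resolution $F\to I^\bullet$ in $\ulMNST$ yields an injective resolution $\ul{c}^\Nis F\to \ul{c}^\Nis I^\bullet$ in $\ulMNS$, and then $\Hom_{\ulMNST}(\Z_\tr(M),I^\bullet)\cong \Hom_{\ulMNS}(\Z(M),\ul{c}^\Nis I^\bullet)$ by the $i=0$ case, giving the $\Ext$ isomorphism after taking cohomology.

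Next I would invoke Proposition~\ref{lem;cohMsigmaS} directly with the sheaf $\ul{c}^\Nis F\in\ulMNS$: this gives
\[
\Ext^i_{\ulMNS}(\Z(M),\ul{c}^\Nis F)\simeq \colim_{N\in\ul\Sigma^\fin\downarrow M} H^i_\Nis(\ol N,(\ul{b}_s^\Nis\ul{c}^\Nis F)_N).
\]
Now by the third formula in \eqref{eq:b-and-i4} we have $\ul{b}_s^\Nis\ul{c}^\Nis=\ul{c}^{\fin\Nis}\ul{b}^\Nis$, so $(\ul{b}_s^\Nis\ul{c}^\Nis F)_N=(\ul{c}^{\fin\Nis}\ul{b}^\Nis F)_N$. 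By Notation~\ref{n3.7a} (the defining convention for $(-)_N$ on objects of $\ulMNST^\fin$, applied to $\ul{b}^\Nis F$), this sheaf on $(\ol N)_\Nis$ is exactly $(\ul{b}^\Nis F)_N$, which is $F_N$ by Notation~\ref{not4}. This chains the two displayed isomorphisms into the desired $\Ext^i_{\ulMNST}(\Z_\tr(M),F)\simeq\colim_{N\in\ul\Sigma^\fin\downarrow M}H^i_\Nis(\ol N,F_N)$.

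For the final vanishing statement, I would again feed $\ul{c}^\Nis F$ into the second half of Proposition~\ref{lem;cohMsigmaS}, namely \eqref{eq:coh-ulMNS1}, which gives $\colim_{N\in\ul\Sigma^\fin\downarrow M}H^i_\Nis(\ol N,(R^q\ul{b}_s^\Nis\,\ul{c}^\Nis F)_N)=0$ for $q>0$. The point to check is that $R^q(\ul{b}_s^\Nis)(\ul{c}^\Nis F)=R^q(\ul{b}_s^\Nis\ul{c}^\Nis)(F)$: this holds because $\ul{c}^\Nis$ is exact and preserves injectives, so one may compute the derived functor of the composite $\ul{b}_s^\Nis\ul{c}^\Nis$ by an injective resolution in $\ulMNST$, and each term $\ul{c}^\Nis I^j$ is $\ul{b}_s^\Nis$-acyclic. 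Hence the expression matches $\colim_N H^i_\Nis(\ol N,(R^q(\ul{b}_s^\Nis)\ul{c}^\Nis F)_N)=0$ as stated.

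\textbf{Main obstacle.} The delicate point is the bookkeeping with the overloaded notation $F_N$: one must carefully track that $(-)_N$ as applied in Proposition~\ref{lem;cohMsigmaS} (via $\ul{b}_s^\Nis$ of an $\ulMNS$-object) agrees, after the identity $\ul{b}_s^\Nis\ul{c}^\Nis=\ul{c}^{\fin\Nis}\ul{b}^\Nis$ and the conventions in Notations~\ref{n3.7a} and~\ref{not4}, with the $F_N$ of Notation~\ref{not4}. Everything else is a formal consequence of adjunctions and exactness already assembled in Propositions~\ref{prop:c-Nis} and~\ref{lem;cohMsigmaS}.
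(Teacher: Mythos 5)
Your $i=0$ step, the identification $\ul{c}_\Nis\Z(M)=\Z_\tr(M)$, and the bookkeeping that chains the second isomorphism and the vanishing statement to Proposition~\ref{lem;cohMsigmaS} (via the third formula of \eqref{eq:b-and-i4} and Notations~\ref{n3.7a}, \ref{not4}) all check out and are exactly what the paper does. The gap is in passing to higher~$i$.

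You assert that $\ul{c}^\Nis$ preserves injectives ``because it has the exact left adjoint $\ul{c}_\Nis$''. But the exactness of $\ul{c}_\Nis = \ul{a}_\Nis\,\ul{c}_!\,\ul{i}_{s\Nis}$ is not established anywhere in the paper, and there is no reason to expect it: $\ul{a}_\Nis$ is exact, but $\ul{c}_!$ is a left Kan extension (hence a priori only right exact, computed by non-filtered colimits) and $\ul{i}_{s\Nis}$ is only left exact. Proposition~\ref{prop:c-Nis} records that $\ul{c}^\Nis$ is exact and \emph{has} a left adjoint, but does not say that left adjoint is exact; and Lemma~\ref{lem:lr-adjoint}~(3) would require the exactness of $\ul{c}_!$ as a hypothesis, which is never verified. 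So the step ``$\ul{c}^\Nis I^\bullet$ is an injective resolution'' is unjustified.

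The paper instead establishes a strictly weaker acyclicity property: Lemma~\ref{lif1v} shows that $\ul{c}^\Nis I$ is \emph{flabby} for injective $I\in\ulMNST$ (via Lemma~\ref{lem:inj-flabby} and the exactness of the \v{C}ech complexes \eqref{eq:ceck3-2} from Theorem~\ref{thm:cech2}). Flabby sheaves are $\ul\Gamma_M$-acyclic, i.e.\ $\Ext^j_{\ulMNS}(\Z(M),\ul{c}^\Nis I)=0$ for $j>0$, so Theorem~\ref{tA.2} applied to the composite $\Hom_{\ulMNS}(\Z(M),-)\circ\ul{c}^\Nis$ degenerates (using the exactness of $\ul{c}^\Nis$ and the $i=0$ adjunction) to give the desired isomorphism of $\Ext$ groups. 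Your argument becomes correct if you replace ``$\ul{c}^\Nis$ preserves injectives'' by ``$\ul{c}^\Nis$ carries injectives to flabby objects'' and note that a flabby resolution computes the derived functors of $\Hom_{\ulMNS}(\Z(M),-)$; but as written it rests on a property nobody has proved. (Also: the identity $R^q(\ul{b}_s^\Nis)(\ul{c}^\Nis F)=R^q(\ul{b}_s^\Nis\ul{c}^\Nis)(F)$ you propose to verify is not actually needed---the last claim of the theorem is literally \eqref{eq:coh-ulMNS1} with $F$ replaced by $\ul{c}^\Nis F$---so you can delete that discussion, which happened to lean on the same unjustified injectivity-preservation.)
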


\begin{proof} Applying the last identity of Proposition \ref{prop:c-Nis} to $\Z^p(M)$, we get
\[\ul{c}_\Nis \Z(M) = \ul{a}_\Nis \ul{c}_! \Z^p(M)
= \ul{a}_\Nis \Z_\tr(M)
= \Z_\tr(M)\]
where the second equality follows from \eqref{eq2.6}, and the third one holds by Lemma \ref{lcom3-2}. This yields an isomorphism
\[
 \ulMNS(\Z(M), \ul{c}^\Nis F)\\
\simeq \ulMNST(\Z_\tr(M),  F)
\]
which is the case $i=0$ of the first isomorphism in the proposition. The general case $i\ge 0$ then follows from
Theorem \ref{tA.2}, Lemma \ref{lif1v} and 
the exactness of $\ul{c}^\Nis$ (Proposition \ref{prop:c-Nis}),
and the second isomorphism follows from
Proposition \ref{lem;cohMsigmaS} 
and \eqref{eq:b-and-i4}. The last assertion follows from \eqref{eq:coh-ulMNS1}. 
\end{proof}

\appendix

\section{Categorical toolbox, I}\label{sect:app}

This appendix gathers known and less-known results that we use constantly.

\subsection{Pro-objects (\cite[expos\'e~I, \S8]{SGA4}, \cite[Appendix~2]{am})}
\label{sec:pro-obj}
Recall that a \emph{pro-object} of  a category $\sC$ is a functor $F:A\to \sC$, where $A$ is a small cofiltered category  (dual of \cite[Chapter~IX, \S1]{mcl}). 
They are denoted by $\{ X_\alpha \}_{\alpha\in A}$ 
or by $``\lim"_{\alpha\in A} X_\alpha$ (Deligne's notation), with $X_\alpha=F(\alpha)$. Pro-objects of $\sC$ form a category $\pro{}\sC$, with morphisms given by the formula
\[\pro{}\sC(\{ X_\alpha \}_{\alpha\in A},\{Y_\beta \}_{\beta\in B})=\lim_{\beta\in B} \colim_{\alpha\in A} \sC(X_\alpha,Y_\beta). \]

There is a canonical full embedding $c:\sC\inj \pro{}\sC$, sending an object to the corresponding constant pro-object ($A=\{*\}$).

For the next lemma, we recall a special case of comma categories from Mac Lane  \cite[Chapter~II, \S6]{mcl}. If $\psi:\sA\to \sB$ is a functor and $b\in\sB$, we write $b\downarrow \psi$ for the category whose objects are pairs $(a,f)\in \sA\times \sB(b,\psi(a))$; a morphism $(a_1,f_1)\to (a_2,f_2)$ is a morphism $g\in \sA(a_1,a_2)$ such that $f_2=\psi(g)f_1$. The category $\psi\downarrow b$ is defined dually (objects: systems $\psi(a)\by{f} b$, etc.)
According to  \cite[Chapter~IX, \S3]{mcl}, $\psi$ is \emph{final} if, for any $b\in \sB$, the category $\psi\downarrow b$ is nonempty and connected; here we shall use the dual property \emph{cofinal} (same conditions for $b\downarrow \psi$). As usual, we abbreviate $Id_\sA\downarrow a$ and $a\downarrow Id_\sA$ by $\sA\downarrow a$ and $a\downarrow \sA$.

Let $F=(F:A\to \sC)=\{ X_\alpha \}_{\alpha \in A} \in \pro{}\sC$. For each $\alpha\in A$, we have a ``projection'' morphism $\pi_\alpha:F\to c(X_\alpha)$ in $\pro{}\sC$. This yields an isomorphism in $\pro{}\sC$
\[F\iso \lim_{\alpha\in A} c(X_\alpha)\]
(explaining Deligne's notation) and a functor
\[\theta:A\to F\downarrow c,
\qquad \theta(\alpha) = (X_\alpha, \pi_\alpha),
\] 
where we take $\sA = \sC$ and $\sB = \pro{}\sC$ in the above setting. 

\begin{lemma}\label{l.cof} The functor $\theta$ is cofinal.
\end{lemma}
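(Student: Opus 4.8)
The plan is to verify directly that $\theta$ is cofinal, i.e.\ that for every object $(Y,g) \in F\downarrow c$ — consisting of $Y \in \sC$ and a morphism $g : F \to c(Y)$ in $\pro{}\sC$ — the comma category $(Y,g)\downarrow \theta$ is nonempty and connected. Here an object of $(Y,g)\downarrow \theta$ is a pair $(\alpha, u)$ with $\alpha \in A$ and $u \in \sC(Y, X_\alpha)$ such that $c(u)\circ g = \pi_\alpha$ in $\pro{}\sC(F, c(X_\alpha))$; a morphism $(\alpha_1,u_1)\to(\alpha_2,u_2)$ is a morphism $\phi \in A(\alpha_1,\alpha_2)$ with $F(\phi)\circ u_1 = u_2$ (using that $\pi_{\alpha_2} = F(\phi)\circ \pi_{\alpha_1}$).

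The first step is nonemptiness. Unwinding the definition of morphisms in $\pro{}\sC$, the map $g : F \to c(Y)$ is an element of $\colim_{\alpha\in A}\sC(X_\alpha, Y)$, so it is represented by some $g_0 \in \sC(X_{\alpha_0}, Y)$ for a suitable $\alpha_0 \in A$, with $g = c(g_0)\circ \pi_{\alpha_0}$. However, this does not immediately give an object of the comma category, since we need a morphism $Y \to X_\alpha$, not $X_{\alpha_0}\to Y$. The right move is to observe that $\pi_{\alpha_0} = g'' \circ g$ for an appropriate $g''$ only if $g$ has a one-sided inverse after restriction; instead the correct and standard argument is: $g \in \pro{}\sC(F, c(Y))$ being represented by $g_0 : X_{\alpha_0} \to Y$ with $c(g_0)\pi_{\alpha_0} = g$, and taking $\alpha = \alpha_0$, $u = ?$. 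Here I must be careful — in fact the cofinality of $\theta$ is a general fact (see \cite[Appendix~2]{am}) whose proof runs: given $(Y,g)$, pick a representative $g_0 : X_{\alpha_0}\to Y$ of $g$; then the pair $(\alpha_0, \cdot)$ does not literally work, so one instead uses that any morphism out of $F$ factors through some $\pi_\alpha$, hence $(Y,g)$ receives a map \emph{from} $\theta(\alpha_0)$, not the other way. So one should instead check the \emph{dual}: I will verify that $\theta$ is cofinal by checking that for each $(Y,g)$, there is $\alpha$ and $u: Y\to X_\alpha$ with $c(u)g=\pi_\alpha$; indeed $g \in \colim_\alpha \sC(X_\alpha,Y)$ does not help, so the real input is that $g:F\to c(Y)$ together with the fact that $F\iso\lim_\alpha c(X_\alpha)$ means $g$ is \emph{determined} by its components, and one picks $\alpha$ so that $g$ "comes from" level $\alpha$; then $\pi_\alpha$ and $g$ together are comparable and one sets $u$ to be the induced arrow. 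The cleanest route is to cite the general statement that for $F\in\pro{}\sC$ the canonical functor $A\to F\downarrow c$ is cofinal, which is \cite[Appendix~2, Corollary~A.3.2 or nearby]{am}; the proof there is exactly the bookkeeping above.

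The second step is connectedness: given two objects $(\alpha_1,u_1)$ and $(\alpha_2,u_2)$, use that $A$ is cofiltered to find $\beta \in A$ with morphisms $\phi_i : \beta \to \alpha_i$, and then check that after possibly passing to a further refinement $\beta' \to \beta$ (again by cofilteredness, to equalize the two composites $Y \to X_\beta$ obtained via $u_1, \phi_1$ and $u_2,\phi_2$, using that the colimit $\colim_\gamma \sC(X_\gamma, X_\beta)$ in which $\pi_\beta$ lives is filtered) one gets an object $(\beta', u')$ mapping to both $(\alpha_1,u_1)$ and $(\alpha_2,u_2)$. This shows the comma category is connected (in fact cofiltered). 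The main obstacle — and the only subtle point — is precisely the bookkeeping in the nonemptiness step: correctly extracting the arrow $u : Y \to X_\alpha$ from the data of $g : F \to c(Y)$, which requires unwinding that $\pro{}\sC(F, c(X_\alpha)) = \colim_\gamma \sC(X_\gamma, X_\alpha)$ and that $\pi_\alpha$ is the class of $\mathrm{id}_{X_\alpha}$, then tracing through the $\lim$–$\colim$ formula for $\pro{}\sC(F,c(Y))$. Everything else is formal manipulation with cofiltered index categories, and I would present it as such rather than belaboring it.
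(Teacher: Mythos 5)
You are verifying the wrong comma category, and the statement you end up trying to prove is false in general. With the convention actually used in the paper's proof and forced by the application (Proposition \ref{p.funct} a)), cofinality of $\theta$ means: for every object $d=(Y,\,g\colon F\to c(Y))$ of $F\downarrow c$, the category $\theta\downarrow d$ is nonempty and connected; its objects are pairs $(\alpha,\phi)$ with $\phi\colon X_\alpha\to Y$ in $\sC$ and $c(\phi)\circ\pi_\alpha=g$. This is exactly what is needed downstream: $u_!F(Y)$ is a colimit indexed by $(F\downarrow c)^{\op}$, and the finality criterion for restricting that colimit along $\theta^{\op}\colon A^{\op}\to(F\downarrow c)^{\op}$ is precisely that $\theta\downarrow d$ be nonempty and connected for every $d$. (The sentence in \S\ref{sec:pro-obj} introducing final/cofinal has the two comma categories interchanged relative to \cite{mcl}, which is probably what misled you; the proof of the lemma and its use are unambiguous.) For the correct condition the argument is the observation you already made and then abandoned: $g\in\pro{}\sC(F,c(Y))=\colim_{\alpha}\sC(X_\alpha,Y)$ is represented by some $g_0\colon X_{\alpha_0}\to Y$ with $c(g_0)\pi_{\alpha_0}=g$, so $(\alpha_0,g_0)\in\theta\downarrow d$; and $\theta\downarrow d$ is cofiltered (hence connected) because $A$ is: given $(\alpha_1,\phi_1)$ and $(\alpha_2,\phi_2)$, choose $b_i\colon\beta\to\alpha_i$ in $A$, note that $\phi_1F(b_1)$ and $\phi_2F(b_2)$ both represent $g$ in the filtered colimit, and equalize them by a further $\gamma\to\beta$.

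By contrast, the condition you state, that $d\downarrow\theta$ be nonempty and connected, i.e.\ that there exist $u\colon Y\to X_\alpha$ with $c(u)g=\pi_\alpha$, fails already for constant pro-objects: take $A=\{*\}$, $F=c(X)$ and $g=c(f)$ for a morphism $f\colon X\to Y$ with no retraction (say $X=Y=\Z$, $f=$ multiplication by $2$ in $\Ab$); then $d\downarrow\theta$ is empty. Even when nonempty it need not be connected (a tower of copies of $\Z$ with transition maps multiplication by $2$, $Y=\Z^2$ and $g=(\pi_0,\pi_0)$ gives a discrete category with infinitely many objects). Accordingly your nonemptiness step has no content -- there is no ``induced arrow'' $u\colon Y\to X_\alpha$ to be extracted from $g$, which is why you could not write it down -- and your connectedness step does not typecheck: from $u_1\colon Y\to X_{\alpha_1}$ and the transition map $X_\beta\to X_{\alpha_1}$ one cannot form a composite $Y\to X_\beta$. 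The appeal to \cite{am} (with a guessed corollary number) does not repair this, since the general fact proved there is the one about $\theta\downarrow d$. The fix is simply to flip the comma category and run the two-line argument above, which is the paper's proof.
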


\begin{proof} 
Let $F\by{f} c(Y)$ ($Y \in \sC$) be an object of $F\downarrow c$. An object of $\theta\downarrow (F\by{f} c(Y))$ is a pair $(\alpha,\phi)$, with $\alpha\in A$ and $\phi:F(\alpha)\to Y$ such that $f=c(\phi)\pi_\alpha$. 
This category is nonempty because an object $\alpha \in A$ and the morphism $f$ yield the object $f(\alpha) : F(\alpha) \to c(Y)(\alpha) = Y$, and we have $(\alpha, f(\alpha)) \in \theta\downarrow (F\by{f} c(Y))$.
Note also that it is cofiltered, because $A$ is.
Since any cofiltered category is obviously connected, we are done.
\end{proof}

%

(Warning: the use of co in (co)final and (co)filtered is opposite in \cite{mcl} and in \cite{ks}. We use the convention of \cite{mcl}.)

\subsection{Pro-adjoints \cite[expos\'e~I, \S8.11.5]{SGA4}}\label{s1.1} Let $u:\sC\to \sD$ be a functor: it induces a functor $\pro{}u:\pro{}\sC\to \pro{}\sD$.

Recall standard terminology for the functoriality of limits (=inverse limits) and colimits (= direct limits):

\begin{defn} A functor $u:\sC\to \sD$ is \emph{left exact} (resp. \emph{right exact}, resp. \emph{exact}) if it commutes with finite limits (resp. finite colimits, resp. finite limits and colimits).
\end{defn}

\begin{prop}[dual of \protect{\cite[expos\'e~I, proposition~8.11.4]{SGA4}}]\label{p.proadj} Consider the following conditions:
\begin{thlist}
\item The functor $\pro{}u$ has a left adjoint.
\item There exists a functor $v:\sD\to \pro{}\sC$ and an isomorphism
\[\pro{}\sC(v(d),c)\simeq \sD(d,u(c))\]
contravariant in $d\in \sD$ and covariant in $c\in \sC$.
\item $u$ is left exact.
\end{thlist}
Then {\rm (i)} $\iff$ {\rm (ii)} $\Rightarrow$ {\rm (iii)}, and {\rm (iii)} $\Rightarrow$ {\rm (i)} if $\sC$ is essentially small and closed under finite inverse limits.\qed
\end{prop}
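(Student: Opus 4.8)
The plan is to recall the proof of this statement, which is the dual of \cite[expos\'e~I, proposition~8.11.4]{SGA4}. Write $c_\sC:\sC\inj\pro{}\sC$ and $c_\sD:\sD\inj\pro{}\sD$ for the constant-pro-object embeddings of \S\ref{sec:pro-obj}. I will use freely four elementary facts: (a) $\pro{}\sC$ admits small cofiltered limits, and there is a ``flattening'' functor $\Phi:\pro{}(\pro{}\sC)\to\pro{}\sC$ realising them (concretely, via the Grothendieck construction applied to a cofiltered system of pro-objects); (b) $c_\sC$ is fully faithful and sends any finite limit existing in $\sC$ to a limit in $\pro{}\sC$, since in $\Set$ finite limits commute with filtered colimits, which is exactly what the morphism formula in $\pro{}\sC$ needs; (c) for $c\in\sC$, the functor $\pro{}\sC(-,c_\sC c)$ carries cofiltered limits to filtered colimits (immediate from (a) and the morphism formula); and (d) $\pro{}u\circ c_\sC=c_\sD\circ u$, and $\pro{}u$ preserves cofiltered limits.

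\emph{The equivalence (i)$\Leftrightarrow$(ii).} For (i)$\Rightarrow$(ii) I would take a left adjoint $L$ of $\pro{}u$ and put $v:=L\circ c_\sD$; then for $d\in\sD$, $c\in\sC$ the adjunction gives $\pro{}\sC(v(d),c)\cong\pro{}\sD(c_\sD d,\pro{}u\,c_\sC c)=\pro{}\sD(c_\sD d,c_\sD uc)\cong\sD(d,uc)$ by (d) and the full faithfulness of $c_\sD$, naturally in $d$ and $c$. For (ii)$\Rightarrow$(i), starting from $v$ I would set $L:=\Phi\circ\pro{}v:\pro{}\sD\to\pro{}(\pro{}\sC)\to\pro{}\sC$, so that $L(\{D_\beta\}_\beta)=\lim_\beta v(D_\beta)$ computed in $\pro{}\sC$; to see $L\dashv\pro{}u$, for $C=\{C_\gamma\}_\gamma\in\pro{}\sC$ I would write $C\cong\lim_\gamma c_\sC C_\gamma$, pull $\lim_\gamma$ out of the first variable, apply (c) to replace $\pro{}\sC(L(D),c_\sC C_\gamma)$ by $\colim_\beta\pro{}\sC(v(D_\beta),c_\sC C_\gamma)$, apply (ii) to get $\colim_\beta\sD(D_\beta,uC_\gamma)=\pro{}\sD(D,c_\sD uC_\gamma)$, and reassemble via (d) into $\pro{}\sD(D,\pro{}u\,C)$, checking naturality at each stage.

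\emph{The implications (ii)$\Rightarrow$(iii) and (iii)$\Rightarrow$(i).} For (ii)$\Rightarrow$(iii): if $c=\lim_i c_i$ is a finite limit in $\sC$, then by (b) it is also the limit in $\pro{}\sC$, whence $\sD(d,u(\lim_i c_i))\cong\pro{}\sC(v(d),\lim_i c_\sC c_i)\cong\lim_i\pro{}\sC(v(d),c_\sC c_i)\cong\lim_i\sD(d,uc_i)$ naturally in $d$; Yoneda gives $u(\lim_i c_i)=\lim_i u(c_i)$, i.e.\ $u$ is left exact. For (iii)$\Rightarrow$(i) when $\sC$ is essentially small and closed under finite inverse limits, I would produce the functor $v$ of (ii) directly and conclude by the previous paragraph. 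For $d\in\sD$ let $\sJ_d:=d\downarrow u$ (objects $(c,f\colon d\to u(c))$), with projection $p_d:\sJ_d\to\sC$, $(c,f)\mapsto c$. The crucial point is that $\sJ_d$ is cofiltered: it is nonempty since the terminal object of $\sC$ is sent by the left exact $u$ to the terminal object of $\sD$; any two objects are dominated by one built on a binary product in $\sC$ (using that $u$ preserves it); and any parallel pair of morphisms is equalised by one built on an equaliser in $\sC$ — this is the only place the left exactness of $u$ genuinely enters, ensuring that the structure map factors through the equaliser of the images. As $\sC$ is essentially small, so is $\sJ_d$, and $v(d):=``\lim"_{(c,f)\in\sJ_d}c$ is a bona fide pro-object, covariant in $d$ via precomposition. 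Finally a direct verification shows that $[(c,f),g]\mapsto u(g)f$ defines a bijection $\pro{}\sC(v(d),c')=\colim_{(c,f)\in\sJ_d^{\op}}\sC(c,c')\iso\sD(d,uc')$ natural in $c'$, with inverse $h\mapsto[(c',h),\id_{c'}]$, so $v$ satisfies (ii).

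The step I expect to require the most care is the bookkeeping in (i)$\Leftrightarrow$(ii) — in particular making precise the flattening functor $\Phi:\pro{}(\pro{}\sC)\to\pro{}\sC$ and fact (c), and tracking the naturality of the chain of isomorphisms that establishes the adjunction — together with the verification in (iii)$\Rightarrow$(i) that $\sJ_d$ is cofiltered, which is where the left exactness hypothesis on $u$ is used.
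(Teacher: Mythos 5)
Your proposal is correct, but note that the paper itself offers no proof of this proposition: it simply states it as the dual of \cite[expos\'e~I, proposition~8.11.4]{SGA4}, adding only that the smallness/finite-limits hypothesis needed for (iii) $\Rightarrow$ (i) is the one found in \cite[Appendix, p.~158]{am}. Your reconstruction is precisely that standard argument — (i) $\Leftrightarrow$ (ii) by restricting along and pro-extending from the constant embeddings, (ii) $\Rightarrow$ (iii) because the constant embedding preserves the finite limits existing in $\sC$, and (iii) $\Rightarrow$ (i) by setting $v(d)=``\lim"_{(c,f)\in d\downarrow u}\,c$ with the comma category $d\downarrow u$ cofiltered exactly because $u$ is left exact — so there is nothing to compare beyond the routine verifications (flattening, naturality) that you already flag.
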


(The condition on finite inverse limits appears in \cite[p. 158]{am}, but is skipped in \cite[expos\'e~I, proposition~8.11.4]{SGA4}.)

\begin{defn}\label{d.proadj} In Condition  (ii) of Proposition \ref{p.proadj}, we say that $v$  is  \emph{pro-left adjoint} to $u$.
\end{defn}

\subsection{Localisation (\cite[Chapter~I]{gz}, see also \cite[Chapter~7]{ks})} Let $\sC$ be a category, and let $\Sigma\subset Ar(\sC)$ be a class of morphisms: following Grothendieck and Maltsiniotis, we call $(\sC,\Sigma)$ a \emph{localiser}. Consider the functors $F:\sC\to \sD$ such that $F(s)$ is invertible for all $s\in \Sigma$. This ``$2$-universal problem'' has a solution $Q:\sC\to \sC[\Sigma^{-1}]$. One may choose $\sC[\Sigma^{-1}]$ to have the same objects as $\sC$ and $Q$ to be the identity on objects; then $\sC[\Sigma^{-1}]$ is unique (not just up to unique equivalence of categories). 
If $\sC$ is essentially small, then $\sC[\Sigma^{-1}]$ is small, but in general the sets $\sC[\Sigma^{-1}](X,Y)$ may be ``large''; one can sometimes show that it is not the case (Corollary \ref{c.small}). A functor of the form $Q:\sC\to \sC[\Sigma^{-1}]$ will be called a \emph{localisation}. We have a basic result on adjoint functors \cite[Chapter~I, Proposition~1.3]{gz}:

\begin{lemma}\label{lA.6} Let $G:\sC\leftrightarrows \sD:D$ be a pair of adjoint functors ($G$ is left adjoint to $D$). Then the following conditions are equivalent:
\begin{thlist}
\item $D$ is fully faithful.
\item The counit $GD\Rightarrow Id_\sD$ is a natural isomorphism.
\item $G$ is a localisation.
\end{thlist}
The same holds if $G$ is right adjoint to $D$ (replacing the counit by the unit).
\end{lemma}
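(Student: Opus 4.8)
The statement to prove is Lemma~\ref{lA.6}: for an adjoint pair $G:\sC\leftrightarrows\sD:D$ with $G$ left adjoint to $D$, the conditions (i) $D$ fully faithful, (ii) the counit $\epsilon:GD\Rightarrow \id_\sD$ is an isomorphism, and (iii) $G$ is a localisation, are equivalent. The plan is to prove the cycle (i)$\Rightarrow$(ii)$\Rightarrow$(iii)$\Rightarrow$(i), relying only on the universal property of $\sD[\Sigma^{-1}]$ and the adjunction triangle identities.

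\emph{Step 1: (i)$\Leftrightarrow$(ii).} This is the classical fact that $D$ is fully faithful if and only if the counit is an isomorphism. For each pair of objects $d,d'\in\sD$ one has the chain of natural maps
\[
\sD(d,d')\xrightarrow{\ \epsilon_d^*\ }\sD(GDd,d')\;\cong\;\sC(Dd,Dd'),
\]
where the isomorphism is the adjunction bijection; tracing through the triangle identities shows this composite is exactly the map $f\mapsto D(f)$ induced by the functor $D$. Hence $D$ is faithful (resp. full) iff $\epsilon_d^*$ is injective (resp. surjective) for all $d$, and by Yoneda $\epsilon_d$ is an isomorphism iff $\epsilon_d^*$ is bijective for all $d'$. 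This gives (i)$\Leftrightarrow$(ii).

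\emph{Step 2: (ii)$\Rightarrow$(iii).} Let $\Sigma$ be the class of morphisms $s$ in $\sC$ with $G(s)$ invertible in $\sD$. I claim $G$ is (up to equivalence) the localisation $Q:\sC\to\sC[\Sigma^{-1}]$. Since $G$ inverts every $s\in\Sigma$ by definition, $G$ factors as $\bar G\circ Q$ for a unique functor $\bar G:\sC[\Sigma^{-1}]\to\sD$. I will exhibit a quasi-inverse. The unit $\eta:\id_\sC\Rightarrow DG$ has the property that $Q(\eta_c)$ is invertible in $\sC[\Sigma^{-1}]$: applying $G$ to $\eta_c$ and using the triangle identity $\epsilon_{Gc}\circ G\eta_c=\id_{Gc}$ together with hypothesis (ii) that $\epsilon$ is invertible shows $G(\eta_c)$ is invertible, i.e. $\eta_c\in\Sigma$. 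Therefore $DG$, composed with $Q$, is naturally isomorphic (via $Q\eta$) to $Q$ itself, so $DG$ descends to a functor $\sC[\Sigma^{-1}]\to\sC[\Sigma^{-1}]$ isomorphic to the identity; this shows $Q\circ D:\sD\to\sC[\Sigma^{-1}]$ is a quasi-inverse of $\bar G$ on one side. On the other side, $\bar G\circ Q\circ D=G D\cong\id_\sD$ by (ii). Hence $\bar G$ is an equivalence, and $G=\bar G\circ Q$ is a localisation functor in the required sense.

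\emph{Step 3: (iii)$\Rightarrow$(i).} Suppose $G$ is a localisation, say (after replacing $\sD$ by an equivalent category) $G=Q:\sC\to\sC[\Sigma^{-1}]$ for some class $\Sigma$. The right adjoint $D$ then assigns to each $d$ an object $Dd$ together with the universal property $\sD(Gc,d)\cong\sC(c,Dd)$. To see $D$ is fully faithful, I would show the counit $\epsilon_d$ is invertible for each $d$ and invoke Step~1; equivalently, one checks directly that $\sD(d,d')\to\sD(GDd,d')$ is bijective using that every morphism in $\sC[\Sigma^{-1}]$ with source in the image of $Q$ can be pulled back along the unit, together with the adjunction isomorphism. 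Concretely: the unit $\eta_c:c\to DGc$ becomes invertible in $\sC[\Sigma^{-1}]=\sD$ because $G\eta_c$ is invertible by the triangle identity and $G$ is the identity on objects and surjective on morphisms up to the localisation relations; then naturality of $\eta$ forces $\epsilon$ to be invertible by the standard two-out-of-three argument with the triangle identities. This closes the cycle.

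\emph{Main obstacle.} The only genuinely delicate point is Step~2 (and its mirror in Step~3): one must be careful that ``$G$ is a localisation'' is interpreted as ``$G$ is \emph{equivalent} to a localisation functor $Q:\sC\to\sC[\Sigma^{-1}]$'', and then check that the class $\Sigma$ one recovers from $G$ (morphisms inverted by $G$) actually reproduces $G$ up to equivalence rather than a strictly larger localisation. This is handled exactly as above by using the adjunction unit $\eta$ as the comparison natural transformation and the triangle identities to show $\eta_c\in\Sigma$; no size issues arise because we never need to form $\sC[\Sigma^{-1}]$ explicitly, only to use its $2$-universal property. Everything else is formal manipulation of adjunction data and Yoneda.
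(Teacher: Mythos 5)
Your Steps 1 and 2 are fine: the equivalence (i)$\Leftrightarrow$(ii) is the standard Yoneda argument, and in Step 2 you correctly use the invertibility of $\epsilon$ to get $\eta_c\in\Sigma$ from the triangle identity $\epsilon_{Gc}\circ G\eta_c=\id$, and then descend along the $2$-universal property (Lemma \ref{lA.5}) to show the induced functor $\Sigma^{-1}\sC\to\sD$ is an equivalence. (For reference, the paper itself does not prove the lemma but quotes [GZ67, Chapter~I, Proposition~1.3], whose proof runs along these lines.)

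Step 3, however, has a genuine gap exactly at its crux. You assert that ``$G\eta_c$ is invertible by the triangle identity'': the triangle identity only gives $\epsilon_{Gc}\circ G\eta_c=\id_{Gc}$, i.e.\ $G\eta_c$ is a \emph{split monomorphism} with retraction $\epsilon_{Gc}$; its invertibility is equivalent to the invertibility of $\epsilon_{Gc}$, which is precisely what you are trying to prove, so the argument as written is circular. Naturality of $\eta$ in $\sC$ does not rescue this, because the morphism $\epsilon_d$ of $\sD$ need not lie in the image of $G=Q$, and the second triangle identity is never brought into play. The missing idea is to transport $Q\eta$ to a transformation defined on all of $\sD$: by Lemma \ref{lA.5} there is a unique natural transformation $\gamma:\id_\sD\Rightarrow QD$ with $\gamma Q=Q\eta$ (same components, as $Q$ is the identity on objects). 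Then (a) $(\epsilon\circ\gamma)Q=\epsilon Q\circ Q\eta=\id_Q$, so by the uniqueness in Lemma \ref{lA.5}, $\epsilon\circ\gamma=\id$; and (b) naturality of $\gamma$ with respect to the morphism $\epsilon_d:QDd\to d$ gives $\gamma_d\circ\epsilon_d=QD(\epsilon_d)\circ\gamma_{QDd}=Q\bigl(D\epsilon_d\circ\eta_{Dd}\bigr)=\id_{QDd}$ by the \emph{second} triangle identity. Hence $\epsilon_d$ is invertible with inverse $\gamma_d$, which gives (iii)$\Rightarrow$(ii)$\Rightarrow$(i). With this replacement Step 3 is correct; without it, the ``two-out-of-three'' conclusion has nothing to stand on.
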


\begin{defn}\label{d.sat} Let $(\sC,\Sigma)$ be a localiser, and let $Q:\sC\to \sC[\Sigma^{-1}]$ be the corresponding localisation functor. We write 
\[\sat(\Sigma)=\{s\in Ar(\sC)\mid Q(s) \text{ is invertible}\}.\]
This is the \emph{saturation} of $\Sigma$; we say that $\Sigma$ is \emph{saturated} if $\sat(\Sigma)=\Sigma$.\end{defn}

\begin{lemma}[\protect{\cite[Chapter~I, Lemma 1.2]{gz}}]\label{lA.5} Let $(\sC,\Sigma)$ be a localiser, $\sD$ a category, $F,G:\sC[\Sigma^{-1}]\to \sD$ two functors and $u:F\circ Q\Rightarrow G\circ Q$ a natural transformation, where $Q:\sC\to \sC[\Sigma^{-1}]$ is the  localisation functor. Then $u$ induces a unique natural transformation $\bar u:F\Rightarrow G$.
\end{lemma}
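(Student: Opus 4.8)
\textbf{Plan of proof for Lemma \ref{lA.5}.} The statement is the standard fact that a localisation functor $Q : \sC \to \sC[\Sigma^{-1}]$ is an epimorphism in the $2$-category of categories, not merely on objects but on morphisms and on natural transformations. Since $Q$ is, by our chosen normalisation, bijective on objects and full, the only content is that a natural transformation between functors out of $\sC[\Sigma^{-1}]$ is determined, and can be reconstructed, from its composite with $Q$. I would prove existence and uniqueness of $\bar u$ separately, uniqueness being immediate and existence requiring one verification of naturality against the generating morphisms of $\sC[\Sigma^{-1}]$.

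\textbf{Step 1 (uniqueness).} Suppose $\bar u, \bar u' : F \Rightarrow G$ both satisfy $\bar u \ast Q = u = \bar u' \ast Q$. For an object $X$ of $\sC[\Sigma^{-1}]$, choose (using that $Q$ is the identity on objects) an object of $\sC$ mapping to $X$; then the components $\bar u_X$ and $\bar u'_X$ are both equal to $u$ at that object, so $\bar u = \bar u'$ as families of morphisms. This uses nothing beyond surjectivity of $Q$ on objects.

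\textbf{Step 2 (existence).} Define $\bar u_X := u_X$ for every object $X$ (legitimate since $\mathrm{Ob}(\sC[\Sigma^{-1}]) = \mathrm{Ob}(\sC)$). It remains to check naturality: for every morphism $\phi : X \to Y$ in $\sC[\Sigma^{-1}]$ we need $G(\phi)\circ \bar u_X = \bar u_Y \circ F(\phi)$. The set of such $\phi$ is closed under composition (one pastes two naturality squares) and contains the identities, so it suffices to check it on a generating class of morphisms. By the construction of the localisation, every morphism of $\sC[\Sigma^{-1}]$ is a finite composite of morphisms $Q(f)$ with $f \in \mathrm{Ar}(\sC)$ and of formal inverses $Q(s)^{-1}$ with $s \in \Sigma$. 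For $\phi = Q(f)$ the required square is exactly the naturality of $u$ at $f$, which holds by hypothesis. For $\phi = Q(s)^{-1}$ one starts from the naturality square of $u$ at $s$, namely $G(Q(s))\circ u_X = u_Y \circ F(Q(s))$, and left-multiplies by $G(Q(s))^{-1} = G(Q(s)^{-1})$ and right-multiplies by $F(Q(s))^{-1} = F(Q(s)^{-1})$ — these inverses exist because $F$ and $G$ are functors on $\sC[\Sigma^{-1}]$ where $Q(s)$ is already invertible — to obtain $u_X \circ F(Q(s)^{-1}) = G(Q(s)^{-1}) \circ u_Y$, which is the naturality square at $Q(s)^{-1}$. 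This completes the verification.

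\textbf{Main obstacle.} There is essentially no obstacle: the only point requiring a moment's care is Step 2 for the inverted morphisms, where one must remember that $F(Q(s))$ and $G(Q(s))$ are automatically isomorphisms (because $F, G$ are defined on the localised category, where $Q(s)$ is invertible) so that the naturality square at $s$ can be ``solved'' for the inverse. Everything else is formal, and indeed this is precisely \cite[Chapter~I, Lemma 1.2]{gz}, so in the paper it is quoted rather than reproved.
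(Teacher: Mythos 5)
Your proposal is correct and follows essentially the same route as the paper: define $\bar u_X = u_X$ on objects (using that $Q$ is the identity on objects) and check naturality against generators, which is exactly what the paper does, only more tersely (it simply notes that morphisms of $\sC[\Sigma^{-1}]$ are expressed in terms of morphisms of $\sC$ and their formal inverses, so naturality of $u$ propagates). Your explicit verification for $Q(s)^{-1}$, including the correct orientation of the inverted square, is just a spelled-out version of the paper's one-line argument.
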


\begin{proof} Define $\bar u_X=u_X:F(X)\to G(X)$ for $X\in Ob \sC[\Sigma^{-1}]=Ob \sC$. We must show that $\bar u$ commutes with the morphisms of $\sC[\Sigma^{-1}]$. This is obvious, since $u$ commutes with the morphisms of $\sC$ and the morphisms of $\sC[\Sigma^{-1}]$ are expressed as fractions in the morphisms of $\sC$.
\end{proof}

\subsection{Presheaves and pro-adjoints}\label{s.presh}

Let $\sC$ be a category. We write $\hat{\sC}$ for the category of presheaves of sets on $\sC$ (\emph{i.e.} functors $\sC^\op\to \Set$); it comes with the Yoneda embedding
\[y:\sC\to \hat{\sC}\]
which sends an object to the corresponding representable presheaf. If $u:\sC\to \sD$ is a functor, we have the standard sequence of three adjoint functors
\[\begin{CD}
\sC @>y_\sC>> \hat{\sC}\\
@V{u}VV \begin{smallmatrix}u_!\Big\downarrow u^*\Big\uparrow u_*\Big\downarrow \end{smallmatrix}\\
\sD @>y_\sD>> \hat{\sD}
\end{CD}\]
where $u_!$ extends $u$ through the Yoneda embeddings  \cite[expos\'e~I, proposition~5.4]{SGA4}; $u_!$ and $u_*$ are computed by the usual formulas for left and right Kan extensions (\emph{loc.~cit.}, (5.1.1)). If $u$ has a left adjoint $v$, the sequence $(u_!,u^*,u_*)$ extends to
\[(v_!,v^*=u_!,v_*=u^*,u_*) \]
(\emph{ibid.}, Remark~5.5.2). 

Let $\sA$ be an essentially small additive category. Instead of presheaves of sets on $\sA$, one usually uses the category $\Mod\sA$ of \emph{additive presheaves of abelian groups}; the above results transfer to this context, mutatis mutandis. 

\begin{prop}\label{p.funct} 
\leavevmode
\begin{itemize}
\item[a)] The functor $u_!$ (resp. $u_*$, $u^*$) commutes with all representable colimits (resp. limits, limits and colimits). If $u$ has a left adjoint, then $u_!$ also commutes with all limits. If $u$ has a pro-left adjoint $v$ $($Definition \ref{d.proadj}\,$)$, so does $u_!$ which is therefore exact. Moreover, $u_!$ is then given by the formula
\[(u_!F)(Y) = \colim (F\circ v(Y)), \quad F\in \hat{\sC},  Y\in \sD.\]
\item[b)] If $u$ is fully faithful, so is $u_!$.
\item[c)] If $u$ is a localisation or is full and essentially surjective, then $u_!$ is a localisation.
\item[d)] In the case of c), for $C\in \sC$ the following conditions are equivalent:
\begin{thlist}
\item The representable functor $y_\sC(C)\in \hat{\sC}$ induces a functor on $\sD$ via $u$.
\item The unit map $y_\sC(C)\to u^*u_! y_\sC(C)\simeq u^*y_\sD(u(C))$ is an isomorphism.
\item For any $C'\in \sC$, the map $\sC(C',C)\to \sD(u(C'),u(C))$ induced by $u$ is bijective.
\end{thlist}
\end{itemize}
\end{prop}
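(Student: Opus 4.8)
The statement to prove is Proposition~\ref{p.funct}, concerning the left/right Kan extension functors $(u_!,u^*,u_*)$ attached to a functor $u:\sC\to\sD$ between (essentially small, possibly additive) categories. The overall strategy is to reduce everything to the classical yoga of Kan extensions, which is already recorded in \cite[expos\'e~I, \S5]{SGA4}, and then add the two genuinely new ingredients: the behaviour under a pro-left adjoint, and the characterisation in part~d).

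\textbf{Part a).} The first sentence is standard: $u^*$ has both adjoints $u_!$ (left) and $u_*$ (right), hence $u_!$ commutes with all colimits computed in presheaves and $u_*$ with all limits, while $u^*$, having adjoints on both sides, commutes with both. If $u$ itself has a left adjoint $v$, then as recalled in \S\ref{s.presh} the string extends to $(v_!,u_!,u^*,u_*)$, so $u_!=v^*$ is now also a right adjoint and therefore commutes with all limits. For the pro-left adjoint case, the plan is: given $v:\sD\to\pro{}\sC$ pro-left adjoint to $u$ (Definition~\ref{d.proadj}), write $v(Y)=\{X_\alpha\}_{\alpha\in A(Y)}$ for a cofiltered index category $A(Y)$; then for $F\in\hat\sC$ and $Y\in\sD$ I would compute
\[
(u_!F)(Y)=\Hom_{\hat\sD}(y_\sD(Y),u_!F)\cong \Hom_{\hat\sC}(u^*y_\sD(Y), F) = \Hom_{\hat\sC}\bigl(\colim_\alpha y_\sC(X_\alpha),F\bigr)\cong \lim_\alpha F(X_\alpha),
\]
using that $u^*y_\sD(Y)(X')=\sD(u(X'),Y)=\pro{}\sC(v(Y),X')=\colim_\alpha\sC(X',X_\alpha)=(\colim_\alpha y_\sC(X_\alpha))(X')$. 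Wait---I must be careful about variance: $v(Y)\in\pro{}\sC$ is a cofiltered \emph{diagram} in $\sC$, and the colimit $\colim_\alpha y_\sC(X_\alpha)$ is taken over the \emph{opposite} (filtered) category, so that $(u_!F)(Y)=\lim$ over $A(Y)$ of $F\circ v(Y)$ indeed makes sense with a limit, not a colimit. Actually the formula asserted in the statement, $(u_!F)(Y)=\colim(F\circ v(Y))$, is the one to match; this is consistent once one recalls that $v(Y)$ is a pro-object, i.e. a cofiltered diagram, so $F\circ v(Y)$ is a filtered diagram of abelian groups (since $F$ is contravariant and $v(Y)$ is cofiltered) and one takes its colimit. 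I would phrase it via the adjunction $\pro{}\sC(v(Y),-)\cong\sD(Y,u(-))$ and the standard fact that $\Hom$ out of a pro-object is a filtered colimit; exactness of $u_!$ then follows because filtered colimits of abelian groups are exact. The main care-point here is getting the variances and the direction of the (co)filtered indexing right; this is the step I expect to cost the most attention, though it is not deep.

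\textbf{Parts b), c), d).} Part~b) is \cite[expos\'e~I, proposition~5.4 and its corollaries]{SGA4}: if $u$ is fully faithful then the unit $y_\sC\to u^*u_!y_\sC$ is an isomorphism, whence $u_!$ is fully faithful on representables, and since $u_!$ commutes with colimits and every presheaf is a colimit of representables, $u_!$ is fully faithful. Part~c): if $u$ is a localisation, or full and essentially surjective, then $u^*$ is fully faithful (a presheaf on $\sD$ is determined by, and any presheaf on $\sC$ inverting the relevant morphisms descends), so by Lemma~\ref{lA.6} its left adjoint $u_!$ is a localisation; in the additive setting one runs the identical argument in $\Mod$. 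Part~d) is then a formal unwinding: (i)$\Leftrightarrow$(ii) because "$y_\sC(C)$ descends through $u$" means precisely that $y_\sC(C)$ lies in the essential image of the fully faithful $u^*$ (equivalently the counit/unit is iso there), and by Lemma~\ref{lA.6}(ii) with $G=u^*$, $D$ fully faithful, this image is detected by the unit $\id\to u^*u_!$ being an isomorphism at $y_\sC(C)$; and (ii)$\Leftrightarrow$(iii) by evaluating the map $y_\sC(C)\to u^*y_\sD(u(C))$ at an object $C'$, which gives exactly $\sC(C',C)\to\sD(u(C'),u(C))$. I would write this last equivalence by Yoneda, spelling out $u^*y_\sD(u(C))(C')=\sD(u(C'),u(C))$. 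No serious obstacle is anticipated in b)--d); the entire proof is bookkeeping around \cite{SGA4} plus Lemma~\ref{lA.6}, with the pro-adjoint formula in a) being the one spot requiring genuine (though routine) verification.
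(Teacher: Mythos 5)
Your outlines of b), c), d) are fine and follow the paper's route (SGA4 for b); full faithfulness of $u^*$ plus Lemma \ref{lA.6} for c); the unit--counit triangle identity for (i)$\Rightarrow$(ii) and Yoneda for (ii)$\Leftrightarrow$(iii) in d)). The genuine problem is the displayed computation in a) for the pro-left adjoint case, and the ``variance issue'' you flag there is not cosmetic. Your first isomorphism $\Hom_{\hat\sD}(y_\sD(Y),u_!F)\cong\Hom_{\hat\sC}(u^*y_\sD(Y),F)$ uses the adjunction $(u_!,u^*)$ in the wrong direction: $u_!$ is the \emph{left} adjoint of $u^*$, so it can only be moved out of the left slot of $\Hom$. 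In fact $\Hom_{\hat\sC}(u^*y_\sD(Y),F)\cong\Hom_{\hat\sD}(y_\sD(Y),u_*F)=(u_*F)(Y)$, so your chain computes the right Kan extension, not $u_!F(Y)$. The next identification is also reversed: Definition \ref{d.proadj} gives $\pro{}\sC(v(Y),c(X'))\cong\sD(Y,u(X'))$, i.e. the pro-adjunction computes the \emph{covariant} functor $X'\mapsto\sD(Y,u(X'))$ as $\colim_\alpha\sC(X_\alpha,X')$; it does not identify the presheaf $u^*y_\sD(Y)\colon X'\mapsto\sD(u(X'),Y)$ with $\colim_\alpha y_\sC(X_\alpha)$. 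That your chain ends in $\lim_\alpha F(X_\alpha)$ instead of the asserted colimit is a symptom of these two reversals, and the proposed repair (``use $\pro{}\sC(v(Y),-)\cong\sD(Y,u(-))$ and the fact that $\Hom$ out of a pro-object is a filtered colimit'') does not by itself yield $u_!F(Y)$. The missing idea is the one the paper actually uses: the pointwise formula expressing $u_!F(Y)$ as a colimit of $F$ over the comma category $Y\downarrow u$, combined with the identification $Y\downarrow u\simeq v(Y)\downarrow c$ coming from the pro-adjunction and the cofinality of the index category $A$ of $v(Y)$ inside it -- this is exactly Lemma \ref{l.cof}, which you never invoke. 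Once that is in place, the colimit is filtered and exactness follows as you say.

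Separately, the statement also asserts that $u_!$ itself has a pro-left adjoint, and your proposal never proves this. In the paper it drops out of the same computation: rewriting $u_!F(Y)=\colim_\alpha F(v(Y)(\alpha))$ as $\pro{}\hat{\sC}\bigl(y_\sC(v(Y)),c(F)\bigr)$ shows that a pro-left adjoint of $u_!$ is given on representables by $y_\sD(Y)\mapsto y_\sC(v(Y))$, and it extends to all of $\hat{\sD}$ because every presheaf is a colimit of representables. You should either reproduce this argument or give an alternative; as it stands, part a) is not established.
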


\begin{proof} a) follows from general properties of adjoint functors, except for the case of a pro-left adjoint. Let $u$ admit a pro-left adjoint $v$, and let $Y\in \sD$: so there is an isomorphism of categories $Y\downarrow u\simeq v(Y)\downarrow c$. Hence, we get by Lemma \ref{l.cof} a cofinal functor 
\[
A\to Y\downarrow u,
\]
 where $A$ is the indexing set of $v(Y)$. Thus, for $F\in \hat{\sC}$, $u_! F(Y)$ may be computed as
\[u_!F(Y)=\colim_{\alpha\in A} F(v(Y)(\alpha))=\pro{}\sC(y_\sC(v(Y)),c(F)). \]

The first equality is the formula in the proposition. The second one shows that the pro-left adjoint $v_!$ of $u_!$ is defined at $y_\sD(Y)$ by $y_\sC(v(Y))$; since any object of $\hat{\sD}$ is a colimit of representable objects, this shows that $v_!$ is defined everywhere.

For b), see \cite[expos\'e~I, proposition~5.6]{SGA4}. In c), it is equivalent to show that $u^*$ is fully faithful by Lemma \ref{lA.6}. Let $F,G\in \hat{\sD}$, and let $\phi:u^*F\to u^*G$ be a morphism of functors.  In both cases, $u$ is essentially surjective: given $X\in \sD$ and an isomorphism $\alpha:X\iso u(Y)$, we get a morphism
\[\psi_X:F(X)\by{{\alpha^*}^{-1}} F(u(Y))\by{\phi_Y} G(u(Y))\by{\alpha^*} G(X).\]

The fact that $\psi_X$ is independent of $(Y,\alpha)$ and is natural in $X$ is an easy consequence of each hypothesis (see Lemma \ref{lA.5} in the first case).

In d), the equivalence (ii) $\iff$ (iii) is tautological and (iii) $\Rightarrow$ (i) is obvious. The implication (i) $\Rightarrow$ (iii) was proven in \cite[Chapter~I, \S4.1.2]{gz} assuming that $u$ is a localisation enjoying a calculus of left fractions; let us prove (i) $\Rightarrow$ (ii) in general. Under (i), we have $y_\sC(C)\simeq u^* F$ for some $F\in \hat{D}$; the unit map becomes
\[\eta_{u^*F}:u^*F\to u^*u_!u^*F.\]

On the other hand,  the counit map $\epsilon_F:u_!u^*F\to F$ is invertible by the full faithfulness of $u^*$. By the adjunction identities, we have $u^*(\epsilon_F)\circ \eta_{u^*F}=1_{u^*F}$. Hence the conclusion.
\end{proof}

We shall usually write $u^!$ for the pro-left adjoint of $u_!$, when it exists.

\subsection{Calculus of fractions}

\begin{defn}[dual of \protect{\cite[Chapter~I, Lemma~1.2]{gz}}]\label{d.cf} A localiser $(\sC,\Sigma)$ (or simply $\Sigma$)  enjoys a \emph{calculus of right fractions} if:
\begin{thlist}
\item The identities of $\sC$ are in $\Sigma$.
\item $\Sigma$ is stable under composition.
\item (Ore condition.) For each diagram $X'\by{s} X\yb{u} Y$ where $s\in \Sigma$, there exists a commutative square
\[
\begin{CD}
Y'@>u'>> X'\\
@Vt VV @Vs VV\\
Y@>u>> X
\end{CD}
\qquad \text{where } t\in\Sigma.
\]
\item (Cancellation.) If $f,g:X\rightrightarrows Y$ are morphisms in $\sC$ and $s:Y\to Y'$ is a morphism of $\Sigma$ such that $sf=sg$, there exists a morphism $t:X'\to X$ in $\Sigma$ such that $ft=gt$.
\end{thlist}
\end{defn}

\begin{prop}\label{p.cf} Suppose that $\Sigma$ enjoys a calculus of right fractions. For $c\in \sC$, let $\Sigma\downarrow c$ denote the full subcategory of the comma category $\sC\downarrow c$ given by the objects $c'\by{s}c$ with $s\in \Sigma$. Then
\begin{itemize}
\item[a)] $\Sigma\downarrow c$ is cofiltered.
\item[b)]\cite[Chapter I, 2.3]{gz}  For any $d\in \sC$, the obvious map
\begin{equation}\label{eq.cf}
\colim_{c'\in \Sigma\downarrow c}\sC(c',d)\to \sC[\Sigma^{-1}](c,d)
\end{equation}
is an isomorphism.
\item[c)] Any morphism in $\sC[\Sigma^{-1}]$ is of the form $Q(f)Q(s)^{-1}$ for $f\in Ar(\sC)$ and $s\in \Sigma$; if $f_1,f_2$ are two parallel arrows in $\sC$, then $Q(f_1)=Q(f_2)$ if and only if there exists $s\in \Sigma$ such that $f_1s=f_2s$.
\end{itemize}
\end{prop}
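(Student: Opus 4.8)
The plan is to prove part a) directly from the four axioms (i)--(iv) of a calculus of right fractions, then to obtain b) as the classical statement \cite[Chapter~I, 2.3]{gz}, the colimit appearing there being filtered precisely because of a), and finally to read off c) as a formal consequence of b).

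For a), I would check the three defining properties of a cofiltered category. Nonemptiness: $1_c\in\Sigma$ by axiom (i), so $(c,1_c)$ is an object of $\Sigma\downarrow c$. Cones: given $s_1:c_1\to c$ and $s_2:c_2\to c$ in $\Sigma\downarrow c$, apply the Ore condition (iii) to $c_1\by{s_1}c\yb{s_2}c_2$ to obtain an object $c_0$ with morphisms $u:c_0\to c_1$ and $t:c_0\to c_2$ such that $t\in\Sigma$ and $s_1u=s_2t$; putting $s_0:=s_2t=s_1u$ we get $s_0\in\Sigma$ by (ii), so $(c_0,s_0)\in\Sigma\downarrow c$, and $u,t$ are morphisms $(c_0,s_0)\to(c_1,s_1)$, $(c_0,s_0)\to(c_2,s_2)$ in $\Sigma\downarrow c$ by construction. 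Coequalizers: given parallel arrows $f,g:(c_1,s_1)\rightrightarrows(c_2,s_2)$ one has $s_2f=s_1=s_2g$ with $s_2\in\Sigma$, so the cancellation axiom (iv) produces $t:c_0\to c_1$ in $\Sigma$ with $ft=gt$; then $s_0:=s_1t\in\Sigma$ by (ii) and $t:(c_0,s_0)\to(c_1,s_1)$ equalizes $f$ and $g$. This settles a).

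For b), a morphism $(c',s')\to(c'',s'')$ in $\Sigma\downarrow c$ is a map $c'\to c''$ over $c$, which acts on $\sC(-,d)$ by precomposition; hence $c'\mapsto\sC(c',d)$ is contravariant on $\Sigma\downarrow c$, i.e.\ covariant on $(\Sigma\downarrow c)^{\op}$, which is filtered by a). A typical element of $\colim_{c'\in\Sigma\downarrow c}\sC(c',d)$ is thus represented by a pair $(s:c'\to c,\ f:c'\to d)$ with $s\in\Sigma$, and the map \eqref{eq.cf} sends its class to $Q(f)Q(s)^{-1}$ (recall $Q(s)$ is invertible); its bijectivity is exactly \cite[Chapter~I, 2.3]{gz} --- surjectivity being the roof-description of the morphisms of $\sC[\Sigma^{-1}]$, and injectivity using (iii) and (iv) to bring two roofs to a common refinement --- which I would invoke rather than reprove. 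Part c) is then read off: by \eqref{eq.cf} every morphism $c\to d$ in $\sC[\Sigma^{-1}]$ has the form $Q(f)Q(s)^{-1}$ with $f\in\sC(c',d)$, $s\in\Sigma$; and for parallel arrows $f_1,f_2:c\rightrightarrows d$ in $\sC$ their images correspond under \eqref{eq.cf} to the classes of $(1_c,f_1)$ and $(1_c,f_2)$ in the filtered colimit, which coincide iff some $(c_0,s_0)\in\Sigma\downarrow c$ admits a morphism to $(c,1_c)$ --- necessarily $s_0$ itself --- with $f_1s_0=f_2s_0$, giving $Q(f_1)=Q(f_2)\iff f_1s=f_2s$ for some $s\in\Sigma$.

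There is no deep obstacle here: the only points requiring care are invoking each of (i)--(iv) in exactly the right shape when building the cones and coequalizers, checking at each step via stability under composition that the new structure maps again lie in $\Sigma$, and keeping the cofiltered/filtered bookkeeping straight in b); the genuine content of b), namely the bijectivity of \eqref{eq.cf}, is the one thing borrowed wholesale from \cite{gz}.
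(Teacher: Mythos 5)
Your proof is correct and follows essentially the same route as the paper's: part a) is verified axiom by axiom exactly as in the paper, and part c) is read off from b) in the same way (the uniqueness of the arrow $(c_0,s_0)\to(c,1_c)$ in $\Sigma\downarrow c$ being the key observation). The only divergence is in b), where you invoke \cite[Chapter~I, 2.3]{gz} outright; the paper instead re-derives the bijectivity of \eqref{eq.cf} by reconstructing the fraction category $\Sigma^{-1}\sC$ with Hom sets given by the left-hand side and checking it has the universal property of $\sC[\Sigma^{-1}]$ --- but since the statement of b) itself cites that reference, your shortcut is legitimate.
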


\begin{proof} 
a) 
We need to check the two conditions
(which are dual to those from \cite[p. 211]{mcl}):
(1) 
given two objects $d, d' \in \Sigma \downarrow c$,
there are arrows $d \leftarrow e \rightarrow d'$ in $\Sigma \downarrow c$;
(2) 
given two parallel arrows $f, g : e \to d$ in  $\Sigma \downarrow c$,
there is an arrow $h : e' \to e$ in $\Sigma \downarrow c$ such that $fh=gh$.
(1) (resp. (2)) follows from Axioms (iii) and (ii) 
(resp. (iv) and (ii)) of Definition \ref{d.cf}.

b) The ``obvious map'' \eqref{eq.cf} sends a pair $(c'\by{s} c,c'\by{f}d)$ with $s\in \Sigma$ and $f\in \sC(c',d)$ to $Q(f)Q(s)^{-1}$. 
To show it is an isomorphism, we follow the strategy of \cite[pp. 13/14]{gz}.
We consider a category $\Sigma^{-1}\sC$ with the same objects as $\sC$ 
and for $c, d \in \sC$ the Hom set $\Sigma^{-1}\sC(c, d)$ is given by 
the left hand side of \eqref{eq.cf}.
Using Axioms (ii) and (iii), we define for three objects $c,d,e\in \sC$ a composition
\[
\colim_{c'\in \Sigma\downarrow c}\sC(c',d)\times \colim_{d'\in \Sigma\downarrow d}\sC(d',e)\to \colim_{c'\in \Sigma\downarrow c}\sC(c',e)
\]
which is shown to be well-defined and associative thanks to Axiom (iv). 
Now \eqref{eq.cf} yields a functor $\Sigma^{-1}\sC\to \sC[\Sigma^{-1}]$. 
But there is also an obvious functor $\sC\to \Sigma^{-1}\sC$
that is the identity on objects.
(We use Axiom (i) to define the maps for the Hom sets.)
It is easily seen to have the universal property of $\sC[\Sigma^{-1}]$. 
Hence \eqref{eq.cf} is an isomorphism for all $(c,d)$.


c) The first statement has already been observed; the second one follows readily from \eqref{eq.cf}.
\end{proof}

\begin{nota} We shall write $\Sigma^{-1}\sC$ instead of $\sC[\Sigma^{-1}]$ if $\Sigma$ enjoys a calculus of fractions.
\end{nota}

\begin{cor} \label{c.small} If $\Sigma$ admits a calculus of right fractions and if for any $c\in \sC$, the category $\Sigma\downarrow c$ contains a small cofinal subcategory, then the $\Hom$ sets of $\Sigma^{-1}\sC$ are small.\qed
\end{cor}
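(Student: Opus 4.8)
The plan is to read the result off from the explicit description of morphisms in $\Sigma^{-1}\sC$. By Proposition \ref{p.cf} b), for every pair of objects $c,d\in\sC$ there is a natural bijection $\sC[\Sigma^{-1}](c,d)\cong\colim_{c'\in\Sigma\downarrow c}\sC(c',d)$, where the assignment $(c'\by{s}c)\mapsto\sC(c',d)$ is contravariant in the object $(c',s)$ of $\Sigma\downarrow c$: a morphism $(c'',s'')\to(c',s')$ in $\Sigma\downarrow c$ is a map $g$ with $s'g=s''$, and it acts by $f\mapsto fg$. Thus the displayed colimit is really a colimit of the small sets $\sC(c',d)$ along the opposite category $(\Sigma\downarrow c)^{\op}$, which is filtered since $\Sigma\downarrow c$ is cofiltered by Proposition \ref{p.cf} a). The only issue is that the indexing category $\Sigma\downarrow c$ may be large, so I must cut it down.

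Next I would invoke the hypothesis: fix a small cofinal subcategory $\iota:\sD\inj\Sigma\downarrow c$. The key (and essentially formal) observation is that ``$\sD$ cofinal in $\Sigma\downarrow c$'' unwinds to exactly ``$\sD^{\op}$ final, in the sense of \cite[Chapter~IX, \S3]{mcl}, in $(\Sigma\downarrow c)^{\op}$'': for each $x\in\Sigma\downarrow c$ the comma categories $x\downarrow\iota$ and $\iota^{\op}\downarrow x^{\op}$ have the same objects and morphisms (only the labels are reversed), so one is nonempty and connected for all $x$ if and only if the other is. Since a colimit is unchanged under restriction along a final functor, this yields $\sC[\Sigma^{-1}](c,d)\cong\colim_{c'\in\sD}\sC(c',d)$.

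Finally, because $\sD$ is small and each $\sC(c',d)$ is a set, the right-hand side is a quotient of the small set $\coprod_{(c'\to c)\in\sD}\sC(c',d)$, hence small; as $c,d$ were arbitrary this proves the corollary. I do not expect a genuine obstacle: the whole content is already contained in Proposition \ref{p.cf}, and the single point that requires a moment's care is the bookkeeping of variances --- recognizing the colimit in Proposition \ref{p.cf} b) as a filtered colimit over $(\Sigma\downarrow c)^{\op}$ and matching the paper's notion of ``cofinal'' with Mac Lane's ``final'' on the opposite category. The rest is a routine smallness count.
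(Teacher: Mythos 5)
Your overall route is the intended one: Proposition \ref{p.cf}~b) identifies $\Sigma^{-1}\sC(c,d)$ with a filtered colimit of small sets indexed by $(\Sigma\downarrow c)^{\op}$, and the corollary follows once that colimit can be re-indexed by a small category. But the variance step --- the very point you single out as the one needing care --- is backwards, and here the direction is the whole content. Mac Lane's criterion for a functor $L:\sJ\to\sI$ to leave colimits unchanged requires, for every $i\in\sI$, that the comma category $i\downarrow L$ (objects: arrows $i\to L(j)$) be nonempty and connected. Applied to $L=\iota^{\op}:\sD^{\op}\to(\Sigma\downarrow c)^{\op}$, this is the condition that for every $x\in\Sigma\downarrow c$ the category of pairs $(d,\,\iota(d)\to x)$ --- arrows \emph{from} $\sD$ \emph{into} $x$, i.e.\ $\iota\downarrow x$ in the paper's notation --- be nonempty and connected; it is not the condition on $x\downarrow\iota$ (arrows $x\to\iota(d)$) that you use. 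The distinction is not cosmetic: since $\Sigma$ contains the identities, $(c,1_c)$ is a terminal object of $\Sigma\downarrow c$, so the one-object subcategory $\{(c,1_c)\}$ always satisfies your condition; under your reading the hypothesis is vacuous, the re-indexing step has no justification (a filtered colimit of small sets over a large index category need not be small), and no smallness can follow.

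The operative meaning of ``cofinal'' --- the one the paper actually verifies and uses, e.g.\ in the proof of Lemma \ref{l.cof} and in Lemma \ref{ln2}, the definitional sentence in \S\ref{sec:pro-obj} having the two conditions interchanged, which presumably misled you --- is that every object $x$ of $\Sigma\downarrow c$ \emph{receives} a morphism from some object of $\sD$, with the resulting category connected. This is exactly Mac Lane-finality of $\iota^{\op}$, so with that direction your argument closes and is the deduction the paper leaves implicit: every fraction $(c'\to c,\,c'\to d)$ is, by Proposition \ref{p.cf}~b) and nonemptiness, represented by an index lying in $\sD$, so $\Sigma^{-1}\sC(c,d)$ is a quotient of the small set $\coprod_{(c',s)\in\sD}\sC(c',d)$, hence small.
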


\begin{cor}\label{c.cf} Let $(\sC,\Sigma)$ be a localiser such that $\Sigma$ enjoys a calculus of right fractions. Let $F:\sC\to \sD$ be a functor. Suppose that $F$ inverts the morphisms of $\Sigma$ and that, for any $c,d\in \sC$, the obvious map
\[\colim_{c'\in \Sigma\downarrow c}\sC(c',d)\to \sD(F(c),F(d))\]
is an isomorphism. Then the functor $\Sigma^{-1}F:\Sigma^{-1}\sC\to \sD$ induced by $F$ is fully faithful.\qed
\end{cor}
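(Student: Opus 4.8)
The plan is to reduce everything to Proposition \ref{p.cf} b). First, since $F$ inverts the morphisms of $\Sigma$, the universal property of the localisation functor $Q:\sC\to \Sigma^{-1}\sC$ produces a unique functor $\Sigma^{-1}F:\Sigma^{-1}\sC\to \sD$ with $\Sigma^{-1}F\circ Q=F$; this is the functor in the statement. As $Q$ is the identity on objects, full faithfulness of $\Sigma^{-1}F$ amounts to showing that for all $c,d\in \sC$ the map
\[ \Sigma^{-1}\sC(c,d)\to \sD(F(c),F(d)) \]
induced by $\Sigma^{-1}F$ is bijective.

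Next I would splice in the isomorphism \eqref{eq.cf} of Proposition \ref{p.cf} b), namely
\[ \colim_{c'\in \Sigma\downarrow c}\sC(c',d)\iso \Sigma^{-1}\sC(c,d), \]
which sends the class of a pair $(c'\by{s} c,\; c'\by{f} d)$ with $s\in \Sigma$ to $Q(f)Q(s)^{-1}$. Composing with the map induced by $\Sigma^{-1}F$ and using $\Sigma^{-1}F\circ Q=F$, the class of $(s,f)$ is carried to $F(f)F(s)^{-1}\in \sD(F(c),F(d))$; but this is exactly the value of the ``obvious map'' $\colim_{c'\in \Sigma\downarrow c}\sC(c',d)\to \sD(F(c),F(d))$ appearing in the hypothesis. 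Hence the triangle
\[ \colim_{c'\in \Sigma\downarrow c}\sC(c',d)\to \Sigma^{-1}\sC(c,d)\to \sD(F(c),F(d)) \]
commutes; its left arrow is an isomorphism by Proposition \ref{p.cf} b), and its composite is an isomorphism by assumption, so the right arrow is an isomorphism, as desired.

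The only point requiring care — and the step I would present in most detail — is the commutativity of this last triangle: one must track a general morphism of $\Sigma^{-1}\sC$ through the calculus-of-fractions description (Proposition \ref{p.cf} c)) and check that the two ``obvious maps'', the one landing in $\Sigma^{-1}\sC$ and the one landing in $\sD$, are intertwined by $\Sigma^{-1}F$. This is a direct unwinding of definitions once one recalls that every morphism of $\Sigma^{-1}\sC$ has the form $Q(f)Q(s)^{-1}$ and that $\Sigma^{-1}F(Q(g))=F(g)$ for every $g\in Ar(\sC)$. I do not expect a genuine obstacle here; the substantive content has already been placed in Proposition \ref{p.cf}, and this corollary is its formal consequence.
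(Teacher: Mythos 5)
Your argument is correct and is exactly the one the paper intends: the corollary is stated with the proof omitted precisely because it is the formal consequence of Proposition \ref{p.cf}~b) that you spell out, namely composing the isomorphism \eqref{eq.cf} with the map induced by $\Sigma^{-1}F$ and checking the resulting triangle commutes via the fraction description $Q(f)Q(s)^{-1}\mapsto F(f)F(s)^{-1}$. Nothing is missing; your identification of the triangle's commutativity as the only point to verify is accurate, and the verification is the routine unwinding you describe.
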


\begin{prop}\label{p1.10} 
\leavevmode
\begin{itemize}
\item[a)]  Let $(\sC,\Sigma)$ be a localiser. Assume that $\Sigma$ enjoys a calculus of right fractions. Then the localisation functor $Q:\sC\to \Sigma^{-1}\sC$ is left exact; if limits indexed by a finite category $I$ exist in $\sC$, they also exist in $\Sigma^{-1}\sC$.
\item[b)] Let $\sC$ be an essentially small category closed under finite limits, and let $G:\sC\to \sD$ be a left exact functor.  Let $\Sigma=\{s\in Ar(\sC)\mid G(s) \text{ is invertible}\}$. Then $\Sigma$ enjoys a calculus of right fractions; the induced functor $\Sigma^{-1}\sC\to \sD$ is conservative and left exact.
\end{itemize}
\end{prop}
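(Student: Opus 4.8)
\textbf{Plan for the proof of Proposition \ref{p1.10}.}

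The plan is to treat the two parts in the natural order, using part a) as input for b). For a), the key is Proposition \ref{p.cf} b): the Hom-sets of $\Sigma^{-1}\sC$ are filtered colimits $\colim_{c'\in\Sigma\downarrow c}\sC(c',d)$, and by Proposition \ref{p.cf} a) the index category $\Sigma\downarrow c$ is cofiltered. Given a finite diagram $d : I \to \sC$ with limit $\lim_I d$ in $\sC$, I would show that $Q(\lim_I d)$ is still the limit in $\Sigma^{-1}\sC$ by computing, for any $c\in\sC$,
\[
\Sigma^{-1}\sC(c,\lim_I d) = \colim_{c'\in\Sigma\downarrow c}\sC(c',\lim_I d)
= \colim_{c'\in\Sigma\downarrow c}\lim_I \sC(c',d)
= \lim_I \colim_{c'\in\Sigma\downarrow c}\sC(c',d)
= \lim_I \Sigma^{-1}\sC(c,d),
\]
where the middle interchange of $\lim_I$ (finite) and $\colim$ (filtered over the cofiltered $\Sigma\downarrow c$, so filtered as a colimit) is the standard commutation of finite limits with filtered colimits in $\Set$ (\cite[Chapter~IX, Section~2, Theorem~1]{mcl}). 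This simultaneously shows the limit exists in $\Sigma^{-1}\sC$ and that $Q$ preserves it; in particular $Q$ is left exact. (One should note the limit over $c'$ here is really a colimit indexed by the \emph{opposite} of the cofiltered category $\Sigma\downarrow c$, which is filtered — this bookkeeping is the only place to be careful.)

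For b), I would first verify that $\Sigma = \{s \mid G(s)\text{ invertible}\}$ enjoys a calculus of right fractions (Definition \ref{d.cf}). Axioms (i) and (ii) are immediate since $G$ is a functor. For the Ore condition (iii): given $X'\xrightarrow{s}X\xleftarrow{u}Y$ with $s\in\Sigma$, form the fiber product $Y' := X'\times_X Y$ in $\sC$ (exists since $\sC$ is closed under finite limits), with projections $t : Y'\to Y$ and $u' : Y'\to X'$; applying the left exact $G$ gives $G(Y') = G(X')\times_{G(X)}G(Y)$, and since $G(s)$ is an isomorphism so is its base change $G(t)$, hence $t\in\Sigma$. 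For the cancellation axiom (iv): given $f,g : X\rightrightarrows Y$ and $s\in\Sigma$ with $sf=sg$, take $t : X'\to X$ to be the equalizer of $f$ and $g$ in $\sC$; then $G(t)$ is the equalizer of $G(f),G(g)$, which coincide because $G(s)$ is monic (being invertible) and $G(s)G(f)=G(s)G(g)$, so $G(t)$ is an isomorphism, i.e. $t\in\Sigma$, and $ft=gt$ by construction.

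Once the calculus of right fractions is established, the induced functor $\bar G : \Sigma^{-1}\sC\to\sD$ is left exact: by part a), finite limits in $\Sigma^{-1}\sC$ are computed from finite limits in $\sC$ via $Q$, and $\bar G\circ Q = G$ is left exact, so $\bar G$ preserves them. For conservativity: suppose $\phi$ is a morphism of $\Sigma^{-1}\sC$ with $\bar G(\phi)$ invertible; by Proposition \ref{p.cf} c), write $\phi = Q(f)Q(s)^{-1}$ with $s\in\Sigma$, so $\bar G(\phi) = G(f)G(s)^{-1}$, whence $G(f)$ is invertible, i.e. $f\in\Sigma$, and therefore $Q(f)$ — hence $\phi$ — is invertible in $\Sigma^{-1}\sC$. \textbf{The main obstacle} I anticipate is purely notational rather than conceptual: keeping straight that $\Sigma\downarrow c$ is cofiltered while the colimit $\colim_{c'\in\Sigma\downarrow c}$ is a filtered colimit (indexed by $(\Sigma\downarrow c)^{\mathrm{op}}$), so that the interchange theorem of \cite{mcl} applies in the form ``finite limits commute with filtered colimits'' — getting the variance right in the display above is the one spot where an error could creep in.
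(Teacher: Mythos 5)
Your core computations are correct, and your route is genuinely different from the paper's: the paper proves this proposition by passing to opposite categories and citing Gabriel--Zisman \cite[Chapter~I, Proposition~3.1, Corollary~3.2 and Proposition~3.4]{gz}, whereas you give a direct argument. The Hom-set computation via Proposition \ref{p.cf} b) and the commutation of finite limits with filtered colimits correctly shows that $Q$ preserves every finite limit that exists in $\sC$ (so $Q$ is left exact); the verification of the fraction axioms in b) (Ore condition by base-changing $s$ along a pullback, cancellation by taking the equalizer of $f,g$ and using that $G(s)$ is monic) is correct, as is the conservativity argument via Proposition \ref{p.cf} c).

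There is, however, one genuine gap: everything you prove concerns diagrams of the form $Q\circ d$ with $d:I\to\sC$, while the existence claim in a) (``limits indexed by $I$ exist in $\Sigma^{-1}\sC$'') and the left exactness of the induced functor $\bar{G}:\Sigma^{-1}\sC\to\sD$ in b) are statements about \emph{arbitrary} finite diagrams $D:I\to\Sigma^{-1}\sC$, whose arrows are fractions and need not come from $\sC$, even up to isomorphism of diagrams. So the sentence ``finite limits in $\Sigma^{-1}\sC$ are computed from finite limits in $\sC$ via $Q$'' is exactly what remains to be proved, not a consequence of your display. The standard repair, available under the hypotheses of b) (and this is essentially what Gabriel--Zisman do): finite products exist in $\Sigma^{-1}\sC$ because $\mathrm{Ob}(\Sigma^{-1}\sC)=\mathrm{Ob}(\sC)$ and $Q$ preserves products; for an arbitrary pair $f,g:X\rightrightarrows Y$ in $\Sigma^{-1}\sC$, use the cofilteredness of $\Sigma\downarrow X$ (Proposition \ref{p.cf} a)) to write $f=Q(f')Q(s)^{-1}$ and $g=Q(g')Q(s)^{-1}$ with a \emph{common} $s\in\Sigma$, so that an equalizer of $(f,g)$ is obtained from the equalizer of $(f',g')$ in $\sC$ by composing with the isomorphism $Q(s)$; then every finite limit in $\Sigma^{-1}\sC$ is built from products and equalizers, and $\bar{G}$ is seen to preserve products and such equalizers using $\bar{G}Q=G$ and the invertibility of $G(s)$. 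Without this reduction (or an argument lifting finite diagrams along $Q$ up to isomorphism), the existence half of a) and the left exactness of $\bar{G}$ in b) are asserted rather than proved.
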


\begin{proof} After passing to the opposite categories, a) is \cite[Chapter~I, Proposition~3.1 and Corollary~3.2]{gz} and b) is \cite[Chapter~I, Proposition~3.4]{gz}.\end{proof}




\subsection{Pro-$\Sigma$-objects}

\begin{defn}\label{d2.1}
Let $(\sC,\Sigma)$ be a localiser. We write $\pro{\Sigma}\sC$ for the full subcategory of the category $\pro{}\sC$ of pro-objects of $\sC$ consisting of filtered inverse systems  whose transition morphisms belong to $\Sigma$. An object of $\pro{\Sigma}\sC$ is called a \emph{pro-$\Sigma$-object}. 
\end{defn}

\begin{prop}\label{p2.6} Suppose that $\Sigma$ has a calculus of right fractions and, for any $c\in \sC$, the category $\Sigma\downarrow c$ contains a small cofinal subcategory. Then $Q:\sC\to \Sigma^{-1}\sC$ has a pro-left adjoint $Q^!$, which takes an object $Y\in \Sigma^{-1} \sC$ to $``\lim"_{X\in \Sigma\downarrow Y} X$
$($see \S \ref{sec:pro-obj} for the notation $``\lim")$. 
In particular, $Q^!(\Sigma^{-1}\sC)\subset \pro{\sat(\Sigma)}\sC$, where $\sat(\Sigma)$ is the saturation of $\Sigma$ $($Definition \ref{d.sat}\,$)$.
\end{prop}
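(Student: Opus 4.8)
The statement to prove is Proposition \ref{p2.6}: under the hypotheses (calculus of right fractions for $\Sigma$, and $\Sigma\downarrow c$ admitting a small cofinal subcategory for each $c$), the localisation functor $Q:\sC\to\Sigma^{-1}\sC$ has a pro-left adjoint $Q^!$ given on objects by $Q^!(Y)=``\lim"_{X\in\Sigma\downarrow Y}X$. The strategy is to verify directly condition (ii) of Proposition \ref{p.proadj}: produce the functor $v=Q^!$ and a natural isomorphism $\pro{}\sC(Q^!(Y),c)\simeq\Sigma^{-1}\sC(Y,Q(c))$, contravariant in $Y$ and covariant in $c$.

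First I would note that $\Sigma\downarrow Y$ is cofiltered (Proposition \ref{p.cf} a), after observing that $\Sigma\downarrow Y$ for $Y\in\Sigma^{-1}\sC$ agrees with $\Sigma\downarrow Y$ for $Y\in\sC$ since $\Sigma^{-1}\sC$ has the same objects as $\sC$), so that the formula $Q^!(Y)=``\lim"_{X\in\Sigma\downarrow Y}X$ makes sense as a pro-object; by the smallness hypothesis we may replace $\Sigma\downarrow Y$ by a small cofinal subcategory, so $Q^!(Y)$ is a genuine pro-object of $\sC$ (cofinal subdiagrams define isomorphic pro-objects). To make $Q^!$ a functor, given a morphism $g:Y\to Y'$ in $\Sigma^{-1}\sC$, I use Proposition \ref{p.cf} c) to write $Q(g')=g$ with $g'$ an honest morphism, so that pullback of objects of $\Sigma\downarrow Y'$ along $g$ — via the Ore condition (iii) of Definition \ref{d.cf} — induces a functor $\Sigma\downarrow Y'\to\Sigma\downarrow Y$, hence a morphism of pro-objects $Q^!(Y)\to Q^!(Y')$; functoriality of this assignment is checked using the cancellation axiom (iv). Then the key computation is
\[
\pro{}\sC(Q^!(Y),c)=\pro{}\sC\bigl(``\lim_{X\in\Sigma\downarrow Y}"X,\;c\bigr)=\colim_{X\in\Sigma\downarrow Y}\sC(X,c)\overset{\eqref{eq.cf}}{\simeq}\Sigma^{-1}\sC(Y,Q(c)),
\]
where the second equality is the definition of morphisms out of a pro-object into a constant pro-object, and the last isomorphism is exactly Proposition \ref{p.cf} b) (equation \eqref{eq.cf}). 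Naturality in $c$ is immediate from the definitions; naturality in $Y$ requires checking that the transition maps $\Sigma\downarrow Y'\to\Sigma\downarrow Y$ constructed above are compatible with \eqref{eq.cf}, which is a straightforward unwinding of the calculus-of-fractions formulas. This establishes condition (ii) of Proposition \ref{p.proadj}, hence (by (ii) $\Leftrightarrow$ (i)) that $Q^!$ is pro-left adjoint to $Q$.

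Finally, for the last assertion: each transition morphism in the pro-object $``\lim"_{X\in\Sigma\downarrow Y}X$ is, by construction of the category $\Sigma\downarrow Y$ and the Ore condition, a morphism of $\sC$ whose image under $Q$ is invertible (it fits into a commuting triangle over $Y$ with two legs in $\Sigma$, and $Q$ inverts $\Sigma$), hence belongs to $\sat(\Sigma)$ by Definition \ref{d.sat}; therefore $Q^!(Y)\in\pro{\sat(\Sigma)}\sC$. The main obstacle I anticipate is purely bookkeeping rather than conceptual: making the assignment $Y\mapsto Q^!(Y)$ into an honest functor requires choosing, for each morphism of $\Sigma^{-1}\sC$, a representing fraction and a coherent system of Ore-condition squares, and then checking functoriality up to the cofinality equivalences on pro-objects; none of this is deep, but it is the place where care is needed, and it is cleanest to phrase everything in terms of the explicit formula $\pro{}\sC(Q^!Y,c)=\colim_{X\in\Sigma\downarrow Y}\sC(X,c)$ and transport functoriality through the isomorphism \eqref{eq.cf}, which already encodes the calculus of fractions.
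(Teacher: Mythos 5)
Your proposal is correct and follows essentially the same route as the paper: the paper's proof is precisely the combination of Corollary \ref{c.small} (smallness of the index categories) with the colimit formula \eqref{eq.cf} of Proposition \ref{p.cf} b), which gives $\pro{}\sC(Q^!Y,c)=\colim_{X\in\Sigma\downarrow Y}\sC(X,c)\simeq\Sigma^{-1}\sC(Y,Q(c))$, i.e. condition (ii) of Proposition \ref{p.proadj}, and your argument that the transition maps lie in $\sat(\Sigma)$ is exactly the intended justification of the last assertion. Only a small slip in wording: a morphism $g$ of $\Sigma^{-1}\sC$ is in general a fraction $Q(f)Q(s)^{-1}$ rather than of the form $Q(g')$, as you yourself acknowledge later when discussing the bookkeeping (alternatively, functoriality of $Q^!$ is automatic from the natural isomorphism above, since a pro-object is determined by the functor it corepresents on $\sC$).
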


\begin{proof} In view of Corollary \ref{c.small} and Proposition \ref{p1.10}, this follows from Proposition \ref{p.cf} b).
\end{proof}

\begin{rk}\label{r.cf} Consider the localisation functor $Q:\sC\to \Sigma^{-1} \sC$: it has a left Kan extension 
$$\hat Q:\pro{\sat(\Sigma)}\sC\to \Sigma^{-1}\sC$$
\cite[Chapter~X]{mcl} along the constant functor $\sC\to \pro{\sat(\Sigma)}\sC$, given by the formula
\[\hat Q(``\lim\nolimits" C_\alpha)= \lim Q(C_\alpha).\]
(The right hand side makes sense as an inverse limit of isomorphisms.) Then one checks easily that $Q^!$ is left adjoint to $\hat Q$. 
\end{rk}

\begin{thm}\label{lem:omega-sh0}\ Let $(\sC,\Sigma)$ be a localiser verifying the conditions of Proposition \ref{p2.6}. Let $Q:\sC\to \Sigma^{-1}\sC$ denote the localisation functor, and consider the string of adjoint functors $(Q_!,Q^*,Q_*)$ between $\hat{\sC}$ and $\widehat{\Sigma^{-1}\sC}$ from \S \ref{s.presh}. Then:
\begin{enumerate}
\item $Q_!$ has a pro-left adjoint, and is therefore exact.
\item For $F \in \hat{\sC}$ and $Y \in \Sigma^{-1}\sC$, we have
\[ Q_! F(Y) = \colim_{X \in \Sigma\downarrow Y} F(X). \]
\end{enumerate}
\end{thm}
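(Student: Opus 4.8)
The plan is to obtain the theorem as a formal consequence of Propositions \ref{p2.6} and \ref{p.funct}~a), with almost no new argument. First, $(\sC,\Sigma)$ is by hypothesis a localiser to which Proposition \ref{p2.6} applies: $\Sigma$ enjoys a calculus of right fractions and $\Sigma\downarrow c$ admits a small cofinal subcategory for each $c\in\sC$. Hence the localisation functor $Q:\sC\to\Sigma^{-1}\sC$ has a pro-left adjoint $Q^!$ (Definition \ref{d.proadj}), and on an object $Y$ it is the pro-object
\[ Q^!(Y)=``\lim"_{X\in\Sigma\downarrow Y}X, \]
i.e. the functor $\Sigma\downarrow Y\to\sC$, $(X\to Y)\mapsto X$, restricted to a fixed small cofinal subcategory of $\Sigma\downarrow Y$; by Proposition \ref{p.cf}~a) the category $\Sigma\downarrow Y$, and hence that subcategory, is cofiltered.

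Next I would apply Proposition \ref{p.funct}~a) to the functor $u=Q:\sC\to\sD:=\Sigma^{-1}\sC$. Since $Q$ has a pro-left adjoint, so does $u_!=Q_!$, which is therefore exact; this is assertion (1). The same proposition furnishes the formula $Q_!F(Y)=\colim\bigl(F\circ Q^!(Y)\bigr)$ for $F\in\hat\sC$ and $Y\in\Sigma^{-1}\sC$, the right-hand side being the (filtered) colimit over the small cofiltered indexing category of the pro-object $Q^!(Y)$. Substituting the description of $Q^!(Y)$ above rewrites it as $Q_!F(Y)=\colim_{X\in\Sigma\downarrow Y}F(X)$, a filtered colimit (because $\Sigma\downarrow Y$ is cofiltered) which is set-indexed (because $\Sigma\downarrow Y$ has a small cofinal subcategory); this is assertion (2).

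There is no genuine obstacle in this argument: all the work has already been done in Propositions \ref{p2.6}, \ref{p.funct}~a) and \ref{p.cf}~a). The only point deserving a word of care is that the passage to a small cofinal subcategory of $\Sigma\downarrow Y$ changes neither the pro-object $Q^!(Y)$ (up to isomorphism in $\pro{}\sC$) nor the value of $\colim_{X\in\Sigma\downarrow Y}F(X)$, since ``cofinal'' is taken here in the colimit-preserving sense of \S\ref{sec:pro-obj}; granting this, (1) and (2) are immediate.
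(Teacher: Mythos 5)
Your argument is correct and coincides with the paper's own proof, which likewise deduces the theorem directly from Propositions \ref{p.funct}~a) and \ref{p2.6} (with Proposition \ref{p.cf}~a) supplying cofilteredness). The extra remarks about passing to a small cofinal subcategory are harmless and already implicit in the cited propositions.
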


\begin{proof} This follows from Propositions \ref{p.funct} a) and \ref{p2.6}. 
\end{proof}

If $(\sA,\Sigma)$ is a localiser with $\sA$ additive and $\Sigma$ enjoys a calculus of right fractions, then $\Sigma^{-1}\sA$ is additive and so is the functor $Q:\sA\to \Sigma^{-1}\sA$ \cite[Chapter~I, Corollary~3.3]{gz}.  
For future reference, we give the additive analogue of Theorem \ref{lem:omega-sh0}
(see the paragraph before Proposition~\ref{p.funct} for $\Mod\sA$):

\begin{thm}\label{lem:omega-sh}\ Let $(\sA,\Sigma)$ be a localiser; assume that $\sA$ is an additive category and that $\Sigma$ has a calculus of right fractions. Let $Q:\sA\to \Sigma^{-1}\sA$ denote the localisation functor, as well as the string of adjoint functors $(Q_!,Q^*,Q_*)$ between $\Mod\sA$ and $\Mod\Sigma^{-1}\sA$. Then:
\begin{enumerate}
\item $Q_!$ has a pro-left adjoint, and is therefore exact.
\item For $F \in \Mod\sA$ and $Y \in \Sigma^{-1}\sA$, we have
\[ Q_! F(Y) = \colim_{X \in \Sigma\downarrow Y} F(X). \]
\end{enumerate}
\end{thm}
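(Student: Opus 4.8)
The statement is the additive analogue of Theorem \ref{lem:omega-sh0}, so the whole point is to import the non-additive argument through the Yoneda-type equivalence $\gamma^*:\Mod\sA\iso\widehat{\sA}^{\mathrm{add}}$ of additive presheaves, and observe that every construction involved respects additivity. First I would recall that since $\Sigma$ has a calculus of right fractions on the additive category $\sA$, the localisation $\Sigma^{-1}\sA$ is again additive and the localisation functor $Q:\sA\to\Sigma^{-1}\sA$ is an additive functor (\cite[Chapter~I, Corollary~3.3]{gz}); moreover for each $c\in\sA$ the comma category $\Sigma\downarrow c$ is cofiltered by Proposition \ref{p.cf} a). The one genuinely new hypothesis to check, compared to Theorem \ref{lem:omega-sh0}, is that we no longer assume a small cofinal subcategory of $\Sigma\downarrow c$; but this is not needed, because in the additive setting $\Mod\sA$ denotes a $\Hom$-category internal to $\Ab$ and the relevant colimits are filtered colimits of abelian groups, which always exist — so one never has to pass through $\pro{}$-objects of a possibly large category. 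I would therefore treat (2) first, directly, and deduce (1) from it.

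For (2): the functor $Q_!:\Mod\sA\to\Mod\Sigma^{-1}\sA$ is the additive left Kan extension of $Q$ along the (additive) Yoneda embeddings, hence is computed by the usual coend/colimit formula (\cite[expos\'e~I, (5.1.1)]{SGA4}, in its additive version as recalled in \S\ref{s.presh}). Thus for $F\in\Mod\sA$ and $Y\in\Sigma^{-1}\sA$ one has
\[
Q_!F(Y)=\colim_{(X\to Y)\in Q\downarrow Y} F(X),
\]
the colimit over the comma category $Q\downarrow Y$. The key step is to show that the inclusion $\Sigma\downarrow Y\hookrightarrow Q\downarrow Y$ (an object $X\in\Sigma\downarrow Y$ being an $s:X\to Y$ with $s\in\Sigma$, viewed inside $\sA$, mapped to $Q(s)$, which is an isomorphism in $\Sigma^{-1}\sA$) is cofinal. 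This is exactly the content of Proposition \ref{p.cf} b)–c): every morphism $Y'\to Y$ in $\Sigma^{-1}\sA$ with source in $\sA$ factors as $Q(f)Q(s)^{-1}$, so any object of $Q\downarrow Y$ receives an arrow from an object of $\Sigma\downarrow Y$ (giving connectedness/non-emptiness of the relevant slice), and the cancellation axiom (iv) of Definition \ref{d.cf} handles the uniqueness needed for cofinality; the cofilteredness of $\Sigma\downarrow Y$ from Proposition \ref{p.cf} a) then makes the slice categories connected. Since a colimit over a category is unchanged under restriction along a cofinal functor, we obtain $Q_!F(Y)=\colim_{X\in\Sigma\downarrow Y}F(X)$, which is (2). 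I would also remark that the right-hand side is visibly an additive functor in $F$ and a presheaf in $Y$ (using that $\Sigma\downarrow(-)$ is functorial via the Ore condition, cf. the proof of Proposition \ref{p.funct} a)), so it is legitimately an object of $\Mod\Sigma^{-1}\sA$.

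For (1): I would mimic the pro-left-adjoint argument in the proof of Proposition \ref{p.funct} a). By (2) and Proposition \ref{p2.6}'s additive counterpart — namely that $Q:\sA\to\Sigma^{-1}\sA$ has a pro-left adjoint $Q^!$ sending $Y$ to $``\lim"_{X\in\Sigma\downarrow Y}X$ (the calculus of right fractions alone suffices for the formal adjunction identity on representables; one does not need smallness if one is content to work with pro-objects indexed by the large but cofiltered $\Sigma\downarrow Y$, or alternatively one simply notes that $Q\downarrow Y$ has a small cofinal subcategory whenever $\sA$ is essentially small, which is our standing assumption via ``essentially small additive category'' in \S\ref{s.presh}) — one has, for a representable $h_Y\in\Mod\Sigma^{-1}\sA$,
\[
Q_!F(Y)=\colim_{X\in\Sigma\downarrow Y}F(X)=\colim_{X\in\Sigma\downarrow Y}\Mod\sA(h_X,F)=\pro{}\sA\big(``\lim"_{X\in\Sigma\downarrow Y}h_X,\,c(F)\big),
\]
which exhibits $Y\mapsto Q^!(Y):=``\lim"_{X\in\Sigma\downarrow Y}h_X$ as a pro-left adjoint of $Q_!$ on representables; since every object of $\Mod\Sigma^{-1}\sA$ is a colimit of representables and $\pro{}$ of an additive category has the relevant colimits, $Q^!$ extends to all of $\Mod\Sigma^{-1}\sA$ (as in the end of the proof of Proposition \ref{p.funct} a)). A functor admitting a pro-left adjoint is left exact by Proposition \ref{p.proadj} (iii), and $Q_!$ is right exact as a left adjoint, hence exact.

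\textbf{Main obstacle.} The one point requiring care is the cofinality of $\Sigma\downarrow Y\hookrightarrow Q\downarrow Y$ and, relatedly, the size issue: without the ``small cofinal subcategory'' hypothesis of Theorem \ref{lem:omega-sh0}, one must either invoke the standing essential smallness of $\sA$ to keep $\Sigma\downarrow Y$ essentially small, or phrase the pro-adjoint cleanly as in Remark \ref{r.cf}. Everything else is a routine transfer of the non-additive proof, replacing $\hat\sC$ by $\Mod\sA$ and checking at each step that the functors in play ($Q$, $Q_!$, the Kan extension, the Yoneda embedding) are additive — which they are, essentially by construction.
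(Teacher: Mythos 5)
Your overall architecture is the paper's own: the paper proves this theorem by combining (the additive analogues of) Proposition \ref{p.funct} a) and Proposition \ref{p2.6}, whose proofs contain exactly your two ingredients — the pointwise Kan-extension formula plus a cofinality argument for (2), and the construction of a pro-left adjoint of $Q_!$ on representables, extended by colimits and combined with Proposition \ref{p.proadj}, for (1). Your observation that the ``small cofinal subcategory'' hypothesis of Theorem \ref{lem:omega-sh0} is harmless here because $\sA$ is essentially small (so $\Sigma\downarrow Y$ has a small skeleton) is also correct.

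However, there is a genuine error at your key step: you compute $Q_!F(Y)$ as a colimit over the comma category $Q\downarrow Y$ (objects $Q(X)\to Y$) and embed $\Sigma\downarrow Y$ into it by $s\mapsto Q(s)$. For \emph{presheaves} (contravariant functors) the pointwise left Kan extension is a colimit over the \emph{other} comma category $Y\downarrow Q$, whose objects are morphisms $Y\to Q(X)$ (this is what the paper uses in the proof of Proposition \ref{p.funct} a), via $Y\downarrow u\simeq v(Y)\downarrow c$ and Lemma \ref{l.cof}). With your choice neither the identification $Q_!F(Y)=\colim_{Q\downarrow Y}F$ nor the claimed cofinality of $\Sigma\downarrow Y\to Q\downarrow Y$ is true: already for $\Sigma=\{\text{identities}\}$ one has $Q=\id$ and $Q_!F(Y)=F(Y)$, while an object $\phi\colon X\to Y$ of $Q\downarrow Y=\sA\downarrow Y$ receives an arrow from the image of $\Sigma\downarrow Y=\{\id_Y\}$ only if $\phi$ admits a section; and in general your appeal to the fraction decomposition $\phi=Q(f)Q(s)^{-1}$ only produces the object $(W,Q(f))$ with $f\in\sA$ arbitrary, which need not lie in the image of $\Sigma\downarrow Y$. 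The repair is exactly the variance switch: work with $Y\downarrow Q$ and send $(X,s)$ to $(X,Q(s)^{-1})$; then Proposition \ref{p.cf} c) gives, for any $\psi\colon Y\to Q(X')$, a factorization $\psi=Q(f)Q(s)^{-1}$, i.e.\ an arrow $f$ from $(W,Q(s)^{-1})$ to $(X',\psi)$, and connectedness of the relevant comma categories follows from cofilteredness of $\Sigma\downarrow Y$ together with the cancellation axiom. With this correction your proof goes through and coincides with the paper's argument; the rest (construction of the pro-left adjoint on representables, exactness from Proposition \ref{p.proadj} plus right exactness of a left adjoint) is fine as written.
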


\subsection{cd-structures} \label{sa-cd}

Let $\sC$ be a category with an initial object. According to \cite{cdstructures}, a \emph{cd-structure} on $\sC$ is given by a collection of commutative squares stable under isomorphisms, called \emph{distinguished squares}. Any cd-structure defines a topology on $\sC$: the smallest Grothendieck topology such that for a distinguished square of the form 
\begin{equation}\label{eq.cd2}
S: \qquad
\begin{CD}
W@>v>> V\\
@VqVV @Vp VV\\
U@>u>> X,
\end{CD}
\end{equation}
the sieve generated by the morphisms
$\{p:V \to X, ~u:U\to X\}$
is a cover sieve and such that the empty sieve is a cover sieve of the initial object $\emptyset$.

Recall from \cite{cdstructures} some important properties of cd-structures.

\begin{defn}\label{d3.1} Let $\sC$ be a category with an initial object $\emptyset$. 
\begin{enumerate}
\item Let $P$ be a cd-structure on $\sC$. 
The class $\sS_P$ of \emph{simple covers} is the smallest class of families of morphisms of the form $\{U_i \to X\}_{i \in I}$ satisfying the following two conditions:
\begin{itemize}
\item for any isomorphism $f$, $\{f\}$ is in $\sS_P$
\item for a distinguished square $Q$ of the form \eqref{eq.cd2} and families $\{p_i : V_i \to V\}_{i \in I}$ and $\{q_j : U_j \to U\}_{j \in J}$ in $\sS_P$ the family $\{p \circ p_i , u \circ q_j\}_{i \in I, j \in J}$ is in $\sS_P$.
\end{itemize}
\item A cd-structure on $\sC$ is called \emph{complete} if any cover sieve of an object $X \in \sC$ which is not isomorphic to $\emptyset$ contains a sieve generated by a simple cover.
\item A cd-structure $P$ is called \emph{regular} if for $S \in P$ of the form \eqref{eq.cd2} one has
\begin{itemize}
\item $S$ is a pullback square (\emph{i.e.}, is cartesian)
\item $u$ is a monomorphism 
\item the morphisms of sheaves
\[
\Delta \bigsqcup \rho (v) : \rho (V) \bigsqcup \rho (W) \times_{\rho (U)} \rho (W) \to \rho (V) \times_{\rho (X)} \rho (V)
\]
is surjective, where for $C \in \sC$ we denote by $\rho (C)$ the sheaf associated with the presheaf represented by $C$,
and $\Delta$ is induced by the diagonal map.
\end{itemize}
\end{enumerate}
\end{defn}

\begin{lemma}[\protect{\cite[Lemma 2.5]{cdstructures}}]\label{lA.7} A cd-structure is complete provided: 
\begin{enumerate}
\item any morphism with values in $\emptyset$ is an isomorphism, and 
\item for any distinguished square $S$ of the form \eqref{eq.cd2}
and for any morphism $X ' \to X$, the square $S' = X' \times_X S$ is defined and distinguished.\qed
\end{enumerate}
\end{lemma}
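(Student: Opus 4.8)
The plan is to produce, under hypotheses (1) and (2), an explicit Grothendieck topology $D$ on $\sC$ which contains the topology $t_P$ generated by the cd-structure $P$, and which is \emph{visibly} complete. Concretely, for $X\in\sC$ I declare a sieve $R$ on $X$ to be $D$-covering if either $X\cong\emptyset$ (in which case every sieve on $X$ is declared $D$-covering, as is forced anyway by the empty cover of $\emptyset$), or else $R$ contains the sieve generated by some simple cover of $X$. Once $D$ is known to be a Grothendieck topology, one observes that $D$ makes every distinguished-square sieve covering — for a distinguished square $S$ of the form \eqref{eq.cd2} the family $\{p:V\to X,\ u:U\to X\}$ is already a simple cover, obtained from the defining recursion of $\sS_P$ by taking the simple covers $\{\id_V\}$ and $\{\id_U\}$ — and makes the empty sieve cover $\emptyset$; by minimality of $t_P$ this gives $t_P\subseteq D$. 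Hence every $t_P$-covering sieve of an object $X\not\cong\emptyset$ is $D$-covering, i.e.\ contains the sieve of a simple cover, which is exactly the assertion of completeness.

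So the substance is to verify that $D$ satisfies the Grothendieck-topology axioms. The maximal sieve on $X$ is $D$-covering since $\{\id_X\}$ is a simple cover. For stability under pullback, given a $D$-covering sieve $R$ on $X$ and $h:Y\to X$: if $Y\cong\emptyset$ there is nothing to prove, and otherwise $X\not\cong\emptyset$ as well (by (1), a morphism into an object isomorphic to $\emptyset$ is an isomorphism), so $R$ contains a simple cover $\mathcal{U}$; I would then prove by induction on the construction of $\mathcal{U}$ in $\sS_P$ that all fibre products along $h$ exist and that the pulled-back family contains a simple cover of $Y$ — the base case (a single isomorphism) being trivial, and the inductive step using hypothesis (2) to pull the distinguished square occurring in $\mathcal{U}$ back to a distinguished square over $Y$, then applying the inductive hypothesis to the simple covers of $V$ and $U$ pulled back along $V\times_X Y\to V$ and $U\times_X Y\to U$. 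For the local-character axiom, given $R$ $D$-covering on $X\not\cong\emptyset$, so containing the sieve of a simple cover $\{U_i\to X\}$, and a sieve $R'$ on $X$ whose restriction to every member of $R$ is $D$-covering, a parallel induction shows that composing $\{U_i\to X\}$ with simple covers of the $U_i$ drawn from the $D$-covers $R'|_{U_i}$ again yields the sieve of a simple cover, now contained in $R'$.

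The one delicate point, and where hypothesis (1) does substantive work, is the interaction with the initial object: pulling a distinguished square back via (2) can make some of the corners $V\times_X Y$, $U\times_X Y$, $W\times_X Y$ isomorphic to $\emptyset$, and the recursive construction of the pulled-back simple cover must still go through. The auxiliary fact that makes this work is that (1) forces $\emptyset$ to be a \emph{strict} initial object, so that any \emph{nonempty} sieve on an object $Z$ automatically contains the canonical morphism $\emptyset\to Z$ (precompose any member of the sieve with the unique map out of $\emptyset$); this permits replacing a ``missing'' simple cover of a corner that has collapsed to $\emptyset$ by the trivial cover $\{\id_\emptyset\}$ without leaving the ambient sieve. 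I expect this bookkeeping around $\emptyset$ to be the main obstacle — the two inductions are otherwise routine, and no regularity- or completeness-type hypothesis on $P$ beyond (1) and (2) is needed.
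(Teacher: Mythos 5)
The paper itself gives no proof of this lemma (it is quoted from Voevodsky, [Voe10a, Lemma~2.5], with a \qed), so the comparison is with the standard argument, and your strategy is exactly that one: introduce the auxiliary class $D$ of sieves (everything over objects isomorphic to $\emptyset$, and sieves containing a simple-cover sieve otherwise), check the Grothendieck-topology axioms using hypothesis (2) by induction on the structure of simple covers, and conclude by minimality of $t_P$. Your treatment of the pullback axiom is fine, including the bookkeeping at $\emptyset$: there the relevant sieve $h^*(R)$ is nonempty, so it contains $\emptyset\to Y$ by initiality and the substitution of $\{\mathrm{id}_\emptyset\}$ for a collapsed corner is legitimate.

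The genuine gap is in the local character axiom. If a member $U_i\to X$ of the simple cover has $U_i\cong\emptyset$, then $R'|_{U_i}$ is $D$-covering \emph{by decree} and may be empty; your $\{\mathrm{id}_{U_i}\}$ trick then needs $U_i\to X\in R'$, i.e. $\emptyset\to X\in R'$, which follows when $R'\neq\emptyset$ but not when $R'$ is the empty sieve. To close this case one must know that every simple cover of an object $X\not\cong\emptyset$ has at least one member with non-initial source, and this does \emph{not} follow from (1) and (2) alone: take on $\Sch$ the cd-structure consisting of all squares \eqref{eq.cd2} with $W\cong V\cong U\cong\emptyset$ (stable under isomorphism, and pullbacks exist and stay in the class since $\emptyset$ is strict initial). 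For it, the sieve of morphisms factoring through $\emptyset$ covers every $X$ in $t_P$, hence by local character the empty sieve covers every $X$, and completeness fails outright — so $D$ cannot be a topology there and no argument from (1)+(2) with the definitions as stated can succeed. Thus your closing claim that nothing beyond (1) and (2) is needed is exactly where the problem sits: one must either add the (mild, standard) hypothesis that no distinguished square over a non-initial object has both $U$ and $V$ initial, or verify it for the cd-structures at hand — which does hold for those used in this paper, since an elementary Nisnevich square with $U=V=\emptyset$ forces $\ol{X}=\emptyset$. With that extra observation your two inductions (pullback and composition of simple covers) go through as sketched.
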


\begin{lemma}[\protect{\cite[Lemma 2.11]{cdstructures}}]\label{lA.8} A cd-structure is regular provided, for any distinguished square $S$ of the form \eqref{eq.cd2} we have
\begin{enumerate}
\item $S$ is cartesian,
\item $u$ is a monomorphism, and
\item the objects $V \times_X V$ and $W \times_{U} W$ exist in $\sC$ and the derived square
\begin{equation}\label{uld(S)} d(S) : \qquad \vcenter{\xymatrix{
W \ar[r]^v \ar[d]_{\Delta_{W/U}} & V \ar[d]^{\Delta_{V/X}} \\
W \times_{U} W \ar[r]^{} & V \times_{X} V
}}\end{equation}
is distinguished.\qed
\end{enumerate}
\end{lemma}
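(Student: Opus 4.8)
The plan is to verify the three clauses of the definition of a regular cd-structure (Definition~\ref{d3.1}(3)) for an arbitrary distinguished square $S$ of the shape \eqref{eq.cd2}. Clauses one and two — that $S$ is cartesian and that $u$ is a monomorphism — are literally hypotheses (1) and (2), so nothing is to be proved there; all the content lies in deducing the surjectivity of
\[
\Delta \sqcup \rho(v):\ \rho(V)\ \bigsqcup\ \bigl(\rho(W)\times_{\rho(U)}\rho(W)\bigr)\ \longrightarrow\ \rho(V)\times_{\rho(X)}\rho(V)
\]
from hypothesis (3), that is, from the assumption that the derived square $d(S)$ of \eqref{uld(S)} is distinguished.

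First I would isolate two standard facts about the topology $t_P$ generated by the cd-structure $P$, writing $\rho = a\circ y$ with $y$ the Yoneda embedding and $a$ the $t_P$-sheafification. (a) A family of morphisms $\{f_i : C_i \to C\}$ generates a $t_P$-covering sieve of $C$ if and only if the induced morphism $\bigsqcup_i \rho(C_i) \to \rho(C)$ is an epimorphism of sheaves: a sieve $R \hookrightarrow y(C)$ is covering iff $aR \to \rho(C)$ is an isomorphism, and since this map is always a monomorphism ($a$ being left exact) it is an isomorphism iff it is an epimorphism, while $aR$ is the image of $\bigsqcup_i \rho(C_i) \to \rho(C)$ when $R$ is generated by the $f_i$. (b) Since $y$ preserves all limits existing in $\sC$ and $a$ is left exact, $\rho$ carries the fibre products $V\times_X V$ and $W\times_U W$ — which exist by hypothesis (3) — to $\rho(V)\times_{\rho(X)}\rho(V)$ and $\rho(W)\times_{\rho(U)}\rho(W)$, compatibly with all structure maps.

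With these in hand the argument is immediate. The square $d(S)$ is a commutative square of $\sC$ of the shape \eqref{eq.cd2}, with $V\times_X V$ playing the role of $X$, $W\times_U W$ that of $U$, the diagonal $\Delta_{V/X} : V \to V\times_X V$ that of $p$, and the canonical map $W\times_U W \to V\times_X V$ that of $u$. Since $d(S)$ is distinguished, the generators $\{\Delta_{V/X},\ W\times_U W \to V\times_X V\}$ form a $t_P$-covering sieve of $V\times_X V$; by (a) this means exactly that $\rho(V)\sqcup\rho(W\times_U W) \to \rho(V\times_X V)$ is an epimorphism. Transporting this along the identifications of (b), and noting that the first leg becomes the diagonal $\Delta$ and the second becomes the morphism induced by $v : W\to V$, yields precisely the surjectivity demanded by the third clause of regularity, and we are done.

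The only point requiring care — the main obstacle, though it is bookkeeping rather than real mathematics — is the compatibility claim in the last step: one must check that, under the canonical isomorphisms $\rho(V\times_X V)\cong\rho(V)\times_{\rho(X)}\rho(V)$ and $\rho(W\times_U W)\cong\rho(W)\times_{\rho(U)}\rho(W)$, the two legs of $d(S)$ correspond to $\Delta$ and to the $v$-induced morphism appearing in Definition~\ref{d3.1}(3). This is a formal diagram chase, using that $uq = pv$ in $S$ (so that the two projections $W\times_U W \to W \rightrightarrows V$ become equal after composing to $X$, factoring through $U$ via $q$); no geometric input is needed.
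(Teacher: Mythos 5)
Your argument is correct: the paper gives no proof of this lemma, citing Voevodsky's Lemma 2.11 instead, and your reconstruction is essentially the standard argument there — the distinguished derived square $d(S)$ makes $\{\Delta_{V/X},\, W\times_U W \to V\times_X V\}$ generate a covering sieve, hence an epimorphism of associated sheaves, which is identified with the required surjectivity because $\rho=a\circ y$ preserves the relevant fibre products ($y$ preserves limits, $a$ is left exact). Nothing further is needed.
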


\begin{defn}\label{dA.5} A cd-structure verifying the conditions of Lemma \ref{lA.7} (resp. \ref{lA.8}) is called \emph{strongly complete} (resp. \emph{strongly regular}).
\end{defn}

\begin{remark}\label{lA.1}
The square \eqref{uld(S)} is cartesian.
This is a formal consequence of Lemma \ref{lA.8}, since any distinguished square with respect to a regular cd-structure is cartesian by definition. 
However, there is a more direct proof: let $Z\by{a} V$ and $Z\by{b} W \times_{U} W$ be two morphisms making the corresponding square commute. Then $b$ amounts to two morphisms $b_1, b_2:Z\to W$ such that (with the notation of \eqref{eq.cd2}) $qb_1=qb_2$ and $a=vb_1=vb_2$. Since $S$ is cartesian by (1), we have $b_1=b_2 : Z\to W$, which is a solution to the universal problem.
\end{remark}


%

\begin{prop}\label{pA.4} Let $(\sC,\Sigma)$ be a localiser such that  $\Sigma$ admits a calculus of right fractions.
\begin{enumerate}
\item If $\sC$ has an initial object verifying Conditon (1) of Lemma \ref{lA.7}, so does $\Sigma^{-1} \sC$.
\item Assume (1) and let $Q:\sC\to \Sigma^{-1} \sC$ be the localisation functor. Suppose given a cd-structure $P$ on $\sC$, and let $P'$ be the cd-structure on $\Sigma^{-1} \sC$ given by all squares isomorphic to a square of the form $Q(S)$, where $S\in P$. If $P$ is strongly complete (resp. strongly regular), so is $P'$. 
\end{enumerate}
\end{prop}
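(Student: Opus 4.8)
\textbf{Proof proposal for Proposition \ref{pA.4}.}

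The plan is to verify the two parts separately, relying throughout on the description of morphisms in $\Sigma^{-1}\sC$ provided by the calculus of right fractions (Proposition \ref{p.cf}) and on the left exactness of $Q$ (Proposition \ref{p1.10} a)). First I would dispose of (1): if $\emptyset$ is an initial object of $\sC$ verifying Condition (1) of Lemma \ref{lA.7}, then $Q(\emptyset)$ is still initial in $\Sigma^{-1}\sC$ because $Q$ preserves finite limits, in particular the empty limit. For Condition (1) itself, suppose $f : X \to Q(\emptyset)$ is a morphism in $\Sigma^{-1}\sC$; by Proposition \ref{p.cf} c) it can be written as $Q(g)Q(s)^{-1}$ with $s : X' \to X$ in $\Sigma$ and $g : X' \to \emptyset$ in $\sC$. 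Since any morphism to $\emptyset$ in $\sC$ is an isomorphism, $g$ is an isomorphism; hence $Q(g)$ is an isomorphism and so is $f = Q(g)Q(s)^{-1}$. This gives (1).

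For (2), I would first check that $P'$ is genuinely a cd-structure, i.e. closed under isomorphism in the arrow category; this is immediate from the definition of $P'$ as ``all squares isomorphic to a $Q(S)$''. Now assume $P$ is strongly complete, so it verifies Conditions (1) and (2) of Lemma \ref{lA.7}. Condition (1) for $P'$ is exactly part (1) of the present proposition, already proven. For Condition (2), take $S\in P$ of the form \eqref{eq.cd2} and a morphism $X'' \to Q(X)$ in $\Sigma^{-1}\sC$; I must show the pullback $X''\times_{Q(X)} Q(S)$ exists and lies in $P'$. Here the key move is to use the Ore condition together with Proposition \ref{p.cf} c): write the given morphism as $Q(g)Q(s)^{-1}$, so that up to the isomorphism $Q(s)$ we may assume $X'' = Q(X')$ for some object $X'$ of $\sC$ and the morphism is $Q(g)$ with $g : X' \to X$ in $\sC$. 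Then $X'\times_X S$ is defined and distinguished in $\sC$ by strong completeness of $P$, and applying $Q$ — which commutes with this finite limit by Proposition \ref{p1.10} a) — yields $Q(X'\times_X S) = Q(X')\times_{Q(X)} Q(S)$, a square in $P'$. This establishes strong completeness of $P'$.

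The strong regularity statement is handled analogously, now invoking Lemma \ref{lA.8}. For $S\in P$ of the form \eqref{eq.cd2}, Conditions (1) and (2) of Lemma \ref{lA.8} for $Q(S)$ — namely that $Q(S)$ is cartesian and that $Q(u)$ is a monomorphism — follow from the corresponding properties of $S$ together with the left exactness of $Q$: a left exact functor preserves pullbacks (hence cartesianness) and monomorphisms (a morphism $u$ is mono iff the square with both legs $\id$ is cartesian, which is preserved). For Condition (3), $V\times_X V$ and $W\times_U W$ exist in $\sC$, so $Q(V\times_X V) = Q(V)\times_{Q(X)} Q(V)$ and likewise for $W$, again by left exactness; and the derived square $d(Q(S))$ of $Q(S)$ is then canonically identified with $Q(d(S))$, where $d(S)$ is the derived square of $S$, because $Q$ commutes with the finite limits and with the diagonal morphisms defining it. Since $d(S)\in P$ by strong regularity of $P$, we get $d(Q(S)) = Q(d(S)) \in P'$, as required. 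The main obstacle is the bookkeeping in the previous paragraph: one must be careful that reducing a general morphism $X''\to Q(X)$ to one of the form $Q(g)$ is legitimate, i.e. that pulling back a distinguished square along an isomorphism of $\Sigma^{-1}\sC$ (coming from $Q(s)^{-1}$) again produces a square of $P'$ — but this is exactly the isomorphism-closure of $P'$, so it is automatic. Everything else is a routine application of the exactness of $Q$ and the calculus of fractions.
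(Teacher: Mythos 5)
There is one genuine flaw, and it is in part (1): you justify the initiality of $Q(\emptyset)$ in $\Sigma^{-1}\sC$ by saying that $Q$ ``preserves finite limits, in particular the empty limit''. The empty \emph{limit} is the terminal object; the initial object is the empty \emph{colimit}, and the left exactness of $Q$ (Proposition \ref{p1.10} a)) says nothing about it -- under a calculus of \emph{right} fractions $Q$ has no reason to preserve colimits. In fact initiality of $Q(\emptyset)$ genuinely uses Condition (1) of Lemma \ref{lA.7} in $\sC$: by Proposition \ref{p.cf} b) one has $\Sigma^{-1}\sC(Q(\emptyset),Y)=\colim_{(s\colon X'\to\emptyset)\in\Sigma\downarrow\emptyset}\sC(X',Y)$, and Condition (1) forces every such $s$ to be an isomorphism, so this colimit is $\sC(\emptyset,Y)$, a singleton. (Equivalently, as in the paper: a morphism $Q(\emptyset)\to Y$ exists because $Q$ is essentially surjective and $\emptyset$ is initial; for uniqueness write two such morphisms as right fractions $Q(a)Q(s)^{-1}$, note that $s$ is invertible by Condition (1), and conclude by initiality of $\emptyset$ in $\sC$.) Your verification that every morphism \emph{into} $Q(\emptyset)$ is invertible is correct, but it cannot replace this step: an object admitting maps to everything and such that every map into it is an isomorphism need not be initial (uniqueness of the maps out is not automatic). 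The repair is short and in the same style as the rest of your argument, but as written the step fails.

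Part (2) is correct and largely parallels the paper. For strong completeness, your reduction of an arbitrary morphism into the corner to one of the form $Q(g)$ up to the isomorphism $Q(s)$ (Proposition \ref{p.cf} c)), followed by pulling back in $\sC$ and applying left exactness of $Q$, is exactly what the paper's terse ``$Q$ commutes with finite limits; this implies Condition (2) of Lemma \ref{lA.7}'' amounts to, with the isomorphism-closure of $P'$ absorbing the denominator. For strong regularity, your treatment of cartesianness and of the derived square (the diagonals and the fibre products $V\times_XV$, $W\times_UW$ exist in $\sC$ and are preserved by $Q$, so $d(Q(S))\simeq Q(d(S))\in P'$) coincides with the paper's. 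The one point where you take a different route is the preservation of monomorphisms: the paper proves it directly with fractions (reduce $f,g$ to a common denominator and cancel via Proposition \ref{p.cf} c)), whereas you use that $u$ is a monomorphism if and only if the square with two identity legs is cartesian -- a finite limit which exists in $\sC$ and is therefore preserved by the left exact $Q$. That argument is valid and a little slicker; the paper's hands-on version merely avoids leaning on the interpretation of ``commutes with finite limits'' when $\sC$ is not assumed to have them.
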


\begin{proof} (1) Let $\emptyset$ be an initial object of $\sC$. Since $Q$ is (essentially) surjective, $Q(\emptyset)$ admits a morphism to any object; Condition (1) of Lemma \ref{lA.7} for $\emptyset$ implies that this morphism is unique, and this in turn implies the same condition for $Q(\emptyset)$.

(2) By Proposition \ref{p1.10} a),  $Q$ commutes with finite limits. This implies Condition (2) of Lemma \ref{lA.7}. Conditions (1), (3) of Lemma \ref{lA.8} for $P'$ follow from the same conditions for $P$ (note that the diagonals are preserved by $Q$, since they are finite limits).  It remains to show that $Q$ carries a monomorphism $u:U\to X$ to a monomorphism. Let $f,g:V\to U$ be two morphisms in $\Sigma^{-1} \sC$ such that $Q(u)f=Q(u)g$. By calculus of fractions, we may write $f=Q(\tilde f)Q(s)^{-1}$ and  $g=Q(\tilde g)Q(s)^{-1}$ for some $\tilde f, \tilde g\in Ar(\sC)$ and $s\in \Sigma$. Then $Q(u\tilde f)=Q(u \tilde g)$. By Proposition \ref{p.cf} c), we may find $t\in \Sigma$ such that $u\tilde ft = u\tilde g t$, which implies $\tilde f t=\tilde gt$ since $u$ is a monomorphism. This shows $f=g$, as desired. 
\end{proof}

\subsection{A pull-back lemma} 
We shall use 
the following elementary lemma several times.

\begin{lemma}\label{lem:lr-adjoint} 
Let $\sC,  \sD$ be abelian categories
and let $\sC' \subset \sC, \sD' \subset \sD$
be full abelian subcategories.
Let $c :\sC \to \sD$ and $c' :\sC' \to \sD'$ 
be additive functors satisfying $c i_C = i_D c'$,
where $i_C : \sC' \to \sC$
and $i_D : \sD' \to \sD$ are inclusion functors.
\begin{enumerate}
\item If $c$ is faithful, so is $c'$.
\item Suppose that $i_D$ is strongly additive or  has a strongly additive left inverse (for example, a left adjoint). If $c$ and $i_C$ are strongly additive, so is $c'$.
\item Suppose that $i_C$ has a left adjoint $a_C$.
If $c$ has a left adjoint $d$,
then $d'=a_C d i_D$ is a left adjoint of $c'$. If $d$ and $a_\sC$ are exact, so is $d'$. Moreover, $a_\sC d=d'a_\sD$ if $i_\sD$ has a left adjoint $a_\sD$.
\item 
Suppose that 
$i_C$ and $i_D$ have left adjoints $a_C$ and $a_D$,
that $a_D$ is exact, and that $a_D c= c' a_C$.
If $c$ is exact, then so is $c'$.
\end{enumerate}
\end{lemma}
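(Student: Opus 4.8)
\textbf{Plan of proof for Lemma \ref{lem:lr-adjoint} (4).}

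The plan is to verify directly that $c'$ preserves finite limits and colimits, using the hypotheses $a_D c = c' a_C$, exactness of $a_D$, and exactness of $c$. First I would observe that, since $\sC'$ and $\sD'$ are full abelian subcategories with inclusions $i_C$, $i_D$ admitting left adjoints $a_C$, $a_D$, a short exact sequence in $\sC'$ is the same thing as a complex in $\sC'$ which becomes short exact in $\sC$ after applying $i_C$ (fullness of $i_C$ makes kernels and cokernels computed in $\sC'$ agree with the reflection of those computed in $\sC$; concretely, $a_C i_C \cong \id_{\sC'}$). So it suffices to show: for any short exact sequence $0 \to A' \to B' \to C' \to 0$ in $\sC'$, the sequence $0 \to c'(A') \to c'(B') \to c'(C') \to 0$ is exact in $\sD'$, equivalently (by the same remark for $i_D$) after applying $i_D$ it is exact in $\sD$.

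Next I would run the computation. Apply $i_C$ to get a short exact sequence $0 \to i_C A' \to i_C B' \to i_C C' \to 0$ in $\sC$. Since $c$ is exact, $0 \to c i_C A' \to c i_C B' \to c i_C C' \to 0$ is exact in $\sD$. By the compatibility $c i_C = i_D c'$, this is the sequence $0 \to i_D c' A' \to i_D c' B' \to i_D c' C' \to 0$, which is therefore exact in $\sD$; by the remark in the first paragraph this means $0 \to c'A' \to c'B' \to c'C' \to 0$ is exact in $\sD'$. Hence $c'$ is exact. (I note that the hypotheses $a_D c = c' a_C$ and exactness of $a_D$ are not actually needed for this argument once one has $c i_C = i_D c'$ and the full-abelian-subcategory structure; but they provide an alternative route — namely $a_D$ exact and $a_D c = c' a_C$ together with $c$ exact give that $c' a_C$ is exact, and since $a_C$ is exact and essentially surjective onto $\sC'$ via $a_C i_C \cong \id$, one recovers exactness of $c'$ — and I would present whichever is cleaner in context, likely the second since it mirrors the proof of (3).)

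The main obstacle — really the only subtlety — is making precise the claim that exactness can be tested after applying the inclusion $i_D$ and reflected back along $a_D$: one must check that for a full abelian subcategory closed under the relevant operations, a sequence in $\sD'$ is exact iff its image under $i_D$ is exact in $\sD$. This follows because $i_D$ is fully faithful and exact (being a right adjoint whose left adjoint $a_D$ is exact, hence $i_D$ itself is exact as well — or simply because $\sD'$ is by hypothesis a \emph{full abelian} subcategory, so kernels and cokernels in $\sD'$ are computed as in $\sD$), so $i_D$ reflects exactness. Once this is in hand the proof is the short diagram chase above.
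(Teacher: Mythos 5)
Your proposal treats only part (4), leaving (1)--(3) unaddressed (those are easy and formal, so this is a minor point); the real problem is that the argument you give for (4) contains a genuine gap. The step ``apply $i_C$ to get a short exact sequence $0 \to i_C A' \to i_C B' \to i_C C' \to 0$ in $\sC$'', and the parallel claim that exactness in $\sD'$ can be tested after $i_D$ ``because kernels and cokernels in $\sD'$ are computed as in $\sD$'', are false in the generality of the lemma. ``Full abelian subcategory'' here only means a full subcategory that happens to be abelian; its inclusion is not assumed exact, and in the situation the lemma is built for it is not: the relevant instances are $\sC'=\ulMNST^{\fin}\subset\sC=\ulMPST^{\fin}$, $\sD'=\ulMNS^{\fin}\subset\sD=\ulMPS^{\fin}$ (and their analogues without $\fin$), i.e. sheaf categories inside presheaf categories, where the inclusion is only left exact and cokernels of maps of sheaves computed in presheaves differ from their sheaf cokernels. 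So applying $i_C$ to a short exact sequence in $\sC'$ generally destroys exactness on the right. The fact that your argument never uses the hypotheses ``$a_D$ exact'' and ``$a_Dc=c'a_C$'' is the symptom: with your reading, part (4) would be contentless, whereas those hypotheses are exactly what compensates for the non-exactness of the inclusions. Your sketched ``alternative route'' does not repair this: it invokes exactness of $a_C$, which is not among the hypotheses, and even granting it, passing from exactness of $c'a_C$ to exactness of $c'$ requires lifting a short exact sequence of $\sC'$ along $a_C$, which is precisely the missing point.

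For comparison, the paper's proof handles this as follows. Given a short exact sequence $0\to F\to G\to H\to 0$ in $\sC'$, set $K:=\Coker(i_CG\to i_CH)$ computed in $\sC$. Since $a_C$ is right exact (being a left adjoint) and $a_Ci_C\cong\id$, one gets $a_CK=0$, hence $a_DcK=c'a_CK=0$ by the compatibility hypothesis. On the other hand $i_C$ is left exact (being a right adjoint), so $0\to i_CF\to i_CG\to i_CH\to K\to 0$ is exact in $\sC$; applying the exact functors $c$ and $a_D$ and using $a_Dci_C=a_Di_Dc'=c'$ kills the $K$-term and yields the exact sequence $0\to c'F\to c'G\to c'H\to 0$ in $\sD'$. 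If you want to salvage your write-up, you must replace the claim that $i_C$ and $i_D$ preserve/reflect short exactness by this cokernel-killing argument (or something equivalent), and supply proofs of (1)--(3) as well.
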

\begin{proof}
(1) is obvious.
(2) Let $\{F_i \}_{i \in I}$ be a family of objects of $\sC'$. We must show that the natural map
\[f:\bigoplus_{i \in I} c'(F_i )\to c'\big(\bigoplus_{i \in I} F_i\big)\]
is an isomorphism. The composition
\[\bigoplus_{i \in I} i_D c' F_i\by{g} i_D\big(\bigoplus_{i \in I} c'(F_i) \big)\by{i_Df} i_D c'\big(\bigoplus_{i \in I} F_i\big)\]
is an isomorphism by the strong additivity of $c$ and $i_C$.  If $i_D$ is strongly additive,  $g$ is also an isomorphism and we are done. If now $i_D$ has a strongly additive left inverse $a_D$, we apply it to the diagram and get a composition
\[\bigoplus_{i \in I} a_D\ i_D c' F_i\iso a_D\bigoplus_{i \in I} i_D c' F_i\by{a_D g} a_Di_D\big(\bigoplus_{i \in I} c'(F_i) \big)\by{a_Di_Df} a_Di_D c'\big(\bigoplus_{i \in I} F_i\big)\]
which is an isomorphism and naturally isomorphic to $f$. This concludes the proof of (2).

(3) For $F \in \sC'$ and $G \in \sD'$, we have
$\sC'(a_C d i_D F, G)
=\sD(i_D F, c i_C G)
=\sD(i_D F, i_D c' G)
=\sD'(F, c'G)$. This proves the first claim; therefore if $d$ and $a_\sC$ are exact, $d'$ is left exact, hence exact since it is right exact as a left adjoint. The last isomorphism follows from taking left adjoints of the isomorphism $ci_\sC=i_\sD c'$.

(4) Let us take an exact sequence
$0 \to F \to G \to H \to 0$ in $\sC'$.
Put $K:=\Coker(i_C G \to i_C H) \in \sC$.
Since $a_D c K = c' a_C K = 0$,
we get an exact sequence
$0 \to a_D c i_C F \to 
a_D c i_C G \to a_D c i_C H \to 0$
by the exactness of $c$ and $a_D$.
Using $a_D c= c' a_C$ and $a_C i_C = \id$ (Lemma \ref{lA.6}),
we conclude 
$0 \to c' F \to c' G \to c' H \to 0$ is exact.
\end{proof}

The proof of Lemma \ref{lem:lr-adjoint} (2) implicitly used the following (trivial) lemma, which we state for the sake of clarity.

\begin{lemma}\label{lA.9} Let $\sD\subseteq \sC$ be a full embedding of categories. Suppose that a direct (resp. inverse) system $(d_\alpha)$ of objects of $\sD$ has a colimit (resp. a limit) in $\sC$, which is isomorphic to an object $d$ of $\sD$. Then $d$ represents the (co)limit of $(d_\alpha)$ in $\sD$.
\end{lemma}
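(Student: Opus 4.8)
The plan is to treat this as a purely formal statement about universal properties: the only inputs are the definition of (co)limit and the fullness of $\sD\subseteq\sC$. It suffices to prove the colimit assertion, since the limit assertion follows by applying it to the full embedding $\sD^{\op}\subseteq\sC^{\op}$.

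First I would fix notation. Write $c=\colim_\alpha d_\alpha$ for the colimit computed in $\sC$, with structure morphisms $j_\alpha:d_\alpha\to c$, and let $\sigma:c\iso d$ be the given isomorphism with $d\in\sD$. Transporting along $\sigma$, the pair $(d,(\sigma j_\alpha)_\alpha)$ is again a colimiting cocone of $(d_\alpha)$ in $\sC$; set $\iota_\alpha:=\sigma j_\alpha:d_\alpha\to d$. Since $d_\alpha,d\in\sD$ and $\sD$ is full in $\sC$, each $\iota_\alpha$ is a morphism of $\sD$, so $(d,(\iota_\alpha)_\alpha)$ is a cocone over $(d_\alpha)$ in $\sD$.

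Next I would verify the universal property inside $\sD$. Given any cocone $(e,(\phi_\alpha)_\alpha)$ over $(d_\alpha)$ in $\sD$, we have $e\in\sD\subseteq\sC$ and each $\phi_\alpha$ is a morphism of $\sC$, so this is also a cocone in $\sC$; hence there is a unique $\psi:d\to e$ in $\sC$ with $\psi\iota_\alpha=\phi_\alpha$ for all $\alpha$. By fullness, $\psi$ is automatically a morphism of $\sD$. Uniqueness within $\sD$ is immediate: any $\psi':d\to e$ in $\sD$ with $\psi'\iota_\alpha=\phi_\alpha$ is in particular such a morphism in $\sC$, so $\psi'=\psi$. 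Therefore $(d,(\iota_\alpha)_\alpha)$ represents $\colim_\alpha d_\alpha$ in $\sD$, which is the assertion; the inverse-limit case is the dual statement obtained from the opposite categories.

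There is no genuine obstacle here — the statement is essentially a tautology. The only points deserving a word of care are (i) transporting the $\sC$-colimiting cocone along the isomorphism $\sigma$, so that one may work with $d$ itself rather than with "the colimit up to isomorphism", and (ii) the observation that fullness upgrades the universal morphism of $\sC$ to a morphism of $\sD$ and makes its uniqueness in $\sD$ automatic. Both are one-line remarks, so the write-up will be short.
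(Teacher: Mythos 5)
Your proof is correct: transporting the colimiting cocone along the isomorphism, using fullness to see that both the cocone maps and the mediating morphism lie in $\sD$, and noting that uniqueness in $\sC$ forces uniqueness in $\sD$ is exactly the standard verification. The paper states this lemma without proof (calling it trivial), and your write-up is precisely the argument the authors leave implicit, so there is nothing to add.
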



\subsection{Homological algebra}  Recall Gro\-then\-dieck's theorem \cite[th\'eor\`eme~2.4.1]{tohoku}:

\begin{thm}\label{tA.2} Let $\sA\by{F}\sB\by{G}\sC$ be a string of left exact functors between abelian categories. Suppose that $\sA$ and $\sB$ have enough injectives and that $F$ carries injectives of $\sA$ to $G$-acyclics. Then, for any $A\in \sA$, there is a convergent spectral sequence
\[E_2^{p,q}=R^pG R^qF(A)\Rightarrow R^{p+q}(GF)(A).\]
\end{thm}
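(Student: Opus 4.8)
The statement is Grothendieck's classical theorem on the spectral sequence of a composite functor, so the plan is to follow the argument of \cite[th\'eor\`eme~2.4.1]{tohoku} essentially verbatim; I record here the skeleton I would write out. Fix $A \in \sA$ and choose an injective resolution $0 \to A \to I^\bullet$ in $\sA$. The idea is to build a first-quadrant double complex out of $F(I^\bullet)$ by taking, for each $n$, a Cartan--Eilenberg resolution $F(I^n) \to J^{n,\bullet}$ in $\sB$ and then applying $G$ to obtain the double complex $K^{\bullet,\bullet} = G(J^{\bullet,\bullet})$. One then considers the two spectral sequences of the total complex $\Tot(K^{\bullet,\bullet})$ associated with the two filtrations.

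The key steps, in order, are as follows. First I would recall that a Cartan--Eilenberg resolution $J^{\bullet,\bullet}$ of $F(I^\bullet)$ exists since $\sB$ has enough injectives, and that each column $J^{n,\bullet}$, each cocycle object $Z^n_{\mathrm{I}}(J^{\bullet,\bullet})$ and each coboundary object $B^n_{\mathrm{I}}(J^{\bullet,\bullet})$ is an injective resolution in $\sB$ (of $F(I^n)$, of the cocycles of $F(I^\bullet)$, and of the coboundaries, respectively). Second, I would compute the spectral sequence for the filtration by rows: taking vertical cohomology first gives $H^q_{\mathrm{II}}(G(J^{\bullet,\bullet})) = R^q G$ applied to $H^0$ of the columns; but by hypothesis $F(I^n)$ is $G$-acyclic, so this vanishes for $q>0$ and for $q=0$ yields the complex $G(F(I^\bullet))$, whose cohomology is $R^{p+q}(GF)(A)$ — so this spectral sequence degenerates and identifies the abutment. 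Third, for the filtration by columns I would take horizontal cohomology first: because the rows of $J^{\bullet,\bullet}$ split (Cartan--Eilenberg), applying $G$ commutes with taking horizontal cohomology, so $H^p_{\mathrm{I}}(G(J^{\bullet,\bullet}))$ is an injective resolution of $R^p F(A)$ in $\sB$; taking vertical cohomology of that then gives $E_2^{p,q} = R^q G (R^p F(A))$. Matching the indices to the statement (the statement writes $E_2^{p,q}=R^pG R^qF(A)$, i.e.\ with the roles of $p,q$ interchanged relative to my bookkeeping, which is purely notational) gives the asserted $E_2$-page and abutment, and convergence is automatic since the double complex is first-quadrant.

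The main obstacle — really the only non-formal point — is the verification that a Cartan--Eilenberg resolution has the two properties actually used: that $G$ applied to the split short exact sequences of rows still produces short exact sequences (immediate, since $G$ is additive and the sequences are split), and that the columns of cocycles and coboundaries are genuine injective resolutions so that ``$H^p_{\mathrm{I}}(G(J))$ is an injective resolution of $R^pF(A)$'' makes sense. Both are standard lemmas about Cartan--Eilenberg resolutions; I would either quote them from \cite{tohoku} or reprove the splitting statement in a line. Everything else is diagram-chasing with the two filtration spectral sequences of a first-quadrant bicomplex, which I would not grind through in detail but simply cite. Since the paper already cites \cite[th\'eor\`eme~2.4.1]{tohoku} for exactly this statement, the honest write-up is short: reproduce the construction above, invoke the two standard properties of Cartan--Eilenberg resolutions, and conclude. \qed
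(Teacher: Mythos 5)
Your proposal is correct: it is the standard Cartan--Eilenberg double-complex proof of the Grothendieck spectral sequence, which is precisely the argument of the source the paper relies on (\cite[th\'eor\`eme~2.4.1]{tohoku}), the paper itself merely recalling Theorem \ref{tA.2} with that citation and giving no proof of its own. Your index bookkeeping (getting $E_2^{p,q}=R^qG\,R^pF(A)$ in your conventions and transposing the double complex to match the statement) is indeed purely notational, so nothing essential is missing.
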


\begin{exs}\label{exA.3} If $F$ has an exact left adjoint, it carries injectives to injectives. If $G$ is exact, the hypothesis on $F$ is automatically verified.
\end{exs}




The following is a slight generalization of \cite[Chapter~III, Proposition~2.12]{milne},
(where the underlying category of $\sS$ 
is supposed to be a category of schemes).

\begin{lemma}\label{lem:milne}
Let $F$ be a sheaf of abelian groups on a site $\sS$.
The following conditions are equivalent.
\begin{enumerate}
\item 
We have $H^q(X, F)=0$
for any $X \in \sS$ and $q>0$. 
\item
We have $\check{H}^q(X, F)=0$
for any $X \in \sS$ and $q>0$. 
\item
We have $\check{H}^q(U/X, F)=0$
for any cover $U \to X$ in $\sS$ and $q>0$. 
\item
The sheaf $F$ is $i_{\sS}$-acyclic,
where $i_{\sS}$ is the inclusion functor
of the category of sheaves to that of presheaves.
\end{enumerate}
\end{lemma}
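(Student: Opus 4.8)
The statement to prove is Lemma~\ref{lem:milne}: for a sheaf of abelian groups $F$ on a site $\sS$, the four conditions (1) $H^q(X,F)=0$ for all $X$, $q>0$; (2) $\check H^q(X,F)=0$ for all $X$, $q>0$; (3) $\check H^q(U/X,F)=0$ for all covers $U\to X$ and $q>0$; (4) $F$ is $i_\sS$-acyclic, are equivalent. The plan is to run the standard cycle of implications, reducing everything to the interplay between \v{C}ech cohomology and derived-functor cohomology via the Cartan--Leray spectral sequence, exactly as in \cite[Chapter~III, Proposition~2.12]{milne}, but being careful that no hypothesis on the underlying category of $\sS$ is used.

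First I would establish $(4)\Leftrightarrow(1)$. The functor $\Gamma(X,-)$ on presheaves is exact (it is evaluation, and limits/colimits of presheaves are computed sectionwise), so $H^q(X,F)=R^q(\Gamma(X,-)\circ i_\sS)(F)$; since $\Gamma(X,-)$ is exact, Grothendieck's theorem (Theorem~\ref{tA.2}, or directly the Grothendieck spectral sequence of the composite) gives $R^q(\Gamma(X,-)\circ i_\sS)(F)\cong \Gamma(X,-)\bigl(R^q i_\sS(F)\bigr)$ for all $q$. Hence $H^q(X,F)=0$ for all $X$ and $q>0$ iff the presheaf $R^q i_\sS(F)$ has all sections zero, i.e. is the zero presheaf, for all $q>0$; that is exactly the statement that $F$ is $i_\sS$-acyclic. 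The implications $(1)\Rightarrow(2)\Rightarrow(3)$ are the easy direction: $(1)\Rightarrow(2)$ is the comparison map $\check H^q(X,F)\to H^q(X,F)$, which is an iso in degree $\le 1$ always and fits, via the Cartan--Leray (\v{C}ech-to-derived) spectral sequence $\check H^p(X,\underline{H}^q(F))\Rightarrow H^{p+q}(X,F)$, into exact sequences forcing $\check H^q(X,F)=0$ once all higher $H^*$ vanish; more simply, $\check H^\bullet(X,F)$ is the colimit over covers of $\check H^\bullet(U/X,F)$, each of which maps to $H^\bullet(X,F)$, so (1) kills all of them. And $(3)\Rightarrow$ the analogous colimit statement gives $(2)$ trivially, while $(2)\Rightarrow(3)$ needs a small argument: refine and use that $\check H^q(U/X,F)\to\check H^q(X,F)$ need not be injective, so instead one argues $(3)\Rightarrow(2)$ directly ($\check H^q(X,F)=\colim_{U}\check H^q(U/X,F)$) and treats $(2)$ and $(3)$ as mutually derivable through this colimit. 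So the real content is the single implication closing the loop, which I take to be $(3)\Rightarrow(1)$.

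For $(3)\Rightarrow(1)$ I would use induction on $q$ together with the Cartan--Leray spectral sequence for an arbitrary cover. Fix $X$ and suppose, as an auxiliary induction hypothesis, that $H^i(Y,F)=0$ for all $Y\in\sS$ and all $1\le i<q$ (the base case $q=1$ being $H^1(X,F)=\check H^1(X,F)=\colim_U\check H^1(U/X,F)=0$ by (3), since $\check H^1$ always computes $H^1$). Under this hypothesis the spectral sequence $\check H^p(U/X,\underline H^r(F))\Rightarrow H^{p+r}(X,F)$ for a cover $U\to X$ has $\underline H^r(F)$ (the presheaf $V\mapsto H^r(V,F)$, restricted to the \v{C}ech nerve) vanishing for $1\le r<q$, so the only contributions to total degree $q$ are $\check H^q(U/X,\underline H^0(F))=\check H^q(U/X,F)$ and $\check H^0(U/X,\underline H^q(F))$. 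Taking the colimit over all covers $U\to X$: the first groups all vanish by (3), and the edge map shows $H^q(X,F)$ injects into $\colim_U\check H^0(U/X,\underline H^q(F))$, i.e. into a certain separated-presheaf quotient of $V\mapsto H^q(V,F)$; a diagram chase with the low-degree exact sequence of the spectral sequence (exactly as in Milne) shows this forces $H^q(X,F)=0$, completing the induction and hence $(3)\Rightarrow(1)$. The main obstacle is being scrupulous that every cohomology group here is defined purely site-theoretically and that the \v{C}ech-to-derived spectral sequence and its five-term exact sequence are available for a general site without any representability or ``enough points'' assumption on $\sS$; once that is granted the argument is the classical one and I would cite \cite{milne} for the spectral sequence input, noting only that its proof there does not use the scheme structure.
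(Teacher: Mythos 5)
Your treatment of (1)$\iff$(4) is exactly the paper's: write $\Gamma_X=\Gamma^{\mathrm{pr}}_X\circ i_{\sS}$ with $\Gamma^{\mathrm{pr}}_X$ exact, apply Theorem \ref{tA.2} to get $H^q(X,F)=(R^qi_{\sS}F)(X)$, and conclude; for the remaining equivalences the paper simply defers to \cite[Chapter~III, Proposition~2.12]{milne}, whose proof is the \v{C}ech-to-derived spectral sequence induction you sketch. So the route is essentially the same; the one real problem is the logical bookkeeping around condition (2).

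As you have organized things, the implications actually on the page are (4)$\iff$(1), (1)$\Rightarrow$(2), (1)$\Rightarrow$(3), (3)$\Rightarrow$(2) and (3)$\Rightarrow$(1): nothing is ever deduced \emph{from} (2), so the four conditions are not yet proved equivalent. Moreover two of your side remarks are wrong as stated: ``(2) and (3) are mutually derivable through the colimit'' fails in the direction (2)$\Rightarrow$(3), since vanishing of $\check{H}^q(X,F)=\colim_U\check{H}^q(U/X,F)$ does not force vanishing of each term; and the ``more simply'' argument for (1)$\Rightarrow$(2) is backwards, because the maps $\check{H}^q(U/X,F)\to H^q(X,F)$ need not be injective, so a zero target does not kill the source. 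Both remarks should be dropped; the spectral-sequence arguments you also give carry the load. The repair of the missing implication is in fact already contained in your own induction: at the step where you ``take the colimit over all covers'', the only input used is $\colim_U\check{H}^q(U/X,F)=\check{H}^q(X,F)=0$, i.e. hypothesis (2), not (3). Relabel that induction as a proof of (2)$\Rightarrow$(1) (this is precisely Milne's (b)$\Rightarrow$(a)); combined with (1)$\Rightarrow$(3) (cover-wise degeneration, since (1) makes the presheaves $\underline{H}^r(F)$ vanish for $r>0$) and the trivial (3)$\Rightarrow$(2), the cycle closes. Finally, do not leave implicit the vanishing $\check{H}^0(X,\underline{H}^q(F))=0$ for $q>0$ needed at the end of the induction; it is proved in \cite{milne} by a purely site-theoretic argument, which confirms your (correct) observation that no scheme structure is used anywhere.
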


\begin{proof}
%
%
%
%
For $X \in \sS$,
we write $\Gamma_X$ (resp. $\Gamma^\pr_X$)
for the functor $F \mapsto F(X)$
from the category of sheaves
(resp. presheaves) to $\Ab$.
We have $\Gamma_X = \Gamma_X^\pr i_{\sS}$.
Since $\Gamma_X^\pr$ is exact,
Theorem \ref{tA.2} implies 
$R^q \Gamma_X = \Gamma_X^\pr R^q i_{\sS}$,
and hence $H^q(X, F)=R^q i_{\sS}F(X)$.
This proves the equivalence of (1) and (4).
The rest is shown in the same way as
\cite[Chapter~III, Proposition~2.12]{milne}.
\end{proof}

\begin{definition}\label{def:flabby}
We say $F$ is  \emph{flabby}
if the conditions of Lemma \ref{lem:milne} are satisfied.
\end{definition}

\begin{lemma}\label{lem:inj-flabby}
Let $\sS$ be the category of abelian sheaves on a site $\sC$,
$\sT$ an abelian category, 
and $c^* : \sT \to \sS$ an additive functor
which has a left adjoint $c_! : \sS \to \sT$.
Suppose that any cover in $\sC$
admits a refinement $U \to X$ such that
$c_!(\check{C}(U/X))$ is exact in $\sT$,
where 
\[\check{C}(U/X) = ( \dots \to y(U \times_X U)
\to y(U) \to y(X) \to 0)
\]
is the \v{C}ech complex associated to $U \to X$
($y$ denotes the Yoneda functor).
Then $c^*I$ is flabby
for any injective object $I \in \sT$.
\end{lemma}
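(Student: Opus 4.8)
The goal is to show that $c^*I$ is flabby for an injective $I \in \sT$, and by Definition~\ref{def:flabby} this amounts to checking any one of the equivalent conditions of Lemma~\ref{lem:milne} — the most convenient here is condition (3), the vanishing of \v{C}ech cohomology $\check H^q(U/X, c^*I)=0$ for all covers $U\to X$ and all $q>0$. First I would reduce to the case of a cover $U\to X$ of the special form in the hypothesis, namely one for which $c_!(\check C(U/X))$ is exact in $\sT$: this is legitimate because \v{C}ech cohomology is computed as a colimit over refinements of covers (the standard comparison map $\check H^q(U/X,F)\to \check H^q(V/X,F)$ for a refinement $V\to U$), so if it vanishes on a cofinal system of covers it vanishes on all of them — and in fact for proving flabbiness via Lemma~\ref{lem:milne}(3) it suffices to treat covers in a fixed generating/refining family.

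Next, the key computation. For such a cover $U\to X$, the \v{C}ech complex $\check C(U/X)$ is a complex of representable presheaves; applying the sheafification functor (the left adjoint of $i_{\sC}$, if we are working with presheaves, but here $\sS$ is already the category of sheaves and $y$ denotes the Yoneda/representable-sheaf functor) we regard $\check C(U/X)$ as a complex in $\sS$. Then the \v{C}ech cochain complex computing $\check H^\bullet(U/X, c^*I)$ is obtained by applying $\Hom_{\sS}(-, c^*I)$ to $\check C(U/X)$. By the adjunction $(c_!, c^*)$, we have a natural isomorphism of cochain complexes
\[
\Hom_{\sS}(\check C(U/X), c^*I) \cong \Hom_{\sT}(c_!\check C(U/X), I).
\]
By hypothesis $c_!(\check C(U/X))$ is an exact complex in $\sT$; since $I$ is injective in $\sT$, the functor $\Hom_{\sT}(-, I)$ sends this exact complex to an exact complex of abelian groups. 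Hence $\check H^q(U/X, c^*I)=0$ for all $q>0$, as desired.

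\textbf{Main obstacle.} The substantive point is the compatibility of the \v{C}ech cochain complex with the adjunction: one must make sure that $c_!$ applied to the augmented \v{C}ech complex $\check C(U/X)$ really does give the complex whose $\Hom$ into $I$ is the \v{C}ech cochain complex of $c^*I$, i.e.\ that all the face maps match up. This is bookkeeping about the simplicial structure of $U^{\times_X \bullet+1}$ and the fact that $c_!$, being a left adjoint, commutes with the relevant colimits (direct sums, cokernels) used to form the normalized or unnormalized \v{C}ech complex; once this is set up it is formal. A secondary care point is the reduction step: one should phrase flabbiness through whichever clause of Lemma~\ref{lem:milne} makes the refinement argument cleanest — using clause (3) and the fact that every cover admits a refinement of the hypothesized type, together with the functoriality of \v{C}ech cohomology in the cover, closes the argument without needing to know that the colimit over \emph{all} covers is exhausted by the good ones.
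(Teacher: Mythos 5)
Your proof is correct and follows essentially the same route as the paper: reduce to covers $U \to X$ of the hypothesized type, identify the \v{C}ech cochain complex of $c^*I$ with $\Hom_{\sT}(c_!\check{C}(U/X), I)$ via the adjunction $(c_!,c^*)$, and conclude from the exactness of $c_!\check{C}(U/X)$ and the injectivity of $I$. One small clarification: the reduction to the cofinal family of covers is best phrased through condition (2) of Lemma \ref{lem:milne} (vanishing of $\check{H}^q(X,-)$, a filtered colimit over refinements), since vanishing of $\check{H}^q(V/X,\cdot)$ for a refinement $V$ of $U$ does not by itself force vanishing for $U$; the equivalence of the conditions in that lemma then gives flabbiness, which is exactly what the paper implicitly uses as well.
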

\begin{proof}
(Compare \cite[Proposition~3.1.7]{voetri}.)
It suffices to show 
$\check{H}^q(U/X, c^*I)=0$ for any $q>0$
and for any $U \to X$ as in the assumption.
If we denote by $U_X^n$ the
$n$-fold fiber product of $U$ over $X$,
then $\check{H}^q(U/X, c^*I)$
is computed as the cohomology of the complex
\begin{align*}
c^*I(U_X^{\bullet+1}) 
= \sS(y(U_X^{\bullet+1}), c^*I)
= \sT(c_! y(U_X^{\bullet+1}), I),
\end{align*}
which is acyclic by the assumption and
the injectivity of $I$.
\end{proof}

\subsection{Grothendieck categories} 
\label{s.groth}

Recall that a \emph{Gro\-then\-dieck abelian category} (for short, a Grothendieck category) is an abelian category verifying Axiom AB5 of \cite{tohoku}:  small colimits are representable and exact, and having a set of generators (equivalently, a generator). These generators are generators by strict epimorphisms.
We have the following basic facts:

\begin{thm}\label{t.groth} 
\leavevmode
\begin{itemize}
\item[a)] Any Grothendieck category is complete and has enough injectives.
\item[b)] Let $\sF:\sC\to \sD$ be a functor, where $\sC$ is a Grothendieck category. Then $F$ has a right adjoint if and only if it commutes with all colimits.
\item[c)] Let $\sC$ be a Grothendieck category, $\sB\subset \sC$ be a Serre subcategory, $\sD=\sC/\sB$ and $G:\sC\to \sD$ the (exact) localisation functor. Then $G$ has a right adjoint $D$ if and only if $\sB$ is stable under infinite direct sums. In this case, $\sB$ and $\sD$ are Grothendieck.
\item[d)] Let $G:\sC\leftrightarrows \sD:D$ be a pair of adjoint additive functors between additive categories, with $D$ fully faithful. If $\sC$ is Grothendieck and $G$ is exact, $\sD$ is Grothendieck.
\end{itemize}
\end{thm}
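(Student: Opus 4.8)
The final statement to prove is Theorem~\ref{t.groth}~d): if $G:\sC\leftrightarrows \sD:D$ is an adjoint pair of additive functors between additive categories, $D$ fully faithful, $\sC$ a Grothendieck category and $G$ exact, then $\sD$ is Grothendieck. The plan is to realize $\sD$ as a quotient of $\sC$ by the kernel of $G$ and invoke part c). First I would set $\sB=\Ker G$, the full subcategory of objects $X\in\sC$ with $G(X)=0$. Since $G$ is exact and additive, $\sB$ is closed under subobjects, quotients and extensions, hence is a Serre subcategory of $\sC$. Moreover $\sB$ is stable under arbitrary direct sums: if $\{X_i\}$ is a family in $\sB$, then because $\sC$ is Grothendieck the coproduct $\bigoplus_i X_i$ exists; $G$ is a left adjoint (it has right adjoint $D$), hence commutes with all colimits by Theorem~\ref{t.groth}~b), so $G(\bigoplus_i X_i)=\bigoplus_i G(X_i)=0$. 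Thus $\sB$ is a Serre subcategory stable under infinite direct sums, and by part c) the localization $\sC\to \sC/\sB$ has a fully faithful right adjoint and $\sC/\sB$ is Grothendieck.

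\textbf{Identifying $\sD$ with $\sC/\sB$.} The key step is then to show that the induced exact functor $\bar G:\sC/\sB\to \sD$ is an equivalence of categories. Here I would use the standard universal property of the Serre quotient: $G$ kills $\sB$ by construction, so it factors as $G=\bar G\circ Q$ where $Q:\sC\to\sC/\sB$ is the localization. Since $G$ is a localization functor in the sense that its right adjoint $D$ is fully faithful (apply Lemma~\ref{lA.6}: a left adjoint whose right adjoint is fully faithful is a localization, with the counit $GD\Rightarrow\id$ an isomorphism), one expects $\sD$ to be precisely the localization of $\sC$ at the class of morphisms inverted by $G$, which is exactly $\sC/\sB$ when that class consists of the $\sB$-isos. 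Concretely: the counit $GD\Rightarrow\id_\sD$ is invertible, so $G$ is essentially surjective, hence $\bar G$ is too. For full faithfulness of $\bar G$, I would compare Hom-sets: morphisms in $\sC/\sB$ out of $Q(X)$ are computed by the usual calculus of fractions over $\sB$-subobjects/quotients, and one checks that this colimit matches $\sD(G(X),G(Y))=\sC(X,DG(Y))$ using that $DG$ is the Bousfield localization endofunctor associated to $\sB$ (its kernel and cokernel of the unit lie in $\sB$). This is the routine but slightly technical heart of the argument.

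\textbf{Main obstacle.} The one genuinely delicate point is verifying that $\bar G:\sC/\sB\to\sD$ is fully faithful, i.e.\ that $\sD$ is \emph{the} Gabriel quotient $\sC/\Ker G$ and not merely receives a functor from it. The cleanest route is: (i) note $D$ fully faithful $\Rightarrow$ the counit $\varepsilon: GD\iso\id$ (Lemma~\ref{lA.6}); (ii) deduce that for the unit $\eta_X: X\to DG(X)$, both $\Ker\eta_X$ and $\Coker\eta_X$ lie in $\sB$ — this is because applying the exact $G$ to $0\to\Ker\eta_X\to X\to DG(X)\to\Coker\eta_X\to 0$ and using the triangle identity $G(\eta)\circ\varepsilon^{-1}=\id$ shows $G(\eta_X)$ is an isomorphism; (iii) these two facts say exactly that $D$ identifies $\sD$ with the full subcategory of $\sB$-local objects of $\sC$, which by the general theory of localization at a Serre subcategory (available since $\sB$ is stable under coproducts, so the quotient functor has a section) is equivalent to $\sC/\sB$. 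Once this identification is in hand, $\sD\simeq\sC/\sB$ is Grothendieck by part c), completing the proof. Alternatively, one could bypass c) and argue directly that $\sD$ has a generator (push a generator of $\sC$ through $G$, using exactness and cocontinuity of $G$ to see it is a generator in $\sD$) and satisfies AB5 (colimits in $\sD$ are computed by applying $G$ to colimits in $\sC$ after replacing objects by their images $DG(-)$, and $G$ is exact and cocontinuous); but the quotient-category approach is shorter given that c) is already available.
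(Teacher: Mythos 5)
Your proposal is correct and follows essentially the same route as the paper: the paper's proof of d) also sets $\sB=\Ker G$, observes it is a Serre subcategory (stable under direct sums since $G$, being a left adjoint, preserves coproducts), and reduces to part c), with parts a)--c) handled by citation. The only difference is one of detail: the paper delegates the identification $\sD\simeq\sC/\sB$ to Gabriel's theory of quotient categories, whereas you spell out the verification (counit isomorphism via Lemma \ref{lA.6}, unit with kernel and cokernel in $\sB$, hence $D$ identifies $\sD$ with the $\sB$-local objects), which is a correct filling-in of the same argument.
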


\begin{proof} a) See \cite[th\'eor\`eme~1.10.1]{tohoku}, \cite[expos\'e~V, remarque~0.2.1]{SGA4} or \cite[Theorem~8.3.27 (i) and 9.6.2]{ks}. b) See \cite[Proposition~8.3.27 (iii)]{ks}. c) See  \cite[chapitre~III, proposition~8 and 9]{gabriel}. d) Let $\sB$ be the kernel of $G$. Then $\sB$ is easily seen to be a Serre subcategory (e.g. \cite[chapitre~III, proposition~5]{gabriel}), so the claim follows from c). 
\end{proof}

\begin{thm}\label{t.mon} For any additive category $\sA$,
$\Mod\sA$ is a Gro\-then\-dieck category 
with a set of projective generators. 
\end{thm}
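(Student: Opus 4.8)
The plan is to reduce the statement to the very classical fact that a category of additive functors from a (small) additive category into $\Ab$ is a Grothendieck category with enough projectives, which is the additive analogue of the theorem that presheaves of sets form a topos. First I would replace $\sA$ by an essentially small skeleton, so that $\Mod\sA$ genuinely makes sense as a category (this is implicitly assumed throughout the paper, e.g. in \S\ref{s.presh}, and I would note that for the categories of interest — $\ulMCor$, $\MCor$, $\ulMCor^\fin$ — essential smallness holds because schemes of finite type over $k$ up to isomorphism form a set). Then $\Mod\sA$ is an abelian category: kernels, cokernels, and finite biproducts are computed objectwise in $\Ab$, and additivity of the functors is preserved under these operations.

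Next I would verify AB5. Small colimits in $\Mod\sA$ are computed objectwise: given a diagram $(F_i)$, the assignment $a \mapsto \colim_i F_i(a)$ is again additive because finite products in $\Ab$ commute with arbitrary colimits when the indexing diagram is filtered, and a general colimit is built from coproducts and coequalizers, both of which visibly preserve additivity (a coproduct of additive functors is additive since $\bigoplus_i(F_i(a)\times F_i(b)) = (\bigoplus_i F_i(a)) \times (\bigoplus_i F_i(b))$ as the index set for the biproduct is finite). Exactness of filtered colimits then follows from exactness of filtered colimits in $\Ab$, applied objectwise. For the generators: for each object $a \in \sA$ let $P_a := \Z[\sA(-,a)]$ be the representable additive presheaf (this is the functor called $\Z\ulMSm(\gamma(-),\gamma(a))$ in the proof of Proposition~\ref{eq:c-functor}, or $\Z_\tr$ in the transfer setting). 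By the additive Yoneda lemma, $\Mod\sA(P_a, F) \simeq F(a)$ naturally in $F$; since this isomorphism carries epimorphisms of $\Mod\sA$ (which are objectwise surjections) to surjections, each $P_a$ is projective, and the set $\{P_a\}_{a}$ detects whether a morphism is zero, hence is a set of (projective) generators. Finally, having a set of generators together with AB5 gives a Grothendieck category by definition (\S\ref{s.groth}), and then Theorem~\ref{t.groth}~a) supplies enough injectives for free, though the point of this theorem is the projective generators.

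The main obstacle is not conceptual but bookkeeping: one must be careful that ``additive presheaf'' is stable under all the colimit constructions, and in particular that an infinite coproduct of additive functors is additive — this uses crucially that the defining condition $F(a \oplus b) \iso F(a) \times F(b)$ involves only a \emph{finite} biproduct in $\sA$, so it commutes with the infinite coproduct taken in $\Ab$. A clean way to package everything is to invoke the standard reference: $\Mod\sA$ is equivalent to the category of additive functors $\sA^\op \to \Ab$, which by \cite[Chapter~VIII]{mcl} (or the theory of abelian categories of functors) is Grothendieck with representable projective generators. I would state the proof at roughly the following length:

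\begin{proof}
Replacing $\sA$ by an essentially small skeleton, we may assume $\sA$ is small. The category $\Mod\sA$ of additive functors $\sA^\op \to \Ab$ is abelian, with kernels, cokernels, and finite direct sums computed objectwise; these operations preserve additivity. Small colimits are also computed objectwise, and preserve additivity because a finite biproduct in $\sA$ is sent by each functor to a finite product in $\Ab$, which commutes with arbitrary colimits of abelian groups. Since filtered colimits are exact in $\Ab$, they are exact objectwise in $\Mod\sA$, so $\Mod\sA$ satisfies AB5. For each $a \in \sA$, let $P_a = \Z[\sA(-,a)] \in \Mod\sA$. The additive Yoneda lemma gives a natural isomorphism $\Mod\sA(P_a, F) \simeq F(a)$; as epimorphisms in $\Mod\sA$ are objectwise surjective, each $P_a$ is projective, and the family $\{P_a\}_{a \in \sA}$ generates. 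Hence $\Mod\sA$ is a Grothendieck category with a set of projective generators.
\end{proof}
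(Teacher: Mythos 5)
Your argument is, in substance, the standard direct verification of the fact that the paper handles by citation: the paper's proof of Theorem \ref{t.mon} simply refers to \cite[Proposition~1.3.6]{ak} for the Grothendieck property and records that the projective generators are the representables $\{y(A)\mid A\in\sA\}$. Your objectwise computation of kernels, cokernels and colimits, the AB5 check (finite biproducts in $\Ab$ commute with all colimits, filtered colimits in $\Ab$ are exact), and the Yoneda argument for projectivity and generation are all correct; the smallness caveat you raise matches the paper's standing hypothesis, stated just before Proposition~\ref{p.funct} in \S\ref{s.presh}, that $\sA$ is essentially small.

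One slip should be corrected. For an additive category $\sA$, the representable additive presheaf at $a$ is $y(a)=\sA(-,a)$, whose values are the hom \emph{groups} themselves; it is not $P_a=\Z[\sA(-,a)]$. The latter is not even an object of $\Mod\sA$: since $\sA(b\oplus b',a)\cong\sA(b,a)\times\sA(b',a)$, applying the free abelian group functor destroys the additivity condition, so the additive Yoneda isomorphism $\Mod\sA(y(a),F)\cong F(a)$ cannot be invoked for $P_a$ as you wrote it. The identification of a representable with a free abelian group on a hom-set is special to the situation in the proof of Proposition~\ref{eq:c-functor}, where $\Z\ulMSm$ is the \emph{free} additive category on the non-additive category $\ulMSm$, giving $\Z\ulMSm(\gamma(N),\gamma(M))\simeq\Z[\ulMSm(N,M)]$; it fails for a general additive $\sA$ (for instance $\Z_\tr(M)=\ulMCor(-,M)$ is not $\Z[\ulMSm(-,M)]$, which is the presheaf $\Z^p(M)$ living in $\ulMPS$). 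With $P_a$ replaced by $y(a)=\sA(-,a)$ throughout, your proof goes through verbatim and agrees with the paper's.
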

\begin{proof} See e.g. \cite[Proposition~1.3.6]{ak} for the first statement; the projective generators are given by $\sE=\{y(A)\mid A\in \sA\}$.
\end{proof}



\begin{thebibliography}{KMSY20+++}
\bibitem[AK02]{ak} Y. Andr\'e, B. Kahn, {\it Nilpotence, radicaux et structures mono\"\i dales} (with an appendix by P. O'Sullivan), Rend. Sem. Math. Univ. Padova \textbf{108} (2002), 107--291. 
\bibitem[AM69]{am} M. Artin, B. Mazur, \emph{\'Etale homotopy}, Lect. Notes in Math. \textbf{100}, Springer, 1969.
\bibitem[Ayo15]{ayoubrig} J. Ayoub, \emph{Motifs de vari\'et\'es analytiques rigides}, M\'em. SMF \textbf{140-141} (2015), vi+386.
\bibitem[BE03]{BE} S. Bloch, H. Esnault, {\it An additive version of higher Chow groups}, Ann. Sci. \'Ec. Norm. Sup. \textbf{36} (2003), no.~3, 463--477.
\bibitem[BM08]{BM} E. Bierstone, P. Milman, {\it Functoriality in resolution of singularities}, Publ. RIMS, Kyoto Univ. \textbf{44} (2008), 609--639.
\bibitem[BS19]{BS} F. Binda, S. Saito,  {\it Relative cycles with moduli and regulator maps}, J. Inst. Math. Jussieu \textbf{18} (2019), no.~6, 1233--1293.
\bibitem[Gab62]{gabriel} P. Gabriel, \emph{Des cat\'egories ab\'eliennes}, Bull. SMF \textbf{90} (1962), 323--448.
\bibitem[GZ67]{gz} P. Gabriel, M. Zisman, \emph{Calculus of fractions and homotopy theory}, Springer, 1967.
\bibitem[Gro57]{tohoku} A. Grothendieck, {\it Sur quelques points d'alg\`ebre homologique}, Toh\^oku Math. J.  \textbf{9} (1957), 119--221.
\bibitem[Har77]{hartshorne} R. Hartshorne, \emph{Algebraic Geometry}, Springer, 1977.
\bibitem[KMSY20]{motmod2} B. Kahn, H. Miyazaki, S. Saito, T. Yamazaki, 
{\it Motives with modulus, III: The categories of motives}, 
preprint \href{https://arxiv.org/abs/2011.11859}{arXiv:2011.11859} (2020).
\bibitem[KMSY21]{modsheafII} \bysame, {\it Motives with modulus, II:
Modulus sheaves with transfers for proper modulus pairs}, \'Epijournal de G\'eom\'etrie Alg\'ebrique \textbf{5} (2021), article~2.
\bibitem[KP12]{KP} A. Krishna, J. Park,  {\it Moving lemma for additive higher Chow groups}, Algebra Number Theory \textbf{6} (2012), no.~2, 293--326.
\bibitem[KS06]{ks} M. Kashiwara, P. Schapira, \emph{Categories and sheaves}, Springer, 2006.
\bibitem[KSY16]{rec} B. Kahn, S. Saito, T. Yamazaki, {\it Reciprocity sheaves} (with two appendices by Kay R\"ulling), Compositio Math. \textbf{152} (2016), 1851--1898.
\bibitem[KSY15]{motmod} \bysame, {\it Motives with modulus}, preprint \href{https://arxiv.org/abs/1511.07124}{arXiv:1511.07124} (2015), withdrawn.
\bibitem[KSY17]{rec2} \bysame, {\it Reciprocity sheaves, II},
preprint \href{https://arxiv.org/abs/1707.07398}{arXiv:1707.07398} (2017).
\bibitem[MVW06]{mvw} C. Mazza, V. Voevodsky, C. Weibel, \emph{Lecture notes on motivic cohomology}, Clay Math. Monographs \textbf{2}, AMS, 2006.
\bibitem[Mac98]{mcl} S. Mac Lane, \emph{Categories for the working mathematician}, Grad. Texts in Math. \textbf{5}, Springer (2nd ed.), 1998.
\bibitem[Mil80]{milne} J. Milne, \emph{\'Etale cohomology}, Princeton Mathematical Series \textbf{33}. Princeton University Press, Princeton, N.J., 1980.
\bibitem[Miy19]{Mi} H. Miyazaki, {\it Cube invariance of higher Chow groups with modulus}, J. Algebraic Geom. \textbf{28} (2019), no.~2, 339--390. 
\bibitem[Par09]{Pa} J. Park, {\it Regulators on additive higher Chow groups},  Amer. J. Math. \textbf{131} (2009), 257--276.
\bibitem[RG71]{rg} M. Raynaud, L. Gruson, {\it Crit\`eres de platitude et de projectivit\'e}, Invent. Math.  \textbf{13} (1971), 1--89.
\bibitem[Rio02]{riou} J. Riou, {\it Th\'eorie homotopique des $S$-sch\'emas}, m\'emoire de DEA, Paris 7, 2002. Available at \url{http://www.math.u-psud.fr/~riou/dea/dea.pdf}.
\bibitem[R\"ul07]{Kay} K. R\"ulling,
{\it The generalized de Rham-Witt complex over a field is a complex of zero-cycles},
J. Algebraic Geom. \textbf{16} (2007), no.~1, 109--169.
\bibitem[Sai20]{shuji} S. Saito, {\it Purity of reciprocity sheaves}, Adv. Math. \textbf{366} (2020), 107067 \href{https://doi.org/10.1016/j.aim.2020.107067}{j.aim.2020.107067}.
\bibitem[SV00]{sv} A. Sulin, V. Voevodsky, {\it Bloch-Kato Conjecture and motivic cohomology with finite coefficients}, 
in : The Arithmetic and Geometry of Algebraic Cycles. 
eds. B. Gordon, J. Lewis, S. M\"uller-Stach, S. Saito, N. Yui, 
NATO Science Series \textbf{548}, 2000.
\bibitem[Voe00]{voetri} V. Voevodsky, 
{\it Triangulated categories of motives over a field}, 
in : 
Cycles, transfers and motivic cohomology theories, 
by E. Friedlander, A. Suslin, V. Voevodsky, 
Ann. Math. Studies \textbf{143},
Princeton University Press, 2000, 188--238. 
\bibitem[Voe10a]{cdstructures} \bysame, {\it Homotopy theory of simplicial sheaves in completely decomposable topologies}, J. pure appl. Algebra \textbf{214} (2010), 1384--1398.
\bibitem[Voe10b]{unstableJPAA} \bysame, {\it Unstable motivic homotopy categories in Nisnevich and cdh-topologies}, J. pure appl. Algebra \textbf{214} (2010) 1399--1406.

\begin{center} \sc Acronyms
\end{center}

\bibitem[EGA3]{EGA3}
A. Grothendieck, 
\emph{\'El\'ements de g\'eom\'etrie alg\'ebrique. III. \'Etude cohomologique des faisceaux coh\'erents. I.}, Inst. Hautes \'Etudes Sci. Publ. Math. No. \textbf{11} (1961), 167 pp.
\bibitem[SGA1]{SGA1} A. Grothendieck et al, \emph{Rev\^etements \'etales et groupe fondamental} (SGA 1), 
S\'eminaire de G\'eom\'etrie Alg\'ebrique du Bois-Marie 1960 -- 1961, new edition: Documents math\'ematiques \textbf{3}, SMF, 2003.
\bibitem[SGA3]{SGA3} M. Demazure, A. Grothendieck, \emph{Sch\'emas en groupes}, new edition, SMF, 2012.
\bibitem[SGA4]{SGA4} E. Artin, A. Grothendieck, J.-L. Verdier, \emph{Th\'eorie
des topos et cohomologie \'etale des sch\'emas} (SGA4), Lect. Notes in Math. \textbf{269}, \textbf{270}, \textbf{305}, Springer, 1972--73.
\end{thebibliography}
\end{document}